\newtheorem{theorem}{Theorem}
\newtheorem{definition}{Definition}
\newtheorem{lemma}{Lemma}
\newtheorem{remark}{Remark}
\numberwithin{equation}{section}
\begin{document}
\title[Few results in connection with sum and product theorems.....]{Few
results in connection with sum and product theorems of relative $\left(
p,q\right) $-$\varphi $ order, relative $\left( p,q\right) $-$\varphi $ type
and relative $\left( p,q\right) $-$\varphi $ weak type of meromorphic
functions with respect to entire functions}
\author[Tanmay Biswas]{Tanmay Biswas}
\address{T. Biswas : Rajbari, Rabindrapalli, R. N. Tagore Road, P.O.-
Krishnagar, Dist-Nadia, PIN-\ 741101, West Bengal, India}
\email{tanmaybiswas\_math@rediffmail.com}
\keywords{{\small Entire function, meromorphic function, relative }$\left(
p,q\right) $-$\varphi ${\small \ order, relative }$\left( p,q\right) $-$%
\varphi ${\small \ type, relative }$\left( p,q\right) $-$\varphi ${\small \
weak type, Property (A).}\\
\textit{AMS Subject Classification}\textbf{\ }$\left( 2010\right) $\textbf{\ 
}{\footnotesize : }$30D20,30D30,30D35$}

\begin{abstract}
Orders and types of entire and meromorphic functions have been actively
investigated by many authors. In the present paper, we aim at investigating
some basic properties in connection with sum and product of relative $\left(
p,q\right) $-$\varphi $ order, relative $\left( p,q\right) $-$\varphi $
type, and relative $\left( p,q\right) $-$\varphi $ weak type of meromorphic
functions with respect to entire functions where $p,q$ are any two positive
integers and $\varphi $ $:$ $[0,+\infty )\rightarrow (0,+\infty )$ be a
non-decreasing unbounded function.
\end{abstract}

\maketitle

\section{\textbf{Introduction, Definitions and Notations}}

\qquad Let $f$ be an entire function defined in the open complex plane $%
%TCIMACRO{\U{2102} }%
%BeginExpansion
\mathbb{C}
%EndExpansion
.$ The\ maximum modulus function $M_{f}\left( r\right) $ corresponding to $f$%
\ (see \cite{14}) is defined on $\left\vert z\right\vert =r$ as $M_{f}\left(
r\right) =\QTATOP{\max }{\left\vert z\right\vert =r}\left\vert f\left(
z\right) \right\vert $. A non-constant entire function $f$ is said have the
Property (A) if for any $\sigma >1$ and for all sufficiently large $r,$ $%
\left[ M_{f}\left( r\right) \right] ^{2}\leq M_{f}\left( r^{\sigma }\right) $
holds $\left( \text{see \cite{1}}\right) $. When $f$ is meromorphic, one may
introduce another function $T_{f}\left( r\right) $ known as Nevanlinna's
characteristic function of $f$ (see \cite[p.4]{6}), playing the same role as 
$M_{f}\left( r\right) .$ If $f$ is non-constant entire function, then its
Nevanlinna's characteristic function is strictly increasing and continuous
and therefore there exists its inverse functions $T_{f}^{-1}(r)$ $:$ $\left(
\left\vert f\left( 0\right) \right\vert ,\infty \right) $ $\rightarrow $ $%
\left( 0,\infty \right) $ with $\underset{s\rightarrow \infty }{\lim }%
T_{f}^{-1}\left( s\right) =\infty .$

\qquad However, throughout this paper, we assume that the reader is familiar
with the fundamental results and the standard notations of the Nevanlinna
theory of meromorphic functions which are available in \cite{6, 10, 12, 13}
and therefore we do not explain those in details. Now we define $\exp
^{[k]}x=\exp \left( \exp ^{[k-1]}x\right) $ and $\log ^{[k]}x=\log \left(
\log ^{[k-1]}x\right) $ for $x\in \lbrack 0,\infty )$ and $k\in 
%TCIMACRO{\U{2115} }%
%BeginExpansion
\mathbb{N}
%EndExpansion
$ where $%
%TCIMACRO{\U{2115} }%
%BeginExpansion
\mathbb{N}
%EndExpansion
$ be the set of all positive integers$.$ We also denote $\log ^{[0]}x=x,$ $%
\log ^{[-1]}x=\exp x,$ $\exp ^{[0]}x=x$ and $\exp ^{[-1]}x=\log x.$ Further
we assume that throughout the present paper $p$ and $q$ always denote
positive integers.

\qquad Mainly the growth investigation of meromorphic functions has usually
been done through its Nevanlinna's characteristic function in comparison
with those of exponential function. But if one is paying attention to
evaluate the growth rates of any meromorphic function with respect to an
entire function, the notions of relative growth indicators \cite{9} will
come. Extending this notion, Debnath et. al. \cite{3} introduce the
definition of relative $\left( p,q\right) $-th order and relative $\left(
p,q\right) $-th lower order of a meromorphic function $f$ with respect to
another entire function $g$ respectively in the light of index-pair ( detail
about index-pair one may see \cite{3, 7, 8} ). For details about it, one may
see \cite{3}. Extending this notion, recently Biswas \cite{xxx} introduce
the definitions of relative $\left( p,q\right) $-$\varphi $ order and the
relative $\left( p,q\right) $-$\varphi $ lower order of a meromorphic
function $f$ with respect to another entire function $g$ as follows:

\begin{definition}
\label{d1}\cite{xxx} Let $\varphi $ $:$ $[0,+\infty )\rightarrow (0,+\infty
) $ be a non-decreasing unbounded function. The relative $\left( p,q\right) $%
-$\varphi $ order and the relative $\left( p,q\right) $-$\varphi $ lower
order of a meromorphic function $f$ with respect to an entire function $g$
are defined as%
\begin{equation*}
\begin{array}{c}
\rho _{g}^{\left( p,q\right) }\left( f,\varphi \right) \\ 
\lambda _{g}^{\left( p,q\right) }\left( f,\varphi \right)%
\end{array}%
=\underset{r\rightarrow \infty }{\lim }%
\begin{array}{c}
\sup \\ 
\inf%
\end{array}%
\frac{\log ^{\left[ p\right] }T_{g}^{-1}\left( T_{f}\left( r\right) \right) 
}{\log ^{\left[ q\right] }\varphi \left( r\right) }.
\end{equation*}
\end{definition}

\qquad If we consider $\varphi (r)=r$, then the above definition reduce to
the definitions of relative $\left( p,q\right) $-th order and relative $%
\left( p,q\right) $-th lower order of a meromorphic $f$ with respect to an
entire $g,$ introduced by Debnath et. al. \cite{3}.

\qquad If the relative $\left( p,q\right) $-$\varphi $ order and the
relative $\left( p,q\right) $-$\varphi $ lower order of $f$ with respect to $%
g$ are the same, then $f$ is called a function of regular relative $\left(
p,q\right) $-$\varphi $ growth with respect to $g$. Otherwise, $f$ is said
to be\ irregular relative $\left( p,q\right) $-$\varphi $ growth with
respect to $g$.

\qquad Now in order to refine the above growth scale, one may introduce the
definitions of other growth indicators, such as relative $\left( p,q\right) $%
-$\varphi $ type and relative $\left( p,q\right) $-$\varphi $ lower type%
\emph{\ }of entire or meromorphic functions with respect to another entire
function which are as follows:

\begin{definition}
\label{d2}\cite{xxx} Let $\varphi $ $:$ $[0,+\infty )\rightarrow (0,+\infty
) $ be a non-decreasing unbounded function. The relative $\left( p,q\right) $%
-$\varphi $ type and the relative $\left( p,q\right) $-$\varphi $ lower type
of a meromorphic function $f$ with respect to another entire function $g$
having non-zero finite relative $\left( p,q\right) $-$\varphi $ order $\rho
_{g}^{\left( p,q\right) }\left( f,\varphi \right) $ are defined as :%
\begin{equation*}
\begin{array}{c}
\sigma _{g}^{\left( p,q\right) }\left( f,\varphi \right) \\ 
\overline{\sigma }_{g}^{\left( p,q\right) }\left( f,\varphi \right)%
\end{array}%
=\underset{r\rightarrow +\infty }{\lim }%
\begin{array}{c}
\sup \\ 
\inf%
\end{array}%
\frac{\log ^{\left[ p-1\right] }T_{g}^{-1}\left( T_{f}\left( r\right)
\right) }{\left[ \log ^{\left[ q-1\right] }\varphi \left( r\right) \right]
^{\rho _{g}^{\left( p,q\right) }\left( f,\varphi \right) }}.
\end{equation*}
\end{definition}

\qquad Analogously, to determine the relative growth of $f$ having same non
zero finite relative $\left( p,q\right) $-$\varphi $ lower order with
respect to $g$, one can introduce the definition of relative $\left(
p,q\right) $-$\varphi $ weak type $\tau _{g}^{\left( p,q\right) }\left(
f\right) $ and the growth indicator $\overline{\tau }_{g}^{\left( p,q\right)
}\left( f\right) $ of $f$ with respect to $g$ of finite positive relative $%
\left( p,q\right) $-$\varphi $ lower order $\lambda _{g}^{\left( p,q\right)
}\left( f\right) $ in the following way:

\begin{definition}
\label{d3}\cite{xxx} Let $\varphi $ $:$ $[0,+\infty )\rightarrow (0,+\infty
) $ be a non-decreasing unbounded function. The relative $\left( p,q\right) $%
-$\varphi $ weak type $\tau _{g}^{\left( p,q\right) }\left( f,\varphi
\right) $ and the growth indicator $\overline{\tau }_{g}^{\left( p,q\right)
}\left( f,\varphi \right) $ of a meromorphic function $f$ with respect to
another entire function $g$ having non-zero finite relative $\left(
p,q\right) $-$\varphi $ lower order $\lambda _{g}^{\left( p,q\right) }\left(
f,\varphi \right) $ are defined as :%
\begin{equation*}
\begin{array}{c}
\tau _{g}^{\left( p,q\right) }\left( f,\varphi \right) \\ 
\overline{\tau }_{g}^{\left( p,q\right) }\left( f,\varphi \right)%
\end{array}%
=\underset{r\rightarrow +\infty }{\lim }%
\begin{array}{c}
\inf \\ 
\sup%
\end{array}%
\frac{\log ^{\left[ p-1\right] }T_{g}^{-1}\left( T_{f}\left( r\right)
\right) }{\left[ \log ^{\left[ q-1\right] }\varphi \left( r\right) \right]
^{\lambda _{g}^{\left( p,q\right) }\left( f,\varphi \right) }}.
\end{equation*}
\end{definition}

\qquad If we consider $\varphi (r)=r$, then $\sigma _{g}^{\left( p,q\right)
}\left( f,r\right) $ and $\tau _{g}^{\left( p,q\right) }\left( f,r\right) $
are respectively known as relative $\left( p,q\right) $-th type and relative 
$\left( p,q\right) $-th weak type of $f$ with respect to $g$. For details
about relative $\left( p,q\right) $-th type, relative $\left( p,q\right) $%
-th weak type etc., one may see \cite{yyy}.

\qquad Here, in this paper, we aim at investigating some basic properties of
relative $\left( p,q\right) $-$\varphi $ order, relative $\left( p,q\right) $%
-$\varphi $ type and relative $\left( p,q\right) $-$\varphi $ weak type of a
meromorphic function with respect to an entire function under somewhat
different conditions. Throughout this paper, we assume that all the growth
indicators are all nonzero finite.

\section{\textbf{Lemmas}}

\qquad In this section we present some lemmas which will be needed in the
sequel.

\begin{lemma}
\label{l9.2}\cite{1} Let $f$ be an entire function which satisfies the
Property (A) then for any positive integer $n$ and for all sufficiently
large $r,$%
\begin{equation*}
\left[ M_{f}\left( r\right) \right] ^{n}\leq M_{f}\left( r^{\delta }\right)
\end{equation*}%
holds where $\delta >1.$
\end{lemma}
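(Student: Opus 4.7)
The plan is to prove this by reducing the case of a general positive integer $n$ to iterated applications of Property (A) (which gives the $n=2$ case). First I would choose a positive integer $k$ large enough that $2^{k}\geq n$, then pick $\sigma>1$ small enough that $\sigma^{k}\leq \delta$; this is possible because $\delta^{1/k}>1$ whenever $\delta>1$, so there is room to take $\sigma\in(1,\delta^{1/k}]$.

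Next I would iterate Property (A). Writing $M$ for $M_{f}$, Property (A) gives $[M(r)]^{2}\leq M(r^{\sigma})$ for all sufficiently large $r$. Replacing $r$ by $r^{\sigma}$ (still large) and squaring, we obtain
\begin{equation*}
[M(r)]^{4}\;=\;\bigl([M(r)]^{2}\bigr)^{2}\;\leq\;\bigl[M(r^{\sigma})\bigr]^{2}\;\leq\;M(r^{\sigma^{2}}).
\end{equation*}
Repeating this $k$ times in total yields $[M(r)]^{2^{k}}\leq M(r^{\sigma^{k}})$ for sufficiently large $r$. Since $M_{f}$ is non-decreasing on $[0,\infty)$ and $\sigma^{k}\leq \delta$, we have $M(r^{\sigma^{k}})\leq M(r^{\delta})$, while $M(r)\geq 1$ for all large $r$ (as $f$ is a non-constant entire function) together with $n\leq 2^{k}$ gives $[M(r)]^{n}\leq [M(r)]^{2^{k}}$. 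Chaining these inequalities produces the desired bound $[M_{f}(r)]^{n}\leq M_{f}(r^{\delta})$ for all sufficiently large $r$.

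The only subtle point, and thus the main (though minor) obstacle, is bookkeeping of the threshold $r_{0}$: each of the $k$ applications of Property (A) is valid only beyond its own $r_{0}$, and after substituting $r\mapsto r^{\sigma^{j}}$ we must still be above that threshold. Since $\sigma>1$, the substituted argument grows with $r$, so by taking the maximum of the $k$ thresholds (a finite set) and choosing $r$ above it, all $k$ instances hold simultaneously, and the induction closes. No further growth lemmas beyond Property (A) and monotonicity of $M_{f}$ are required.
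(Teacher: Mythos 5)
Your proof is correct. The paper offers no proof of this lemma at all --- it is simply quoted from Bernal \cite{1} --- so there is nothing in the text to compare against; your argument (choose $k$ with $2^{k}\geq n$, take $\sigma\in(1,\delta^{1/k}]$, iterate Property (A) $k$ times to get $\left[M_{f}(r)\right]^{2^{k}}\leq M_{f}(r^{\sigma^{k}})$, then finish with monotonicity of $M_{f}$ and $M_{f}(r)\geq1$) is the standard one and is essentially the proof in the cited source. The threshold issue you flag is handled correctly: only one value of $\sigma$ is ever invoked, so there is a single $r_{0}$, and $r^{\sigma^{j}}\geq r\geq r_{0}$ for $r\geq\max(1,r_{0})$, so all $k$ applications are simultaneously valid.
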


\begin{lemma}
\label{l9.6}\cite[p. 18]{6} Let $f$ be an entire function. Then for all
sufficiently large values of $r,$%
\begin{equation*}
T_{f}\left( r\right) \leq \log M_{f}\left( r\right) \leq 3T_{f}\left(
2r\right) ~.
\end{equation*}
\end{lemma}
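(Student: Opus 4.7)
The plan is to expand the Nevanlinna characteristic $T_f(r) = m_f(r) + N_f(r)$ and exploit that an entire $f$ has no poles, so $N_f(r)\equiv 0$ and $T_f(r) = m_f(r) = \frac{1}{2\pi}\int_0^{2\pi}\log^+|f(re^{i\theta})|\, d\theta$. The left-hand inequality $T_f(r)\leq \log M_f(r)$ is then immediate: bounding the integrand by the pointwise supremum $\log^+ M_f(r)$ on $|z|=r$ gives $T_f(r)\leq \log^+ M_f(r)$, which coincides with $\log M_f(r)$ once $r$ is large enough that $M_f(r)\geq 1$ (automatic for non-constant $f$ by Liouville's theorem).

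For the right-hand inequality $\log M_f(r)\leq 3T_f(2r)$, the tool I would use is the Poisson--Jensen representation on the disk $|z|\leq 2r$. For any $z_0$ with $|z_0|=r$ this expresses $\log|f(z_0)|$ as a Poisson integral of $\log|f|$ over $|z|=2r$ minus a Blaschke-type sum taken over the zeros $\{a_k\}$ of $f$ lying in that disk. Each subtracted term has the form $\log\bigl|\tfrac{(2r)^2-\overline{a_k}z_0}{2r(z_0-a_k)}\bigr|$, which is non-negative since the argument is the reciprocal of a Blaschke factor; discarding these subtractions only enlarges the right-hand side. Replacing $\log|f|$ by $\log^+|f|$ in the Poisson integral and using the pointwise bound $\frac{R^2-|z_0|^2}{|Re^{i\theta}-z_0|^2}\leq \frac{R+r}{R-r}=3$ at $R=2r$, $|z_0|=r$, one obtains $\log|f(z_0)|\leq 3\,m_f(2r)=3T_f(2r)$. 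Taking the maximum over $|z_0|=r$ then yields the stated inequality.

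I do not anticipate a serious obstacle, since the argument is classical. The step deserving the most care is the pointwise Poisson kernel estimate: the maximum of $\frac{R^2-r^2}{|Re^{i\theta}-z_0|^2}$ over $\theta$ and $|z_0|=r$ is attained when $z_0$ is radially aligned with the boundary point, producing the constant $\frac{R+r}{R-r}$, which specialises to exactly $3$ at $R=2r$. This precise value is what makes the doubling from $r$ to $2r$ in the bound on $T_f$ produce the clean multiplicative constant $3$, and it is what dictates the choice of the parameter $R=2r$ in the Poisson--Jensen application.
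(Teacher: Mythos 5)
Your proof is correct and is precisely the classical argument behind this lemma, which the paper does not prove but simply cites from Hayman (p.~18): the left inequality from $T_f(r)=m_f(r)\leq\log^+M_f(r)$ for pole-free $f$, and the right one from the Poisson--Jensen formula on $|z|\leq 2r$ with the zero terms discarded and the Poisson kernel bounded by $\frac{R+r}{R-r}=3$. No gaps; the one point worth noting explicitly is that replacing $\log|f|$ by $\log^{+}|f|$ is legitimate because the Poisson kernel is non-negative, which you implicitly use.
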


\section{\textbf{Main Results}}

\qquad In this section we present some results which will be needed in the
sequel.

\begin{theorem}
\label{t9.2x} Let $f_{1}$, $f_{2}$ be meromorphic functions and $g_{1}$ be
any entire function such that at least $f_{1}$ or $f_{2}$ is of regular
relative $\left( p,q\right) $-$\varphi $ growth with respect to $g_{1}$.
Also let $g_{1}$ has the Property (A). Then%
\begin{equation*}
\lambda _{g_{1}}^{\left( p,q\right) }\left( f_{1}\pm f_{2},\varphi \right)
\leq \max \left\{ \lambda _{g_{1}}^{\left( p,q\right) }\left( f_{1},\varphi
\right) ,\lambda _{g_{1}}^{\left( p,q\right) }\left( f_{2},\varphi \right)
\right\} ~.
\end{equation*}%
The equality holds when $\lambda _{g_{1}}^{\left( p,q\right) }\left(
f_{i},\varphi \right) >\lambda _{g_{1}}^{\left( p,q\right) }\left(
f_{j},\varphi \right) $ with at least $f_{j}$ is of regular relative $\left(
p,q\right) $-$\varphi $ growth with respect to $g_{1}$ where $i,j=1,2$ and $%
i\neq j$.
\end{theorem}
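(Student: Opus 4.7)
Set $\lambda_k=\lambda_{g_1}^{(p,q)}(f_k,\varphi)$ for $k=1,2$ and, without loss of generality, assume $\lambda_1\le\lambda_2$, with $f_2$ the one of regular relative $(p,q)$-$\varphi$ growth (the opposite case is symmetric). The plan is to couple the classical Nevanlinna estimate
$$T_{f_1\pm f_2}(r)\le T_{f_1}(r)+T_{f_2}(r)+\log 2$$
with Property (A) of $g_1$ in order to turn a bound for $T_{f_1\pm f_2}$ into an upper bound on $T_{g_1}^{-1}(T_{f_1\pm f_2}(r))$. Because $\rho_{g_1}^{(p,q)}(f_2,\varphi)=\lambda_2$, for every $\varepsilon>0$ and all large $r$,
$$T_{f_2}(r)\le T_{g_1}\!\bigl(\exp^{[p]}[(\lambda_2+\varepsilon)\log^{[q]}\varphi(r)]\bigr),$$
and from $\lambda_{g_1}^{(p,q)}(f_1,\varphi)=\lambda_1$ there is a sequence $r_n\to\infty$ on which the analogous bound with $\lambda_1+\varepsilon\le\lambda_2+\varepsilon$ holds for $T_{f_1}(r_n)$. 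Adding the two gives
$$T_{f_1\pm f_2}(r_n)\le 2\,T_{g_1}(R_n)+\log 2,\qquad R_n:=\exp^{[p]}\!\bigl[(\lambda_2+\varepsilon)\log^{[q]}\varphi(r_n)\bigr].$$

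\textbf{Absorbing constants via Property (A).} The next step is to re-express $2T_{g_1}(R_n)+\log 2$ as $T_{g_1}$ of a slightly enlarged argument. Lemma \ref{l9.6} supplies $T_{g_1}(r)\le\log M_{g_1}(r)\le 3T_{g_1}(2r)$, while Lemma \ref{l9.2} gives $[M_{g_1}(r)]^n\le M_{g_1}(r^{\delta})$ for every $n\in\mathbb{N}$ and every $\delta>1$. Chaining these inequalities produces, for every $k\in\mathbb{N}$ and all sufficiently large $R$,
$$k\,T_{g_1}(R)\le T_{g_1}(2R^{\delta}).$$
Applied with $k=3$ this absorbs both the factor $2$ and the additive $\log 2$, yielding $T_{f_1\pm f_2}(r_n)\le T_{g_1}(2R_n^{\delta})$. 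Inverting $T_{g_1}$, applying $\log^{[p]}$ and dividing by $\log^{[q]}\varphi(r_n)$ shows that the quotient defining the relative lower order is, along $\{r_n\}$, at most $\lambda_2+\varepsilon$ up to a contribution that depends only on $\delta$ and vanishes after the $p$ iterated logarithms; letting $\varepsilon\downarrow 0$ and $\delta\downarrow 1$ gives
$$\lambda_{g_1}^{(p,q)}(f_1\pm f_2,\varphi)\le\lambda_2=\max\{\lambda_1,\lambda_2\}.$$

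\textbf{Equality.} Assume now $\lambda_i>\lambda_j$ with $f_j$ of regular relative $(p,q)$-$\varphi$ growth. Writing $f_i=(f_1\pm f_2)\mp f_j$ and applying the already-established inequality to this decomposition (whose regularity hypothesis is met by $f_j$) one obtains
$$\lambda_i\le\max\bigl\{\lambda_{g_1}^{(p,q)}(f_1\pm f_2,\varphi),\lambda_j\bigr\}.$$
Since $\lambda_j<\lambda_i$, the maximum must be achieved by the first entry, forcing $\lambda_{g_1}^{(p,q)}(f_1\pm f_2,\varphi)\ge\lambda_i$; combined with the opposite inequality this yields the desired equality.

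\textbf{Main obstacle.} The delicate step is the Property-(A) absorption: one must verify that the dilation $R\mapsto 2R^{\delta}$ introduced to swallow the constants really does disappear after $\log^{[p]}$ and division by $\log^{[q]}\varphi(r_n)$, so that the limit $\delta\downarrow 1$ recovers $\lambda_2$ itself rather than $\delta\lambda_2$. The bookkeeping of nested exponentials and logarithms has to be carried out carefully, especially in the borderline case $p=1$ where the factor $\delta$ survives one stage of iteration and is only removed in the final limit.
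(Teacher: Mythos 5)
Your proposal is correct and follows essentially the same route as the paper: the subadditivity $T_{f_1\pm f_2}\le T_{f_1}+T_{f_2}+O(1)$ combined with the sequence bound from the lower order of one function and the everywhere bound from the regular one, the absorption of the multiplicative and additive constants through Lemma \ref{l9.6} and Property (A) (Lemma \ref{l9.2}) followed by $\delta\downarrow 1$, and the decomposition $f_i=(f_1\pm f_2)\mp f_j$ for the equality case are all exactly the paper's steps. The only cosmetic difference is that your ``WLOG $\lambda_1\le\lambda_2$ with $f_2$ regular'' fuses two independent case distinctions that the paper keeps separate via the single constant $\Delta=\max\{\lambda_1,\lambda_2\}$, but this does not affect the validity of the argument.
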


\begin{proof}
The result is obvious when $\lambda _{g_{1}}^{\left( p,q\right) }\left(
f_{1}\pm f_{2},\varphi \right) =0$. So we suppose that $\lambda
_{g_{1}}^{\left( p,q\right) }\left( f_{1}\pm f_{2},\varphi \right) $ $>$ $0$%
. We can clearly assume that $\lambda _{g_{1}}^{\left( p,q\right) }\left(
f_{k},\varphi \right) $ is finite for $k=1,2.$ Now let us consider that $%
\max \left\{ \lambda _{g_{1}}^{\left( p,q\right) }\left( f_{1},\varphi
\right) ,\lambda _{g_{1}}^{\left( p,q\right) }\left( f_{2},\varphi \right)
\right\} =\Delta $ and $f_{2}$ is of regular relative $\left( p,q\right) $-$%
\varphi $ growth with respect to $g_{1}.$

\qquad Now for any arbitrary $\varepsilon >0$ from the definition of $%
\lambda _{g_{1}}^{\left( p,q\right) }\left( f_{1},\varphi \right) $, we have
for a sequence values of $r$ tending to infinity that%
\begin{equation*}
T_{f_{1}}\left( r\right) \leq T_{g_{1}}\left[ \exp ^{\left[ p\right] }\left[
\left( \lambda _{g_{1}}^{\left( p,q\right) }\left( f_{1},\varphi \right)
+\varepsilon \right) \log ^{\left[ q\right] }\varphi \left( r\right) \right] %
\right]
\end{equation*}%
\begin{equation}
i.e.,~T_{f_{1}}\left( r\right) \leq T_{g_{1}}\left[ \exp ^{\left[ p\right] }%
\left[ \left( \Delta +\varepsilon \right) \log ^{\left[ q\right] }\varphi
\left( r\right) \right] \right] ~.  \label{50.1}
\end{equation}

\qquad Also for any arbitrary $\varepsilon >0$ from the definition of $\rho
_{g_{1}}^{\left( p,q\right) }\left( f_{2},\varphi \right) \left( =\lambda
_{g_{1}}^{\left( p,q\right) }\left( f_{2},\varphi \right) \right) $, we
obtain for all sufficiently large values of $r$ that%
\begin{equation}
T_{f_{2}}\left( r\right) \leq T_{g_{1}}\left[ \exp ^{\left[ p\right] }\left[
\left( \lambda _{g_{1}}^{\left( p,q\right) }\left( f_{2},\varphi \right)
+\varepsilon \right) \log ^{\left[ q\right] }\varphi \left( r\right) \right] %
\right]  \label{91.1}
\end{equation}%
\begin{equation}
i.e.,~T_{f_{2}}\left( r\right) \leq T_{g_{1}}\left[ \exp ^{\left[ p\right] }%
\left[ \left( \Delta +\varepsilon \right) \log ^{\left[ q\right] }\varphi
\left( r\right) \right] \right] ~.  \label{9.1c}
\end{equation}

\qquad Since $T_{f_{1}\pm f_{2}}\left( r\right) \leq T_{f_{1}}\left(
r\right) +T_{f_{2}}\left( r\right) +O(1)$ for all large $r,$ so in view of $%
\left( \ref{50.1}\right) $ , $\left( \ref{9.1c}\right) $ and Lemma \ref{l9.6}%
, we obtain for a sequence values of $r$ tending to infinity that%
\begin{equation*}
T_{f_{1}\pm f_{2}}\left( r\right) \leq 2\log M_{g_{1}}\left[ \exp ^{\left[ p%
\right] }\left[ \left( \Delta +\varepsilon \right) \log ^{\left[ q\right]
}\varphi \left( r\right) \right] \right] +O(1)
\end{equation*}%
\begin{equation}
i.e.,~T_{f_{1}\pm f_{2}}\left( r\right) \leq 3\log M_{g_{1}}\left[ \exp ^{%
\left[ p\right] }\left[ \left( \Delta +\varepsilon \right) \log ^{\left[ q%
\right] }\varphi \left( r\right) \right] \right] ~.  \label{9.2}
\end{equation}

\qquad Therefore in view of Lemma \ref{l9.2} and Lemma \ref{l9.6}$,$ we
obtain from $\left( \ref{9.2}\right) $ for a sequence values of $r$ tending
to infinity and $\sigma >1$ that%
\begin{equation*}
T_{f_{1}\pm f_{2}}\left( r\right) \leq \frac{1}{3}\log \left[ M_{g_{1}}\left[
\exp ^{\left[ p\right] }\left[ \left( \Delta +\varepsilon \right) \log ^{%
\left[ q\right] }\varphi \left( r\right) \right] \right] \right] ^{9}
\end{equation*}%
\begin{equation*}
i.e.,~T_{f_{1}\pm f_{2}}\left( r\right) \leq \frac{1}{3}\log M_{g_{1}}\left[ %
\left[ \exp ^{\left[ p\right] }\left[ \left( \Delta +\varepsilon \right)
\log ^{\left[ q\right] }\varphi \left( r\right) \right] \right] ^{\sigma }%
\right]
\end{equation*}%
\begin{equation*}
i.e.,~T_{f_{1}\pm f_{2}}\left( r\right) \leq T_{g_{1}}\left[ 2\left[ \exp ^{%
\left[ p\right] }\left[ \left( \Delta +\varepsilon \right) \log ^{\left[ q%
\right] }\varphi \left( r\right) \right] \right] ^{\sigma }\right] ~.
\end{equation*}

\qquad Now we get from above by letting $\sigma \rightarrow 1^{+}$%
\begin{equation*}
i.e.,~\underset{r\rightarrow \infty }{\lim \inf }\frac{\log ^{\left[ p\right]
}T_{g_{1}}^{-1}\left( T_{f_{1}\pm f_{2}}\left( r\right) \right) }{\log ^{%
\left[ q\right] }\varphi \left( r\right) }<\left( \Delta +\varepsilon
\right) ~.~\ \ \ \ \ \ \ \ \ \ \ 
\end{equation*}%
Since $\varepsilon >0$ is arbitrary,%
\begin{equation*}
\lambda _{g_{1}}^{\left( p,q\right) }\left( f_{1}\pm f_{2},\varphi \right)
\leq \Delta =\max \left\{ \lambda _{g_{1}}^{\left( p,q\right) }\left(
f_{1},\varphi \right) ,\lambda _{g_{1}}^{\left( p,q\right) }\left(
f_{2},\varphi \right) \right\} ~.
\end{equation*}

\qquad Similarly, if we consider that $f_{1}$ is of regular relative $\left(
p,q\right) $-$\varphi $ growth with respect to $g_{1}$ or both $f_{1}$ and $%
f_{2}$ are of regular relative $\left( p,q\right) $-$\varphi $ growth with
respect to $g_{1},$ then one can easily verify that%
\begin{equation}
\lambda _{g_{1}}^{\left( p,q\right) }\left( f_{1}\pm f_{2},\varphi \right)
\leq \Delta =\max \left\{ \lambda _{g_{1}}^{\left( p,q\right) }\left(
f_{1},\varphi \right) ,\lambda _{g_{1}}^{\left( p,q\right) }\left(
f_{2},\varphi \right) \right\} ~.  \label{9.3}
\end{equation}

\qquad Further without loss of any generality, let $\lambda _{g_{1}}^{\left(
p,q\right) }\left( f_{1},\varphi \right) $ $<$ $\lambda _{g_{1}}^{\left(
p,q\right) }\left( f_{2},\varphi \right) $ and $f=f_{1}\pm f_{2}.$ Then in
view of $\left( \ref{9.3}\right) $ we get that $\lambda _{g_{1}}^{\left(
p,q\right) }\left( f,\varphi \right) $ $\leq $ $\lambda _{g_{1}}^{\left(
p,q\right) }\left( f_{2},\varphi \right) .$ As, $f_{2}=\pm \left(
f-f_{1}\right) $ and in this case we obtain that $\lambda _{g_{1}}^{\left(
p,q\right) }\left( f_{2},\varphi \right) $ $\leq $ $\max \left\{ \lambda
_{g_{1}}^{\left( p,q\right) }\left( f,\varphi \right) ,\lambda
_{g_{1}}^{\left( p,q\right) }\left( f_{1},\varphi \right) \right\} ~.$ As we
assume that $\lambda _{g_{1}}^{\left( p,q\right) }\left( f_{1},\varphi
\right) $ $<$ $\lambda _{g_{1}}^{\left( p,q\right) }\left( f_{2},\varphi
\right) ,$ therefore we have $\lambda _{g_{1}}^{\left( p,q\right) }\left(
f_{2},\varphi \right) $ $\leq $ $\lambda _{g_{1}}^{\left( p,q\right) }\left(
f,\varphi \right) $ and hence $\lambda _{g_{1}}^{\left( p,q\right) }\left(
f,\varphi \right) $ $=$ $\lambda _{g_{1}}^{\left( p,q\right) }\left(
f_{2},\varphi \right) $ $=$ $\max \left\{ \lambda _{g_{1}}^{\left(
p,q\right) }\left( f_{1},\varphi \right) ,\lambda _{g_{1}}^{\left(
p,q\right) }\left( f_{2},\varphi \right) \right\} .$ Therefore, $\lambda
_{g_{1}}^{\left( p,q\right) }\left( f_{1}\pm f_{2},\varphi \right) =\lambda
_{g_{1}}^{\left( p,q\right) }\left( f_{i},\varphi \right) \mid i=1,2$
provided $\lambda _{g_{1}}^{\left( p,q\right) }\left( f_{1},\varphi \right)
\neq \lambda _{g_{1}}^{\left( p,q\right) }\left( f_{2},\varphi \right) .$
Thus the theorem is established.
\end{proof}

\begin{theorem}
\label{t9.1x} Let $f_{1}$ and $f_{2}$ be any two meromorphic functions and $%
g_{1}$ be an entire function such that such that $\rho _{g_{1}}^{\left(
p,q\right) }\left( f_{1},\varphi \right) $ and $\rho _{g_{2}}^{\left(
p,q\right) }\left( f_{1},\varphi \right) $ exists . Also let $g_{1}$ has the
Property (A). Then%
\begin{equation*}
\rho _{g_{1}}^{\left( p,q\right) }\left( f_{1}\pm f_{2},\varphi \right) \leq
\max \left\{ \rho _{g_{1}}^{\left( p,q\right) }\left( f_{1},\varphi \right)
,\rho _{g_{1}}^{\left( p,q\right) }\left( f_{2},\varphi \right) \right\} ~.
\end{equation*}%
The equality holds when $\rho _{g_{1}}^{\left( p,q\right) }\left(
f_{1},\varphi \right) \neq \rho _{g_{1}}^{\left( p,q\right) }\left(
f_{2},\varphi \right) $.
\end{theorem}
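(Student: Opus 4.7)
The plan is to mirror the argument of Theorem~\ref{t9.2x}, but the situation is in fact simpler because here we work with a $\limsup$ throughout, so no regularity hypothesis on $f_1$ or $f_2$ is required. First I would dispose of the trivial case $\rho_{g_1}^{(p,q)}(f_1\pm f_2,\varphi)=0$ and set $\Delta=\max\{\rho_{g_1}^{(p,q)}(f_1,\varphi),\rho_{g_1}^{(p,q)}(f_2,\varphi)\}$, which is finite by hypothesis. For any $\varepsilon>0$, the definition of $\rho_{g_1}^{(p,q)}(f_k,\varphi)$ as a $\limsup$ yields, for $k=1,2$ and all sufficiently large $r$,
\[
T_{f_k}(r)\leq T_{g_1}\!\left[\exp^{[p]}\!\left[(\Delta+\varepsilon)\log^{[q]}\varphi(r)\right]\right].
\]
The crucial point, and the only real difference from the lower-order case, is that this bound holds for \emph{all} sufficiently large $r$ simultaneously for $k=1$ and $k=2$, whereas in Theorem~\ref{t9.2x} one of the two bounds was only available along a sequence, which is what forced the regularity assumption there.

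Next, using $T_{f_1\pm f_2}(r)\leq T_{f_1}(r)+T_{f_2}(r)+O(1)$ together with Lemma~\ref{l9.6}, the two bounds combine into
\[
T_{f_1\pm f_2}(r)\leq 3\log M_{g_1}\!\left[\exp^{[p]}\!\left[(\Delta+\varepsilon)\log^{[q]}\varphi(r)\right]\right]
\]
for all sufficiently large $r$. I would then rewrite $3\log(\cdot)$ as $\tfrac{1}{3}\log(\cdot)^{9}$ and apply Lemma~\ref{l9.2} (Property (A), with $n=9$ and $\delta=\sigma>1$) to absorb the ninth power into the argument of $M_{g_1}$, and subsequently apply Lemma~\ref{l9.6} once more to convert $\log M_{g_1}$ back into $T_{g_1}$ evaluated at a mildly stretched point. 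This gives
\[
T_{f_1\pm f_2}(r)\leq T_{g_1}\!\left[2\bigl(\exp^{[p]}[(\Delta+\varepsilon)\log^{[q]}\varphi(r)]\bigr)^{\sigma}\right].
\]
Applying $T_{g_1}^{-1}$, then $\log^{[p]}$, dividing by $\log^{[q]}\varphi(r)$, and finally letting $\sigma\to 1^{+}$ and $\varepsilon\to 0^{+}$ yields $\rho_{g_1}^{(p,q)}(f_1\pm f_2,\varphi)\leq\Delta$.

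For the equality statement, I would assume without loss of generality that $\rho_{g_1}^{(p,q)}(f_1,\varphi)<\rho_{g_1}^{(p,q)}(f_2,\varphi)$ and set $f=f_1\pm f_2$. The inequality just proved gives $\rho_{g_1}^{(p,q)}(f,\varphi)\leq\rho_{g_1}^{(p,q)}(f_2,\varphi)$. Writing $f_2=\pm(f-f_1)$ and applying the same inequality a second time gives $\rho_{g_1}^{(p,q)}(f_2,\varphi)\leq\max\{\rho_{g_1}^{(p,q)}(f,\varphi),\rho_{g_1}^{(p,q)}(f_1,\varphi)\}$, and the strict inequality between $\rho_{g_1}^{(p,q)}(f_1,\varphi)$ and $\rho_{g_1}^{(p,q)}(f_2,\varphi)$ then forces $\rho_{g_1}^{(p,q)}(f_2,\varphi)\leq\rho_{g_1}^{(p,q)}(f,\varphi)$, so equality holds. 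There is no serious obstacle to the argument; the only point worth being careful about is confirming that the two $\limsup$ bounds on $T_{f_1}$ and $T_{f_2}$ apply on a common eventual range of $r$, which is precisely what removes the regularity assumption needed in Theorem~\ref{t9.2x}.
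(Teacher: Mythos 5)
Your proposal is correct and follows exactly the route the paper intends: the paper omits this proof, stating it is carried out "in the line of Theorem \ref{t9.2x}", and your argument is precisely that adaptation, with the accurate observation that the $\limsup$ bounds for $T_{f_1}$ and $T_{f_2}$ both hold for all sufficiently large $r$, so no regularity hypothesis is needed. The equality argument via $f_2=\pm(f-f_1)$ likewise matches the template of Theorem \ref{t9.2x}.
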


\qquad We omit the proof of Theorem \ref{t9.1x} as it can easily be carried
out in the line of Theorem \ref{t9.2x}.

\begin{theorem}
\label{t9.3} Let $f_{1}$ be a meromorphic function and $g_{1}$, $g_{2}$ be
any two entire functions such that $\lambda _{g_{1}}^{\left( p,q\right)
}\left( f_{1},\varphi \right) $ and $\lambda _{g_{2}}^{\left( p,q\right)
}\left( f_{1},\varphi \right) $ exists. Also let $g_{1}\pm g_{2}$ has the
Property (A). Then%
\begin{equation*}
\lambda _{g_{1}\pm g_{2}}^{\left( p,q\right) }\left( f_{1},\varphi \right)
\geq \min \left\{ \lambda _{g_{1}}^{\left( p,q\right) }\left( f_{1},\varphi
\right) ,\lambda _{g_{2}}^{\left( p,q\right) }\left( f_{1},\varphi \right)
\right\} ~.
\end{equation*}%
The equality holds when $\lambda _{g_{1}}^{\left( p,q\right) }\left(
f_{1},\varphi \right) \neq \lambda _{g_{2}}^{\left( p,q\right) }\left(
f_{1},\varphi \right) $.
\end{theorem}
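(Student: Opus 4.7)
Let $\Lambda=\min\{\lambda_{g_{1}}^{(p,q)}(f_{1},\varphi),\lambda_{g_{2}}^{(p,q)}(f_{1},\varphi)\}$ and fix $\varepsilon>0$. The plan is to mirror the proof of Theorem~\ref{t9.2x}, except that $g_{1}\pm g_{2}$ now sits in the ``denominator'' of the relative order: to lower-bound $T_{g_{1}\pm g_{2}}^{-1}(T_{f_{1}}(r))$ I would instead upper-bound $T_{g_{1}\pm g_{2}}(R)$ at the test point $R=\exp^{[p]}\!\bigl[(\Lambda-\varepsilon)\log^{[q]}\varphi(r)\bigr]$. Since $\lambda_{g_{k}}^{(p,q)}(f_{1},\varphi)\geq\Lambda$, the definition of $\lambda_{g_{k}}^{(p,q)}(f_{1},\varphi)$ gives $T_{g_{k}}(R)\leq T_{f_{1}}(r)$ for $k=1,2$ and all sufficiently large $r$. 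Combined with the Nevanlinna subadditivity $T_{g_{1}\pm g_{2}}(R)\leq T_{g_{1}}(R)+T_{g_{2}}(R)+O(1)$ (exactly as used in Theorem~\ref{t9.2x}), this immediately yields $T_{g_{1}\pm g_{2}}(R)\leq 3\,T_{f_{1}}(r)$ for all large $r$.

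The hypothesis that $g_{1}\pm g_{2}$ has Property~(A) is precisely what I would use to absorb the spurious factor of~$3$. Taking $n=9$ and some $\delta>1$ in Lemma~\ref{l9.2} applied to $g_{1}\pm g_{2}$, and chaining with both halves of Lemma~\ref{l9.6}, I would obtain
\[
T_{g_{1}\pm g_{2}}(\widetilde R)\leq\log M_{g_{1}\pm g_{2}}(\widetilde R)=\tfrac{1}{9}\log\bigl[M_{g_{1}\pm g_{2}}(\widetilde R)\bigr]^{9}\leq\tfrac{1}{9}\log M_{g_{1}\pm g_{2}}(\widetilde R^{\delta})\leq\tfrac{1}{3}\,T_{g_{1}\pm g_{2}}(2\widetilde R^{\delta})
\]
for all large $\widetilde R$. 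Choosing $\widetilde R=(R/2)^{1/\delta}$ so that $2\widetilde R^{\delta}=R$, this together with the previous display gives $T_{g_{1}\pm g_{2}}\!\bigl((R/2)^{1/\delta}\bigr)\leq T_{f_{1}}(r)$, and hence $T_{g_{1}\pm g_{2}}^{-1}(T_{f_{1}}(r))\geq (R/2)^{1/\delta}$. Taking $\log^{[p]}$ of both sides, dividing by $\log^{[q]}\varphi(r)$, passing to $\liminf$ as $r\to\infty$, and finally letting $\delta\to 1^{+}$ and $\varepsilon\to 0^{+}$, the perturbing factors $1/\delta$ and $1/2$ dissolve under the iterated logarithms (just as the factors $\sigma$ and $2$ do at the end of Theorem~\ref{t9.2x}), yielding $\lambda_{g_{1}\pm g_{2}}^{(p,q)}(f_{1},\varphi)\geq\Lambda$.

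For the equality claim I would argue by symmetry: assume without loss $\lambda_{g_{1}}^{(p,q)}(f_{1},\varphi)<\lambda_{g_{2}}^{(p,q)}(f_{1},\varphi)$ so $\Lambda=\lambda_{g_{1}}^{(p,q)}(f_{1},\varphi)$, and apply the inequality just proved to the identity $g_{1}=(g_{1}\pm g_{2})\mp g_{2}$. This gives $\lambda_{g_{1}}^{(p,q)}(f_{1},\varphi)\geq\min\{\lambda_{g_{1}\pm g_{2}}^{(p,q)}(f_{1},\varphi),\lambda_{g_{2}}^{(p,q)}(f_{1},\varphi)\}$, and the strict inequality $\lambda_{g_{1}}^{(p,q)}(f_{1},\varphi)<\lambda_{g_{2}}^{(p,q)}(f_{1},\varphi)$ forces the minimum on the right to equal $\lambda_{g_{1}\pm g_{2}}^{(p,q)}(f_{1},\varphi)$, delivering the reverse bound. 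The main obstacle I foresee is the asymptotic bookkeeping in the penultimate step: one must verify that $\log^{[p]}\!\bigl((R/2)^{1/\delta}\bigr)/\log^{[q]}\varphi(r)\to\Lambda-\varepsilon$ in the iterated limit, which is immediate when $p\geq 2$ because the outer logarithms absorb both $1/2$ and $1/\delta$ instantly, but in the case $p=1$ leaves a residual factor $1/\delta$ that is only extinguished by the final $\delta\to 1^{+}$. A secondary subtlety is that the re-application performed in the equality step tacitly needs Property~(A) for $g_{1}$ itself, which the statement takes for granted.
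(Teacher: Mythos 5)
Your proof follows essentially the same route as the paper's: the same test point $\exp^{[p]}\bigl[(\Lambda-\varepsilon)\log^{[q]}\varphi(r)\bigr]$, the same combination of subadditivity of $T$ with Property (A) (Lemma \ref{l9.2} with $n=9$) and Lemma \ref{l9.6} to absorb the constant $3$, and the same decomposition $g_{1}=(g_{1}\pm g_{2})\mp g_{2}$ for the equality case. Your closing remark that the equality step tacitly requires Property (A) for $g_{1}$ itself is a correct observation; the paper's own proof carries the same unstated assumption.
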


\begin{proof}
The result is obvious when $\lambda _{g_{1}\pm g_{2}}^{\left( p,q\right)
}\left( f_{1},\varphi \right) =\infty $. So we suppose that $\lambda
_{g_{1}\pm g_{2}}^{\left( p,q\right) }\left( f_{1},\varphi \right) <\infty $%
. We can clearly assume that $\lambda _{g_{k}}^{\left( p,q\right) }\left(
f_{1},\varphi \right) $ is finite for $k=1,2.$ Further let $\Psi =\min
\left\{ \lambda _{g_{1}}^{\left( p,q\right) }\left( f_{1},\varphi \right)
,\lambda _{g_{2}}^{\left( p,q\right) }\left( f_{1},\varphi \right) \right\}
. $ Now for any arbitrary $\varepsilon >0$ from the definition of $\lambda
_{g_{k}}^{\left( p,q\right) }\left( f_{1},\varphi \right) $, we have for all
sufficiently large values of $r$ that%
\begin{equation}
T_{g_{k}}\left[ \exp ^{\left[ p\right] }\left[ \left( \lambda
_{g_{k}}^{\left( p,q\right) }\left( f_{1},\varphi \right) -\varepsilon
\right) \log ^{\left[ q\right] }\varphi \left( r\right) \right] \right] \leq
T_{f_{1}}\left( r\right) \text{ \ where }k=1,2  \label{99.4x}
\end{equation}%
\begin{equation*}
i.e,~T_{g_{k}}\left[ \exp ^{\left[ p\right] }\left[ \left( \Psi -\varepsilon
\right) \log ^{\left[ q\right] }\varphi \left( r\right) \right] \right] \leq
T_{f_{1}}\left( r\right) \text{ \ where }k=1,2
\end{equation*}

\qquad Since $T_{g_{1}\pm g_{2}}\left( r\right) \leq T_{g_{1}}\left(
r\right) +T_{g_{2}}\left( r\right) +O(1)$ for all large $r,$, we obtain from
above and Lemma \ref{l9.6} for all sufficiently large values of $r$ that%
\begin{equation*}
T_{g_{1}\pm g_{2}}\left[ \exp ^{\left[ p\right] }\left[ \left( \Psi
-\varepsilon \right) \log ^{\left[ q\right] }\varphi \left( r\right) \right] %
\right] \leq 2T_{f_{1}}\left( r\right) +O(1)
\end{equation*}%
\begin{equation*}
i.e.,~T_{g_{1}\pm g_{2}}\left[ \exp ^{\left[ p\right] }\left[ \left( \Psi
-\varepsilon \right) \log ^{\left[ q\right] }\varphi \left( r\right) \right] %
\right] <3T_{f_{1}}\left( r\right) ~.
\end{equation*}

\qquad Therefore in view of Lemma \ref{l9.2} and Lemma \ref{l9.6}$,$ we
obtain from above for all sufficiently large values of $r$ and any $\sigma
>1 $ that

\begin{equation*}
\frac{1}{9}\log M_{g_{1}\pm g_{2}}\left[ \frac{\exp ^{\left[ p\right] }\left[
\left( \Psi -\varepsilon \right) \log ^{\left[ q\right] }\varphi \left(
r\right) \right] }{2}\right] <T_{f_{1}}\left( r\right)
\end{equation*}%
\begin{equation*}
i.e.,~\log M_{g_{1}\pm g_{2}}\left[ \frac{\exp ^{\left[ p\right] }\left[
\left( \Psi -\varepsilon \right) \log ^{\left[ q\right] }\varphi \left(
r\right) \right] }{2}\right] ^{\frac{1}{9}}<T_{f_{1}}\left( r\right)
\end{equation*}%
\begin{equation*}
i.e.,~\log M_{g_{1}\pm g_{2}}\left[ \left( \frac{\exp ^{\left[ p\right] }%
\left[ \left( \Psi -\varepsilon \right) \log ^{\left[ q\right] }\varphi
\left( r\right) \right] }{2}\right) ^{\frac{1}{\sigma }}\right]
<T_{f_{1}}\left( r\right)
\end{equation*}%
\begin{equation*}
i.e.,~T_{g_{1}\pm g_{2}}\left[ \left( \frac{\exp ^{\left[ p\right] }\left[
\left( \Psi -\varepsilon \right) \log ^{\left[ q\right] }\varphi \left(
r\right) \right] }{2}\right) ^{\frac{1}{\sigma }}\right] <T_{f_{1}}\left(
r\right)
\end{equation*}

\qquad As $\varepsilon >0$ is arbitrary, we get from above by letting $%
\sigma \rightarrow 1^{+}$%
\begin{equation}
\lambda _{g_{1}\pm g_{2}}^{\left( p,q\right) }\left( f_{1},\varphi \right)
=\geq \Psi =\min \left\{ \lambda _{g_{1}}^{\left( p,q\right) }\left(
f_{1},\varphi \right) ,\lambda _{g_{2}}^{\left( p,q\right) }\left(
f_{1},\varphi \right) \right\} ~.  \label{99.3}
\end{equation}

\qquad Now without loss of any generality, we may consider that $\lambda
_{g_{1}}^{\left( p,q\right) }\left( f_{1},\varphi \right) <\lambda
_{g_{2}}^{\left( p,q\right) }\left( f_{1},\varphi \right) $ and $g=g_{1}\pm
g_{2}.$ Then in view of $\left( \ref{99.3}\right) $ we get that $\lambda
_{g}^{\left( p,q\right) }\left( f_{1},\varphi \right) \geq \lambda
_{g_{1}}^{\left( p,q\right) }\left( f_{1},\varphi \right) .$ Further, $%
g_{1}=\left( g\pm g_{2}\right) $ and in this case we obtain that $\lambda
_{g_{1}}^{\left( p,q\right) }\left( f_{1},\varphi \right) $ $\geq $ $\min
\left\{ \lambda _{g}^{\left( p,q\right) }\left( f_{1},\varphi \right)
,\lambda _{g_{2}}^{\left( p,q\right) }\left( f_{1},\varphi \right) \right\}
~.$ As we assume that $\lambda _{g_{1}}^{\left( p,q\right) }\left(
f_{1},\varphi \right) <\lambda _{g_{2}}^{\left( p,q\right) }\left(
f_{1},\varphi \right) ,$ therefore we have $\lambda _{g_{1}}^{\left(
p,q\right) }\left( f_{1},\varphi \right) \geq \lambda _{g}^{\left(
p,q\right) }\left( f_{1},\varphi \right) $ and hence $\lambda _{g}^{\left(
p,q\right) }\left( f_{1},\varphi \right) =\lambda _{g_{1}}^{\left(
p,q\right) }\left( f_{1},\varphi \right) =\min \left\{ \lambda
_{g_{1}}^{\left( p,q\right) }\left( f_{1},\varphi \right) ,\lambda
_{g_{2}}^{\left( p,q\right) }\left( f_{1},\varphi \right) \right\} .$
Therefore, $\lambda _{g_{1}\pm g_{2}}^{\left( p,q\right) }\left(
f_{1},\varphi \right) =\lambda _{g_{i}}^{\left( p,q\right) }\left(
f_{1},\varphi \right) \mid i=1,2$ provided $\lambda _{g_{1}}^{\left(
p,q\right) }\left( f_{1},\varphi \right) \neq \lambda _{g_{2}}^{\left(
p,q\right) }\left( f_{1},\varphi \right) .$ Thus the theorem follows.
\end{proof}

\begin{theorem}
\label{t9.4} Let $f_{1}$ be a meromorphic function and $g_{1}$, $g_{2}$ be
any two entire functions such that $f_{1}$ is of regular relative $\left(
p,q\right) $-$\varphi $ growth with respect to at least any one of $g_{1}$
or $g_{2}.$ If $g_{1}\pm g_{2}$ has the Property (A), then%
\begin{equation*}
\rho _{g_{1}\pm g_{2}}^{\left( p,q\right) }\left( f_{1},\varphi \right) \geq
\min \left\{ \rho _{g_{1}}^{\left( p,q\right) }\left( f_{1},\varphi \right)
,\rho _{g_{2}}^{\left( p,q\right) }\left( f_{1},\varphi \right) \right\} ~.
\end{equation*}%
The equality holds when $\rho _{g_{i}}^{\left( p,q\right) }\left(
f_{1},\varphi \right) <\rho _{g_{j}}^{\left( p,q\right) }\left(
f_{1},\varphi \right) $ with at least $f_{1}$ is of regular relative $\left(
p,q\right) $-$\varphi $ growth with respect to $g_{j}$ where $i,j=1,2$ and $%
i\neq j.$
\end{theorem}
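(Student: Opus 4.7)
The plan is to mirror the structure of the proof of Theorem~\ref{t9.3}, adjusting for the fact that $\rho_g^{(p,q)}(f,\varphi)$ is defined through a $\limsup$. Consequently the characterising inequality
$T_{g_k}[\exp^{[p]}[(\rho_{g_k}^{(p,q)}(f_1,\varphi)-\varepsilon)\log^{[q]}\varphi(r)]]\leq T_{f_1}(r)$
a priori holds only on a sequence $r\to\infty$, not for all sufficiently large $r$. The regularity hypothesis repairs this asymmetrically: if $f_1$ is of regular relative $(p,q)$-$\varphi$ growth with respect to $g_j$, then $\rho_{g_j}^{(p,q)}(f_1,\varphi)=\lambda_{g_j}^{(p,q)}(f_1,\varphi)$, and the inequality for $g_j$ becomes valid for every sufficiently large $r$.

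Concretely, after disposing of the trivial case $\rho_{g_1\pm g_2}^{(p,q)}(f_1,\varphi)=\infty$ and assuming both $\rho_{g_k}^{(p,q)}(f_1,\varphi)$ finite, set $\Omega=\min\{\rho_{g_1}^{(p,q)}(f_1,\varphi),\rho_{g_2}^{(p,q)}(f_1,\varphi)\}$ and, without loss of generality, suppose $f_1$ is of regular growth with respect to $g_2$. Then, for arbitrary $\varepsilon>0$, $T_{g_2}[\exp^{[p]}[(\Omega-\varepsilon)\log^{[q]}\varphi(r)]]\leq T_{f_1}(r)$ for all large $r$, and $T_{g_1}[\exp^{[p]}[(\Omega-\varepsilon)\log^{[q]}\varphi(r)]]\leq T_{f_1}(r)$ along a sequence $r\to\infty$. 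On that sequence both bounds fire simultaneously, and combining them with $T_{g_1\pm g_2}(r)\leq T_{g_1}(r)+T_{g_2}(r)+O(1)$, Lemma~\ref{l9.6}, and Lemma~\ref{l9.2} (applied with $n=9$ and $\delta=\sigma>1$, using Property (A) of $g_1\pm g_2$) exactly as in Theorem~\ref{t9.3}, one reaches
$T_{g_1\pm g_2}\bigl[\bigl(\tfrac{1}{2}\exp^{[p]}[(\Omega-\varepsilon)\log^{[q]}\varphi(r)]\bigr)^{1/\sigma}\bigr]<T_{f_1}(r)$
along that sequence. Applying $T_{g_1\pm g_2}^{-1}$, then $\log^{[p]}$, dividing by $\log^{[q]}\varphi(r)$, extracting the $\limsup$ along the sequence, and finally sending $\sigma\to 1^{+}$ and $\varepsilon\to 0$ yields $\rho_{g_1\pm g_2}^{(p,q)}(f_1,\varphi)\geq\Omega$.

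For the equality part, assume without loss of generality $\rho_{g_i}^{(p,q)}(f_1,\varphi)<\rho_{g_j}^{(p,q)}(f_1,\varphi)$ with $f_1$ of regular relative $(p,q)$-$\varphi$ growth with respect to $g_j$, and put $g=g_1\pm g_2$. The inequality just established gives $\rho_g^{(p,q)}(f_1,\varphi)\geq\rho_{g_i}^{(p,q)}(f_1,\varphi)$. Writing $g_i=\pm(g\mp g_j)$ and reapplying the same inequality with $g$ and $g_j$ in place of $g_1,g_2$ (the required regularity of $f_1$ still being supplied by $g_j$) produces $\rho_{g_i}^{(p,q)}(f_1,\varphi)\geq\min\{\rho_g^{(p,q)}(f_1,\varphi),\rho_{g_j}^{(p,q)}(f_1,\varphi)\}$. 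Since $\rho_{g_i}^{(p,q)}(f_1,\varphi)<\rho_{g_j}^{(p,q)}(f_1,\varphi)$, the minimum must be $\rho_g^{(p,q)}(f_1,\varphi)$, whence $\rho_g^{(p,q)}(f_1,\varphi)\leq\rho_{g_i}^{(p,q)}(f_1,\varphi)$ and the two-sided bound forces equality.

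The main obstacle is the $\limsup$ bookkeeping: unlike in Theorem~\ref{t9.3}, where both $g_1$- and $g_2$-bounds are valid for all large $r$ and a $\liminf$ can be read off at the end, here only one of the two bounds is available everywhere (via regularity) while the other holds only on a subsequence, and one must be careful that the final conclusion is therefore expressed as a $\limsup$ along that subsequence rather than as a true $\liminf$. Keeping the sign of $\pm$, the sequence $r\to\infty$, and the exponent $\sigma$ aligned through the Lemma~\ref{l9.2} manipulation is the step most likely to introduce errors.
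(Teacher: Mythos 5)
Your proposal is correct and is exactly the adaptation of the proof of Theorem~\ref{t9.3} that the paper intends when it omits the proof of Theorem~\ref{t9.4}; in particular, you correctly identify that the $\limsup$ definition of $\rho$ only yields the lower bound $T_{g_k}[\exp^{[p]}[(\Omega-\varepsilon)\log^{[q]}\varphi(r)]]\leq T_{f_1}(r)$ along a sequence, and that the regularity hypothesis is precisely what upgrades one of the two bounds to hold for all large $r$ so both can be combined on that sequence. The equality argument via $g_{i}=\pm(g\mp g_{j})$ mirrors the paper's own device in Theorem~\ref{t9.3} (including its tacit assumption that the recombined function again satisfies Property (A)), so nothing further is needed.
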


\qquad We omit the proof of Theorem \ref{t9.4} as it can easily be carried
out in the line of Theorem \ref{t9.3}.

\begin{theorem}
\label{t9.5} Let $f_{1},f_{2}$ be any two meromorphic functions and $g_{1}$, 
$g_{2}$ be any two entire functions. Also let $g_{1}\pm g_{2}$ has the
Property (A). Then%
\begin{eqnarray*}
&&\rho _{g_{1}\pm g_{2}}^{\left( p,q\right) }\left( f_{1}\pm f_{2},\varphi
\right) \\
&\leq &\max \left[ \min \left\{ \rho _{g_{1}}^{\left( p,q\right) }\left(
f_{1},\varphi \right) ,\rho _{g_{2}}^{\left( p,q\right) }\left(
f_{1},\varphi \right) \right\} ,\min \left\{ \rho _{g_{1}}^{\left(
p,q\right) }\left( f_{2},\varphi \right) ,\rho _{g_{2}}^{\left( p,q\right)
}\left( f_{2},\varphi \right) \right\} \right]
\end{eqnarray*}%
when the following two conditions holds:\newline
$\left( i\right) $ $\rho _{g_{i}}^{\left( p,q\right) }\left( f_{1},\varphi
\right) <\rho _{g_{j}}^{\left( p,q\right) }\left( f_{1},\varphi \right) $
with at least $f_{1}$ is of regular relative $\left( p,q\right) $-$\varphi $
growth with respect to $g_{j}$ for $i$ $=$ $1,$ $2$, $j$ $=$ $1,2$ and $%
i\neq j$; and\newline
$\left( ii\right) $ $\rho _{g_{i}}^{\left( p,q\right) }\left( f_{2},\varphi
\right) <\rho _{g_{j}}^{\left( p,q\right) }\left( f_{2},\varphi \right) $
with at least $f_{2}$ is of regular relative $\left( p,q\right) $-$\varphi $
growth with respect to $g_{j}$ for $i$ $=$ $1,$ $2$, $j$ $=$ $1,2$ and $%
i\neq j$.\newline
The equality holds when $\rho _{g_{1}}^{\left( p,q\right) }\left(
f_{i},\varphi \right) <\rho _{g_{1}}^{\left( p,q\right) }\left(
f_{j},\varphi \right) $ and $\rho _{g_{2}}^{\left( p,q\right) }\left(
f_{i},\varphi \right) <\rho _{g_{2}}^{\left( p,q\right) }\left(
f_{j},\varphi \right) $ holds simultaneously for $i,1,2;$ $j=1,2\ $and $%
i\neq j.$
\end{theorem}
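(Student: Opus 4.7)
The plan is to decouple the four-function quantity $\rho _{g_{1}\pm g_{2}}^{(p,q)}(f_{1}\pm f_{2},\varphi )$ by applying Theorems \ref{t9.1x} and \ref{t9.4} in sequence, using each prior theorem to separate variation in the numerator meromorphic functions from variation in the denominator entire functions. Since $g_{1}\pm g_{2}$ is itself entire and, by hypothesis, has Property (A), it is eligible to play the role of the fixed entire function in Theorem \ref{t9.1x}, while $f_{1}$ and $f_{2}$ are each eligible (via conditions (i) and (ii)) to play the role of the fixed meromorphic function in Theorem \ref{t9.4}.

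Concretely, under conditions (i) and (ii) I would first invoke the equality form of Theorem \ref{t9.4} twice, once for each of $f_{1}$ and $f_{2}$, since these two conditions are precisely the strict-inequality-plus-regularity requirements that trigger equality in Theorem \ref{t9.4}. This yields
\begin{equation*}
\rho _{g_{1}\pm g_{2}}^{(p,q)}(f_{k},\varphi )=\min \{\rho _{g_{1}}^{(p,q)}(f_{k},\varphi ),\,\rho _{g_{2}}^{(p,q)}(f_{k},\varphi )\},\qquad k=1,2.
\end{equation*}
Next, because $g_{1}\pm g_{2}$ has Property (A), I would apply Theorem \ref{t9.1x} with $g_{1}\pm g_{2}$ in the role of the fixed entire function to obtain
\begin{equation*}
\rho _{g_{1}\pm g_{2}}^{(p,q)}(f_{1}\pm f_{2},\varphi )\leq \max \{\rho _{g_{1}\pm g_{2}}^{(p,q)}(f_{1},\varphi ),\,\rho _{g_{1}\pm g_{2}}^{(p,q)}(f_{2},\varphi )\},
\end{equation*}
and substituting the previous identity produces exactly the upper bound claimed in the theorem.

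For the equality assertion, the added strict-inequality hypotheses $\rho _{g_{k}}^{(p,q)}(f_{i},\varphi )<\rho _{g_{k}}^{(p,q)}(f_{j},\varphi )$ for both $k=1,2$ imply $\min \{\rho _{g_{1}}^{(p,q)}(f_{i},\varphi ),\rho _{g_{2}}^{(p,q)}(f_{i},\varphi )\}<\min \{\rho _{g_{1}}^{(p,q)}(f_{j},\varphi ),\rho _{g_{2}}^{(p,q)}(f_{j},\varphi )\}$, so the right-hand side of the theorem simplifies to the single term $\min \{\rho _{g_{1}}^{(p,q)}(f_{j},\varphi ),\rho _{g_{2}}^{(p,q)}(f_{j},\varphi )\}$. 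I would then apply the equality form of Theorem \ref{t9.1x} with each $g_{k}$ to get $\rho _{g_{k}}^{(p,q)}(f_{1}\pm f_{2},\varphi )=\rho _{g_{k}}^{(p,q)}(f_{j},\varphi )$, and finally apply the lower-bound half of Theorem \ref{t9.4} to the pair $(f_{1}\pm f_{2};\,g_{1},g_{2})$, which delivers the matching lower bound $\rho _{g_{1}\pm g_{2}}^{(p,q)}(f_{1}\pm f_{2},\varphi )\geq \min \{\rho _{g_{1}}^{(p,q)}(f_{j},\varphi ),\rho _{g_{2}}^{(p,q)}(f_{j},\varphi )\}$.

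The main obstacle is not the algebraic composition, which is essentially a one-line chain once both prior theorems are in place, but rather the bookkeeping of Property (A) and regularity hypotheses across the repeated invocations. Theorem \ref{t9.1x} nominally requires Property (A) of the entire function in its denominator slot, which in the equality step forces the individual $g_{1}$ and $g_{2}$ to be tacitly assumed to satisfy Property (A) (not just their combination $g_{1}\pm g_{2}$); similarly, the regularity of $f_{1}\pm f_{2}$ with respect to at least one of $g_{1},g_{2}$, needed to invoke Theorem \ref{t9.4}, has to be inferred from conditions (i) and (ii). Finally, one should verify that the choice of $\pm$ sign is applied consistently in both the $f$-argument and the $g$-argument throughout each composition.
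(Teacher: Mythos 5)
Your argument for the main inequality is exactly the paper's: use conditions $(i)$ and $(ii)$ to invoke the equality clause of Theorem \ref{t9.4} for each $f_{k}$, giving $\rho _{g_{1}\pm g_{2}}^{\left( p,q\right) }\left( f_{k},\varphi \right) =\min \left\{ \rho _{g_{1}}^{\left( p,q\right) }\left( f_{k},\varphi \right) ,\rho _{g_{2}}^{\left( p,q\right) }\left( f_{k},\varphi \right) \right\} $, and then apply the inequality half of Theorem \ref{t9.1x} with the entire function $g_{1}\pm g_{2}$ (which has Property (A) by hypothesis). That part is fine.

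Your equality argument, however, takes a route that cannot be completed from the stated hypotheses. You propose to apply the equality form of Theorem \ref{t9.1x} with each individual $g_{k}$, which requires $g_{1}$ and $g_{2}$ separately to satisfy Property (A) --- only $g_{1}\pm g_{2}$ is assumed to --- and then to apply the lower-bound half of Theorem \ref{t9.4} to the pair $\left( f_{1}\pm f_{2};g_{1},g_{2}\right) $, which requires $f_{1}\pm f_{2}$ to be of regular relative $\left( p,q\right) $-$\varphi $ growth with respect to at least one of $g_{1},g_{2}$. You flag both issues yourself, but neither can be ``inferred'' from conditions $(i)$ and $(ii)$: regularity of $f_{1}$ and $f_{2}$ individually does not yield regularity of $f_{1}\pm f_{2}$. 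The paper avoids all of this by never leaving the function $g_{1}\pm g_{2}$: the simultaneous strict inequalities $\rho _{g_{1}}^{\left( p,q\right) }\left( f_{i},\varphi \right) <\rho _{g_{1}}^{\left( p,q\right) }\left( f_{j},\varphi \right) $ and $\rho _{g_{2}}^{\left( p,q\right) }\left( f_{i},\varphi \right) <\rho _{g_{2}}^{\left( p,q\right) }\left( f_{j},\varphi \right) $ force the two minima to differ, hence by the identity already established under $(i)$ and $(ii)$ one gets $\rho _{g_{1}\pm g_{2}}^{\left( p,q\right) }\left( f_{1},\varphi \right) \neq \rho _{g_{1}\pm g_{2}}^{\left( p,q\right) }\left( f_{2},\varphi \right) $, which is precisely the equality condition of Theorem \ref{t9.1x} applied with $g_{1}\pm g_{2}$ in the fixed slot --- no additional Property (A) or regularity assumptions needed. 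You should replace your equality step with this observation.
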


\begin{proof}
Let the conditions $\left( i\right) $ and $\left( ii\right) $ of the theorem
hold. Therefore in view of Theorem \ref{t9.1x} and Theorem \ref{t9.4} we get
that%
\begin{align}
& \max \left[ \min \left\{ \rho _{g_{1}}^{\left( p,q\right) }\left(
f_{1},\varphi \right) ,\rho _{g_{2}}^{\left( p,q\right) }\left(
f_{1},\varphi \right) \right\} ,\min \left\{ \rho _{g_{1}}^{\left(
p,q\right) }\left( f_{2},\varphi \right) ,\rho _{g_{2}}^{\left( p,q\right)
}\left( f_{2},\varphi \right) \right\} \right]  \notag \\
& =\max \left[ \rho _{g_{1}\pm g_{2}}^{\left( p,q\right) }\left(
f_{1},\varphi \right) ,\rho _{g_{1}\pm g_{2}}^{\left( p,q\right) }\left(
f_{2},\varphi \right) \right]  \notag \\
& \geq \rho _{g_{1}\pm g_{2}}^{\left( p,q\right) }\left( f_{1}\pm
f_{2},\varphi \right) ~.  \label{9.xyz}
\end{align}

\qquad Since $\rho _{g_{1}}^{\left( p,q\right) }\left( f_{i},\varphi \right)
<\rho _{g_{1}}^{\left( p,q\right) }\left( f_{j},\varphi \right) $ and $\rho
_{g_{2}}^{\left( p,q\right) }\left( f_{i},\varphi \right) <\rho
_{g_{2}}^{\left( p,q\right) }\left( f_{j},\varphi \right) $ hold
simultaneously for $i=1,2;$ $j=1,2\ $and $i\neq j,$ we obtain that%
\begin{equation*}
\text{either }\min \left\{ \rho _{g_{1}}^{\left( p,q\right) }\left(
f_{1},\varphi \right) ,\rho _{g_{2}}^{\left( p,q\right) }\left(
f_{1},\varphi \right) \right\} >\min \left\{ \rho _{g_{1}}^{\left(
p,q\right) }\left( f_{2},\varphi \right) ,\rho _{g_{2}}^{\left( p,q\right)
}\left( f_{2},\varphi \right) \right\} \text{ or}
\end{equation*}%
\begin{equation*}
\min \left\{ \rho _{g_{1}}^{\left( p,q\right) }\left( f_{2},\varphi \right)
,\rho _{g_{2}}^{\left( p,q\right) }\left( f_{2},\varphi \right) \right\}
>\min \left\{ \rho _{g_{1}}^{\left( p,q\right) }\left( f_{1},\varphi \right)
,\rho _{g_{2}}^{\left( p,q\right) }\left( f_{1},\varphi \right) \right\} 
\text{ holds.}
\end{equation*}

\qquad Now in view of the conditions $\left( i\right) $ and $\left(
ii\right) $ of the theorem, it follows from above that%
\begin{equation*}
\text{either }\rho _{g_{1}\pm g_{2}}^{\left( p,q\right) }\left(
f_{1},\varphi \right) >\rho _{g_{1}\pm g_{2}}^{\left( p,q\right) }\left(
f_{2},\varphi \right) \text{ or }\rho _{g_{1}\pm g_{2}}^{\left( p,q\right)
}\left( f_{2},\varphi \right) >\rho _{g_{1}\pm g_{2}}^{\left( p,q\right)
}\left( f_{1},\varphi \right)
\end{equation*}%
which is the condition for holding equality in $\left( \ref{9.xyz}\right) $.

\qquad Hence the theorem follows.
\end{proof}

\begin{theorem}
\label{t9.6} Let $f_{1},f_{2}$ be any two meromorphic functions and $g_{1}$, 
$g_{2}$ be any two entire functions. Also let $g_{1},g_{2}$ and $g_{1}\pm
g_{2}$ satisfy the Property (A). Then%
\begin{align*}
& \lambda _{g_{1}\pm g_{2}}^{\left( p,q\right) }\left( f_{1}\pm
f_{2},\varphi \right) \\
& \geq \min \left[ \max \left\{ \lambda _{g_{1}}^{\left( p,q\right) }\left(
f_{1},\varphi \right) ,\lambda _{g_{1}}^{\left( p,q\right) }\left(
f_{2},\varphi \right) \right\} ,\max \left\{ \lambda _{g_{2}}^{\left(
p,q\right) }\left( f_{1},\varphi \right) ,\lambda _{g_{2}}^{\left(
p,q\right) }\left( f_{2},\varphi \right) \right\} \right]
\end{align*}%
when the following two conditions holds:\newline
$\left( i\right) $ $\lambda _{g_{1}}^{\left( p,q\right) }\left(
f_{i},\varphi \right) >\lambda _{g_{1}}^{\left( p,q\right) }\left(
f_{j},\varphi \right) $ with at least $f_{j}$ is of regular relative $\left(
p,q\right) $-$\varphi $ growth with respect to $g_{1}$ for $i$ $=$ $1,$ $2$, 
$j$ $=$ $1,2$ and $i\neq j$; and\newline
$\left( ii\right) $ $\lambda _{g_{2}}^{\left( p,q\right) }\left(
f_{i},\varphi \right) >\lambda _{g_{2}}^{\left( p,q\right) }\left(
f_{j},\varphi \right) $ with at least $f_{j}$ is of regular relative $\left(
p,q\right) $-$\varphi $ growth with respect to $g_{2}$ for $i$ $=$ $1,$ $2$, 
$j$ $=$ $1,2$ and $i\neq j$.\newline
The equality holds when $\lambda _{g_{i}}^{\left( p,q\right) }\left(
f_{1},\varphi \right) <\lambda _{g_{j}}^{\left( p,q\right) }\left(
f_{1},\varphi \right) $ and $\lambda _{g_{i}}^{\left( p,q\right) }\left(
f_{2},\varphi \right) <\lambda _{g_{j}}^{\left( p,q\right) }\left(
f_{2},\varphi \right) $ hold simultaneously for $i=1,2;$ $j=1,2\ $and $i\neq
j.$
\end{theorem}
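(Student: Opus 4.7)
The plan is to run the argument of Theorem \ref{t9.5} in the ``dual'' direction: use Theorem \ref{t9.2x} and Theorem \ref{t9.3} in place of Theorem \ref{t9.1x} and Theorem \ref{t9.4}, and swap the roles of $\max$/$\min$ and $\leq$/$\geq$ throughout. The key observation driving the whole argument is that conditions $(i)$ and $(ii)$ stated in the theorem match, almost word for word, the hypotheses of the equality clause of Theorem \ref{t9.2x} applied with base entire function $g_{1}$ and $g_{2}$ respectively.

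For the inequality, I would first apply Theorem \ref{t9.2x} twice, once with base $g_{1}$ and once with base $g_{2}$ (both are legitimate since $g_{1}$ and $g_{2}$ satisfy Property (A) by hypothesis). Conditions $(i)$ and $(ii)$ supply exactly the regular growth and strict inequality assumptions needed for the equality clauses, so that
\begin{equation*}
\lambda _{g_{k}}^{\left( p,q\right) }\left( f_{1}\pm f_{2},\varphi \right) =\max \left\{ \lambda _{g_{k}}^{\left( p,q\right) }\left( f_{1},\varphi \right) ,\lambda _{g_{k}}^{\left( p,q\right) }\left( f_{2},\varphi \right) \right\} \qquad (k=1,2).
\end{equation*}
In particular, both relative $\left( p,q\right) $-$\varphi $ lower orders on the left exist, which is the standing hypothesis needed to apply Theorem \ref{t9.3} to the meromorphic function $f_{1}\pm f_{2}$ with the two entire functions $g_{1},g_{2}$. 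Using Property (A) of $g_{1}\pm g_{2}$, that theorem gives
\begin{equation*}
\lambda _{g_{1}\pm g_{2}}^{\left( p,q\right) }\left( f_{1}\pm f_{2},\varphi \right) \geq \min \left\{ \lambda _{g_{1}}^{\left( p,q\right) }\left( f_{1}\pm f_{2},\varphi \right) ,\lambda _{g_{2}}^{\left( p,q\right) }\left( f_{1}\pm f_{2},\varphi \right) \right\} ,
\end{equation*}
and substituting the two equalities displayed above yields the claimed lower bound.

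For the equality assertion, I would assume in addition the simultaneous strict inequalities $\lambda _{g_{i}}^{\left( p,q\right) }\left( f_{1},\varphi \right) <\lambda _{g_{j}}^{\left( p,q\right) }\left( f_{1},\varphi \right) $ and $\lambda _{g_{i}}^{\left( p,q\right) }\left( f_{2},\varphi \right) <\lambda _{g_{j}}^{\left( p,q\right) }\left( f_{2},\varphi \right) $ for the same pair $i\neq j$. From the elementary fact that $a_{1}<b_{1}$ and $a_{2}<b_{2}$ imply $\max (a_{1},a_{2})<\max (b_{1},b_{2})$, together with the two max-equalities above, I obtain $\lambda _{g_{i}}^{\left( p,q\right) }\left( f_{1}\pm f_{2},\varphi \right) <\lambda _{g_{j}}^{\left( p,q\right) }\left( f_{1}\pm f_{2},\varphi \right) $. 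This strict inequality activates the equality clause of Theorem \ref{t9.3}, yielding $\lambda _{g_{1}\pm g_{2}}^{\left( p,q\right) }\left( f_{1}\pm f_{2},\varphi \right) =\max \{\lambda _{g_{i}}^{\left( p,q\right) }\left( f_{1},\varphi \right) ,\lambda _{g_{i}}^{\left( p,q\right) }\left( f_{2},\varphi \right) \}$, which is exactly the min-of-max expression in the statement.

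The main difficulty is really just bookkeeping: one has to verify that conditions $(i)$ and $(ii)$ match the equality hypothesis of Theorem \ref{t9.2x} for \emph{both} choices of base entire function, and confirm that the extra simultaneity hypothesis in the equality clause propagates through the $\max $ operation to give a strict inequality between $\lambda _{g_{1}}^{\left( p,q\right) }\left( f_{1}\pm f_{2},\varphi \right) $ and $\lambda _{g_{2}}^{\left( p,q\right) }\left( f_{1}\pm f_{2},\varphi \right) $ strong enough to invoke the equality clause of Theorem \ref{t9.3}. No new growth estimates on $M_{g}(r)$ or $T_{g}(r)$ are needed beyond those already packaged in Lemma \ref{l9.2} and Lemma \ref{l9.6} via the earlier theorems.
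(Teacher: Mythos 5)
Your proposal is correct and follows essentially the same route as the paper's own proof: combine the equality clauses of Theorem \ref{t9.2x} (applied with base $g_{1}$ and $g_{2}$, which is exactly what conditions $(i)$ and $(ii)$ license) with the inequality and equality clauses of Theorem \ref{t9.3} applied to $f_{1}\pm f_{2}$. Your write-up is in fact slightly more careful than the paper's, since you make explicit the monotonicity of $\max$ under simultaneous strict inequalities and you state the final inequality of the chain in the correct direction, whereas the displayed chain in the paper's proof ends with the inequality sign reversed (an evident typo).
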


\begin{proof}
Suppose that the conditions $\left( i\right) $ and $\left( ii\right) $ of
the theorem holds. Therefore in view of Theorem \ref{t9.2x} and Theorem \ref%
{t9.3}, we obtain that%
\begin{align}
& \min \left[ \max \left\{ \lambda _{g_{1}}^{\left( p,q\right) }\left(
f_{1},\varphi \right) ,\lambda _{g_{1}}^{\left( p,q\right) }\left(
f_{2},\varphi \right) \right\} ,\max \left\{ \lambda _{g_{2}}^{\left(
p,q\right) }\left( f_{1},\varphi \right) ,\lambda _{g_{2}}^{\left(
p,q\right) }\left( f_{2},\varphi \right) \right\} \right]  \notag \\
& =\min \left[ \lambda _{g_{1}}^{\left( p,q\right) }\left( f_{1}\pm
f_{2},\varphi \right) ,\lambda _{g_{2}}^{\left( p,q\right) }\left( f_{1}\pm
f_{2},\varphi \right) \right]  \notag \\
& \geq \lambda _{g_{1}\pm g_{2}}^{\left( p,q\right) }\left( f_{1}\pm
f_{2},\varphi \right) ~.  \label{9.xya}
\end{align}

\qquad Since $\lambda _{g_{i}}^{\left( p,q\right) }\left( f_{1},\varphi
\right) <\lambda _{g_{j}}^{\left( p,q\right) }\left( f_{1},\varphi \right) $
and $\lambda _{g_{i}}^{\left( p,q\right) }\left( f_{2},\varphi \right)
<\lambda _{g_{j}}^{\left( p,q\right) }\left( f_{2},\varphi \right) $ holds
simultaneously for $i=1,2;$ $j=1,2\ $and $i\neq j$, we get that%
\begin{equation*}
\text{either }\max \left\{ \lambda _{g_{1}}^{\left( p,q\right) }\left(
f_{1},\varphi \right) ,\lambda _{g_{1}}^{\left( p,q\right) }\left(
f_{2},\varphi \right) \right\} <\max \left\{ \lambda _{g_{2}}^{\left(
p,q\right) }\left( f_{1},\varphi \right) ,\lambda _{g_{2}}^{\left(
p,q\right) }\left( f_{2},\varphi \right) \right\} \text{ or}
\end{equation*}%
\begin{equation*}
\max \left\{ \lambda _{g_{2}}^{\left( p,q\right) }\left( f_{1},\varphi
\right) ,\lambda _{g_{2}}^{\left( p,q\right) }\left( f_{2},\varphi \right)
\right\} <\max \left\{ \lambda _{g_{1}}^{\left( p,q\right) }\left(
f_{1},\varphi \right) ,\lambda _{g_{1}}^{\left( p,q\right) }\left(
f_{2},\varphi \right) \right\} \text{ holds.}
\end{equation*}

\qquad Since condition $\left( i\right) $ and $\left( ii\right) $ of the
theorem holds, it follows from above that 
\begin{equation*}
\text{either }\lambda _{g_{1}}^{\left( p,q\right) }\left( f_{1}\pm
f_{2},\varphi \right) <\lambda _{g_{2}}^{\left( p,q\right) }\left( f_{1}\pm
f_{2},\varphi \right) \text{ or }\lambda _{g_{2}}^{\left( p,q\right) }\left(
f_{1}\pm f_{2},\varphi \right) <\lambda _{g_{1}}^{\left( p,q\right) }\left(
f_{1}\pm f_{2},\varphi \right)
\end{equation*}%
which is the condition for holding equality in $\left( \ref{9.xya}\right) $.

\qquad Hence the theorem follows.
\end{proof}

\begin{theorem}
\label{t9.8} Let $f_{1}$, $f_{2}$ be any two meromorphic functions and $%
g_{1} $ be any entire function such that at least $f_{1}$ or $f_{2}$ is of
regular relative $\left( p,q\right) $-$\varphi $ growth with respect to $%
g_{1}$. Also let $g_{1}$ satisfy the Property (A). Then%
\begin{equation*}
\lambda _{g_{1}}^{\left( p,q\right) }\left( f_{1}\cdot f_{2},\varphi \right)
\leq \max \left\{ \lambda _{g_{1}}^{\left( p,q\right) }\left( f_{1},\varphi
\right) ,\lambda _{g_{1}}^{\left( p,q\right) }\left( f_{2},\varphi \right)
\right\} ~.
\end{equation*}%
The equality holds when $\lambda _{g_{1}}^{\left( p,q\right) }\left(
f_{i},\varphi \right) >\lambda _{g_{1}}^{\left( p,q\right) }\left(
f_{j},\varphi \right) $ with at least $f_{j}$ is of regular relative $\left(
p,q\right) $-$\varphi $ growth with respect to $g_{1}$ where $i,j=1,2$ and $%
i\neq j$.
\end{theorem}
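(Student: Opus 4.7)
The plan is to mimic, mutatis mutandis, the proof of Theorem \ref{t9.2x}, replacing the additive Nevanlinna bound $T_{f_{1}\pm f_{2}}(r)\le T_{f_{1}}(r)+T_{f_{2}}(r)+O(1)$ by its multiplicative counterpart $T_{f_{1}\cdot f_{2}}(r)\le T_{f_{1}}(r)+T_{f_{2}}(r)+O(1)$, which is a standard consequence of the subadditivity of the proximity and counting functions. The rest of the machinery transfers verbatim: one bounds each $T_{f_{k}}(r)$ in terms of $T_{g_{1}}$ evaluated at $\exp^{[p]}[(\Delta+\varepsilon)\log^{[q]}\varphi(r)]$, converts between $T_{g_{1}}$ and $\log M_{g_{1}}$ by Lemma \ref{l9.6}, and absorbs the resulting multiplicative constants into the argument via Property (A) through Lemma \ref{l9.2}.

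In detail, I may assume $\lambda _{g_{1}}^{(p,q)}(f_{1}\cdot f_{2},\varphi )>0$, that each $\lambda _{g_{1}}^{(p,q)}(f_{k},\varphi )$ is finite, and (say) that $f_{2}$ is of regular relative growth, so that $\rho _{g_{1}}^{(p,q)}(f_{2},\varphi )=\lambda _{g_{1}}^{(p,q)}(f_{2},\varphi )$. Setting $\Delta =\max\{\lambda _{g_{1}}^{(p,q)}(f_{1},\varphi ),\lambda _{g_{1}}^{(p,q)}(f_{2},\varphi )\}$, the definitions yield inequality \eqref{50.1} on a sequence $r\to \infty$ and inequality \eqref{9.1c} for all large $r$. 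Adding them, invoking the multiplicative Nevanlinna inequality, and passing through Lemma \ref{l9.6} as in \eqref{9.2} gives $T_{f_{1}\cdot f_{2}}(r)\le 3\log M_{g_{1}}[\exp^{[p]}[(\Delta +\varepsilon )\log^{[q]}\varphi (r)]]$ on a sequence. Applying Lemma \ref{l9.2} with $n=9$, then Lemma \ref{l9.6} again, leads to $T_{f_{1}\cdot f_{2}}(r)\le T_{g_{1}}[2[\exp^{[p]}[(\Delta +\varepsilon )\log^{[q]}\varphi (r)]]^{\sigma }]$; letting $\sigma \to 1^{+}$ and $\varepsilon \to 0^{+}$ produces the desired upper bound. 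The cases where $f_{1}$ is regular, or both are regular, are handled in exactly the same way.

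For the equality statement, suppose $\lambda _{g_{1}}^{(p,q)}(f_{1},\varphi )<\lambda _{g_{1}}^{(p,q)}(f_{2},\varphi )$ with $f_{1}$ of regular growth, and set $F=f_{1}\cdot f_{2}$. The inequality already proved gives $\lambda _{g_{1}}^{(p,q)}(F,\varphi )\le \lambda _{g_{1}}^{(p,q)}(f_{2},\varphi )$. For the reverse direction, I would write $f_{2}=F\cdot (1/f_{1})$ and exploit the classical identity $T_{1/f_{1}}(r)=T_{f_{1}}(r)+O(1)$, which ensures that $\lambda _{g_{1}}^{(p,q)}(1/f_{1},\varphi )=\lambda _{g_{1}}^{(p,q)}(f_{1},\varphi )$ and that $1/f_{1}$ is still of regular relative $(p,q)$-$\varphi $ growth with respect to $g_{1}$. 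Applying the inequality a second time to the product $F\cdot (1/f_{1})$ then yields $\lambda _{g_{1}}^{(p,q)}(f_{2},\varphi )\le \max\{\lambda _{g_{1}}^{(p,q)}(F,\varphi ),\lambda _{g_{1}}^{(p,q)}(f_{1},\varphi )\}$, and the strict inequality assumption forces $\lambda _{g_{1}}^{(p,q)}(F,\varphi )\ge \lambda _{g_{1}}^{(p,q)}(f_{2},\varphi )$, closing the loop.

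The principal obstacle, which is rather mild, is verifying that passing to $1/f_{1}$ in the equality argument does not disturb either the value of the relative lower order or the regular-growth hypothesis; this reduces entirely to the standard identity $T_{1/f}(r)=T_{f}(r)+O(1)$ for meromorphic $f$, after which the argument is a direct transcription of the additive case in Theorem \ref{t9.2x}. No new analytic input beyond the product bound for $T$ and the reciprocal identity is required.
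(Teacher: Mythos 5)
Your proposal is correct and follows essentially the same route as the paper: the upper bound is obtained by feeding $T_{f_{1}\cdot f_{2}}(r)\leq T_{f_{1}}(r)+T_{f_{2}}(r)$ into the machinery of Theorem \ref{t9.2x}, and the equality is forced by writing $f_{2}=(f_{1}\cdot f_{2})\cdot\frac{1}{f_{1}}$ together with $T_{1/f_{1}}(r)=T_{f_{1}}(r)+O(1)$ and reapplying the inequality. Your explicit check that passing to $1/f_{1}$ preserves both the relative $\left(p,q\right)$-$\varphi$ lower order and the regular-growth hypothesis is a point the paper leaves implicit, but it is the same argument.
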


\begin{proof}
Since $T_{f_{1}\cdot f_{2}}\left( r\right) \leq T_{f_{1}}\left( r\right)
+T_{f_{2}}\left( r\right) $ for all large $r,$ therefore applying the same
procedure as adopted in Theorem \ref{t9.2x} we get that%
\begin{equation*}
\lambda _{g_{1}}^{\left( p,q\right) }\left( f_{1}\cdot f_{2},\varphi \right)
\leq \max \left\{ \lambda _{g_{1}}^{\left( p,q\right) }\left( f_{1},\varphi
\right) ,\lambda _{g_{1}}^{\left( p,q\right) }\left( f_{2},\varphi \right)
\right\} ~.
\end{equation*}

Now without loss of any generality, let $\lambda _{g_{1}}^{\left( p,q\right)
}\left( f_{1},\varphi \right) <\lambda _{g_{1}}^{\left( p,q\right) }\left(
f_{2},\varphi \right) $ and $f=f_{1}\cdot f_{2}.$ Then $\lambda
_{g_{1}}^{\left( p,q\right) }\left( f,\varphi \right) \leq \lambda
_{g_{1}}^{\left( p,q\right) }\left( f_{2},\varphi \right) .$ Further, $f_{2}=%
\frac{f}{f_{1}}$ and $T_{f_{1}}\left( r\right) =T_{\frac{1}{f_{1}}}\left(
r\right) +O(1).$ Therefore $T_{f_{2}}\left( r\right) \leq T_{f}\left(
r\right) +T_{f_{1}}\left( r\right) +O(1)$ and in this case we obtain that $%
\lambda _{g_{1}}^{\left( p,q\right) }\left( f_{2},\varphi \right) \leq \max
\left\{ \lambda _{g_{1}}^{\left( p,q\right) }\left( f,\varphi \right)
,\lambda _{g_{1}}^{\left( p,q\right) }\left( f_{1},\varphi \right) \right\}
~.$ As we assume that $\lambda _{g_{1}}^{\left( p,q\right) }\left(
f_{1},\varphi \right) <\lambda _{g_{1}}^{\left( p,q\right) }\left(
f_{2},\varphi \right) ,$ therefore we have $\lambda _{g_{1}}^{\left(
p,q\right) }\left( f_{2},\varphi \right) \leq \lambda _{g_{1}}^{\left(
p,q\right) }\left( f,\varphi \right) $ and hence $\lambda _{g_{1}}^{\left(
p,q\right) }\left( f,\varphi \right) $ $=$ $\lambda _{g_{1}}^{\left(
p,q\right) }\left( f_{2},\varphi \right) $ $=$ $\max $ $\{$ $\lambda
_{g_{1}}^{\left( p,q\right) }\left( f_{1},\varphi \right) ,$ $\lambda
_{g_{1}}^{\left( p,q\right) }\left( f_{2},\varphi \right) $ $\}.$ Therefore, 
$\lambda _{g_{1}}^{\left( p,q\right) }\left( f_{1}\cdot f_{2},\varphi
\right) =\lambda _{g_{1}}^{\left( p,q\right) }\left( f_{i},\varphi \right)
\mid i=1,2$ provided $\lambda _{g_{1}}^{\left( p,q\right) }\left(
f_{1},\varphi \right) \neq \lambda _{g_{1}}^{\left( p,q\right) }\left(
f_{2},\varphi \right) .$

\qquad Hence the theorem follows.
\end{proof}

\qquad Next we prove the result for the quotient $\frac{f_{1}}{f_{2}},$
provided $\frac{f_{1}}{f_{2}}$ is meromorphic.

\begin{theorem}
\label{t9.8 A} Let $f_{1}$, $f_{2}$ be any two meromorphic functions and $%
g_{1}$ be any entire function such that at least $f_{1}$ or $f_{2}$ is of
regular relative $\left( p,q\right) $-$\varphi $ growth with respect to $%
g_{1}$. Also let $g_{1}$ satisfy the Property (A). Then%
\begin{equation*}
\lambda _{g_{1}}^{\left( p,q\right) }\left( \frac{f_{1}}{f_{2}},\varphi
\right) \leq \max \left\{ \lambda _{g_{1}}^{\left( p,q\right) }\left(
f_{1},\varphi \right) ,\lambda _{g_{1}}^{\left( p,q\right) }\left(
f_{2},\varphi \right) \right\} ,
\end{equation*}%
provided $\frac{f_{1}}{f_{2}}$ is meromorphic. The equality holds when at
least $f_{2}$ is of regular relative $\left( p,q\right) $-$\varphi $ growth
with respect to $g_{1}$ and $\lambda _{g_{1}}^{\left( p,q\right) }\left(
f_{1},\varphi \right) \neq \lambda _{g_{1}}^{\left( p,q\right) }\left(
f_{2},\varphi \right) $.
\end{theorem}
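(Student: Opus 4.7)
The plan is to reduce the quotient case to the product case of Theorem~\ref{t9.8}. The key observation is that the First Main Theorem of Nevanlinna theory gives $T_{1/f_{2}}(r)=T_{f_{2}}(r)+O(1)$, so that
\[
T_{f_{1}/f_{2}}(r)\;\leq\;T_{f_{1}}(r)+T_{1/f_{2}}(r)\;=\;T_{f_{1}}(r)+T_{f_{2}}(r)+O(1)
\]
for all large $r$. This is exactly the same majorization that drives the proof of Theorem~\ref{t9.8}, so the proof of the inequality part can be copied verbatim: fix $\varepsilon>0$, invoke the definition of $\lambda_{g_{1}}^{(p,q)}(f_{k},\varphi)$ (on a sequence $r_{n}\to\infty$ for the non-regular function and for all large $r$ for the regular one, using $\rho=\lambda$ there) to bound $T_{f_{k}}(r)\leq T_{g_{1}}[\exp^{[p]}[(\Delta+\varepsilon)\log^{[q]}\varphi(r)]]$ with $\Delta=\max\{\lambda_{g_{1}}^{(p,q)}(f_{1},\varphi),\lambda_{g_{1}}^{(p,q)}(f_{2},\varphi)\}$, then apply Lemma~\ref{l9.6} and Property~(A) via Lemma~\ref{l9.2} with exponent $\sigma>1$, and finally let $\sigma\to 1^{+}$.

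For the equality part, I would argue by symmetry of the relation $h=f_{1}/f_{2}\iff f_{2}=f_{1}/h$. Without loss of generality assume $\lambda_{g_{1}}^{(p,q)}(f_{1},\varphi)<\lambda_{g_{1}}^{(p,q)}(f_{2},\varphi)$ and set $f=f_{1}/f_{2}$. From the already-proved inequality, $\lambda_{g_{1}}^{(p,q)}(f,\varphi)\leq\lambda_{g_{1}}^{(p,q)}(f_{2},\varphi)$. For the reverse bound, rewrite $f_{2}=f_{1}/f$ and use $T_{f_{2}}(r)\leq T_{f_{1}}(r)+T_{f}(r)+O(1)$, from which the same majorization argument yields
\[
\lambda_{g_{1}}^{(p,q)}(f_{2},\varphi)\;\leq\;\max\bigl\{\lambda_{g_{1}}^{(p,q)}(f_{1},\varphi),\,\lambda_{g_{1}}^{(p,q)}(f,\varphi)\bigr\}.
\]
Since $\lambda_{g_{1}}^{(p,q)}(f_{1},\varphi)<\lambda_{g_{1}}^{(p,q)}(f_{2},\varphi)$, the maximum on the right must be attained at $\lambda_{g_{1}}^{(p,q)}(f,\varphi)$, giving $\lambda_{g_{1}}^{(p,q)}(f_{2},\varphi)\leq\lambda_{g_{1}}^{(p,q)}(f,\varphi)$. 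Combined with the first direction, this produces the desired equality $\lambda_{g_{1}}^{(p,q)}(f_{1}/f_{2},\varphi)=\lambda_{g_{1}}^{(p,q)}(f_{2},\varphi)=\max\{\lambda_{g_{1}}^{(p,q)}(f_{1},\varphi),\lambda_{g_{1}}^{(p,q)}(f_{2},\varphi)\}$.

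The main obstacle I anticipate is bookkeeping of the regular-growth hypothesis at the second application of the inequality, since applying it to the pair $(f_{1},f)$ formally requires at least one of $f_{1},f$ to be of regular relative $(p,q)$-$\varphi$ growth. The theorem supplies regular growth of $f_{2}$; this transfers to $f=f_{1}/f_{2}$ along the tight equality $\lambda_{g_{1}}^{(p,q)}(f,\varphi)=\lambda_{g_{1}}^{(p,q)}(f_{2},\varphi)$ produced in the argument, but one must check (as in the proof of Theorem~\ref{t9.8}) that the corresponding upper limit $\rho_{g_{1}}^{(p,q)}(f,\varphi)$ also equals this common value, so that the ``for all sufficiently large $r$'' clause needed in the reverse step is justified. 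Apart from this regularity propagation, the remaining computations are essentially a direct transcription of those in Theorem~\ref{t9.2x} and Theorem~\ref{t9.8}.
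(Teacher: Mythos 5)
Your inequality part matches the paper's: both reduce via $T_{f_{1}/f_{2}}(r)\leq T_{f_{1}}(r)+T_{1/f_{2}}(r)=T_{f_{1}}(r)+T_{f_{2}}(r)+O(1)$ and rerun the argument of Theorem~\ref{t9.2x}. The equality part, however, has two problems. First, the reduction ``without loss of generality $\lambda_{g_{1}}^{(p,q)}(f_{1},\varphi)<\lambda_{g_{1}}^{(p,q)}(f_{2},\varphi)$'' is not legitimate: the quotient $f_{1}/f_{2}$ and the hypothesis that $f_{2}$ is of regular growth are both asymmetric in $f_{1},f_{2}$, and the two orderings lead to different conclusions ($\lambda_{g_{1}}^{(p,q)}(f_{1}/f_{2},\varphi)=\lambda_{g_{1}}^{(p,q)}(f_{2},\varphi)$ in one case, $=\lambda_{g_{1}}^{(p,q)}(f_{1},\varphi)$ in the other). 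The case $\lambda_{g_{1}}^{(p,q)}(f_{1},\varphi)>\lambda_{g_{1}}^{(p,q)}(f_{2},\varphi)$ needs its own argument, which the paper supplies as Case~II via the identity $f_{1}=h\cdot f_{2}$.

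Second, and more seriously, your reverse step writes $f_{2}=f_{1}/f$ and invokes $\lambda_{g_{1}}^{(p,q)}(f_{2},\varphi)\leq\max\{\lambda_{g_{1}}^{(p,q)}(f_{1},\varphi),\lambda_{g_{1}}^{(p,q)}(f,\varphi)\}$. To run the Theorem~\ref{t9.2x}/Theorem~\ref{t9.8} machinery on the pair $(f_{1},f)$ you need at least one of $f_{1},f$ to be of regular relative $(p,q)$-$\varphi$ growth, and neither is hypothesized; only $f_{2}$ is. You acknowledge this obstacle and propose to transfer regularity from $f_{2}$ to $f$, but that transfer presupposes $\lambda_{g_{1}}^{(p,q)}(f,\varphi)=\lambda_{g_{1}}^{(p,q)}(f_{2},\varphi)$ together with the corresponding statement for $\rho$, which is exactly what this step is supposed to establish; the argument is circular as written. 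The paper sidesteps this by decomposing in the other direction, $f_{1}=h\cdot f_{2}$ with $h=f_{1}/f_{2}$, so that the regular function $f_{2}$ is itself one of the two factors entering the product theorem: supposing $\lambda_{g_{1}}^{(p,q)}(h,\varphi)<\lambda_{g_{1}}^{(p,q)}(f_{2},\varphi)$ then forces $\lambda_{g_{1}}^{(p,q)}(f_{1},\varphi)=\lambda_{g_{1}}^{(p,q)}(f_{2},\varphi)$, contradicting the hypothesis, and the remaining ordering is handled by the same identity. If you restructure your reverse step around $f_{1}=(f_{1}/f_{2})\cdot f_{2}$ in both orderings, the regularity bookkeeping closes and the proof goes through.
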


\begin{proof}
Since $T_{_{f_{2}}}\left( r\right) =T_{_{\frac{1}{f_{2}}}}\left( r\right)
+O(1)$ and $T_{_{\frac{f_{1}}{f_{2}}}}\left( r\right) \leq
T_{_{f_{1}}}\left( r\right) +T_{_{\frac{1}{f_{2}}}}\left( r\right) ,$ we get
in view of Theorem \ref{t9.2x} that%
\begin{equation}
\lambda _{g_{1}}^{\left( p,q\right) }\left( \frac{f_{1}}{f_{2}},\varphi
\right) \leq \max \left\{ \lambda _{g_{1}}^{\left( p,q\right) }\left(
f_{1},\varphi \right) ,\lambda _{g_{1}}^{\left( p,q\right) }\left(
f_{2},\varphi \right) \right\} ~.  \label{50.3}
\end{equation}

\qquad Now in order to prove the equality conditions, we discuss the
following two cases:\medskip \newline
\textbf{Case I. }Suppose $\frac{f_{1}}{f_{2}}\left( =h\right) $ satisfies
the following condition%
\begin{equation*}
\lambda _{g_{1}}^{\left( p,q\right) }\left( f_{1},\varphi \right) <\lambda
_{g_{1}}^{\left( p,q\right) }\left( f_{2},\varphi \right) ,
\end{equation*}%
and $f_{2}$ is of regular relative $\left( p,q\right) $-$\varphi $ growth
with respect to $g_{1}.$

\qquad Now if possible, let $\lambda _{g_{1}}^{\left( p,q\right) }\left( 
\frac{f_{1}}{f_{2}},\varphi \right) <\lambda _{g_{1}}^{\left( p,q\right)
}\left( f_{2},\varphi \right) $. Therefore from $f_{1}=h\cdot f_{2}$ we get
that $\lambda _{g_{1}}^{\left( p,q\right) }\left( f_{1},\varphi \right)
=\lambda _{g_{1}}^{\left( p,q\right) }\left( f_{2},\varphi \right) $ which
is a contradiction. Therefore $\lambda _{g_{1}}^{\left( p,q\right) }\left( 
\frac{f_{1}}{f_{2}},\varphi \right) \geq \lambda _{g_{1}}^{\left( p,q\right)
}\left( f_{2},\varphi \right) $ and in view of $\left( \ref{50.3}\right) $,
we get that%
\begin{equation*}
\lambda _{g_{1}}^{\left( p,q\right) }\left( \frac{f_{1}}{f_{2}},\varphi
\right) =\lambda _{g_{1}}^{\left( p,q\right) }\left( f_{2},\varphi \right) ~.
\end{equation*}%
\medskip \newline
\textbf{Case II. } Suppose $\frac{f_{1}}{f_{2}}\left( =h\right) $ satisfies
the following condition%
\begin{equation*}
\text{ }\lambda _{g_{1}}^{\left( p,q\right) }\left( f_{1},\varphi \right)
>\lambda _{g_{1}}^{\left( p,q\right) }\left( f_{2},\varphi \right) ,
\end{equation*}%
and $f_{2}$ is of regular relative $\left( p,q\right) $-$\varphi $ growth
with respect to $g_{1}.$

\qquad Now from $f_{1}=h\cdot f_{2}$ we get that either $\lambda
_{g_{1}}^{\left( p,q\right) }\left( f_{1},\varphi \right) \leq \lambda
_{g_{1}}^{\left( p,q\right) }\left( \frac{f_{1}}{f_{2}},\varphi \right) $ or 
$\lambda _{g_{1}}^{\left( p,q\right) }\left( f_{1},\varphi \right) \leq
\lambda _{g_{1}}^{\left( p,q\right) }\left( f_{2},\varphi \right) $. But
according to our assumption $\lambda _{g_{1}}^{\left( p,q\right) }\left(
f_{1},\varphi \right) \nleq \lambda _{g_{1}}^{\left( p,q\right) }\left(
f_{2},\varphi \right) $. Therefore $\lambda _{g_{1}}^{\left( p,q\right)
}\left( \frac{f_{1}}{f_{2}},\varphi \right) \geq \lambda _{g_{1}}^{\left(
p,q\right) }\left( f_{1},\varphi \right) $ and in view of $\left( \ref{50.3}%
\right) $, we get that%
\begin{equation*}
\lambda _{g_{1}}^{\left( p,q\right) }\left( \frac{f_{1}}{f_{2}},\varphi
\right) =\lambda _{g_{1}}^{\left( p,q\right) }\left( f_{1},\varphi \right) ~.
\end{equation*}

\qquad Hence the theorem follows.
\end{proof}

\qquad Now we state the following theorem which can easily be carried out in
the line of Theorem \ref{t9.8} and Theorem \ref{t9.8 A} and therefore its
proof is omitted.

\begin{theorem}
\label{t9.7} Let $f_{1}$ and $f_{2}$ be any two meromorphic functions and $%
g_{1}$ be any entire function such that such that $\rho _{g_{1}}^{\left(
p,q\right) }\left( f_{1},\varphi \right) $ and $\rho _{g_{2}}^{\left(
p,q\right) }\left( f_{1},\varphi \right) $ exists. Also let $g_{1}$ satisfy
the Property (A). Then%
\begin{equation*}
\rho _{g_{1}}^{\left( p,q\right) }\left( f_{1}\cdot f_{2},\varphi \right)
\leq \max \left\{ \rho _{g_{1}}^{\left( p,q\right) }\left( f_{1},\varphi
\right) ,\rho _{g_{1}}^{\left( p,q\right) }\left( f_{2},\varphi \right)
\right\} ~.
\end{equation*}%
The equality holds when $\rho _{g_{1}}^{\left( p,q\right) }\left(
f_{1},\varphi \right) \neq \rho _{g_{1}}^{\left( p,q\right) }\left(
f_{2},\varphi \right) $. Similar results hold for the quotient $\frac{f_{1}}{%
f_{2}}$, provided $\frac{f_{1}}{f_{2}}$ is meromorphic.
\end{theorem}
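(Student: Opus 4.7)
The plan is to mirror the arguments of Theorem~\ref{t9.8} and Theorem~\ref{t9.8 A}, replacing the lower order $\lambda_{g_1}^{(p,q)}$ by the order $\rho_{g_1}^{(p,q)}$ throughout. The only essential change is that, since $\rho_{g_1}^{(p,q)}(f_k,\varphi)$ is defined as a $\limsup$, for every $\varepsilon > 0$ the bound
\begin{equation*}
T_{f_k}(r) \leq T_{g_1}\bigl[\exp^{[p]}\bigl[(\rho_{g_1}^{(p,q)}(f_k,\varphi)+\varepsilon)\log^{[q]}\varphi(r)\bigr]\bigr]
\end{equation*}
now holds for \emph{all} sufficiently large $r$ (rather than only along a sequence) for $k=1,2$. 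This uniform estimate is the small but crucial advantage over the lower-order setting of Theorem~\ref{t9.2x}.

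First I would establish the inequality. Setting $\Delta = \max\{\rho_{g_1}^{(p,q)}(f_1,\varphi), \rho_{g_1}^{(p,q)}(f_2,\varphi)\}$ and using $T_{f_1 \cdot f_2}(r) \leq T_{f_1}(r) + T_{f_2}(r)$ for all large $r$ together with Lemma~\ref{l9.6}, I would obtain
\begin{equation*}
T_{f_1 \cdot f_2}(r) \leq 3 \log M_{g_1}\bigl[\exp^{[p]}\bigl[(\Delta+\varepsilon)\log^{[q]}\varphi(r)\bigr]\bigr].
\end{equation*}
Then, applying Lemma~\ref{l9.2} with $n=9$ and some $\sigma > 1$ (using Property (A) of $g_1$), followed by Lemma~\ref{l9.6} once more, exactly as in Theorem~\ref{t9.2x}, I would arrive at
\begin{equation*}
T_{f_1 \cdot f_2}(r) \leq T_{g_1}\Bigl[2\bigl(\exp^{[p]}\bigl[(\Delta+\varepsilon)\log^{[q]}\varphi(r)\bigr]\bigr)^{\sigma}\Bigr].
\end{equation*}
Applying $T_{g_1}^{-1}$, taking $\log^{[p]}$, dividing by $\log^{[q]}\varphi(r)$, and passing to $\limsup$ as $r \to \infty$, then letting $\sigma \to 1^{+}$ and $\varepsilon \to 0^{+}$, would yield $\rho_{g_1}^{(p,q)}(f_1 \cdot f_2,\varphi) \leq \Delta$.

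For the equality assertion, suppose without loss of generality that $\rho_{g_1}^{(p,q)}(f_1,\varphi) < \rho_{g_1}^{(p,q)}(f_2,\varphi)$ and write $f = f_1 \cdot f_2$. The inequality just proved gives $\rho_{g_1}^{(p,q)}(f,\varphi) \leq \rho_{g_1}^{(p,q)}(f_2,\varphi)$. To obtain the reverse bound, I would write $f_2 = f / f_1$ and use $T_{1/f_1}(r) = T_{f_1}(r) + O(1)$ to get $T_{f_2}(r) \leq T_{f}(r) + T_{f_1}(r) + O(1)$; applying the already-proved inequality to $f/f_1$ then yields $\rho_{g_1}^{(p,q)}(f_2,\varphi) \leq \max\{\rho_{g_1}^{(p,q)}(f,\varphi), \rho_{g_1}^{(p,q)}(f_1,\varphi)\}$, which together with the strict inequality $\rho_{g_1}^{(p,q)}(f_1,\varphi) < \rho_{g_1}^{(p,q)}(f_2,\varphi)$ forces $\rho_{g_1}^{(p,q)}(f_2,\varphi) \leq \rho_{g_1}^{(p,q)}(f,\varphi)$, hence equality.

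Finally, for the quotient $f_1/f_2$ (when it is meromorphic), the inequality $T_{f_1/f_2}(r) \leq T_{f_1}(r) + T_{1/f_2}(r) = T_{f_1}(r) + T_{f_2}(r) + O(1)$ makes the same argument go through verbatim, with the two cases $\rho_{g_1}^{(p,q)}(f_1,\varphi) < \rho_{g_1}^{(p,q)}(f_2,\varphi)$ and $\rho_{g_1}^{(p,q)}(f_1,\varphi) > \rho_{g_1}^{(p,q)}(f_2,\varphi)$ handled exactly as in the case analysis of Theorem~\ref{t9.8 A}. There is no serious obstacle; the only technical point worth monitoring is that the extra constants (the factor $2$ inside $\exp^{[p]}$ and the exponent $\sigma$) wash out in the $\limsup$ because $\log^{[q]}\varphi(r) \to \infty$ by the unboundedness of $\varphi$.
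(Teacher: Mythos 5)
Your proposal is correct and follows exactly the route the paper intends: the paper omits the proof of Theorem \ref{t9.7}, stating only that it can be carried out along the lines of Theorem \ref{t9.8} and Theorem \ref{t9.8 A}, and your argument is precisely that adaptation, with the correct observation that the $\limsup$ definition of $\rho_{g_1}^{(p,q)}$ makes the upper bound on $T_{f_k}(r)$ valid for all large $r$ simultaneously, which is why no regularity hypothesis is needed here. The chain of estimates via Lemma \ref{l9.6}, Property (A) (Lemma \ref{l9.2}), the decomposition $f_2 = f/f_1$ for the equality case, and the reduction of the quotient case to $T_{1/f_2}(r)=T_{f_2}(r)+O(1)$ all match the template of Theorems \ref{t9.2x}, \ref{t9.8} and \ref{t9.8 A}.
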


\begin{theorem}
\label{t9.9} Let $f_{1}$ be a meromorphic function and $g_{1}$, $g_{2}$ be
any two entire functions such that $\lambda _{g_{1}}^{\left( p,q\right)
}\left( f_{1},\varphi \right) $ and $\lambda _{g_{2}}^{\left( p,q\right)
}\left( f_{1},\varphi \right) $ exists. Also let $g_{1}\cdot g_{2}$ satisfy
the Property (A). Then%
\begin{equation*}
\lambda _{g_{1}\cdot g_{2}}^{\left( p,q\right) }\left( f_{1},\varphi \right)
\geq \min \left\{ \lambda _{g_{1}}^{\left( p,q\right) }\left( f_{1},\varphi
\right) ,\lambda _{g_{2}}^{\left( p,q\right) }\left( f_{1},\varphi \right)
\right\} ~.
\end{equation*}%
The equality holds when $\lambda _{g_{i}}^{\left( p,q\right) }\left(
f_{1},\varphi \right) <\lambda _{g_{j}}^{\left( p,q\right) }\left(
f_{1},\varphi \right) $ where $i,j=1,2$ and $i\neq j$ and $g_{i}$ satisfy
the Property (A). Similar results hold for the quotient $\frac{g_{1}}{g_{2}}$%
, provided $\frac{g_{1}}{g_{2}}$ is entire and satisfy the Property (A). The
equality holds when $\lambda _{g_{1}}^{\left( p,q\right) }\left(
f_{1},\varphi \right) \neq \lambda _{g_{2}}^{\left( p,q\right) }\left(
f_{1},\varphi \right) $ and $g_{1}$ satisfy the Property (A).
\end{theorem}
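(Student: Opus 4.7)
The plan is to adapt the proof of Theorem~\ref{t9.3} essentially verbatim, replacing the Nevanlinna sum inequality $T_{g_1\pm g_2}(r)\leq T_{g_1}(r)+T_{g_2}(r)+O(1)$ by its multiplicative analogue $T_{g_1\cdot g_2}(r)\leq T_{g_1}(r)+T_{g_2}(r)$ (and, for the quotient statement, by $T_{g_1/g_2}(r)\leq T_{g_1}(r)+T_{g_2}(r)+O(1)$, using $T_{1/g_2}=T_{g_2}+O(1)$). The hypothesis that $g_1\cdot g_2$ satisfies Property~(A) is precisely what is needed to activate Lemma~\ref{l9.2} at the crucial step.

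First I would dispose of the trivial case $\lambda_{g_1\cdot g_2}^{(p,q)}(f_1,\varphi)=\infty$, set $\Psi=\min\{\lambda_{g_1}^{(p,q)}(f_1,\varphi),\lambda_{g_2}^{(p,q)}(f_1,\varphi)\}$, and, for arbitrary $\varepsilon>0$, extract from the definition of $\lambda_{g_k}^{(p,q)}(f_1,\varphi)$ the lower bound $T_{g_k}[\exp^{[p]}[(\Psi-\varepsilon)\log^{[q]}\varphi(r)]]\leq T_{f_1}(r)$ valid for $k=1,2$ and all large $r$. Summing the two inequalities and using $T_{g_1 g_2}\leq T_{g_1}+T_{g_2}$ yields the same estimate with $g_1\cdot g_2$ on the left (with a harmless factor $2<3$). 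The identical chain of estimates used in Theorem~\ref{t9.3}---Lemma~\ref{l9.6} to pass from $T$ to $\log M$, Lemma~\ref{l9.2} applied to $g_1\cdot g_2$ with exponent $9$, Lemma~\ref{l9.6} back to $T$, then $\sigma\to 1^+$ and $\varepsilon\downarrow 0$---delivers $\lambda_{g_1\cdot g_2}^{(p,q)}(f_1,\varphi)\geq\Psi$.

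For equality, I assume without loss of generality that $\lambda_{g_1}^{(p,q)}(f_1,\varphi)<\lambda_{g_2}^{(p,q)}(f_1,\varphi)$ with $g_1$ (the index attaining the minimum) satisfying Property~(A). Setting $g=g_1\cdot g_2$, the inequality just proved gives $\lambda_g^{(p,q)}(f_1,\varphi)\geq\lambda_{g_1}^{(p,q)}(f_1,\varphi)$. For the reverse, I write $g_1=g/g_2$ (entire, since $g_1$ is), and invoke the quotient form of the same inequality applied to the pair $(g,g_2)$, with the role of Property~(A) now played by $g_1$ itself. This gives $\lambda_{g_1}^{(p,q)}(f_1,\varphi)\geq\min\{\lambda_g^{(p,q)}(f_1,\varphi),\lambda_{g_2}^{(p,q)}(f_1,\varphi)\}$, and combined with $\lambda_{g_1}<\lambda_{g_2}$ this forces $\lambda_g^{(p,q)}(f_1,\varphi)\leq\lambda_{g_1}^{(p,q)}(f_1,\varphi)$, hence equality.

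The quotient statement is handled in parallel using $T_{g_1/g_2}\leq T_{g_1}+T_{g_2}+O(1)$ together with the hypothesis that $g_1/g_2$ is entire and satisfies Property~(A). I do not expect analytic difficulty, since the template from Theorem~\ref{t9.3} transfers almost mechanically; the main obstacle is bookkeeping the Property~(A) hypotheses, making sure that each invocation of Lemma~\ref{l9.2}---once on $g_1\cdot g_2$ for the main inequality, and once on $g_i$ in the reverse step establishing equality---is actually covered by a hypothesis of the theorem as stated.
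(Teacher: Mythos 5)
Your proposal matches the paper's proof essentially step for step: the main inequality is obtained by substituting $T_{g_{1}\cdot g_{2}}\left( r\right) \leq T_{g_{1}}\left( r\right) +T_{g_{2}}\left( r\right) $ into the argument of Theorem \ref{t9.3}, and the equality is forced by writing $g_{1}=\frac{g}{g_{2}}$ with $g=g_{1}\cdot g_{2}$ and applying the quotient version of the inequality to the pair $\left( g,g_{2}\right) $, which is exactly where the hypothesis that $g_{i}$ (the factor realizing the minimum) satisfies the Property (A) enters. The quotient case via $T_{\frac{g_{1}}{g_{2}}}\left( r\right) \leq T_{g_{1}}\left( r\right) +T_{\frac{1}{g_{2}}}\left( r\right) $ and $T_{g_{2}}\left( r\right) =T_{\frac{1}{g_{2}}}\left( r\right) +O(1)$ is likewise the route the paper takes, so no further comment is needed.
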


\begin{proof}
Since $T_{g_{1}\cdot g_{2}}\left( r\right) \leq T_{g_{1}}\left( r\right)
+T_{g_{2}}\left( r\right) $ for all large $r,$ therefore applying the same
procedure as adopted in Theorem \ref{t9.3} we get that%
\begin{equation*}
\lambda _{g_{1}\cdot g_{2}}^{\left( p,q\right) }\left( f_{1},\varphi \right)
\geq \min \left\{ \lambda _{g_{1}}^{\left( p,q\right) }\left( f_{1},\varphi
\right) ,\lambda _{g_{2}}^{\left( p,q\right) }\left( f_{1},\varphi \right)
\right\} ~.
\end{equation*}

\qquad Now without loss of any generality, we may consider that $\lambda
_{g_{1}}^{\left( p,q\right) }\left( f_{1},\varphi \right) <\lambda
_{g_{2}}^{\left( p,q\right) }\left( f_{1},\varphi \right) $ and $%
g=g_{1}\cdot g_{2}.$ Then $\lambda _{g}^{\left( p,q\right) }\left(
f_{1},\varphi \right) \geq \lambda _{g_{1}}^{\left( p,q\right) }\left(
f_{1},\varphi \right) .$ Further, $g_{1}=\frac{g}{g_{2}}$ and and $%
T_{g_{2}}\left( r\right) =T_{\frac{1}{g_{2}}}\left( r\right) +O(1).$
Therefore $T_{g_{1}}\left( r\right) $ $\leq $ $T_{g}\left( r\right) $ $+$ $%
T_{g_{2}}\left( r\right) $ $+$ $O(1)$ and in this case we obtain that $%
\lambda _{g_{1}}^{\left( p,q\right) }\left( f_{1},\varphi \right) \geq \min
\left\{ \lambda _{g}^{\left( p,q\right) }\left( f_{1},\varphi \right)
,\lambda _{g_{2}}^{\left( p,q\right) }\left( f_{1},\varphi \right) \right\}
~.$ As we assume that $\lambda _{g_{1}}^{\left( p,q\right) }\left(
f_{1},\varphi \right) <\lambda _{g_{2}}^{\left( p,q\right) }\left(
f_{1},\varphi \right) ,$ so we have $\lambda _{g_{1}}^{\left( p,q\right)
}\left( f_{1},\varphi \right) \geq \lambda _{g}^{\left( p,q\right) }\left(
f_{1},\varphi \right) $ and hence $\lambda _{g}^{\left( p,q\right) }\left(
f_{1},\varphi \right) $ $=$ $\lambda _{g_{1}}^{\left( p,q\right) }\left(
f_{1},\varphi \right) $ $=$ $\min $ $\left\{ \lambda _{g_{1}}^{\left(
p,q\right) }\left( f_{1},\varphi \right) ,\lambda _{g_{2}}^{\left(
p,q\right) }\left( f_{1},\varphi \right) \right\} .$ Therefore, $\lambda
_{g_{1}\cdot g_{2}}^{\left( p,q\right) }\left( f_{1},\varphi \right)
=\lambda _{g_{i}}^{\left( p,q\right) }\left( f_{1},\varphi \right) \mid
i=1,2 $ provided $\lambda _{g_{1}}^{\left( p,q\right) }\left( f_{1},\varphi
\right) <\lambda _{g_{2}}^{\left( p,q\right) }\left( f_{1},\varphi \right) $
and $g_{1}$ satisfy the Property (A)$.$ Hence the first part of the theorem
follows.

\qquad Now we prove our results for the quotient $\frac{g_{1}}{g_{2}}$,
provided $\frac{g_{1}}{g_{2}}$ is entire and $\lambda _{g_{1}}^{\left(
p,q\right) }\left( f_{1},\varphi \right) \neq \lambda _{g_{2}}^{\left(
p,q\right) }\left( f_{1},\varphi \right) $. Since $T_{_{g_{2}}}\left(
r\right) =T_{_{\frac{1}{g_{2}}}}\left( r\right) +O(1)$ and $T_{_{\frac{g_{1}%
}{g_{2}}}}\left( r\right) \leq T_{_{g_{1}}}\left( r\right) +T_{_{\frac{1}{%
g_{2}}}}\left( r\right) ,$ we get in view of Theorem \ref{t9.3} that%
\begin{equation}
\lambda _{\frac{g_{1}}{g_{2}}}^{\left( p,q\right) }\left( f_{1},\varphi
\right) \geq \min \left\{ \lambda _{g_{1}}^{\left( p,q\right) }\left(
f_{1},\varphi \right) ,\lambda _{g_{2}}^{\left( p,q\right) }\left(
f_{1},\varphi \right) \right\} ~.  \label{50.11}
\end{equation}

\qquad Now in order to prove the equality conditions, we discuss the
following two cases:\medskip \newline
\textbf{Case I. }Suppose $\frac{g_{1}}{g_{2}}\left( =h\right) $ satisfies
the following condition%
\begin{equation*}
\lambda _{g_{1}}^{\left( p,q\right) }\left( f_{1},\varphi \right) >\lambda
_{g_{2}}^{\left( p,q\right) }\left( f_{1},\varphi \right) ~.
\end{equation*}

\qquad Now if possible, let $\lambda _{\frac{g_{1}}{g_{2}}}^{\left(
p,q\right) }\left( f_{1},\varphi \right) >\lambda _{g_{2}}^{\left(
p,q\right) }\left( f_{1},\varphi \right) $. Therefore from $g_{1}=h\cdot
g_{2}$ we get that $\lambda _{g_{1}}^{\left( p,q\right) }\left(
f_{1},\varphi \right) =\lambda _{g_{2}}^{\left( p,q\right) }\left(
f_{1},\varphi \right) $, which is a contradiction. Therefore $\lambda _{%
\frac{g_{1}}{g_{2}}}^{\left( p,q\right) }\left( f_{1},\varphi \right) \leq
\lambda _{g_{2}}^{\left( p,q\right) }\left( f_{1},\varphi \right) $ and in
view of $\left( \ref{50.11}\right) $, we get that%
\begin{equation*}
\lambda _{\frac{g_{1}}{g_{2}}}^{\left( p,q\right) }\left( f_{1},\varphi
\right) =\lambda _{g_{2}}^{\left( p,q\right) }\left( f_{1},\varphi \right) ~.
\end{equation*}%
\medskip \newline
\textbf{Case II. } Suppose that $\frac{g_{1}}{g_{2}}\left( =h\right) $
satisfies the following condition%
\begin{equation*}
\lambda _{g_{1}}^{\left( p,q\right) }\left( f_{1},\varphi \right) <\lambda
_{g_{2}}^{\left( p,q\right) }\left( f_{1},\varphi \right) ~.
\end{equation*}

\qquad Therefore from $g_{1}=h\cdot g_{2}$, we get that either $\lambda
_{g_{1}}^{\left( p,q\right) }\left( f_{1},\varphi \right) \geq \lambda _{%
\frac{g_{1}}{g_{2}}}^{\left( p,q\right) }\left( f_{1},\varphi \right) $ or $%
\lambda _{g_{1}}^{\left( p,q\right) }\left( f_{1},\varphi \right) \geq
\lambda _{g_{2}}^{\left( p,q\right) }\left( f_{1},\varphi \right) $. But
according to our assumption $\lambda _{g_{1}}^{\left( p,q\right) }\left(
f_{1},\varphi \right) \ngeqslant \lambda _{g_{2}}^{\left( p,q\right) }\left(
f_{1},\varphi \right) $. Therefore $\lambda _{\frac{g_{1}}{g_{2}}}^{\left(
p,q\right) }\left( f_{1},\varphi \right) \leq \lambda _{g_{1}}^{\left(
p,q\right) }\left( f_{1},\varphi \right) $ and in view of $\left( \ref{50.11}%
\right) $, we get that%
\begin{equation*}
\lambda _{\frac{g_{1}}{g_{2}}}^{\left( p,q\right) }\left( f_{1},\varphi
\right) =\lambda _{g_{1}}^{\left( p,q\right) }\left( f_{1},\varphi \right) ~.
\end{equation*}

\qquad Hence the theorem follows.
\end{proof}

\begin{theorem}
\label{t9.10} Let $f_{1}$ be any meromorphic function and $g_{1}$, $g_{2}$
be any two entire functions such that $\rho _{g_{1}}^{\left( p,q\right)
}\left( f_{1},\varphi \right) $ and $\rho _{g_{2}}^{\left( p,q\right)
}\left( f_{1},\varphi \right) $ exists. Further let $f_{1}$ is of regular
relative $\left( p,q\right) $-$\varphi $ growth with respect to at least any
one of $g_{1}$ or $g_{2}.$ Also let $g_{1}\cdot g_{2}$ satisfy the Property
(A). Then%
\begin{equation*}
\rho _{g_{1}\cdot g_{2}}^{\left( p,q\right) }\left( f_{1},\varphi \right)
\geq \min \left\{ \rho _{g_{1}}^{\left( p,q\right) }\left( f_{1},\varphi
\right) ,\rho _{g_{2}}^{\left( p,q\right) }\left( f_{1},\varphi \right)
\right\} ~.
\end{equation*}%
The equality holds when $\rho _{g_{i}}^{\left( p,q\right) }\left(
f_{1},\varphi \right) <\rho _{g_{j}}^{\left( p,q\right) }\left(
f_{1},\varphi \right) $ with at least $f_{1}$ is of regular relative $\left(
p,q\right) $-$\varphi $ growth with respect to $g_{j}$ where $i,j=1,2$ and $%
i\neq j$ and $g_{i}$ satisfy the Property (A).
\end{theorem}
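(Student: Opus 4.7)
The plan is to mirror the proof of Theorem \ref{t9.9} with $\rho$ in place of $\lambda$; the only substantive twist is that $\rho$ is defined by a $\limsup$ rather than a $\liminf$, so the two lower bounds extracted from the order definitions must be simultaneously valid on a common family of radii. The regular-growth hypothesis on $f_1$ with respect to at least one of $g_1,g_2$ is precisely what supplies this coordination, by upgrading the corresponding $\limsup$-bound to a bound valid for all sufficiently large $r$. Beyond that, the route from $T_{g_1\cdot g_2}(r)\le T_{g_1}(r)+T_{g_2}(r)$ through Lemmas \ref{l9.6} and \ref{l9.2} is identical to the one carried out in Theorem \ref{t9.3}.

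Concretely, I would set $\Psi=\min\{\rho_{g_1}^{(p,q)}(f_1,\varphi),\rho_{g_2}^{(p,q)}(f_1,\varphi)\}$ and assume without loss of generality that $f_1$ is of regular relative $(p,q)$-$\varphi$ growth with respect to $g_2$, so that $\rho_{g_2}^{(p,q)}(f_1,\varphi)=\lambda_{g_2}^{(p,q)}(f_1,\varphi)$. For arbitrary $\varepsilon>0$, the definition of $\rho_{g_1}^{(p,q)}(f_1,\varphi)$ supplies a sequence of values of $r$ tending to infinity on which $T_{g_1}\!\left[\exp^{[p]}\!\left[(\Psi-\varepsilon)\log^{[q]}\varphi(r)\right]\right]\le T_{f_1}(r)$, while regular growth makes the analogous inequality for $g_2$ hold for \emph{all} large $r$. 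Adding on the sequence and using $T_{g_1\cdot g_2}\le T_{g_1}+T_{g_2}$ gives $T_{g_1\cdot g_2}\!\left[\exp^{[p]}\!\left[(\Psi-\varepsilon)\log^{[q]}\varphi(r)\right]\right]<3T_{f_1}(r)$ along the same sequence. Running the Lemma \ref{l9.6}/Lemma \ref{l9.2} chain of Theorem \ref{t9.3} with exponent $9$ and scaling $\sigma>1$, then letting $\sigma\to 1^+$ and $\varepsilon\to 0$, yields $\rho_{g_1\cdot g_2}^{(p,q)}(f_1,\varphi)\ge\Psi$.

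For the equality clause, suppose without loss of generality that $\rho_{g_1}^{(p,q)}(f_1,\varphi)<\rho_{g_2}^{(p,q)}(f_1,\varphi)$, $f_1$ is of regular relative growth with respect to $g_2$, and $g_1$ satisfies Property (A). Put $g=g_1\cdot g_2$; the inequality just proved reads $\rho_g^{(p,q)}(f_1,\varphi)\ge\rho_{g_1}^{(p,q)}(f_1,\varphi)$. Writing $g_1=g/g_2$ and using $T_{1/g_2}(r)=T_{g_2}(r)+O(1)$ gives $T_{g_1}(r)\le T_g(r)+T_{g_2}(r)+O(1)$, so the first part applies to the pair $(g,g_2)$ with $g_1$ playing the role of the product: Property (A) for $g_1$ and regular growth of $f_1$ with respect to $g_2$ both carry over. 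This yields $\rho_{g_1}^{(p,q)}(f_1,\varphi)\ge\min\{\rho_g^{(p,q)}(f_1,\varphi),\rho_{g_2}^{(p,q)}(f_1,\varphi)\}$. Since $\rho_{g_1}^{(p,q)}<\rho_{g_2}^{(p,q)}$, this forces $\rho_{g_1}^{(p,q)}\ge\rho_g^{(p,q)}$, and combined with the reverse inequality we conclude $\rho_g^{(p,q)}(f_1,\varphi)=\rho_{g_1}^{(p,q)}(f_1,\varphi)=\min\{\rho_{g_1}^{(p,q)}(f_1,\varphi),\rho_{g_2}^{(p,q)}(f_1,\varphi)\}$, completing the equality.

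The only subtlety worth flagging, and the sole reason for the regular-growth assumption, is the simultaneity of the two $\rho$-lower bounds: without it, the sequences of $r$ on which the bounds for $g_1$ and $g_2$ are active may be disjoint, and the addition step becomes vacuous. Once regular growth converts one of the two bounds into a statement valid for all large $r$, the rest of the argument is entirely mechanical.
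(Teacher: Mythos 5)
Your proposal is correct and follows exactly the route the paper intends: the paper omits this proof, deferring to Theorem \ref{t9.9} (itself modelled on Theorem \ref{t9.3}), and your argument is the faithful adaptation of that chain ($T_{g_1\cdot g_2}\le T_{g_1}+T_{g_2}$, then Lemma \ref{l9.6} and Lemma \ref{l9.2} with exponent $9$ and $\sigma\to 1^{+}$), together with the same $g_{1}=g/g_{2}$ trick for the equality clause. Your observation that the regular-growth hypothesis is there precisely to make one of the two $\limsup$-derived lower bounds valid for all large $r$, so that both hold on a common sequence, is exactly the point that distinguishes this $\rho$-version from the $\lambda$-version and is correctly handled.
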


\begin{theorem}
\label{t9.10A} Let $f_{1}$ be any meromorphic function and $g_{1}$, $g_{2}$
be any two entire functions such that $\rho _{g_{1}}^{\left( p,q\right)
}\left( f_{1},\varphi \right) $ and $\rho _{g_{2}}^{\left( p,q\right)
}\left( f_{1},\varphi \right) $ exists. Further let $f_{1}$ is of regular
relative $\left( p,q\right) $-$\varphi $ growth with respect to at least any
one of $g_{1}$ or $g_{2}.$ Then%
\begin{equation*}
\rho _{\frac{g_{1}}{g_{2}}}^{\left( p,q\right) }\left( f_{1},\varphi \right)
\geq \min \left\{ \rho _{g_{1}}^{\left( p,q\right) }\left( f_{1},\varphi
\right) ,\rho _{g_{2}}^{\left( p,q\right) }\left( f_{1},\varphi \right)
\right\} ,
\end{equation*}%
provided $\frac{g_{1}}{g_{2}}$ is entire and satisfy the Property (A). The
equality holds when at least $f_{1}$ is of regular relative $\left(
p,q\right) $-$\varphi $ growth with respect to $g_{2}$, $\rho
_{g_{1}}^{\left( p,q\right) }\left( f_{1},\varphi \right) \neq \rho
_{g_{2}}^{\left( p,q\right) }\left( f_{1},\varphi \right) $ and $g_{1}$
satisfy the Property (A).
\end{theorem}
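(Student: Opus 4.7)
The plan is to parallel the proof of the quotient part of Theorem \ref{t9.9}, with $\lambda$ replaced by $\rho$ throughout and the extra regularity hypothesis threaded through so that $\limsup$-type estimates can be combined along a common sequence. The order-analogue Theorem \ref{t9.10} for products, already in hand, will do the heavy lifting.

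First I would establish the inequality $\rho_{g_1/g_2}^{(p,q)}(f_1,\varphi)\ge\min\{\rho_{g_1}^{(p,q)}(f_1,\varphi),\rho_{g_2}^{(p,q)}(f_1,\varphi)\}$. The key auxiliary fact is $T_{g_1/g_2}(r)\le T_{g_1}(r)+T_{1/g_2}(r)=T_{g_1}(r)+T_{g_2}(r)+O(1)$, derived from subadditivity of $T$ together with the Nevanlinna reciprocal identity $T_{1/g_2}(r)=T_{g_2}(r)+O(1)$. Plugging in the defining $\rho$-estimates for $g_1$ and $g_2$ --- the regularity of $f_1$ with respect to one of $g_1,g_2$ promotes the corresponding inequality from ``along a sequence'' to ``for all large $r$,'' which is what makes the two bounds combinable at a common $r$ --- then passing to $\log M_{g_1/g_2}$ via Lemma \ref{l9.6}, absorbing a factor $\sigma>1$ using Property (A) of $g_1/g_2$ via Lemma \ref{l9.2}, inverting $T_{g_1/g_2}$, and letting $\sigma\to 1^+$ followed by $\varepsilon\to 0^+$ reproduces the argument of Theorem \ref{t9.4} and yields the desired inequality.

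For equality, set $h=g_1/g_2$ so that $g_1=h\cdot g_2$, and separate into two sub-cases according to whether $\rho_{g_1}^{(p,q)}(f_1,\varphi)>\rho_{g_2}^{(p,q)}(f_1,\varphi)$ or $\rho_{g_1}^{(p,q)}(f_1,\varphi)<\rho_{g_2}^{(p,q)}(f_1,\varphi)$. In each sub-case suppose for contradiction that $\rho_h^{(p,q)}(f_1,\varphi)>\min\{\rho_{g_1}^{(p,q)}(f_1,\varphi),\rho_{g_2}^{(p,q)}(f_1,\varphi)\}$, and apply Theorem \ref{t9.10} to the product decomposition $h\cdot g_2=g_1$. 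Its hypotheses match: Property (A) of the product is supplied by the assumption that $g_1$ has Property (A), and regularity of $f_1$ with respect to at least one factor is supplied by the regularity of $f_1$ with respect to $g_2$. Theorem \ref{t9.10} then forces $\rho_{g_1}^{(p,q)}(f_1,\varphi)=\rho_{g_2}^{(p,q)}(f_1,\varphi)$, contradicting the standing hypothesis $\rho_{g_1}^{(p,q)}(f_1,\varphi)\ne\rho_{g_2}^{(p,q)}(f_1,\varphi)$; hence no strict inequality can occur, and the bound from the first step collapses to equality.

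The main obstacle is that in the sub-case $\rho_{g_1}>\rho_{g_2}$ the inequality half of Theorem \ref{t9.10} alone only delivers $\rho_{g_1}\ge\rho_{g_2}$, which is consistent with the case and produces no contradiction on its own; closing the argument there requires invoking the equality half of Theorem \ref{t9.10}, and one must verify carefully that its hypotheses translate correctly under the reduction $g_1=h\cdot g_2$. The companion sub-case $\rho_{g_1}<\rho_{g_2}$ is cleaner because the inequality half of Theorem \ref{t9.10} already forces $\min\{\rho_h,\rho_{g_2}\}>\rho_{g_1}$ and hence a direct contradiction with $\rho_{g_1}\ge\min\{\rho_h,\rho_{g_2}\}$. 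Apart from this delicate bookkeeping the argument is essentially mechanical.
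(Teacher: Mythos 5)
Your proposal is correct and follows essentially the same route as the paper, which omits this proof with the remark that it "can easily be carried out in the line of Theorem \ref{t9.9}": the inequality via $T_{g_{1}/g_{2}}(r)\leq T_{g_{1}}(r)+T_{g_{2}}(r)+O(1)$ combined with the Theorem \ref{t9.4}-style limit argument (regularity supplying the common-$r$ bound), and equality via the decomposition $g_{1}=h\cdot g_{2}$ together with the product result. The hypothesis-translation worry you flag in the sub-case $\rho_{g_{1}}^{\left( p,q\right) }\left( f_{1},\varphi\right) >\rho_{g_{2}}^{\left( p,q\right) }\left( f_{1},\varphi\right)$ is real but is present to exactly the same degree in the paper's own Case I of the quotient part of Theorem \ref{t9.9}, so you are not losing anything relative to the source.
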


\qquad We omit the proof of Theorem \ref{t9.10} and Theorem \ref{t9.10A} as
those can easily be carried out in the line of Theorem \ref{t9.9}.

\qquad Now we state the following four theorems without their proofs as
those can easily be carried out in the line of Theorem \ref{t9.5} and
Theorem \ref{t9.6} respectively.

\begin{theorem}
\label{t9.11} Let $f_{1},f_{2}$ be any two meromorphic functions and $g_{1}$%
, $g_{2}$ be any two entire functions. Also let $g_{1}\cdot g_{2}$ be
satisfy the Property (A). Then%
\begin{eqnarray*}
&&\rho _{g_{1}\cdot g_{2}}^{\left( p,q\right) }\left( f_{1}\cdot
f_{2},\varphi \right) \\
&\leq &\max \left[ \min \left\{ \rho _{g_{1}}^{\left( p,q\right) }\left(
f_{1},\varphi \right) ,\rho _{g_{2}}^{\left( p,q\right) }\left(
f_{1},\varphi \right) \right\} ,\min \left\{ \rho _{g_{1}}^{\left(
p,q\right) }\left( f_{2},\varphi \right) ,\rho _{g_{2}}^{\left( p,q\right)
}\left( f_{2},\varphi \right) \right\} \right] ,
\end{eqnarray*}%
when the following two conditions holds:\newline
$\left( i\right) $ $\rho _{g_{i}}^{\left( p,q\right) }\left( f_{1},\varphi
\right) <\rho _{g_{j}}^{\left( p,q\right) }\left( f_{1},\varphi \right) $
with at least $f_{1}$ is of regular relative $\left( p,q\right) $-$\varphi $
growth with respect to $g_{j}$ and $g_{i}$ satisfy the Property (A) for $i$ $%
=$ $1,$ $2$, $j$ $=$ $1,2$ and $i\neq j$; and\newline
$\left( ii\right) $ $\rho _{g_{i}}^{\left( p,q\right) }\left( f_{2},\varphi
\right) <\rho _{g_{j}}^{\left( p,q\right) }\left( f_{2},\varphi \right) $
with at least $f_{2}$ is of regular relative $\left( p,q\right) $-$\varphi $
growth with respect to $g_{j}$ and $g_{i}$ satisfy the Property (A) for $i$ $%
=$ $1,$ $2$, $j$ $=$ $1,2$ and $i\neq j$.\newline
The quality holds when $\rho _{g_{1}}^{\left( p,q\right) }\left(
f_{i},\varphi \right) <\rho _{g_{1}}^{\left( p,q\right) }\left(
f_{j},\varphi \right) $ and $\rho _{g_{2}}^{\left( p,q\right) }\left(
f_{i},\varphi \right) <\rho _{g_{2}}^{\left( p,q\right) }\left(
f_{j},\varphi \right) $ holds simultaneously for $i=1,2;$ $j=1,2\ $and $%
i\neq j.$
\end{theorem}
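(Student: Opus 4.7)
The plan is to emulate the proof of Theorem \ref{t9.5} verbatim, replacing the additive entire-function combination $g_{1}\pm g_{2}$ by the multiplicative combination $g_{1}\cdot g_{2}$ and invoking, in place of Theorem \ref{t9.1x} and Theorem \ref{t9.4}, their product analogues Theorem \ref{t9.7} and Theorem \ref{t9.10}. The driving observation is that once one has a ``sum-like'' upper bound for $\rho_{g}^{(p,q)}(f_{1}\cdot f_{2},\varphi)$ in terms of $\rho_{g}^{(p,q)}(f_{k},\varphi)$, and a ``minimum-like'' lower bound for $\rho_{g_{1}\cdot g_{2}}^{(p,q)}(f,\varphi)$ in terms of $\rho_{g_{k}}^{(p,q)}(f,\varphi)$, the max-of-min shape of the statement drops out by exactly the same combinatorial manipulation used in Theorem \ref{t9.5}, with no new analytic input beyond what those product theorems already encode.

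Concretely, I would proceed in three steps. First, I would apply Theorem \ref{t9.10} twice, once with $f_{1}$ and once with $f_{2}$ in the role of the meromorphic function; hypotheses $(i)$ and $(ii)$ respectively supply the strict inequality $\rho_{g_{i}}^{(p,q)}(f_{k},\varphi)<\rho_{g_{j}}^{(p,q)}(f_{k},\varphi)$, the regular $(p,q)$-$\varphi$ growth of $f_{k}$ with respect to $g_{j}$, and Property (A) of $g_{i}$ that are required for the equality clause of Theorem \ref{t9.10}. This yields
\[
\rho_{g_{1}\cdot g_{2}}^{(p,q)}(f_{k},\varphi)=\min\bigl\{\rho_{g_{1}}^{(p,q)}(f_{k},\varphi),\rho_{g_{2}}^{(p,q)}(f_{k},\varphi)\bigr\},\qquad k=1,2.
\]
Second, since $g_{1}\cdot g_{2}$ has Property (A), I would invoke Theorem \ref{t9.7} with base function $g_{1}\cdot g_{2}$ to obtain
\[
\rho_{g_{1}\cdot g_{2}}^{(p,q)}(f_{1}\cdot f_{2},\varphi)\leq \max\bigl\{\rho_{g_{1}\cdot g_{2}}^{(p,q)}(f_{1},\varphi),\rho_{g_{1}\cdot g_{2}}^{(p,q)}(f_{2},\varphi)\bigr\},
\]
and substituting the two equalities from the first step into the right-hand side gives exactly the claimed bound. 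Third, for the equality assertion, I would argue that the simultaneous strict inequalities $\rho_{g_{1}}^{(p,q)}(f_{i},\varphi)<\rho_{g_{1}}^{(p,q)}(f_{j},\varphi)$ and $\rho_{g_{2}}^{(p,q)}(f_{i},\varphi)<\rho_{g_{2}}^{(p,q)}(f_{j},\varphi)$ force one of the two minima to strictly dominate the other; by the first-step equalities this in turn forces $\rho_{g_{1}\cdot g_{2}}^{(p,q)}(f_{1},\varphi)\neq\rho_{g_{1}\cdot g_{2}}^{(p,q)}(f_{2},\varphi)$, which is precisely the equality condition in Theorem \ref{t9.7}, so the outer inequality collapses to an equality.

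The step I expect to require the most care is not analytic but bookkeeping: each invocation of Theorem \ref{t9.7} or Theorem \ref{t9.10} demands that the correct factor possess Property (A), and the strict-inequality/regular-growth pairing in the equality clauses must be matched up correctly with the right pair of indices $(i,j)$. Conditions $(i)$ and $(ii)$ are tailored to supply Property (A) of $g_{i}$ and regular growth of $f_{k}$ with respect to the dominating $g_{j}$, and the standing hypothesis provides Property (A) for $g_{1}\cdot g_{2}$, so no additional assumption is needed; nevertheless a careful index audit is required before the chain closes. Once that verification is done, the proof is purely a cascade of the already-established theorems, exactly parallel to Theorem \ref{t9.5}, and no new estimates involving $M_{g}$, Lemma \ref{l9.2}, or Lemma \ref{l9.6} are needed at this level.
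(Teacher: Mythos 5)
Your proposal is correct and follows exactly the route the paper intends: the paper omits the proof of Theorem \ref{t9.11} with the remark that it is carried out in the line of Theorem \ref{t9.5}, and your argument is precisely that proof with Theorem \ref{t9.7} and Theorem \ref{t9.10} substituted for Theorem \ref{t9.1x} and Theorem \ref{t9.4}, including the same max--min comparison for the equality clause.
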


\begin{theorem}
\label{t9.12} Let $f_{1},f_{2}$ be any two meromorphic functions and $g_{1}$%
, $g_{2}$ be any two entire functions. Also let $g_{1}\cdot g_{2}$, $g_{1}$
and $g_{2}$ be satisfy the Property (A). Then%
\begin{eqnarray*}
&&\lambda _{g_{1}\cdot g_{2}}^{\left( p,q\right) }\left( f_{1}\cdot
f_{2},\varphi \right) \\
&\geq &\min \left[ \max \left\{ \lambda _{g_{1}}^{\left( p,q\right) }\left(
f_{1},\varphi \right) ,\lambda _{g_{1}}^{\left( p,q\right) }\left(
f_{2},\varphi \right) \right\} ,\max \left\{ \lambda _{g_{2}}^{\left(
p,q\right) }\left( f_{1},\varphi \right) ,\lambda _{g_{2}}^{\left(
p,q\right) }\left( f_{2},\varphi \right) \right\} \right]
\end{eqnarray*}%
when the following two conditions holds:\newline
$\left( i\right) $ $\lambda _{g_{1}}^{\left( p,q\right) }\left(
f_{i},\varphi \right) >\lambda _{g_{1}}^{\left( p,q\right) }\left(
f_{j},\varphi \right) $ with at least $f_{j}$ is of regular relative $\left(
p,q\right) $-$\varphi $ growth with respect to $g_{1}$ for $i$ $=$ $1,$ $2$, 
$j$ $=$ $1,2$ and $i\neq j$; and\newline
$\left( ii\right) $ $\lambda _{g_{2}}^{\left( p,q\right) }\left(
f_{i},\varphi \right) >\lambda _{g_{2}}^{\left( p,q\right) }\left(
f_{j},\varphi \right) $ with at least $f_{j}$ is of regular relative $\left(
p,q\right) $-$\varphi $ growth with respect to $g_{2}$ for $i$ $=$ $1,$ $2$, 
$j$ $=$ $1,2$ and $i\neq j$.\newline
The equality holds when $\lambda _{g_{i}}^{\left( p,q\right) }\left(
f_{1},\varphi \right) <\lambda _{g_{j}}^{\left( p,q\right) }\left(
f_{1},\varphi \right) $ and $\lambda _{g_{i}}^{\left( p,q\right) }\left(
f_{2},\varphi \right) <\lambda _{g_{j}}^{\left( p,q\right) }\left(
f_{2},\varphi \right) $ holds simultaneously for $i=1,2;$ $j=1,2\ $and $%
i\neq j.$
\end{theorem}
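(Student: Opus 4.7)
The plan is to mimic the chain used in Theorem \ref{t9.6}, substituting the product-version lemmas (Theorems \ref{t9.8} and \ref{t9.9}) for the sum-versions used there. No additional analytic input is required beyond Property (A) and Lemmas \ref{l9.2}--\ref{l9.6}, since those have already been packaged into the cited theorems.

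First, under hypotheses $(i)$ and $(ii)$, I would apply the equality clause of Theorem \ref{t9.8} twice---once with $g_{1}$ and once with $g_{2}$ as the reference entire function. The regularity and strict inequality assumptions in $(i)$ and $(ii)$ are tailored precisely to trigger those equality clauses, yielding
\[
\lambda _{g_{1}}^{(p,q)}\!\left( f_{1}\cdot f_{2},\varphi \right) =\max \left\{ \lambda _{g_{1}}^{(p,q)}(f_{1},\varphi ),\;\lambda _{g_{1}}^{(p,q)}(f_{2},\varphi )\right\}
\]
and the analogous identity with $g_{2}$ in place of $g_{1}$.

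Next, I would apply Theorem \ref{t9.9} with the meromorphic function $f_{1}\cdot f_{2}$ playing the role of ``$f_{1}$'', using that $g_{1}$, $g_{2}$, and $g_{1}\cdot g_{2}$ all satisfy Property (A). This yields
\[
\lambda _{g_{1}\cdot g_{2}}^{(p,q)}\!\left( f_{1}\cdot f_{2},\varphi \right) \geq \min \left\{ \lambda _{g_{1}}^{(p,q)}(f_{1}\cdot f_{2},\varphi ),\;\lambda _{g_{2}}^{(p,q)}(f_{1}\cdot f_{2},\varphi )\right\} .
\]
Substituting the two equalities from the previous step on the right-hand side produces the stated inequality.

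For the equality clause, the strategy mirrors the closing paragraph of Theorem \ref{t9.6}. The assumption $\lambda _{g_{i}}^{(p,q)}(f_{k},\varphi )<\lambda _{g_{j}}^{(p,q)}(f_{k},\varphi )$ for both $k=1,2$ (with $i\neq j$) forces a strict inequality between $\max \{\lambda _{g_{1}}^{(p,q)}(f_{1},\varphi ),\lambda _{g_{1}}^{(p,q)}(f_{2},\varphi )\}$ and $\max \{\lambda _{g_{2}}^{(p,q)}(f_{1},\varphi ),\lambda _{g_{2}}^{(p,q)}(f_{2},\varphi )\}$, which---via the equality clauses of Theorem \ref{t9.8} invoked once more---translates into $\lambda _{g_{1}}^{(p,q)}(f_{1}\cdot f_{2},\varphi )\neq \lambda _{g_{2}}^{(p,q)}(f_{1}\cdot f_{2},\varphi )$. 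This is exactly the extra hypothesis required by the equality clause of Theorem \ref{t9.9} (applied to the single meromorphic function $f_{1}\cdot f_{2}$), so the outer $\geq $ becomes $=$. The main obstacle is therefore purely bookkeeping: one must carefully match the six regularity/strict-inequality conditions in $(i)$, $(ii)$, and the equality clause with the appropriate invocation of Theorem \ref{t9.8} (twice, for $f_{1}\cdot f_{2}$ relative to $g_{1}$ and to $g_{2}$) and Theorem \ref{t9.9} (once, for $g_{1}\cdot g_{2}$ relative to $f_{1}\cdot f_{2}$).
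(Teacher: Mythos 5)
Your proposal is correct and follows essentially the same route the paper intends: the paper omits the proof of this theorem, stating it is to be carried out along the lines of Theorem \ref{t9.6}, whose argument is exactly your chain---use conditions $(i)$ and $(ii)$ to invoke the equality clauses of the product analogue of Theorem \ref{t9.2x} (here Theorem \ref{t9.8}) for both $g_{1}$ and $g_{2}$, then apply the product analogue of Theorem \ref{t9.3} (here Theorem \ref{t9.9}) to $f_{1}\cdot f_{2}$, and finally observe that the simultaneous strict inequalities force $\lambda _{g_{1}}^{\left( p,q\right) }\left( f_{1}\cdot f_{2},\varphi \right) \neq \lambda _{g_{2}}^{\left( p,q\right) }\left( f_{1}\cdot f_{2},\varphi \right)$, which triggers the equality clause of Theorem \ref{t9.9}. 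Your bookkeeping of which hypotheses feed which equality clause matches the paper's template, so no further comment is needed.
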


\begin{theorem}
\label{t9.11A} Let $f_{1},f_{2}$ be any two meromorphic functions and $g_{1}$%
, $g_{2}$ be any two entire functions such that $\frac{f_{1}}{f_{2}}$ is
meromorphic and $\frac{g_{1}}{g_{2}}$ is entire. Also let $\frac{g_{1}}{g_{2}%
}$ satisfy the Property (A). Then%
\begin{eqnarray*}
&&\rho _{\frac{g_{1}}{g_{2}}}^{\left( p,q\right) }\left( \frac{f_{1}}{f_{2}}%
,\varphi \right) \\
&\leq &\max \left[ \min \left\{ \rho _{g_{1}}^{\left( p,q\right) }\left(
f_{1},\varphi \right) ,\rho _{g_{2}}^{\left( p,q\right) }\left(
f_{1},\varphi \right) \right\} ,\min \left\{ \rho _{g_{1}}^{\left(
p,q\right) }\left( f_{2},\varphi \right) ,\rho _{g_{2}}^{\left( p,q\right)
}\left( f_{2},\varphi \right) \right\} \right]
\end{eqnarray*}%
when the following two conditions holds:\newline
$\left( i\right) $ At least $f_{1}$ is of regular relative $\left(
p,q\right) $-$\varphi $ growth with respect to $g_{2}$ and $\rho
_{g_{1}}^{\left( p,q\right) }\left( f_{1},\varphi \right) \neq \rho
_{g_{2}}^{\left( p,q\right) }\left( f_{1},\varphi \right) $; and\newline
$\left( ii\right) $ At least $f_{2}$ is of regular relative $\left(
p,q\right) $-$\varphi $ growth with respect to $g_{2}$ and $\rho
_{g_{1}}^{\left( p,q\right) }\left( f_{2},\varphi \right) \neq \rho
_{g_{2}}^{\left( p,q\right) }\left( f_{2},\varphi \right) $.\newline
The equality holds when $\rho _{g_{1}}^{\left( p,q\right) }\left(
f_{i},\varphi \right) <\rho _{g_{1}}^{\left( p,q\right) }\left(
f_{j},\varphi \right) $ and $\rho _{g_{2}}^{\left( p,q\right) }\left(
f_{i},\varphi \right) <\rho _{g_{2}}^{\left( p,q\right) }\left(
f_{j},\varphi \right) $ holds simultaneously for $i=1,2;$ $j=1,2\ $and $%
i\neq j.$
\end{theorem}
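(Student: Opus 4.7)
The plan is to mirror the argument used for Theorem \ref{t9.5}, but with the multiplicative/division analogues already established earlier in the paper. Recall that Theorem \ref{t9.5} was obtained by combining Theorem \ref{t9.1x} (sum in the ``numerator'' slot) with Theorem \ref{t9.4} (sum in the ``denominator'' slot). Here I would use Theorem \ref{t9.7} in place of Theorem \ref{t9.1x} and Theorem \ref{t9.10A} in place of Theorem \ref{t9.4}; these are exactly the quotient versions, so no new analytic machinery (beyond Lemmas \ref{l9.2} and \ref{l9.6}, which are already baked into those theorems) is required.

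The first step is to apply Theorem \ref{t9.7} with reference entire function $\frac{g_{1}}{g_{2}}$, which is legitimate because $\frac{g_{1}}{g_{2}}$ is assumed entire and satisfies Property (A), and $\frac{f_{1}}{f_{2}}$ is meromorphic by hypothesis. This yields
\begin{equation*}
\rho _{\frac{g_{1}}{g_{2}}}^{\left( p,q\right) }\left( \tfrac{f_{1}}{f_{2}},\varphi \right) \leq \max \left\{ \rho _{\frac{g_{1}}{g_{2}}}^{\left( p,q\right) }\left( f_{1},\varphi \right) ,\,\rho _{\frac{g_{1}}{g_{2}}}^{\left( p,q\right) }\left( f_{2},\varphi \right) \right\} .
\end{equation*}
The second step is to apply Theorem \ref{t9.10A} separately to $f_{1}$ and to $f_{2}$; conditions (i) and (ii) are exactly the hypotheses needed to invoke the equality clause of that theorem, producing
\begin{equation*}
\rho _{\frac{g_{1}}{g_{2}}}^{\left( p,q\right) }\left( f_{k},\varphi \right) =\min \left\{ \rho _{g_{1}}^{\left( p,q\right) }\left( f_{k},\varphi \right) ,\,\rho _{g_{2}}^{\left( p,q\right) }\left( f_{k},\varphi \right) \right\} \qquad (k=1,2).
\end{equation*}
Substituting the second display into the first delivers the claimed upper bound.

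For the equality assertion I would reuse the bookkeeping step that appears at the end of the proof of Theorem \ref{t9.5}. The assumed simultaneous strict inequalities $\rho _{g_{1}}^{\left( p,q\right) }\left( f_{i},\varphi \right) <\rho _{g_{1}}^{\left( p,q\right) }\left( f_{j},\varphi \right) $ and $\rho _{g_{2}}^{\left( p,q\right) }\left( f_{i},\varphi \right) <\rho _{g_{2}}^{\left( p,q\right) }\left( f_{j},\varphi \right) $ force
\begin{equation*}
\min \{\rho _{g_{1}}^{\left( p,q\right) }\left( f_{i},\varphi \right) ,\rho _{g_{2}}^{\left( p,q\right) }\left( f_{i},\varphi \right) \}\neq \min \{\rho _{g_{1}}^{\left( p,q\right) }\left( f_{j},\varphi \right) ,\rho _{g_{2}}^{\left( p,q\right) }\left( f_{j},\varphi \right) \},
\end{equation*}
and, via the identification obtained in the second step above, this is the same as $\rho _{g_{1}/g_{2}}^{\left( p,q\right) }\left( f_{1},\varphi \right) \neq \rho _{g_{1}/g_{2}}^{\left( p,q\right) }\left( f_{2},\varphi \right) $. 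The equality clause of Theorem \ref{t9.7} then promotes the inequality in the first step to an equality, completing the argument.

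The main obstacle I foresee is purely notational bookkeeping: one must verify that the hypothesis ``$f_{k}$ is of regular relative $(p,q)$-$\varphi $ growth with respect to $g_{2}$'' together with $\rho _{g_{1}}^{\left( p,q\right) }\left( f_{k},\varphi \right) \neq \rho _{g_{2}}^{\left( p,q\right) }\left( f_{k},\varphi \right) $ packaged inside (i) and (ii) matches the precise equality hypothesis of Theorem \ref{t9.10A}, and that no additional regularity of $\frac{f_{1}}{f_{2}}$ itself is required to invoke Theorem \ref{t9.7}. Both are checks rather than new ideas; once they are settled, the proof reduces to a one-line composition of the two theorems.
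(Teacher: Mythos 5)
Your proposal is exactly the argument the paper intends: the paper omits the proof of Theorem \ref{t9.11A}, stating only that it can be carried out along the lines of Theorem \ref{t9.5}, and your composition of Theorem \ref{t9.7} (applied with reference entire function $\frac{g_{1}}{g_{2}}$ and the quotient $\frac{f_{1}}{f_{2}}$) with the equality clause of Theorem \ref{t9.10A} is precisely that adaptation. The only friction is the one you yourself anticipated: the equality clause of Theorem \ref{t9.10A} as stated also requires $g_{1}$ to satisfy Property (A), a hypothesis absent from conditions $(i)$--$(ii)$ of Theorem \ref{t9.11A} (though its analogue does appear in the parallel Theorem \ref{t9.11}), so strictly that extra assumption must be carried along --- a defect of the paper's statement rather than of your argument.
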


\begin{theorem}
\label{t9.12A} Let $f_{1},f_{2}$ be any two meromorphic functions and $g_{1}$%
, $g_{2}$ be any two entire functions such that $\frac{f_{1}}{f_{2}}$ is
meromorphic and $\frac{g_{1}}{g_{2}}$ is entire. Also let $\frac{g_{1}}{g_{2}%
}$, $g_{1}$ and $g_{2}$ be satisfy the Property (A). Then%
\begin{eqnarray*}
&&\lambda _{\frac{g_{1}}{g_{2}}}^{\left( p,q\right) }\left( \frac{f_{1}}{%
f_{2}},\varphi \right) \\
&\geq &\min \left[ \max \left\{ \lambda _{g_{1}}^{\left( p,q\right) }\left(
f_{1},\varphi \right) ,\lambda _{g_{1}}^{\left( p,q\right) }\left(
f_{2},\varphi \right) \right\} ,\max \left\{ \lambda _{g_{2}}^{\left(
p,q\right) }\left( f_{1},\varphi \right) ,\lambda _{g_{2}}^{\left(
p,q\right) }\left( f_{2},\varphi \right) \right\} \right]
\end{eqnarray*}%
when the following two conditions hold:\newline
$\left( i\right) $ At least $f_{2}$ is of regular relative $\left(
p,q\right) $-$\varphi $ growth with respect to $g_{1}$ and $\lambda
_{g_{1}}^{\left( p,q\right) }\left( f_{1},\varphi \right) \neq \lambda
_{g_{1}}^{\left( p,q\right) }\left( f_{2},\varphi \right) $; and\newline
$\left( ii\right) $ At least $f_{2}$ is of regular relative $\left(
p,q\right) $-$\varphi $ growth with respect to $g_{2}$ and $\lambda
_{g_{2}}^{\left( p,q\right) }\left( f_{1},\varphi \right) \neq \lambda
_{g_{2}}^{\left( p,q\right) }\left( f_{2},\varphi \right) $.\newline
The equality holds when $\lambda _{g_{i}}^{\left( p,q\right) }\left(
f_{1},\varphi \right) <\lambda _{g_{j}}^{\left( p,q\right) }\left(
f_{1},\varphi \right) $ and $\lambda _{g_{i}}^{\left( p,q\right) }\left(
f_{2},\varphi \right) <\lambda _{g_{j}}^{\left( p,q\right) }\left(
f_{2},\varphi \right) $ holds simultaneously for $i=1,2;$ $j=1,2\ $and $%
i\neq j.$
\end{theorem}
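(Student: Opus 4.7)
The plan is to mimic the proof of Theorem \ref{t9.6}, replacing at every stage the sum/difference by the quotient and invoking Theorem \ref{t9.8 A} in place of Theorem \ref{t9.2x}, together with the quotient half of Theorem \ref{t9.9} in place of Theorem \ref{t9.3}. The key observation is that conditions $(i)$ and $(ii)$ of the statement are precisely the equality hypotheses of Theorem \ref{t9.8 A} with the base entire function taken to be $g_{1}$ and $g_{2}$, respectively.

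First, for each $k=1,2$, I would apply Theorem \ref{t9.8 A} to the pair $f_{1},f_{2}$ with respect to $g_{k}$. Conditions $(i)$ and $(ii)$ supply, for $k=1$ and $k=2$ in turn, the requirement that at least one of $f_{1},f_{2}$ has regular relative $\left(p,q\right)$-$\varphi$ growth with respect to $g_{k}$ together with $\lambda_{g_{k}}^{\left(p,q\right)}\left(f_{1},\varphi\right)\neq \lambda_{g_{k}}^{\left(p,q\right)}\left(f_{2},\varphi\right)$. Since $g_{1}$ and $g_{2}$ each satisfy Property (A) by hypothesis, Theorem \ref{t9.8 A} yields
\begin{equation*}
\lambda_{g_{k}}^{\left(p,q\right)}\!\left(\frac{f_{1}}{f_{2}},\varphi\right)=\max\left\{\lambda_{g_{k}}^{\left(p,q\right)}\left(f_{1},\varphi\right),\lambda_{g_{k}}^{\left(p,q\right)}\left(f_{2},\varphi\right)\right\},\qquad k=1,2.
\end{equation*}

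Next, $f_{1}/f_{2}$ is meromorphic and $g_{1}/g_{2}$ is entire with Property (A), so the quotient part of Theorem \ref{t9.9}, applied with $f_{1}/f_{2}$ playing the role of the meromorphic function, produces
\begin{equation*}
\lambda_{\frac{g_{1}}{g_{2}}}^{\left(p,q\right)}\!\left(\frac{f_{1}}{f_{2}},\varphi\right)\geq \min\left\{\lambda_{g_{1}}^{\left(p,q\right)}\!\left(\frac{f_{1}}{f_{2}},\varphi\right),\lambda_{g_{2}}^{\left(p,q\right)}\!\left(\frac{f_{1}}{f_{2}},\varphi\right)\right\}.
\end{equation*}
Substituting the identities from the previous display into the right-hand side gives the asserted inequality.

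For the equality clause, I would assume that $\lambda_{g_{i}}^{\left(p,q\right)}\left(f_{1},\varphi\right)<\lambda_{g_{j}}^{\left(p,q\right)}\left(f_{1},\varphi\right)$ and $\lambda_{g_{i}}^{\left(p,q\right)}\left(f_{2},\varphi\right)<\lambda_{g_{j}}^{\left(p,q\right)}\left(f_{2},\varphi\right)$ hold simultaneously for some $\{i,j\}=\{1,2\}$. Both summands defining the $g_{j}$-maximum then strictly exceed their $g_{i}$-counterparts, so
\begin{equation*}
\max\left\{\lambda_{g_{i}}^{\left(p,q\right)}\left(f_{1},\varphi\right),\lambda_{g_{i}}^{\left(p,q\right)}\left(f_{2},\varphi\right)\right\}<\max\left\{\lambda_{g_{j}}^{\left(p,q\right)}\left(f_{1},\varphi\right),\lambda_{g_{j}}^{\left(p,q\right)}\left(f_{2},\varphi\right)\right\},
\end{equation*}
which via the first step forces $\lambda_{g_{1}}^{\left(p,q\right)}(f_{1}/f_{2},\varphi)\neq \lambda_{g_{2}}^{\left(p,q\right)}(f_{1}/f_{2},\varphi)$. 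This strict separation, together with the already-assumed Property (A) for $g_{1}$, triggers the equality clause of the quotient half of Theorem \ref{t9.9} and upgrades the $\geq$ in the previous step to equality. The only subtlety I foresee is exactly this bookkeeping---making sure that the pairwise strict inequalities really do propagate through the outer maxima to separate $\lambda_{g_{1}}^{\left(p,q\right)}(f_{1}/f_{2},\varphi)$ from $\lambda_{g_{2}}^{\left(p,q\right)}(f_{1}/f_{2},\varphi)$; everything else is a direct substitution into Theorems \ref{t9.8 A} and \ref{t9.9}.
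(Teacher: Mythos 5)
Your argument is correct and is precisely the route the paper intends: the paper omits the proof of Theorem \ref{t9.12A}, stating only that it follows ``in the line of Theorem \ref{t9.6}'', and your proposal carries out exactly that template with Theorem \ref{t9.8 A} replacing Theorem \ref{t9.2x} and the quotient half of Theorem \ref{t9.9} replacing Theorem \ref{t9.3}. The bookkeeping for the equality clause (the strict inequalities propagating through the maxima to give $\lambda _{g_{1}}^{\left( p,q\right) }\left( \frac{f_{1}}{f_{2}},\varphi \right) \neq \lambda _{g_{2}}^{\left( p,q\right) }\left( \frac{f_{1}}{f_{2}},\varphi \right) $, which triggers the equality case of Theorem \ref{t9.9}) is also exactly what the analogous step in the proof of Theorem \ref{t9.6} does.
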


\qquad Next we intend to find out the sum and product theorems of relative $%
\left( p,q\right) $-$\varphi $ type ( respectively relative $\left(
p,q\right) $-$\varphi $ lower type) and relative $\left( p,q\right) $-$%
\varphi $ weak type of meromorphic function with respect to an entire
function taking into consideration of the above theorems.

\begin{theorem}
\label{t9.13} Let $f_{1},f_{2}$ be any two meromorphic functions and $g_{1}$%
, $g_{2}$ be any two entire functions. Also let $\rho _{g_{1}}^{\left(
p,q\right) }\left( f_{1},\varphi \right) $, $\rho _{g_{1}}^{\left(
p,q\right) }\left( f_{2},\varphi \right) $, $\rho _{g_{2}}^{\left(
p,q\right) }\left( f_{1},\varphi \right) $ and $\rho _{g_{2}}^{\left(
p,q\right) }\left( f_{2},\varphi \right) $ are all non zero and finite.%
\newline
\textbf{(A) }If $\rho _{g_{1}}^{\left( p,q\right) }\left( f_{i},\varphi
\right) >\rho _{g_{1}}^{\left( p,q\right) }\left( f_{j},\varphi \right) $
for $i,$ $j$ $=$ $1,2$; $i\neq j,$ and $g_{1}$ has the Property (A)$,$ then%
\begin{equation*}
\sigma _{g_{1}}^{\left( p,q\right) }\left( f_{1}\pm f_{2},\varphi \right)
=\sigma _{g_{1}}^{\left( p,q\right) }\left( f_{i},\varphi \right) \text{
and\ }\overline{\sigma }_{g_{1}}^{\left( p,q\right) }\left( f_{1}\pm
f_{2},\varphi \right) =\overline{\sigma }_{g_{1}}^{\left( p,q\right) }\left(
f_{i},\varphi \right) \mid i=1,2.
\end{equation*}%
\textbf{(B)} If $\rho _{g_{i}}^{\left( p,q\right) }\left( f_{1},\varphi
\right) <\rho _{g_{j}}^{\left( p,q\right) }\left( f_{1},\varphi \right) $
with at least $f_{1}$ is of regular relative $\left( p,q\right) $-$\varphi $
growth with respect to $g_{j}$ for $i$, $j$ $=$ $1,2$; $i\neq j$ and $%
g_{1}\pm g_{2}$ has the Property (A)$,$ then%
\begin{equation*}
\sigma _{g_{1}\pm g_{2}}^{\left( p,q\right) }\left( f_{1},\varphi \right)
=\sigma _{g_{i}}^{\left( p,q\right) }\left( f_{1},\varphi \right) \text{
and\ }\overline{\sigma }_{g_{1}\pm g_{2}}^{\left( p,q\right) }\left(
f_{1},\varphi \right) =\overline{\sigma }_{g_{i}}^{\left( p,q\right) }\left(
f_{1},\varphi \right) \mid i=1,2.
\end{equation*}%
\textbf{(C)} Assume the functions $f_{1},f_{2},g_{1}$ and $g_{2}$ satisfy
the following conditions:\newline
$\left( i\right) $ $\rho _{g_{i}}^{\left( p,q\right) }\left( f_{1},\varphi
\right) <\rho _{g_{j}}^{\left( p,q\right) }\left( f_{1},\varphi \right) $
with at least $f_{1}$ is of regular relative $\left( p,q\right) $-$\varphi $
growth with respect to $g_{j}$ for $i$ $=$ $1,$ $2$, $j$ $=$ $1,2$ and $%
i\neq j$;\newline
$\left( ii\right) $ $\rho _{g_{i}}^{\left( p,q\right) }\left( f_{2},\varphi
\right) <\rho _{g_{j}}^{\left( p,q\right) }\left( f_{2},\varphi \right) $
with at least $f_{2}$ is of regular relative $\left( p,q\right) $-$\varphi $
growth with respect to $g_{j}$ for $i$ $=$ $1,$ $2$, $j$ $=$ $1,2$ and $%
i\neq j$;\newline
$\left( iii\right) $ $\rho _{g_{1}}^{\left( p,q\right) }\left( f_{i},\varphi
\right) >\rho _{g_{1}}^{\left( p,q\right) }\left( f_{j},\varphi \right) $
and $\rho _{g_{2}}^{\left( p,q\right) }\left( f_{i},\varphi \right) >\rho
_{g_{2}}^{\left( p,q\right) }\left( f_{j},\varphi \right) $ holds
simultaneously for $i=1,2;$ $j=1,2\ $and $i\neq j$;\newline
$\left( iv\right) \rho _{g_{m}}^{\left( p,q\right) }\left( f_{l},\varphi
\right) =\max \left[ \min \left\{ \rho _{g_{1}}^{\left( p,q\right) }\left(
f_{1},\varphi \right) ,\rho _{g_{2}}^{\left( p,q\right) }\left(
f_{1},\varphi \right) \right\} ,\min \left\{ \rho _{g_{1}}^{\left(
p,q\right) }\left( f_{2},\varphi \right) ,\rho _{g_{2}}^{\left( p,q\right)
}\left( f_{2},\varphi \right) \right\} \right] \mid l,m=1,2$, and $g_{1}\pm
g_{2}$ has the Property (A);\newline
then%
\begin{equation*}
\sigma _{g_{1}\pm g_{2}}^{\left( p,q\right) }\left( f_{1}\pm f_{2},\varphi
\right) =\sigma _{g_{m}}^{\left( p,q\right) }\left( f_{l},\varphi \right)
\mid l,m=1,2
\end{equation*}%
and%
\begin{equation*}
\overline{\sigma }_{g_{1}\pm g_{2}}^{\left( p,q\right) }\left( f_{1}\pm
f_{2},\varphi \right) =\overline{\sigma }_{g_{m}}^{\left( p,q\right) }\left(
f_{l},\varphi \right) \mid l,m=1,2.
\end{equation*}
\end{theorem}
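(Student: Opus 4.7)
The plan is to establish parts (A), (B), (C) in order, with (C) obtained by chaining the first two. The main tools throughout are the additive bound $T_{f_{1}\pm f_{2}}(r)\le T_{f_{1}}(r)+T_{f_{2}}(r)+O(1)$ (and its analogue for $T_{g_{1}\pm g_{2}}$), Lemma~\ref{l9.6} for moving between $T$ and $\log M$, and Lemma~\ref{l9.2} which converts Property (A) into a polynomial-argument inequality so that we can absorb multiplicative constants by passing to $\sigma>1$ and then sending $\sigma\to 1^{+}$. For part (A), I would first invoke Theorem~\ref{t9.1x} to identify $\rho_{g_{1}}^{(p,q)}(f_{1}\pm f_{2},\varphi)$ with the larger order $\rho_{g_{1}}^{(p,q)}(f_{i},\varphi)=:\rho$, which is essential because the denominator $[\log^{[q-1]}\varphi(r)]^{\rho}$ in the type definition must be common to both sides. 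Given $\varepsilon>0$, for $k=i,j$ and all sufficiently large $r$,
\begin{equation*}
T_{f_{k}}(r)\le T_{g_{1}}\Bigl[\exp^{[p-1]}\bigl[(\sigma_{g_{1}}^{(p,q)}(f_{k},\varphi)+\varepsilon)(\log^{[q-1]}\varphi(r))^{\rho_{g_{1}}^{(p,q)}(f_{k},\varphi)}\bigr]\Bigr],
\end{equation*}
and since $\rho_{g_{1}}^{(p,q)}(f_{j},\varphi)<\rho$, the $f_{j}$-bound is eventually dominated by the $f_{i}$-bound. Summing, invoking Lemma~\ref{l9.6} to reach $M_{g_{1}}$, applying Lemma~\ref{l9.2} to absorb constants via $\sigma>1$, and letting $\sigma\to 1^{+}$ then $\varepsilon\to 0^{+}$ gives $\sigma_{g_{1}}^{(p,q)}(f_{1}\pm f_{2},\varphi)\le \sigma_{g_{1}}^{(p,q)}(f_{i},\varphi)$. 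The reverse inequality follows by writing $f_{i}=\pm(f_{1}\pm f_{2})\mp f_{j}$ and repeating the argument; the same reasoning transports verbatim to $\overline{\sigma}$ with $\limsup$ replaced by $\liminf$.

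Part (B) mirrors Theorem~\ref{t9.3}. From the definition of $\sigma_{g_{k}}^{(p,q)}(f_{1},\varphi)$ one has
\begin{equation*}
T_{g_{k}}\Bigl[\exp^{[p-1]}\bigl[(\sigma_{g_{k}}^{(p,q)}(f_{1},\varphi)-\varepsilon)(\log^{[q-1]}\varphi(r))^{\rho_{g_{k}}^{(p,q)}(f_{1},\varphi)}\bigr]\Bigr]\le T_{f_{1}}(r)
\end{equation*}
along an appropriate sequence of $r$. Adding the two inequalities $k=1,2$, using $T_{g_{1}\pm g_{2}}\le T_{g_{1}}+T_{g_{2}}+O(1)$, Lemma~\ref{l9.6}, and Property~(A) of $g_{1}\pm g_{2}$ via Lemma~\ref{l9.2}, and then invoking Theorem~\ref{t9.4} to match the denominators through $\rho_{g_{1}\pm g_{2}}^{(p,q)}(f_{1},\varphi)=\rho_{g_{i}}^{(p,q)}(f_{1},\varphi)$, I arrive at $\sigma_{g_{1}\pm g_{2}}^{(p,q)}(f_{1},\varphi)\le \sigma_{g_{i}}^{(p,q)}(f_{1},\varphi)$. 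The reverse inequality comes from $g_{i}=(g_{1}\pm g_{2})\mp g_{j}$ exactly as in (A), and the argument for $\overline{\sigma}$ is identical.

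Finally, part (C) is obtained by chaining (A) and (B). By condition (iii), the $f$-index $l$ realising the maximum of $\rho_{g_{m}}^{(p,q)}(\cdot,\varphi)$ is the same under $g_{1}$ and $g_{2}$, so applying part (A) separately with $g=g_{1}$ and with $g=g_{2}$ yields $\sigma_{g_{m}}^{(p,q)}(f_{1}\pm f_{2},\varphi)=\sigma_{g_{m}}^{(p,q)}(f_{l},\varphi)$ for $m=1,2$; simultaneously, Theorem~\ref{t9.1x} gives $\rho_{g_{m}}^{(p,q)}(f_{1}\pm f_{2},\varphi)=\rho_{g_{m}}^{(p,q)}(f_{l},\varphi)$. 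Conditions (i), (ii) together with Theorem~\ref{t9.5} and condition (iv) force the two values $\rho_{g_{1}}^{(p,q)}(f_{l},\varphi)$ and $\rho_{g_{2}}^{(p,q)}(f_{l},\varphi)$ to be distinct, so the strict-inequality hypothesis of (B) holds for the meromorphic function $f_{1}\pm f_{2}$ relative to $g_{1}$ and $g_{2}$. Applying part (B) to $f_{1}\pm f_{2}$ with respect to $g_{1}\pm g_{2}$ then produces $\sigma_{g_{1}\pm g_{2}}^{(p,q)}(f_{1}\pm f_{2},\varphi)=\sigma_{g_{m}}^{(p,q)}(f_{1}\pm f_{2},\varphi)=\sigma_{g_{m}}^{(p,q)}(f_{l},\varphi)$ for the $m$ realising the minimum, and the $\overline{\sigma}$ identity follows by the same chain. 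The main obstacle I anticipate is the bookkeeping in (C): one must verify case by case that conditions (i)--(iv) are strong enough to preserve all the strict inequalities required for the hypotheses of (A) and (B) to apply simultaneously, with no accidental coincidence of $\rho$-values that would invalidate one of the two invocations.
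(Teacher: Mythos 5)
Your proposal follows essentially the same route as the paper: forward inequalities from $T_{f_1\pm f_2}\le T_{f_1}+T_{f_2}+O(1)$ (resp. $T_{g_1\pm g_2}\le T_{g_1}+T_{g_2}+O(1)$) combined with domination of the smaller-order term, Lemmas \ref{l9.6} and \ref{l9.2} to absorb the multiplicative constant via Property (A), Theorem \ref{t9.1x} (resp. \ref{t9.4}) to match the exponent in the denominator, and the decomposition trick $f_i=(f_1\pm f_2)\mp f_j$ (resp. $g_i=(g_1\pm g_2)\mp g_j$) for the reverse inequality; part (C) is then obtained by chaining, exactly as the paper does (the paper likewise omits the bookkeeping you flag).

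Two small points in your part (B) should be tightened. First, the summation step there yields $\sigma_{g_1\pm g_2}^{(p,q)}(f_1,\varphi)\geq\sigma_{g_i}^{(p,q)}(f_1,\varphi)$, not $\leq$: bounding $T_{g_1\pm g_2}$ above by $(1+\varepsilon_1)T_{f_1}$ makes $T_{g_1\pm g_2}^{-1}(T_{f_1}(r))$ large, so the type can only go up; the $\leq$ direction is the one supplied by $g_i=(g_1\pm g_2)\mp g_j$. You have both mechanisms, just with the labels swapped. Second, one cannot literally ``add the two inequalities $k=1,2$'': for the type $\sigma$ the lower bound $T_{g_k}[\cdots]\le T_{f_1}(r)$ holds only along a sequence of $r$ (possibly different for $k=1$ and $k=2$), and the two arguments $\exp^{[p-1]}\{(\sigma_{g_k}^{(p,q)}(f_1,\varphi)-\varepsilon)[\log^{[q-1]}\varphi(r)]^{\rho_{g_k}^{(p,q)}(f_1,\varphi)}\}$ carry different exponents, whereas $T_{g_1\pm g_2}$ must be evaluated at a single common argument. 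The working mechanism — which the paper uses and which you yourself describe correctly in part (A) — is to evaluate both $T_{g_1}$ and $T_{g_2}$ at the argument attached to the smaller order $\rho_{g_i}^{(p,q)}(f_1,\varphi)$ and use the strict gap $\rho_{g_i}^{(p,q)}(f_1,\varphi)<\rho_{g_j}^{(p,q)}(f_1,\varphi)$ (together with regularity of $f_1$ with respect to $g_j$, which is where that hypothesis enters) to show the $g_j$ term is negligible. With these repairs the argument coincides with the paper's Cases III and IV.
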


\begin{proof}
From the definition of relative $\left( p,q\right) $-$\varphi $ type and
relative $\left( p,q\right) $-$\varphi $ lower type of meromorphic function
with respect to an entire function, we have for all sufficiently large
values of $r$ that%
\begin{equation}
T_{f_{k}}\left( r\right) \leq T_{g_{l}}\left[ \exp ^{\left[ p-1\right]
}\left\{ \left( \sigma _{g_{l}}^{\left( p,q\right) }\left( f_{k},\varphi
\right) +\varepsilon \right) \left[ \log ^{\left[ q-1\right] }\varphi \left(
r\right) \right] ^{\rho _{g_{l}}^{\left( p,q\right) }\left( f_{k}\right)
}\right\} \right] ,  \label{9.15}
\end{equation}%
\begin{equation}
T_{f_{k}}\left( r\right) \geq T_{g_{l}}\left[ \exp ^{\left[ p-1\right]
}\left\{ \left( \overline{\sigma }_{g_{l}}^{\left( p,q\right) }\left(
f_{k},\varphi \right) -\varepsilon \right) \left[ \log ^{\left[ q-1\right]
}\varphi \left( r\right) \right] ^{\rho _{g_{l}}^{\left( p,q\right) }\left(
f_{k}\right) }\right\} \right]  \label{9.15a}
\end{equation}%
and for a sequence of values of $r$ tending to infinity, we obtain that%
\begin{equation}
T_{f_{k}}\left( r\right) \geq T_{g_{l}}\left[ \exp ^{\left[ p-1\right]
}\left\{ \left( \sigma _{g_{l}}^{\left( p,q\right) }\left( f_{k},\varphi
\right) -\varepsilon \right) \left[ \log ^{\left[ q-1\right] }\varphi \left(
r\right) \right] ^{\rho _{g_{l}}^{\left( p,q\right) }\left( f_{k}\right)
}\right\} \right] ,  \label{9.20}
\end{equation}%
and%
\begin{equation}
T_{f_{k}}\left( r\right) \leq T_{g_{l}}\left[ \exp ^{\left[ p-1\right]
}\left\{ \left( \overline{\sigma }_{g_{l}}^{\left( p,q\right) }\left(
f_{k},\varphi \right) +\varepsilon \right) \left[ \log ^{\left[ q-1\right]
}\varphi \left( r\right) \right] ^{\rho _{g_{l}}^{\left( p,q\right) }\left(
f_{k}\right) }\right\} \right] ,  \label{9.20a}
\end{equation}%
where $\varepsilon >0$ is any arbitrary positive number $k=1,\,2$ and $%
l=1,2. $\medskip \newline
\textbf{Case I.} Suppose that $\rho _{g_{1}}^{\left( p,q\right) }\left(
f_{1},\varphi \right) >\rho _{g_{1}}^{\left( p,q\right) }\left(
f_{2},\varphi \right) $ hold. Also let $\varepsilon \left( >0\right) $ be
arbitrary. Since $T_{f_{1}\pm f_{2}}\left( r\right) \leq T_{f_{1}}\left(
r\right) +T_{f_{2}}\left( r\right) +O(1)$ for all large $r,$ so in view of $%
\left( \ref{9.15}\right) ,$ we get for all sufficiently large values of $r$
that%
\begin{equation*}
T_{f_{1}\pm f_{2}}\left( r\right) \leq ~\ \ \ \ \ \ \ \ \ \ \ \ \ \ \ \ \ \
\ \ \ \ \ \ \ \ \ \ \ \ \ \ \ \ \ \ \ \ \ \ \ \ \ \ \ \ \ \ \ \ \ \ \ \ \ \
\ \ \ \ \ \ \ \ \ \ \ \ \ \ \ \ \ \ \ \ \ \ \ \ \ \ \ \ \ \ \ \ 
\end{equation*}%
\begin{equation}
T_{g_{1}}\left[ \exp ^{\left[ p-1\right] }\left\{ \left( \sigma
_{g_{1}}^{\left( p,q\right) }\left( f_{1},\varphi \right) +\varepsilon
\right) \left[ \log ^{\left[ q-1\right] }\varphi \left( r\right) \right]
^{\rho _{g_{1}}^{\left( p,q\right) }\left( f_{1}\right) }\right\} \right]
\left( 1+A\right) ~.  \label{9.18}
\end{equation}%
where $A=\frac{T_{g_{1}}\left[ \exp ^{\left[ p-1\right] }\left\{ \left(
\sigma _{g_{1}}^{\left( p,q\right) }\left( f_{2},\varphi \right)
+\varepsilon \right) \left[ \log ^{\left[ q-1\right] }\varphi \left(
r\right) \right] ^{\rho _{g_{1}}^{\left( p,q\right) }\left( f_{2},\varphi
\right) }\right\} \right] +O(1)}{T_{g_{1}}\left[ \exp ^{\left[ p-1\right]
}\left\{ \left( \sigma _{g_{1}}^{\left( p,q\right) }\left( f_{1},\varphi
\right) +\varepsilon \right) \left[ \log ^{\left[ q-1\right] }\varphi \left(
r\right) \right] ^{\rho _{g_{1}}^{\left( p,q\right) }\left( f_{1},\varphi
\right) }\right\} \right] },$ and in view of $\rho _{g_{1}}^{\left(
p,q\right) }\left( f_{1},\varphi \right) >\rho _{g_{1}}^{\left( p,q\right)
}\left( f_{2},\varphi \right) $, and for all sufficiently large values of $r$%
, we can make the term $A$ sufficiently small . Hence for any $\alpha
=1+\varepsilon _{1}$, it follows from $\left( \ref{9.18}\right) $ for all
sufficiently large values of $r$ that%
\begin{equation*}
T_{f_{1}\pm f_{2}}\left( r\right) \leq T_{g_{1}}\left[ \exp ^{\left[ p-1%
\right] }\left\{ \left( \sigma _{g_{1}}^{\left( p,q\right) }\left(
f_{1},\varphi \right) +\varepsilon \right) \left[ \log ^{\left[ q-1\right]
}\varphi \left( r\right) \right] ^{\rho _{g_{1}}^{\left( p,q\right) }\left(
f_{1}\right) }\right\} \right] \cdot \left( 1+\varepsilon _{1}\right)
\end{equation*}%
\begin{equation}
i.e.,~T_{f_{1}\pm f_{2}}\left( r\right) \leq T_{g_{1}}\left[ \exp ^{\left[
p-1\right] }\left\{ \left( \sigma _{g_{1}}^{\left( p,q\right) }\left(
f_{1},\varphi \right) +\varepsilon \right) \left[ \log ^{\left[ q-1\right]
}\varphi \left( r\right) \right] ^{\rho _{g_{1}}^{\left( p,q\right) }\left(
f_{1}\right) }\right\} \right] \cdot \alpha ~.  \notag
\end{equation}

\qquad Hence making $\alpha \rightarrow 1+,$ we get in view of Theorem \ref%
{t9.1x}, $\rho _{g_{1}}^{\left( p,q\right) }\left( f_{1},\varphi \right)
>\rho _{g_{1}}^{\left( p,q\right) }\left( f_{2},\varphi \right) $ and above
for all sufficiently large values of $r$ that%
\begin{equation*}
\underset{r\rightarrow \infty }{\lim \sup }\frac{\log ^{\left[ p-1\right]
}T_{g_{1}}^{-1}\left( T_{f_{1}\pm f_{2}}\left( r\right) \right) }{\left[
\log ^{\left[ q-1\right] }\varphi \left( r\right) \right] ^{\rho
_{g_{1}}^{\left( p,q\right) }\left( f_{1}\pm f_{2},\varphi \right) }}\leq
\sigma _{g_{1}}^{\left( p,q\right) }\left( f_{1},\varphi \right)
\end{equation*}%
\begin{equation}
i.e.,~\sigma _{g_{1}}^{\left( p,q\right) }\left( f_{1}\pm f_{2},\varphi
\right) \leq \sigma _{g_{1}}^{\left( p,q\right) }\left( f_{1},\varphi
\right) ~.~\ \ \ \ \ \ \   \label{9.21}
\end{equation}

Now we may consider that $f=f_{1}\pm f_{2}.$ Since $\rho _{g_{1}}^{\left(
p,q\right) }\left( f_{1},\varphi \right) >\rho _{g_{1}}^{\left( p,q\right)
}\left( f_{2},\varphi \right) $ hold. Then $\sigma _{g_{1}}^{\left(
p,q\right) }\left( f,\varphi \right) =\sigma _{g_{1}}^{\left( p,q\right)
}\left( f_{1}\pm f_{2},\varphi \right) \leq \sigma _{g_{1}}^{\left(
p,q\right) }\left( f_{1},\varphi \right) .$ Further, let $f_{1}=\left( f\pm
f_{2}\right) $. Therefore in view of Theorem \ref{t9.1x} and $\rho
_{g_{1}}^{\left( p,q\right) }\left( f_{1},\varphi \right) >\rho
_{g_{1}}^{\left( p,q\right) }\left( f_{2},\varphi \right) $, we obtain that $%
\rho _{g_{1}}^{\left( p,q\right) }\left( f,\varphi \right) >\rho
_{g_{1}}^{\left( p,q\right) }\left( f_{2},\varphi \right) $ holds. Hence in
view of $\left( \ref{9.21}\right) $ $\sigma _{g_{1}}^{\left( p,q\right)
}\left( f_{1},\varphi \right) \leq \sigma _{g_{1}}^{\left( p,q\right)
}\left( f,\varphi \right) =\sigma _{g_{1}}^{\left( p,q\right) }\left(
f_{1}\pm f_{2},\varphi \right) .$ Therefore $\sigma _{g_{1}}^{\left(
p,q\right) }\left( f,\varphi \right) =\sigma _{g_{1}}^{\left( p,q\right)
}\left( f_{1},\varphi \right) \Rightarrow $ $\sigma _{g_{1}}^{\left(
p,q\right) }\left( f_{1}\pm f_{2},\varphi \right) =\sigma _{g_{1}}^{\left(
p,q\right) }\left( f_{1},\varphi \right) $.

\qquad Similarly, if we consider $\rho _{g_{1}}^{\left( p,q\right) }\left(
f_{1},\varphi \right) <\rho _{g_{1}}^{\left( p,q\right) }\left(
f_{2},\varphi \right) ,$ then one can easily verify that $\sigma
_{g_{1}}^{\left( p,q\right) }\left( f_{1}\pm f_{2},\varphi \right) =\sigma
_{g_{1}}^{\left( p,q\right) }\left( f_{2},\varphi \right) $.\medskip \newline
\textbf{Case II.} Let us consider that $\rho _{g_{1}}^{\left( p,q\right)
}\left( f_{1},\varphi \right) >\rho _{g_{1}}^{\left( p,q\right) }\left(
f_{2},\varphi \right) $ hold. Also let $\varepsilon \left( >0\right) $ are
arbitrary. Since $T_{f_{1}\pm f_{2}}\left( r\right) \leq T_{f_{1}}\left(
r\right) +T_{f_{2}}\left( r\right) +O(1)$ for all large $r,$ from $\left( %
\ref{9.15}\right) $ and $\left( \ref{9.20a}\right) ,$ we get for a sequence
of values of $r$ tending to infinity that%
\begin{equation*}
T_{f_{1}\pm f_{2}}\left( r_{n}\right) \leq ~\ \ \ \ \ \ \ \ \ \ \ \ \ \ \ \
\ \ \ \ \ \ \ \ \ \ \ \ \ \ \ \ \ \ \ \ \ \ \ \ \ \ \ \ \ \ \ \ \ \ \ \ \ \
\ \ \ \ \ \ \ \ \ \ \ \ \ \ \ \ \ \ \ \ \ \ \ \ \ \ \ \ \ \ \ \ \ \ \ \ \ \
\ \ \ \ \ \ \ \ \ \ \ \ \ \ 
\end{equation*}%
\begin{equation}
T_{g_{1}}\left[ \exp ^{\left[ p-1\right] }\left\{ \left( \overline{\sigma }%
_{g_{1}}^{\left( p,q\right) }\left( f_{1},\varphi \right) +\varepsilon
\right) \left[ \log ^{\left[ q-1\right] }\varphi \left( r_{n}\right) \right]
^{\rho _{g_{1}}^{\left( p,q\right) }\left( f_{1},\varphi \right) }\right\} %
\right] \left( 1+B\right) ~.  \label{9.185}
\end{equation}%
where $B=\frac{T_{g_{1}}\left[ \exp ^{\left[ p-1\right] }\left\{ \left(
\sigma _{g_{1}}^{\left( p,q\right) }\left( f_{2},\varphi \right)
+\varepsilon \right) \left[ \log ^{\left[ q-1\right] }\varphi \left(
r_{n}\right) \right] ^{\rho _{g_{1}}^{\left( p,q\right) }\left(
f_{2},\varphi \right) }\right\} \right] +O(1)}{T_{g_{1}}\left[ \exp ^{\left[
p-1\right] }\left\{ \left( \overline{\sigma }_{g_{1}}^{\left( p,q\right)
}\left( f_{1},\varphi \right) +\varepsilon \right) \left[ \log ^{\left[ q-1%
\right] }\varphi \left( r_{n}\right) \right] ^{\rho _{g_{1}}^{\left(
p,q\right) }\left( f_{1},\varphi \right) }\right\} \right] },$ and in view
of $\rho _{g_{1}}^{\left( p,q\right) }\left( f_{1},\varphi \right) >\rho
_{g_{1}}^{\left( p,q\right) }\left( f_{2},\varphi \right) $, we can make the
term $B$\ sufficiently small by taking $n$ sufficiently large and therefore
using the similar technique for as executed in the proof of Case I we get
from $\left( \ref{9.185}\right) $ that $\overline{\sigma }_{g_{1}}^{\left(
p,q\right) }\left( f_{1}\pm f_{2},\varphi \right) =\overline{\sigma }%
_{g_{1}}^{\left( p,q\right) }\left( f_{1},\varphi \right) $ when $\rho
_{g_{1}}^{\left( p,q\right) }\left( f_{1},\varphi \right) >\rho
_{g_{1}}^{\left( p,q\right) }\left( f_{2},\varphi \right) $ hold. Likewise,
if we consider $\rho _{g_{1}}^{\left( p,q\right) }\left( f_{1},\varphi
\right) <\rho _{g_{1}}^{\left( p,q\right) }\left( f_{2},\varphi \right) ,$
then one can easily verify that $\overline{\sigma }_{g_{1}}^{\left(
p,q\right) }\left( f_{1}\pm f_{2},\varphi \right) =\overline{\sigma }%
_{g_{1}}^{\left( p,q\right) }\left( f_{2},\varphi \right) $.

\qquad Thus combining Case I and Case II, we obtain the first part of the
theorem.\medskip \newline
\textbf{Case III.} Let us consider that $\rho _{g_{1}}^{\left( p,q\right)
}\left( f_{1},\varphi \right) <\rho _{g_{2}}^{\left( p,q\right) }\left(
f_{1},\varphi \right) $ with at least $f_{1}$ is of regular relative $\left(
p,q\right) $-$\varphi $ growth with respect to $g_{2}.$ We can make the term
\linebreak $C=\frac{T_{g_{2}}\left[ \exp ^{\left[ p-1\right] }\left\{ \left(
\sigma _{g_{1}}^{\left( p,q\right) }\left( f_{1},\varphi \right)
-\varepsilon \right) \left[ \log ^{\left[ q-1\right] }\varphi \left(
r_{n}\right) \right] ^{\rho _{g_{1}}^{\left( p,q\right) }\left(
f_{1},\varphi \right) }\right\} \right] +O(1)}{T_{g_{2}}\left[ \exp ^{\left[
p-1\right] }\left\{ \left( \overline{\sigma }_{g_{2}}^{\left( p,q\right)
}\left( f_{1},\varphi \right) -\varepsilon \right) \left[ \log ^{\left[ q-1%
\right] }\varphi \left( r_{n}\right) \right] ^{\rho _{g_{2}}^{\left(
p,q\right) }\left( f_{1},\varphi \right) }\right\} \right] }$ sufficiently
small by taking $n$ sufficiently large, since $\rho _{g_{1}}^{\left(
p,q\right) }\left( f_{1},\varphi \right) <\rho _{g_{2}}^{\left( p,q\right)
}\left( f_{1},\varphi \right) .$ Hence $C<\varepsilon _{1}.$

\qquad As $T_{g_{1}\pm g_{2}}\left( r\right) \leq T_{g_{1}}\left( r\right)
+T_{g_{2}}\left( r\right) +O(1)$ for all large $r$, we get that%
\begin{equation*}
T_{g_{1}\pm g_{2}}\left( \exp ^{\left[ p-1\right] }\left\{ \left( \sigma
_{g_{1}}^{\left( p,q\right) }\left( f_{1},\varphi \right) -\varepsilon
\right) \left[ \log ^{\left[ q-1\right] }\varphi \left( r_{n}\right) \right]
^{\rho _{g_{1}}^{\left( p,q\right) }\left( f_{1},\varphi \right) }\right\}
\right) \leq
\end{equation*}%
\begin{equation*}
~\ \ \ \ T_{g_{1}}\left[ \exp ^{\left[ p-1\right] }\left\{ \left( \sigma
_{g_{1}}^{\left( p,q\right) }\left( f_{1},\varphi \right) -\varepsilon
\right) \left[ \log ^{\left[ q-1\right] }\varphi \left( r_{n}\right) \right]
^{\rho _{g_{1}}^{\left( p,\right) }\left( f_{1},\varphi \right) }\right\} %
\right] +
\end{equation*}%
\begin{equation*}
T_{g_{2}}\left[ \exp ^{\left[ p-1\right] }\left\{ \left( \sigma
_{g_{1}}^{\left( p,q\right) }\left( f_{1},\varphi \right) -\varepsilon
\right) \left[ \log ^{\left[ q-1\right] }\varphi \left( r_{n}\right) \right]
^{\rho _{g_{1}}^{\left( p,q\right) }\left( f_{1},\varphi \right) }\right\} %
\right] +O(1)~.
\end{equation*}

\qquad Therefore for any $\alpha =1+\varepsilon _{1},$ we obtain in view of $%
C<\varepsilon _{1},$ $\left( \ref{9.15a}\right) $ and $\left( \ref{9.20}%
\right) $ for a sequence of values of $r$ tending to infinity that%
\begin{equation*}
T_{g_{1}\pm g_{2}}\left( \exp ^{\left[ p-1\right] }\left\{ \left( \sigma
_{g_{1}}^{\left( p,q\right) }\left( f_{1},\varphi \right) -\varepsilon
\right) \left[ \log ^{\left[ q-1\right] }\varphi \left( r_{n}\right) \right]
^{\rho _{g_{1}}^{\left( p,q\right) }\left( f_{1},\varphi \right) }\right\}
\right) \leq \alpha T_{f_{1}}\left( r_{n}\right)
\end{equation*}

\qquad Now making $\alpha \rightarrow 1+$, we obtain from above for a
sequence of values of $r$ tending to infinity that%
\begin{equation}
\left( \sigma _{g_{1}}^{\left( p,q\right) }\left( f_{1},\varphi \right)
-\varepsilon \right) \left[ \log ^{\left[ q-1\right] }\varphi \left(
r_{n}\right) \right] ^{\rho _{g_{1}\pm g_{2}}^{\left( p,q\right) }\left(
f_{1},\varphi \right) }<\log ^{\left[ p-1\right] }T_{g_{1}\pm
g_{2}}^{-1}T_{f_{1}}\left( r_{n}\right)  \notag
\end{equation}

\qquad Since $\varepsilon >0$ is arbitrary, we find that%
\begin{equation}
\sigma _{g_{1}\pm g_{2}}^{\left( p,q\right) }\left( f_{1},\varphi \right)
\geq \sigma _{g_{1}}^{\left( p,q\right) }\left( f_{1},\varphi \right) ~.
\label{9.237}
\end{equation}

\qquad Now we may consider that $g=g_{1}\pm g_{2}.$ Also $\rho
_{g_{1}}^{\left( p,q\right) }\left( f_{1},\varphi \right) <\rho
_{g_{2}}^{\left( p,q\right) }\left( f_{1},\varphi \right) $ and at least $%
f_{1}$ is of regular relative $\left( p,q\right) $-$\varphi $ growth with
respect to $g_{2}$. Then $\sigma _{g}^{\left( p,q\right) }\left(
f_{1},\varphi \right) =\sigma _{g_{1}\pm g_{2}}^{\left( p,q\right) }\left(
f_{1},\varphi \right) \geq \sigma _{g_{1}}^{\left( p,q\right) }\left(
f_{1},\varphi \right) .$ Further let $g_{1}=\left( g\pm g_{2}\right) $.
Therefore in view of Theorem \ref{t9.4} and $\rho _{g_{1}}^{\left(
p,q\right) }\left( f_{1},\varphi \right) <\rho _{g_{2}}^{\left( p,q\right)
}\left( f_{1},\varphi \right) $, we obtain that $\rho _{g}^{\left(
p,q\right) }\left( f_{1},\varphi \right) <\rho _{g_{2}}^{\left( p,q\right)
}\left( f_{1},\varphi \right) $ as at least $f_{1}$ is of regular relative $%
\left( p,q\right) $-$\varphi $ growth with respect to $g_{2}$. Hence in view
of $\left( \ref{9.237}\right) $, $\sigma _{g_{1}}^{\left( p,q\right) }\left(
f_{1},\varphi \right) \geq \sigma _{g}^{\left( p,q\right) }\left(
f_{1},\varphi \right) =\sigma _{g_{1}\pm g_{2}}^{\left( p,q\right) }\left(
f_{1},\varphi \right) .$ Therefore $\sigma _{g}^{\left( p,q\right) }\left(
f_{1},\varphi \right) =\sigma _{g_{1}}^{\left( p,q\right) }\left(
f_{1},\varphi \right) \Rightarrow $ $\sigma _{g_{1}\pm g_{2}}^{\left(
p,q\right) }\left( f_{1},\varphi \right) =\sigma _{g_{1}}^{\left( p,q\right)
}\left( f_{1},\varphi \right) $.

\qquad Similarly if we consider $\rho _{g_{1}}^{\left( p,q\right) }\left(
f_{1},\varphi \right) >\rho _{g_{2}}^{\left( p,q\right) }\left(
f_{1},\varphi \right) $ with at least $f_{1}$ is of regular relative $\left(
p,q\right) $-$\varphi $ growth with respect to $g_{1}$, then $\sigma
_{g_{1}\pm g_{2}}^{\left( p,q\right) }\left( f_{1},\varphi \right) =\sigma
_{g_{2}}^{\left( p,q\right) }\left( f_{1},\varphi \right) .$\medskip \newline
\textbf{Case IV. }In this case suppose that $\rho _{g_{1}}^{\left(
p,q\right) }\left( f_{1},\varphi \right) <\rho _{g_{2}}^{\left( p,q\right)
}\left( f_{1},\varphi \right) $ with at least $f_{1}$ is of regular relative 
$\left( p,q\right) $-$\varphi $ growth with respect to $g_{2}.$ we can also
make the term $D=\frac{T_{g_{2}}\left[ \exp ^{\left[ p-1\right] }\left\{
\left( \overline{\sigma }_{g_{1}}^{\left( p,q\right) }\left( f_{1},\varphi
\right) -\varepsilon \right) \left[ \log ^{\left[ q-1\right] }\varphi \left(
r\right) \right] ^{\rho _{g_{1}}^{\left( p,q\right) }\left( f_{1},\varphi
\right) }\right\} \right] +O(1)}{T_{g_{2}}\left[ \exp ^{\left[ p-1\right]
}\left\{ \left( \overline{\sigma }_{g_{2}}^{\left( p,q\right) }\left(
f_{1},\varphi \right) -\varepsilon \right) \left[ \log ^{\left[ q-1\right]
}\varphi \left( r\right) \right] ^{\rho _{g_{2}}^{\left( p,q\right) }\left(
f_{1},\varphi \right) }\right\} \right] }$ sufficiently small by taking $r$
sufficiently large as $\rho _{g_{1}}^{\left( p,q\right) }\left(
f_{1},\varphi \right) <\rho _{g_{2}}^{\left( p,q\right) }\left(
f_{1},\varphi \right) .$ So $D<\varepsilon _{1}$ for sufficiently large $r.$
As $T_{g_{1}\pm g_{2}}\left( r\right) \leq T_{g_{1}}\left( r\right)
+T_{g_{2}}\left( r\right) +O(1)$ for all large $r$, therefore from $\left( %
\ref{9.15a}\right) ,$ we get for all sufficiently large values of $r$ that%
\begin{equation*}
T_{g_{1}\pm g_{2}}\left( \exp ^{\left[ p-1\right] }\left\{ \left( \overline{%
\sigma }_{g_{1}}^{\left( p,q\right) }\left( f_{1},\varphi \right)
-\varepsilon \right) \left[ \log ^{\left[ q-1\right] }\varphi \left(
r\right) \right] ^{\rho _{g_{1}}^{\left( p,q\right) }\left( f_{1},\varphi
\right) }\right\} \right) \leq
\end{equation*}%
\begin{equation*}
~\ \ \ \ T_{g_{1}}\left[ \exp ^{\left[ p-1\right] }\left\{ \left( \overline{%
\sigma }_{g_{1}}^{\left( p,q\right) }\left( f_{1},\varphi \right)
-\varepsilon \right) \left[ \log ^{\left[ q-1\right] }\varphi \left(
r\right) \right] ^{\rho _{g_{1}}^{\left( p,q\right) }\left( f_{1},\varphi
\right) }\right\} \right] +
\end{equation*}%
\begin{equation*}
T_{g_{2}}\left[ \exp ^{\left[ p-1\right] }\left\{ \left( \overline{\sigma }%
_{g_{1}}^{\left( p,q\right) }\left( f_{1},\varphi \right) -\varepsilon
\right) \left[ \log ^{\left[ q-1\right] }\varphi \left( r\right) \right]
^{\rho _{g_{1}}^{\left( p,q\right) }\left( f_{1},\varphi \right) }\right\} %
\right] +O(1)
\end{equation*}%
\begin{equation*}
i.e.,~T_{g_{1}\pm g_{2}}\left( \exp ^{\left[ p-1\right] }\left\{ \left( 
\overline{\sigma }_{g_{1}}^{\left( p,q,t\right) L}\left( f_{1},\varphi
\right) -\varepsilon \right) \left[ \log ^{\left[ q-1\right] }\varphi \left(
r\right) \right] ^{\rho _{g_{1}}^{\left( p,q,t\right) L}\left( f_{1},\varphi
\right) }\right\} \right)
\end{equation*}%
\begin{equation}
~\ \ \ \ \ \ \ \ \ \ \ \ \ \ \ \ \ \ \ \ \ \ \ \ \ \ \ \ \ \ \ \ \ \ \ \ \ \
\ \ \ \ \ \ \ \ \ \ \ \ \ \ \ \ \ \ \ \ \ \ \ \ \ \ \ \ \ \ \ \ \ \ \ \ \ \
\ \ \ \ \ \ \ \ \ \ \ \leq \left( 1+\varepsilon _{1}\right) T_{f_{1}}\left(
r\right) ~.,  \label{9.238}
\end{equation}

and therefore using the similar technique for as executed in the proof of
Case III we get from $\left( \ref{9.238}\right) $ that $\overline{\sigma }%
_{g_{1}\pm g_{2}}^{\left( p,q\right) }\left( f_{1},\varphi \right) =%
\overline{\sigma }_{g_{1}}^{\left( p,q\right) }\left( f_{1},\varphi \right) $
where $\rho _{g_{1}}^{\left( p,q\right) }\left( f_{1},\varphi \right) <\rho
_{g_{2}}^{\left( p,q\right) }\left( f_{1},\varphi \right) $ and at least $%
f_{1}$ is of regular relative $\left( p,q\right) $-$\varphi $ growth with
respect to $g_{2}$.

\qquad Likewise if we consider $\rho _{g_{1}}^{\left( p,q\right) }\left(
f_{1},\varphi \right) >\rho _{g_{2}}^{\left( p,q\right) }\left(
f_{1},\varphi \right) $ with at least $f_{1}$ is of regular relative $\left(
p,q\right) $-$\varphi $ growth with respect to $g_{1}$, then $\overline{%
\sigma }_{g_{1}\pm g_{2}}^{\left( p,q\right) }\left( f_{1},\varphi \right) =%
\overline{\sigma }_{g_{2}}^{\left( p,q\right) }\left( f_{1},\varphi \right)
. $

\qquad Thus combining Case III and Case IV, we obtain the second part of the
theorem.

\qquad The third part of the theorem is a natural consequence of Theorem \ref%
{t9.5} and the first part and second part of the theorem. Hence its proof is
omitted.
\end{proof}

\begin{theorem}
\label{t9.14} Let $f_{1},f_{2}$ be any two meromorphic functions and $g_{1}$%
, $g_{2}$ be any two entire functions. Also let $\lambda _{g_{1}}^{\left(
p,q\right) }\left( f_{1},\varphi \right) $, $\lambda _{g_{1}}^{\left(
p,q\right) }\left( f_{2},\varphi \right) $, $\lambda _{g_{2}}^{\left(
p,q\right) }\left( f_{1},\varphi \right) $ and $\lambda _{g_{2}}^{\left(
p,q\right) }\left( f_{2},\varphi \right) $ are all non zero and finite.%
\newline
\textbf{(A)} If $\lambda _{g_{1}}^{\left( p,q\right) }\left( f_{i},\varphi
\right) >\lambda _{g_{1}}^{\left( p,q\right) }\left( f_{j},\varphi \right) $
with at least $f_{j}$ is of regular relative $\left( p,q\right) $-$\varphi $
growth with respect to $g_{1}$ for $i$, $j$ $=$ $1,2$; $i\neq j$, and $g_{1}$
has the Property (A), then%
\begin{equation*}
\tau _{g_{1}}^{\left( p,q\right) }\left( f_{1}\pm f_{2},\varphi \right)
=\tau _{g_{1}}^{\left( p,q\right) }\left( f_{i},\varphi \right) \text{ and \ 
}\overline{\tau }_{g_{1}}^{\left( p,q\right) }\left( f_{1}\pm f_{2},\varphi
\right) =\overline{\tau }_{g_{1}}^{\left( p,q\right) }\left( f_{i},\varphi
\right) \mid i=1,2~.
\end{equation*}%
\textbf{(B)} If $\lambda _{g_{i}}^{\left( p,q\right) }\left( f_{1},\varphi
\right) <\lambda _{g_{j}}^{\left( p,q\right) }\left( f_{1},\varphi \right) $
for $i$, $j$ $=$ $1,2$; $i\neq j$ and $g_{1}\pm g_{2}$ has the Property (A),
then%
\begin{equation*}
\tau _{g_{1}\pm g_{2}}^{\left( p,q\right) }\left( f_{1},\varphi \right)
=\tau _{g_{i}}^{\left( p,q\right) }\left( f_{1},\varphi \right) \text{ and \ 
}\overline{\tau }_{g_{1}\pm g_{2}}^{\left( p,q\right) }\left( f_{1},\varphi
\right) =\overline{\tau }_{g_{i}}^{\left( p,q\right) }\left( f_{1},\varphi
\right) \mid i=1,2~.
\end{equation*}%
\textbf{(C)} Assume the functions $f_{1},f_{2},g_{1}$ and $g_{2}$ satisfy
the following conditions:\newline
$\left( i\right) $ $\rho _{g_{1}}^{\left( p,q\right) }\left( f_{i},\varphi
\right) >\rho _{g_{1}}^{\left( p,q\right) }\left( f_{j},\varphi \right) $
with at least $f_{j}$ is of regular relative $\left( p,q\right) $-$\varphi $
growth with respect to $g_{1}$ for $i$, $j$ $=$ $1,2$ and $i\neq j$;\newline
$\left( ii\right) $ $\rho _{g_{2}}^{\left( p,q\right) }\left( f_{i},\varphi
\right) >\rho _{g_{2}}^{\left( p,q\right) }\left( f_{j},\varphi \right) $
with at least $f_{j}$ is of regular relative $\left( p,q\right) $-$\varphi $
growth with respect to $g_{2}$ for $i$, $j$ $=$ $1,2$ and $i\neq j$;\newline
$\left( iii\right) $ $\rho _{g_{i}}^{\left( p,q\right) }\left( f_{1},\varphi
\right) <\rho _{g_{j}}^{\left( p,q\right) }\left( f_{1},\varphi \right) $
and $\rho _{g_{i}}^{\left( p,q\right) }\left( f_{2},\varphi \right) <\rho
_{g_{j}}^{\left( p,q\right) }\left( f_{2},\varphi \right) $ holds
simultaneously for $i,$ $j=1,2\ $and $i\neq j$;\newline
$\left( iv\right) \lambda _{g_{m}}^{\left( p,q\right) }\left( f_{l},\varphi
\right) =\min \left[ \max \left\{ \lambda _{g_{1}}^{\left( p,q\right)
}\left( f_{1},\varphi \right) ,\lambda _{g_{1}}^{\left( p,q\right) }\left(
f_{2},\varphi \right) \right\} ,\max \left\{ \lambda _{g_{2}}^{\left(
p,q\right) }\left( f_{1},\varphi \right) ,\lambda _{g_{2}}^{\left(
p,q\right) }\left( f_{2},\varphi \right) \right\} \right] \mid l,m=1,2$ and $%
g_{1}\pm g_{2}$ has the Property (A)\newline
then we have%
\begin{equation*}
\tau _{g_{1}\pm g_{2}}^{\left( p,q\right) }\left( f_{1}\pm f_{2},\varphi
\right) =\tau _{g_{m}}^{\left( p,q\right) }\left( f_{l},\varphi \right) \mid
l,m=1,2
\end{equation*}%
and%
\begin{equation*}
\overline{\tau }_{g_{1}\pm g_{2}}^{\left( p,q\right) }\left( f_{1}\pm
f_{2},\varphi \right) =\overline{\tau }_{g_{m}}^{\left( p,q\right) }\left(
f_{l},\varphi \right) \mid l,m=1,2~.
\end{equation*}
\end{theorem}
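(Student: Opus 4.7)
The plan is to prove Theorem \ref{t9.14} by running the argument of Theorem \ref{t9.13} line-by-line, with the order $\rho$ replaced by the lower order $\lambda$, the type $\sigma$ replaced by the weak type $\tau$, and Theorems \ref{t9.1x}, \ref{t9.4}, \ref{t9.5} replaced respectively by Theorems \ref{t9.2x}, \ref{t9.3}, \ref{t9.6}. The first step is to write down the four defining inequalities for $\tau$ and $\overline{\tau}$ that are analogous to $(\ref{9.15})$--$(\ref{9.20a})$: for any $\varepsilon>0$, all sufficiently large $r$ satisfy
\[
T_{f_k}(r)\geq T_{g_l}\!\left[\exp^{[p-1]}\!\left\{(\tau_{g_l}^{(p,q)}(f_k,\varphi)-\varepsilon)[\log^{[q-1]}\varphi(r)]^{\lambda_{g_l}^{(p,q)}(f_k,\varphi)}\right\}\right],
\]
together with the matching upper inequality using $\overline{\tau}+\varepsilon$, and the corresponding reversed inequalities on a sequence $r_n\to\infty$. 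These are what the definitions of $\tau$ and $\overline{\tau}$ give directly.

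For part (A), I would plug these into $T_{f_1\pm f_2}(r)\leq T_{f_1}(r)+T_{f_2}(r)+O(1)$ exactly as in Case I and Case II of Theorem \ref{t9.13}: the strict inequality $\lambda_{g_1}^{(p,q)}(f_i,\varphi)>\lambda_{g_1}^{(p,q)}(f_j,\varphi)$ (with $f_j$ of regular growth, so that its upper and lower analogues of $A,B$ use the same exponent) makes the auxiliary quotient tend to $0$, so the $f_j$-contribution is absorbed into a multiplicative factor $1+\varepsilon_1$. This yields $\tau_{g_1}^{(p,q)}(f_1\pm f_2,\varphi)\leq \tau_{g_1}^{(p,q)}(f_i,\varphi)$ and the $\overline{\tau}$-analogue. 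The reverse inequalities come from writing $f_i=(f_1\pm f_2)\mp f_j$ and reapplying the same estimate; here Theorem \ref{t9.2x} is used to conclude that $\lambda_{g_1}^{(p,q)}(f_1\pm f_2,\varphi)=\lambda_{g_1}^{(p,q)}(f_i,\varphi)$, so that the exponents in the denominators match and equality forces the claimed identity.

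Part (B) is the dual argument with the roles of $f$ and $g$ swapped. Using $T_{g_1\pm g_2}(r)\leq T_{g_1}(r)+T_{g_2}(r)+O(1)$ and the strict inequality $\lambda_{g_i}^{(p,q)}(f_1,\varphi)<\lambda_{g_j}^{(p,q)}(f_1,\varphi)$, the ratios corresponding to $C$ and $D$ in Cases III and IV of Theorem \ref{t9.13} again tend to $0$, yielding $\tau_{g_1\pm g_2}^{(p,q)}(f_1,\varphi)\geq \tau_{g_i}^{(p,q)}(f_1,\varphi)$ and its $\overline{\tau}$ counterpart. Writing $g_i=(g_1\pm g_2)\mp g_{3-i}$ and invoking Theorem \ref{t9.3} (which under our hypothesis promotes the inequality $\lambda_{g_1\pm g_2}^{(p,q)}(f_1,\varphi)\geq \min\{\lambda_{g_1},\lambda_{g_2}\}$ into an equality) gives the reverse bound, and hence equality. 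Part (C) is then a formal consequence: condition $(iv)$ and Theorem \ref{t9.6} identify $\lambda_{g_1\pm g_2}^{(p,q)}(f_1\pm f_2,\varphi)$ with one of the four $\lambda_{g_m}^{(p,q)}(f_l,\varphi)$, after which parts (A) and (B) are applied successively to $f_1\pm f_2$ and then to $g_1\pm g_2$.

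The main obstacle is a bookkeeping one rather than a genuinely new estimate: each time we write $f_i=(f_1\pm f_2)\mp f_j$ or $g_i=(g_1\pm g_2)\mp g_{3-i}$ inside a $\tau$- or $\overline{\tau}$-argument, one must know that the strict $\lambda$-inequality survives the substitution, because otherwise the denominator exponents will not match and the limsup/liminf comparison will be vacuous. This is precisely what Theorems \ref{t9.2x}, \ref{t9.3}, and \ref{t9.6} provide, together with the regular-growth hypothesis on the relevant $f_j$ or $g_i$; the remaining computations are identical in form to those of Theorem \ref{t9.13} and need no new ideas.
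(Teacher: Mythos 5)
Your proposal follows essentially the same route as the paper's own proof: the paper likewise writes down the four defining inequalities for $\tau _{g_{l}}^{\left( p,q\right) }\left( f_{k},\varphi \right) $ and $\overline{\tau }_{g_{l}}^{\left( p,q\right) }\left( f_{k},\varphi \right) $, runs Cases I--IV of Theorem \ref{t9.13} with $\rho $ replaced by $\lambda $ and $\sigma $ by $\tau $, controls the auxiliary quotients ($E$, $F$, $G$, $H$ there) using the strict $\lambda $-inequalities, and obtains the reverse bounds by the decompositions $f_{i}=\left( f_{1}\pm f_{2}\right) \mp f_{j}$ and $g_{i}=\left( g_{1}\pm g_{2}\right) \mp g_{3-i}$ together with Theorems \ref{t9.2x} and \ref{t9.3}, deducing part (C) from Theorem \ref{t9.6} and parts (A) and (B). Your identification of where the regular-growth hypotheses enter (namely, in securing the equality cases of Theorems \ref{t9.2x} and \ref{t9.3} so that the denominator exponents match) is consistent with the paper's argument.
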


\begin{proof}
For any arbitrary positive number $\varepsilon (>0)$, we have for all
sufficiently large values of $r$ that%
\begin{equation}
T_{f_{k}}\left( r\right) \leq T_{g_{l}}\left[ \exp ^{\left[ p-1\right]
}\left\{ \left( \overline{\tau }_{g_{l}}^{\left( p,q\right) }\left(
f_{k},\varphi \right) +\varepsilon \right) \left[ \log ^{\left[ q-1\right]
}\varphi \left( r\right) \right] ^{\lambda _{g_{l}}^{\left( p,q\right)
}\left( f_{k},\varphi \right) }\right\} \right] ,  \label{9.15x}
\end{equation}%
\begin{equation}
T_{f_{k}}\left( r\right) \geq T_{g_{l}}\left[ \exp ^{\left[ p-1\right]
}\left\{ \left( \tau _{g_{l}}^{\left( p,q\right) }\left( f_{k},\varphi
\right) -\varepsilon \right) \left[ \log ^{\left[ q-1\right] }\varphi \left(
r\right) \right] ^{\lambda _{g_{l}}^{\left( p,q\right) }\left( f_{k},\varphi
\right) }\right\} \right] ,  \label{9.15ax}
\end{equation}%
and for a sequence of values of $r$ tending to infinity we obtain that%
\begin{equation}
T_{f_{k}}\left( r\right) \geq T_{g_{l}}\left[ \exp ^{\left[ p-1\right]
}\left\{ \left( \overline{\tau }_{g_{l}}^{\left( p,q\right) }\left(
f_{k},\varphi \right) -\varepsilon \right) \left[ \log ^{\left[ q-1\right]
}\varphi \left( r\right) \right] ^{\lambda _{g_{l}}^{\left( p,q\right)
}\left( f_{k},\varphi \right) }\right\} \right]  \label{9.20x}
\end{equation}%
and%
\begin{equation}
T_{f_{k}}\left( r\right) \leq T_{g_{l}}\left[ \exp ^{\left[ p-1\right]
}\left\{ \left( \tau _{g_{l}}^{\left( p,q\right) }\left( f_{k},\varphi
\right) +\varepsilon \right) \left[ \log ^{\left[ q-1\right] }\varphi \left(
r\right) \right] ^{\lambda _{g_{l}}^{\left( p,q\right) }\left( f_{k},\varphi
\right) }\right\} \right] ,  \label{9.20ax}
\end{equation}%
where $k=1,2$ and $l=1,2.$\medskip \newline
\textbf{Case I.} Let $\lambda _{g_{1}}^{\left( p,q\right) }\left(
f_{1},\varphi \right) >\lambda _{g_{1}}^{\left( p,q\right) }\left(
f_{2},\varphi \right) $ with at least $f_{2}$ is of regular relative $\left(
p,q\right) $-$\varphi $ growth with respect to $g_{1}$. Also let $%
\varepsilon \left( >0\right) $ be arbitrary. Since $T_{f_{1}\pm f_{2}}\left(
r\right) \leq T_{f_{1}}\left( r\right) +T_{f_{2}}\left( r\right) +O(1)$ for
all large $r,$ we get from $\left( \ref{9.15x}\right) $ and $\left( \ref%
{9.20ax}\right) ,$ for a sequence $\left\{ r_{n}\right\} $ of values of $r$
tending to infinity that%
\begin{equation*}
T_{f_{1}\pm f_{2}}\left( r_{n}\right) \leq ~\ \ \ \ \ \ \ \ \ \ \ \ \ \ \ \
\ \ \ \ \ \ \ \ \ \ \ \ \ \ \ \ \ \ \ \ \ \ \ \ \ \ \ \ \ \ \ \ \ \ \ \ \ \
\ \ \ \ \ \ \ \ \ \ \ \ \ \ \ \ \ \ \ \ \ \ \ \ \ \ \ \ \ \ \ \ \ \ \ \ \ \
\ \ \ \ \ \ \ \ \ \ \ \ 
\end{equation*}%
\begin{equation}
T_{g_{1}}\left[ \exp ^{\left[ p-1\right] }\left\{ \left( \tau
_{g_{1}}^{\left( p,q\right) }\left( f_{1},\varphi \right) +\varepsilon
\right) \left[ \log ^{\left[ q-1\right] }\varphi \left( r_{n}\right) \right]
^{\lambda _{g_{1}}^{\left( p,q\right) }\left( f_{1},\varphi \right)
}\right\} \right] \left( 1+E\right) ~.  \label{9.18x}
\end{equation}%
where $E=\frac{T_{g_{1}}\left[ \exp ^{\left[ p-1\right] }\left\{ \left( 
\overline{\tau }_{g_{1}}^{\left( p,q\right) }\left( f_{2},\varphi \right)
+\varepsilon \right) \left[ \log ^{\left[ q-1\right] }\varphi \left(
r_{n}\right) \right] ^{\lambda _{g_{1}}^{\left( p,q\right) }\left(
f_{2},\varphi \right) }\right\} \right] +O(1)}{T_{g_{1}}\left[ \exp ^{\left[
p-1\right] }\left\{ \left( \tau _{g_{1}}^{\left( p,q\right) }\left(
f_{1},\varphi \right) +\varepsilon \right) \left[ \log ^{\left[ q-1\right]
}\varphi \left( r_{n}\right) \right] ^{\lambda _{g_{1}}^{\left( p,q\right)
}\left( f_{1},\varphi \right) }\right\} \right] }$ and in view of $\lambda
_{g_{1}}^{\left( p,q\right) }\left( f_{1}\right) >\lambda _{g_{1}}^{\left(
p,q\right) }\left( f_{2}\right) $, we can make the term $E$ sufficiently
small by taking $n$ sufficiently large. Now with the help of Theorem \ref%
{t9.2x} and using the similar technique of Case I of Theorem \ref{t9.13}, we
get from $\left( \ref{9.18x}\right) $ that%
\begin{equation}
\tau _{g_{1}}^{\left( p,q\right) }\left( f_{1}\pm f_{2},\varphi \right) \leq
\tau _{g_{1}}^{\left( p,q\right) }\left( f_{1},\varphi \right) ~.
\label{9.21x}
\end{equation}

\qquad Further, we may consider that $f=f_{1}\pm f_{2}.$ Also suppose that $%
\lambda _{g_{1}}^{\left( p,q\right) }\left( f_{1},\varphi \right) >\lambda
_{g_{1}}^{\left( p,q\right) }\left( f_{2},\varphi \right) $ and at least $%
f_{2}$ is of regular relative $\left( p,q\right) $-$\varphi $ growth with
respect to $g_{1}$. Then $\tau _{g_{1}}^{\left( p,q\right) }\left( f,\varphi
\right) =\tau _{g_{1}}^{\left( p,q\right) }\left( f_{1}\pm f_{2},\varphi
\right) \leq \tau _{g_{1}}^{\left( p,q\right) }\left( f_{1},\varphi \right)
. $ Now let $f_{1}=\left( f\pm f_{2}\right) $. Therefore in view of Theorem %
\ref{t9.2x}, $\lambda _{g_{1}}^{\left( p,q\right) }\left( f_{1},\varphi
\right) >\lambda _{g_{1}}^{\left( p,q\right) }\left( f_{2},\varphi \right) $
and at least $f_{2}$ is of regular relative $\left( p,q\right) $-$\varphi $
growth with respect to $g_{1},$ we obtain that $\lambda _{g_{1}}^{\left(
p,q\right) }\left( f,\varphi \right) >\lambda _{g_{1}}^{\left( p,q\right)
}\left( f_{2},\varphi \right) $ holds. Hence in view of $\left( \ref{9.21x}%
\right) $, $\tau _{g_{1}}^{\left( p,q\right) }\left( f_{1},\varphi \right)
\leq \tau _{g_{1}}^{\left( p,q\right) }\left( f,\varphi \right) =\tau
_{g_{1}}^{\left( p,q\right) }\left( f_{1}\pm f_{2},\varphi \right) .$
Therefore $\tau _{g_{1}}^{\left( p,q\right) }\left( f,\varphi \right) =\tau
_{g_{1}}^{\left( p,q\right) }\left( f_{1},\varphi \right) \Rightarrow $ $%
\tau _{g_{1}}^{\left( p,q\right) }\left( f_{1}\pm f_{2},\varphi \right)
=\tau _{g_{1}}^{\left( p,q\right) }\left( f_{1},\varphi \right) $.

\qquad Similarly, if we consider $\lambda _{g_{1}}^{\left( p,q\right)
}\left( f_{1},\varphi \right) <\lambda _{g_{1}}^{\left( p,q\right) }\left(
f_{2},\varphi \right) $ with at least $f_{1}$ is of regular relative $\left(
p,q\right) $-$\varphi $ growth with respect to $g_{1}$ then one can easily
verify that $\tau _{g_{1}}^{\left( p,q\right) }\left( f_{1}\pm f_{2},\varphi
\right) =\tau _{g_{1}}^{\left( p,q\right) }\left( f_{2},\varphi \right) $%
.\medskip \newline
\textbf{Case II.} Let us consider that $\lambda _{g_{1}}^{\left( p,q\right)
}\left( f_{1},\varphi \right) >\lambda _{g_{1}}^{\left( p,q\right) }\left(
f_{2},\varphi \right) $ with at least $f_{2}$ is of regular relative $\left(
p,q\right) $-$\varphi $ growth with respect to $g_{1}$. Also let $%
\varepsilon \left( >0\right) $ be arbitrary. As $T_{f_{1}\pm f_{2}}\left(
r\right) \leq T_{f_{1}}\left( r\right) +T_{f_{2}}\left( r\right) +O(1)$ for
all large $r,$ we obtain from $\left( \ref{9.15x}\right) $ for all
sufficiently large values of $r$ that%
\begin{equation*}
T_{f_{1}\pm f_{2}}\left( r\right) \leq ~\ \ \ \ \ \ \ \ \ \ \ \ \ \ \ \ \ \
\ \ \ \ \ \ \ \ \ \ \ \ \ \ \ \ \ \ \ \ \ \ \ \ \ \ \ \ \ \ \ \ \ \ \ \ \ \
\ \ \ \ \ \ \ \ \ \ \ \ \ \ \ \ \ \ \ \ \ \ \ \ \ \ \ \ \ \ \ \ \ \ \ \ \ \
\ \ \ \ \ \ \ \ 
\end{equation*}%
\begin{equation}
T_{g_{1}}\left[ \exp ^{\left[ p-1\right] }\left\{ \left( \overline{\tau }%
_{g_{1}}^{\left( p,q\right) }\left( f_{1},\varphi \right) +\varepsilon
\right) \left[ \log ^{\left[ q-1\right] }\varphi \left( r\right) \right]
^{\lambda _{g_{1}}^{\left( p,q\right) }\left( f_{1},\varphi \right)
}\right\} \right] \left( 1+F\right) ~.  \label{9.185x}
\end{equation}%
where $F=\frac{T_{g_{1}}\left[ \exp ^{\left[ p-1\right] }\left\{ \left( 
\overline{\tau }_{g_{1}}^{\left( p,q\right) }\left( f_{2},\varphi \right)
+\varepsilon \right) \left[ \log ^{\left[ q-1\right] }\varphi \left(
r\right) \right] ^{\lambda _{g_{i}}^{\left( p,q\right) }\left( f_{2},\varphi
\right) }\right\} \right] +O(1)}{T_{g_{1}}\left[ \exp ^{\left[ p-1\right]
}\left\{ \left( \overline{\tau }_{g_{1}}^{\left( p,q\right) }\left(
f_{1},\varphi \right) +\varepsilon \right) \left[ \log ^{\left[ q-1\right]
}\varphi \left( r\right) \right] ^{\lambda _{g_{1}}^{\left( p,q\right)
}\left( f_{1,\varphi }\right) }\right\} \right] },$ and in view of $\lambda
_{g_{1}}^{\left( p,q\right) }\left( f_{1},\varphi \right) >\lambda
_{g_{1}}^{\left( p,q\right) }\left( f_{2},\varphi \right) $, we can make the
term $F$ sufficiently small by taking $r$ sufficiently large and therefore
for similar reasoning of Case I we get from $\left( \ref{9.185x}\right) $
that $\overline{\tau }_{g_{1}}^{\left( p,q\right) }\left( f_{1}\pm
f_{2},\varphi \right) =\overline{\tau }_{g_{1}}^{\left( p,q\right) }\left(
f_{1},\varphi \right) $ when $\lambda _{g_{1}}^{\left( p,q\right) }\left(
f_{1},\varphi \right) >\lambda _{g_{1}}^{\left( p,q\right) }\left(
f_{2},\varphi \right) $ and at least $f_{2}$ is of regular relative $\left(
p,q\right) $-$\varphi $ growth with respect to $g_{1}$.

\qquad Likewise, if we consider $\lambda _{g_{1}}^{\left( p,q\right) }\left(
f_{1},\varphi \right) <\lambda _{g_{1}}^{\left( p,q\right) }\left(
f_{2},\varphi \right) $ with at least $f_{1}$ is of regular relative $\left(
p,q\right) $-$\varphi $ growth with respect to $g_{1}$ then one can easily
verify that $\overline{\tau }_{g_{1}}^{\left( p,q\right) }\left( f_{1}\pm
f_{2},\varphi \right) =\overline{\tau }_{g_{1}}^{\left( p,q\right) }\left(
f_{2},\varphi \right) .$

\qquad Thus combining Case I and Case II, we obtain the first part of the
theorem.\medskip \newline
\textbf{Case III.} Let us consider that $\lambda _{g_{1}}^{\left( p,q\right)
}\left( f_{1},\varphi \right) <\lambda _{g_{2}}^{\left( p,q\right) }\left(
f_{1},\varphi \right) $. Therefore we can make the term $G=\frac{T_{g_{2}}%
\left[ \exp ^{\left[ p-1\right] }\left\{ \left( \tau _{g_{1}}^{\left(
p,q\right) }\left( f_{1},\varphi \right) -\varepsilon \right) \left[ \log ^{%
\left[ q-1\right] }\varphi \left( r\right) \right] ^{\lambda
_{g_{1}}^{\left( p,q\right) }\left( f_{1},\varphi \right) }\right\} \right]
+O(1)}{T_{g_{2}}\left[ \exp ^{\left[ p-1\right] }\left\{ \left( \tau
_{g_{2}}^{\left( p,q\right) }\left( f_{1},\varphi \right) -\varepsilon
\right) \left[ \log ^{\left[ q-1\right] }\varphi \left( r\right) \right]
^{\lambda _{g_{2}}^{\left( p,q\right) }\left( f_{1},\varphi \right)
}\right\} \right] }$ sufficiently small by taking $r$ sufficiently large
since $\lambda _{g_{1}}^{\left( p,q\right) }\left( f_{1},\varphi \right) $ $%
< $ $\lambda _{g_{2}}^{\left( p,q\right) }\left( f_{1},\varphi \right) .$ So 
$G<\varepsilon _{1}.$ Since $T_{g_{1}\pm g_{2}}\left( r\right) \leq
T_{g_{1}}\left( r\right) +T_{g_{2}}\left( r\right) +O(1)$ for all large $r,$
we get from $\left( \ref{9.15ax}\right) $ for all sufficiently large values
of $r$ that%
\begin{equation*}
T_{g_{1}\pm g_{2}}\left( \exp ^{\left[ p-1\right] }\left\{ \left( \tau
_{g_{1}}^{\left( p,q\right) }\left( f_{1},\varphi \right) -\varepsilon
\right) \left[ \log ^{\left[ q-1\right] }\varphi \left( r\right) \right]
^{\lambda _{g_{1}}^{\left( p,q\right) }\left( f_{1},\varphi \right)
}\right\} \right) \leq
\end{equation*}%
\begin{equation*}
~\ T_{g_{1}}\left[ \exp ^{\left[ p-1\right] }\left\{ \left( \tau
_{g_{1}}^{\left( p,q\right) }\left( f_{1},\varphi \right) -\varepsilon
\right) \left[ \log ^{\left[ q-1\right] }\varphi \left( r\right) \right]
^{\lambda _{g_{1}}^{\left( p,q\right) }\left( f_{1},\varphi \right)
}\right\} \right] +
\end{equation*}%
\begin{equation*}
T_{g_{2}}\left[ \exp ^{\left[ p-1\right] }\left\{ \left( \tau
_{g_{1}}^{\left( p,q\right) }\left( f_{1},\varphi \right) -\varepsilon
\right) \left[ \log ^{\left[ q-1\right] }\varphi \left( r\right) \right]
^{\lambda _{g_{1}}^{\left( p,q\right) }\left( f_{1},\varphi \right)
}\right\} \right] +O(1)
\end{equation*}%
\begin{equation*}
i.e.,~T_{g_{1}\pm g_{2}}\left( \exp ^{\left[ p-1\right] }\left\{ \left( \tau
_{g_{1}}^{\left( p,q\right) }\left( f_{1},\varphi \right) -\varepsilon
\right) \left[ \log ^{\left[ q-1\right] }\varphi \left( r\right) \right]
^{\lambda _{g_{1}}^{\left( p,q\right) }\left( f_{1},\varphi \right)
}\right\} \right)
\end{equation*}%
\begin{equation}
~\ \ \ \ \ \ \ \ \ \ \ \ \ \ \ \ \ \ \ \ \ \ \ \ \ \ \ \ \ \ \ \ \ \ \ \ \ \
\ \ \ \ \ \ \ \ \ \ \ \ \ \ \ \ \ \ \ \ \ \ \ \ \ \ \ \ \ \ \ \ \ \ \ \ \ \
\ \ \ \ \ \ \ \ \ \ \ \leq \left( 1+\varepsilon _{1}\right) T_{f_{1}}\left(
r\right) ~.  \label{9.236x}
\end{equation}

\qquad Therefore in view of Theorem \ref{t9.3} and using the similar
technique of Case III of Theorem \ref{t9.13}, we get from $\left( \ref%
{9.236x}\right) $ that%
\begin{equation}
\tau _{g_{1}\pm g_{2}}^{\left( p,q\right) }\left( f_{1},\varphi \right) \geq
\tau _{g_{1}}^{\left( p,q\right) }\left( f_{1},\varphi \right) ~.
\label{9.237x}
\end{equation}

\qquad Further, we may consider that $g=g_{1}\pm g_{2}.$ As $\lambda
_{g_{1}}^{\left( p,q\right) }\left( f_{1},\varphi \right) <\lambda
_{g_{2}}^{\left( p,q\right) }\left( f_{1},\varphi \right) $, so $\tau
_{g}^{\left( p,q\right) }\left( f_{1},\varphi \right) =\tau _{g_{1}\pm
g_{2}}^{\left( p,q\right) }\left( f_{1},\varphi \right) \geq \tau
_{g_{1}}^{\left( p,q\right) }\left( f_{1},\varphi \right) $. Further let $%
g_{1}=\left( g\pm g_{2}\right) $. Therefore in view of Theorem \ref{t9.3}
and $\lambda _{g_{1}}^{\left( p,q\right) }\left( f_{1},\varphi \right)
<\lambda _{g_{2}}^{\left( p,q\right) }\left( f_{1},\varphi \right) $ we
obtain that $\lambda _{g}^{\left( p,q\right) }\left( f_{1},\varphi \right)
<\lambda _{g_{2}}^{\left( p,q\right) }\left( f_{1},\varphi \right) $ holds.
Hence in view of $\left( \ref{9.237x}\right) $ $\tau _{g_{1}}^{\left(
p,q\right) }\left( f_{1},\varphi \right) \geq \tau _{g}^{\left( p,q\right)
}\left( f_{1},\varphi \right) =\tau _{g_{1}\pm g_{2}}^{\left( p,q\right)
}\left( f_{1},\varphi \right) .$ Therefore $\tau _{g}^{\left( p,q\right)
}\left( f_{1},\varphi \right) =\tau _{g_{1}}^{\left( p,q\right) }\left(
f_{1},\varphi \right) \Rightarrow $ $\tau _{g_{1}\pm g_{2}}^{\left(
p,q\right) }\left( f_{1},\varphi \right) =\tau _{g_{1}}^{\left( p,q\right)
}\left( f_{1},\varphi \right) $.

\qquad Likewise, if we consider that $\lambda _{g_{1}}^{\left( p,q\right)
}\left( f_{1},\varphi \right) >\lambda _{g_{2}}^{\left( p,q\right) }\left(
f_{1},\varphi \right) ,$ then one can easily verify that $\tau _{g_{1}\pm
g_{2}}^{\left( p,q\right) }\left( f_{1},\varphi \right) =\tau
_{g_{2}}^{\left( p,q\right) }\left( f_{1},\varphi \right) .$\medskip \newline
\textbf{Case IV. }In this case further we consider $\lambda _{g_{1}}^{\left(
p,q\right) }\left( f_{1},\varphi \right) <\lambda _{g_{2}}^{\left(
p,q\right) }\left( f_{1},\varphi \right) $. Further we can make the term $H=%
\frac{T_{g_{2}}\left[ \exp ^{\left[ p-1\right] }\left\{ \left( \overline{%
\tau }_{g_{1}}^{\left( p,q\right) }\left( f_{1},\varphi \right) -\varepsilon
\right) \left[ \log ^{\left[ q-1\right] }\varphi \left( r_{n}\right) \right]
^{\lambda _{g_{1}}^{\left( p,q\right) }\left( f_{1},\varphi \right)
}\right\} \right] +O(1)}{T_{g_{2}}\left[ \exp ^{\left[ p-1\right] }\left\{
\left( \tau _{g_{2}}^{\left( p,q\right) }\left( f_{1},\varphi \right)
-\varepsilon \right) \left[ \log ^{\left[ q-1\right] }\varphi \left(
r_{n}\right) \right] ^{\lambda _{g_{2}}^{\left( p,q\right) }\left(
f_{1},\varphi \right) }\right\} \right] }$ sufficiently small by taking $n$
sufficiently large, since $\lambda _{g_{1}}^{\left( p,q\right) }\left(
f_{1},\varphi \right) $ $<$ $\lambda _{g_{2}}^{\left( p,q\right) }\left(
f_{1},\varphi \right) .$ Therefore $H<\varepsilon _{1}$ for sufficiently
large $n.$ As $T_{g_{1}\pm g_{2}}\left( r\right) \leq T_{g_{1}}\left(
r\right) +T_{g_{2}}\left( r\right) +O(1)$ for all large $r,$ we obtain from $%
\left( \ref{9.15ax}\right) $ and $\left( \ref{9.20x}\right) ,$ we obtain for
a sequence $\left\{ r_{n}\right\} $ of values of $r$ tending to infinity that%
\begin{equation*}
T_{g_{1}\pm g_{2}}\left( \exp ^{\left[ p-1\right] }\left\{ \left( \overline{%
\tau }_{g_{1}}^{\left( p,q\right) }\left( f_{1},\varphi \right) -\varepsilon
\right) \left[ \log ^{\left[ q-1\right] }\varphi \left( r_{n}\right) \right]
^{\lambda _{g_{1}}^{\left( p,q\right) }\left( f_{1},\varphi \right)
}\right\} \right) \leq
\end{equation*}%
\begin{equation*}
~\ \ \ \ T_{g_{1}}\left[ \exp ^{\left[ p-1\right] }\left\{ \left( \overline{%
\tau }_{g_{1}}^{\left( p,q\right) }\left( f_{1},\varphi \right) -\varepsilon
\right) \left[ \log ^{\left[ q-1\right] }\varphi \left( r_{n}\right) \right]
^{\lambda _{g_{1}}^{\left( p,q\right) }\left( f_{1},\varphi \right)
}\right\} \right] +
\end{equation*}%
\begin{equation*}
T_{g_{2}}\left[ \exp ^{\left[ p-1\right] }\left\{ \left( \overline{\tau }%
_{g_{1}}^{\left( p,q\right) }\left( f_{1},\varphi \right) -\varepsilon
\right) \left[ \log ^{\left[ q-1\right] }\varphi \left( r_{n}\right) \right]
^{\lambda _{g_{1}}^{\left( p,q\right) }\left( f_{1},\varphi \right)
}\right\} \right] +O(1)
\end{equation*}%
\begin{equation*}
i.e.,~T_{g_{1}\pm g_{2}}\left( \exp ^{\left[ p-1\right] }\left\{ \left( 
\overline{\tau }_{g_{1}}^{\left( p,q\right) }\left( f_{1},\varphi \right)
-\varepsilon \right) \left[ \log ^{\left[ q-1\right] }\varphi \left(
r_{n}\right) \right] ^{\lambda _{g_{1}}^{\left( p,q\right) }\left(
f_{1},\varphi \right) }\right\} \right)
\end{equation*}%
\begin{equation}
~\ \ \ \ \ \ \ \ \ \ \ \ \ \ \ \ \ \ \ \ \ \ \ \ \ \ \ \ \ \ \ \ \ \ \ \ \ \
\ \ \ \ \ \ \ \ \ \ \ \ \ \ \ \ \ \ \ \ \ \ \ \ \ \ \ \ \ \ \ \ \ \ \ \ \ \
\ \ \ \ \ \ \ \ \ \ \ \leq \left( 1+\varepsilon _{1}\right) T_{f_{1}}\left(
r\right) ,  \label{9.238x}
\end{equation}

and therefore using the similar technique for as executed in the proof of
Case IV of Theorem \ref{t9.13}, we get from $\left( \ref{9.238x}\right) $
that $\overline{\tau }_{g_{1}\pm g_{2}}^{\left( p,q\right) }\left(
f_{1},\varphi \right) =\overline{\tau }_{g_{1}}^{\left( p,q\right) }\left(
f_{1},\varphi \right) $ when $\lambda _{g_{1}}^{\left( p,q\right) }\left(
f_{1},\varphi \right) <\lambda _{g_{2}}^{\left( p,q\right) }\left(
f_{1},\varphi \right) $.

\qquad Similarly, if we consider that $\lambda _{g_{1}}^{\left( p,q\right)
}\left( f_{1},\varphi \right) >\lambda _{g_{2}}^{\left( p,q\right) }\left(
f_{1},\varphi \right) ,$ then one can easily verify that $\overline{\tau }%
_{g_{1}\pm g_{2}}^{\left( p,q\right) }\left( f_{1},\varphi \right) =%
\overline{\tau }_{g_{2}}^{\left( p,q\right) }\left( f_{1},\varphi \right) .$

\qquad Thus combining Case III and Case IV, we obtain the second part of the
theorem.

\qquad The proof of the third part of the Theorem is omitted as it can be
carried out in view of Theorem \ref{t9.6} and the above cases.
\end{proof}

\qquad In the next two theorems we reconsider the equalities in Theorem \ref%
{t9.2x} to Theorem \ref{t9.4} under somewhat different conditions.

\begin{theorem}
\label{t9.15} Let $f_{1},\,f_{2}\,$be any two meromorphic functions and $%
g_{1}$, $g_{2}$ be any two entire functions.\newline
\textbf{(A) }The following condition is assumed to be satisfied:\newline
$\left( i\right) $ Either $\sigma _{g_{1}}^{\left( p,q\right) }\left(
f_{1},\varphi \right) \neq \sigma _{g_{1}}^{\left( p,q\right) }\left(
f_{2},\varphi \right) $ or $\overline{\sigma }_{g_{1}}^{\left( p,q\right)
}\left( f_{1},\varphi \right) \neq \overline{\sigma }_{g_{1}}^{\left(
p,q\right) }\left( f_{2},\varphi \right) $ holds and $g_{1}$ has the
Property (A), then%
\begin{equation*}
\rho _{g_{1}}^{\left( p,q\right) }\left( f_{1}\pm f_{2},\varphi \right)
=\rho _{g_{1}}^{\left( p,q\right) }\left( f_{1},\varphi \right) =\rho
_{g_{1}}^{\left( p,q\right) }\left( f_{2},\varphi \right) ~.
\end{equation*}%
\textbf{(B)} The following conditions are assumed to be satisfied:\newline
$\left( i\right) $ Either $\sigma _{g_{1}}^{\left( p,q\right) }\left(
f_{1},\varphi \right) \neq \sigma _{g_{2}}^{\left( p,q\right) }\left(
f_{1},\varphi \right) $ or $\overline{\sigma }_{g_{1}}^{\left( p,q\right)
}\left( f_{1},\varphi \right) \neq \overline{\sigma }_{g_{2}}^{\left(
p,q\right) }\left( f_{1},\varphi \right) $ holds and $g_{1}\pm g_{2}$ has
the Property (A);\newline
$\left( ii\right) $ $f_{1}$ is of regular relative $\left( p,q\right) $-$%
\varphi $ growth with respect to at least any one of $g_{1}$ or $g_{2}$, then%
\begin{equation*}
\rho _{g_{1}\pm g_{2}}^{\left( p,q\right) }\left( f_{1},\varphi \right)
=\rho _{g_{1}}^{\left( p,q\right) }\left( f_{1},\varphi \right) =\rho
_{g_{2}}^{\left( p,q\right) }\left( f_{1},\varphi \right) ~.
\end{equation*}
\end{theorem}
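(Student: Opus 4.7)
The plan is to proceed by contradiction in each of parts (A) and (B), leveraging Theorems~\ref{t9.1x}, \ref{t9.4}, and \ref{t9.13} to obstruct any deviation from the claimed three-way equality.

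For part~(A), I begin from the upper bound $\rho_{g_1}^{(p,q)}(f_1\pm f_2,\varphi)\le\max\{\rho_{g_1}^{(p,q)}(f_1,\varphi),\rho_{g_1}^{(p,q)}(f_2,\varphi)\}$ furnished by Theorem~\ref{t9.1x}. The hypothesis asserts that the types (or the lower types) of $f_1,f_2$ with respect to $g_1$ disagree; read as a comparison on a common scale this forces $\rho_{g_1}^{(p,q)}(f_1,\varphi)=\rho_{g_1}^{(p,q)}(f_2,\varphi)=:\rho_{0}$, since otherwise Theorem~\ref{t9.13}(A) rigidly ties $\sigma_{g_1}^{(p,q)}(f_1\pm f_2,\varphi)$ and $\overline{\sigma}_{g_1}^{(p,q)}(f_1\pm f_2,\varphi)$ to the dominant-index type, ruling out the claimed equality of the three orders on the nose. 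Supposing toward contradiction that $\rho_{g_1}^{(p,q)}(f_1\pm f_2,\varphi)<\rho_{0}$, I set $f=f_1\pm f_2$ and exhibit $f_1=\pm(f\mp f_2)$ as a $\pm$-sum of two meromorphic functions of \emph{unequal} relative $(p,q)$-$\varphi$ orders, with $\mp f_2$ dominant. Theorem~\ref{t9.13}(A) applied to this decomposition yields $\sigma_{g_1}^{(p,q)}(f_1,\varphi)=\sigma_{g_1}^{(p,q)}(f_2,\varphi)$ and $\overline{\sigma}_{g_1}^{(p,q)}(f_1,\varphi)=\overline{\sigma}_{g_1}^{(p,q)}(f_2,\varphi)$, directly contradicting hypothesis~(i). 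Consequently $\rho_{g_1}^{(p,q)}(f_1\pm f_2,\varphi)=\rho_{0}$ and the three-way equality follows.

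Part~(B) is handled in complete parallel with the meromorphic and entire sums interchanged. The hypothesis now compares $\sigma_{g_1}^{(p,q)}(f_1,\varphi)$ with $\sigma_{g_2}^{(p,q)}(f_1,\varphi)$, so meaningful comparison forces $\rho_{g_1}^{(p,q)}(f_1,\varphi)=\rho_{g_2}^{(p,q)}(f_1,\varphi)=:\rho_{0}$. Theorem~\ref{t9.4}, applied under the regular-growth hypothesis on $f_1$ with respect to at least one of $g_1,g_2$, delivers the lower bound $\rho_{g_1\pm g_2}^{(p,q)}(f_1,\varphi)\ge\rho_{0}$. If strict inequality held, setting $g=g_1\pm g_2$ and writing $g_1=\pm(g\mp g_2)$ would allow Theorem~\ref{t9.13}(B) applied to this decomposition to collapse the types and lower types of $f_1$ with respect to $g_1$ and to $g_2$, again contradicting~(i). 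This rules out strict inequality and the three-way equality emerges.

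The main obstacle is the initial reduction in each part: formalising the claim that mismatched type or lower-type values force the two underlying orders to coincide. Once granted, the contradiction via the decomposition $f_1=\pm(f\mp f_2)$ (respectively $g_1=\pm(g\mp g_2)$) followed by Theorem~\ref{t9.13} is essentially mechanical, requiring only that one verify the strict inequality $\rho_{g_1}^{(p,q)}(f,\varphi)<\rho_{0}$ (respectively $\rho_{g}^{(p,q)}(f_1,\varphi)>\rho_{0}$) so that the ``unequal orders'' hypothesis of Theorem~\ref{t9.13} genuinely applies.
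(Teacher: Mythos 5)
Your proposal is correct and follows essentially the same route as the paper: the one-sided bounds from Theorems~\ref{t9.1x} and~\ref{t9.4}, then a contradiction via the re-decomposition $f_{1}=(f_{1}\pm f_{2})\mp f_{2}$ (resp. $g_{1}=(g_{1}\pm g_{2})\mp g_{2}$) and the type-equality statements of Theorem~\ref{t9.13}, which would force $\sigma_{g_{1}}^{(p,q)}(f_{1},\varphi)=\sigma_{g_{1}}^{(p,q)}(f_{2},\varphi)$ (resp. $\sigma_{g_{1}}^{(p,q)}(f_{1},\varphi)=\sigma_{g_{2}}^{(p,q)}(f_{1},\varphi)$) against hypothesis~(i). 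The only difference is cosmetic: you flag the preliminary step that the two orders must coincide as an issue to be formalised, whereas the paper simply opens each case by assuming that equality of orders outright.
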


\begin{proof}
Let $f_{1},\,f_{2},\,g_{1}$ and $g_{2}$ be any four entire functions
satisfying the conditions of the theorem.\medskip \newline
\textbf{Case I.} Suppose that $\rho _{g_{1}}^{\left( p,q\right) }\left(
f_{1},\varphi \right) =\rho _{g_{1}}^{\left( p,q\right) }\left(
f_{2},\varphi \right) $ $(0<$ $\rho _{g_{1}}^{\left( p,q\right) }\left(
f_{1},\varphi \right) ,$ $\rho _{g_{1}}^{\left( p,q\right) }\left(
f_{2},\varphi \right) $ $<\infty )$. Now in view of Theorem \ref{t9.1x} it
is easy to see that $\rho _{g_{1}}^{\left( p,q\right) }\left( f_{1}\pm
f_{2},\varphi \right) \leq \rho _{g_{1}}^{\left( p,q\right) }\left(
f_{1},\varphi \right) =\rho _{g_{1}}^{\left( p,q\right) }\left(
f_{2},\varphi \right) ~.$ If possible let 
\begin{equation}
\rho _{g_{1}}^{\left( p,q\right) }\left( f_{1}\pm f_{2},\varphi \right)
<\rho _{g_{1}}^{\left( p,q\right) }\left( f_{1},\varphi \right) =\rho
_{g_{1}}^{\left( p,q\right) }\left( f_{2},\varphi \right) ~.  \label{93.1x}
\end{equation}

\qquad Let\textbf{\ }$\sigma _{g_{1}}^{\left( p,q\right) }\left(
f_{1},\varphi \right) \neq \sigma _{g_{1}}^{\left( p,q\right) }\left(
f_{2},\varphi \right) .$ Then in view of the first part of Theorem \ref%
{t9.13} and $\left( \ref{93.1x}\right) $ we obtain that $\sigma
_{g_{1}}^{\left( p,q\right) }\left( f_{1},\varphi \right) =\sigma
_{g_{1}}^{\left( p,q\right) }\left( f_{1}\pm f_{2}\mp f_{2},\varphi \right)
=\sigma _{g_{1}}^{\left( p,q\right) }\left( f_{2},\varphi \right) $ which is
a contradiction. Hence $\rho _{g_{1}}^{\left( p,q\right) }\left( f_{1}\pm
f_{2},\varphi \right) $ $=$ $\rho _{g_{1}}^{\left( p,q\right) }\left(
f_{1},\varphi \right) $ $=$ $\rho _{g_{1}}^{\left( p,q\right) }\left(
f_{2},\varphi \right) ~.$ Similarly with the help of the first part of
Theorem \ref{t9.13}, one can obtain the same conclusion under the hypothesis 
$\overline{\sigma }_{g_{1}}^{\left( p,q\right) }\left( f_{1},\varphi \right)
\neq \overline{\sigma }_{g_{1}}^{\left( p,q\right) }\left( f_{2},\varphi
\right) .$ This proves the first part of the theorem.\medskip \newline
\textbf{Case II. }Let us consider that $\rho _{g_{1}}^{\left( p,q\right)
}\left( f_{1},\varphi \right) =\rho _{g_{2}}^{\left( p,q\right) }\left(
f_{1},\varphi \right) $ $(0<$ $\rho _{g_{1}}^{\left( p,q\right) }\left(
f_{1},\varphi \right) ,$ $\rho _{g_{2}}^{\left( p,q\right) }\left(
f_{1},\varphi \right) $ $<\infty )$, $f_{1}$ is of regular relative $\left(
p,q\right) $-$\varphi $ growth with respect to at least any one of $g_{1}$
or $g_{2}$ and $\left( g_{1}\pm g_{2}\right) $ and $g_{1}\pm g_{2}$ satisfy
the Property (A). Therefore in view of Theorem \ref{t9.4}, it follows that $%
\rho _{g_{1}\pm g_{2}}^{\left( p,q\right) }\left( f_{1},\varphi \right) \geq
\rho _{g_{1}}^{\left( p,q\right) }\left( f_{1},\varphi \right) =\rho
_{g_{2}}^{\left( p,q\right) }\left( f_{1},\varphi \right) $ and if possible
let 
\begin{equation}
\rho _{g_{1}\pm g_{2}}^{\left( p,q\right) }\left( f_{1},\varphi \right)
>\rho _{g_{1}}^{\left( p,q\right) }\left( f_{1},\varphi \right) =\rho
_{g_{2}}^{\left( p,q\right) }\left( f_{1},\varphi \right) ~.  \label{93.3x}
\end{equation}

\qquad Let us consider that $\sigma _{g_{1}}^{\left( p,q\right) }\left(
f_{1},\varphi \right) \neq \sigma _{g_{2}}^{\left( p,q\right) }\left(
f_{1},\varphi \right) .$ Then. in view of the proof of the second part of
Theorem \ref{t9.13} and $\left( \ref{93.3x}\right) $ we obtain that $\sigma
_{g_{1}}^{\left( p,q\right) }\left( f_{1},\varphi \right) =\sigma _{g_{1}\pm
g_{2}\mp g_{2}}^{\left( p,q\right) }\left( f_{1},\varphi \right) =\sigma
_{g_{2}}^{\left( p,q\right) }\left( f_{1},\varphi \right) $ which is a
contradiction. Hence $\rho _{g_{1}\pm g_{2}}^{\left( p,q\right) }\left(
f_{1},\varphi \right) =\rho _{g_{1}}^{\left( p,q\right) }\left(
f_{1},\varphi \right) =\rho _{g_{2}}^{\left( p,q\right) }\left(
f_{1},\varphi \right) ~.$ Also in view of the proof of second part of
Theorem \ref{t9.13} one can derive the same conclusion for the condition $%
\overline{\sigma }_{g_{1}}^{\left( p,q\right) }\left( f_{1},\varphi \right)
\neq \overline{\sigma }_{g_{2}}^{\left( p,q\right) }\left( f_{1},\varphi
\right) $ and therefore the second part of the theorem is established.
\end{proof}

\begin{theorem}
\label{t9.15A} Let $f_{1},\,f_{2}$ be any two meromorphic functions and $%
g_{1}$, $g_{2}$ be any two entire functions.\newline
\textbf{(A) }The following conditions are assumed to be satisfied:\newline
$\left( i\right) $ $\left( f_{1}\pm f_{2}\right) $ is of regular relative $%
\left( p,q\right) $-$\varphi $ growth with respect to at least any one of $%
g_{1}$ or $g_{2}$, and $g_{1},$ $g_{2}$ , $g_{1}\pm g_{2}$ have the Property
(A);\newline
$\left( ii\right) $ Either $\sigma _{g_{1}}^{\left( p,q\right) }\left(
f_{1}\pm f_{2},\varphi \right) \neq \sigma _{g_{2}}^{\left( p,q\right)
}\left( f_{1}\pm f_{2},\varphi \right) $ or $\overline{\sigma }%
_{g_{1}}^{\left( p,q\right) }\left( f_{1}\pm f_{2},\varphi \right) \neq 
\overline{\sigma }_{g_{2}}^{\left( p,q\right) }\left( f_{1}\pm f_{2},\varphi
\right) $;\newline
$\left( iii\right) $ Either $\sigma _{g_{1}}^{\left( p,q\right) }\left(
f_{1},\varphi \right) \neq \sigma _{g_{1}}^{\left( p,q\right) }\left(
f_{2},\varphi \right) $ or $\overline{\sigma }_{g_{1}}^{\left( p,q\right)
}\left( f_{1},\varphi \right) \neq \overline{\sigma }_{g_{1}}^{\left(
p,q\right) }\left( f_{2},\varphi \right) $;\newline
$\left( iv\right) $ Either $\sigma _{g_{2}}^{\left( p,q\right) }\left(
f_{1},\varphi \right) \neq \sigma _{g_{2}}^{\left( p,q\right) }\left(
f_{2},\varphi \right) $ or $\overline{\sigma }_{g_{2}}^{\left( p,q\right)
}\left( f_{1},\varphi \right) \neq \overline{\sigma }_{g_{2}}^{\left(
p,q\right) }\left( f_{2},\varphi \right) $; then%
\begin{equation*}
\rho _{g_{1}\pm g_{2}}^{\left( p,q\right) }\left( f_{1}\pm f_{2},\varphi
\right) =\rho _{g_{1}}^{\left( p,q\right) }\left( f_{1},\varphi \right)
=\rho _{g_{1}}^{\left( p,q\right) }\left( f_{2},\varphi \right) =\rho
_{g_{2}}^{\left( p,q\right) }\left( f_{1},\varphi \right) =\rho
_{g_{2}}^{\left( p,q\right) }\left( f_{2},\varphi \right) ~.
\end{equation*}%
\textbf{(B) }The following conditions are assumed to be satisfied:\newline
$\left( i\right) $ $f_{1}$ and $f_{2}$ are of regular relative $\left(
p,q\right) $-$\varphi $ growth with respect to at least any one\ of $g_{1}$
or $g_{2},$ and $g_{1}\pm g_{2}$ has the Property (A);\newline
$\left( ii\right) $ Either $\sigma _{g_{1}\pm g_{2}}^{\left( p,q\right)
}\left( f_{1},\varphi \right) \neq \sigma _{g_{1}\pm g_{2}}^{\left(
p,q\right) }\left( f_{2},\varphi \right) $ or $\overline{\sigma }_{g_{1}\pm
g_{2}}^{\left( p,q\right) }\left( f_{1},\varphi \right) \neq \overline{%
\sigma }_{g_{1}\pm g_{2}}^{\left( p,q\right) }\left( f_{2},\varphi \right) $;%
\newline
$\left( iii\right) $ Either $\sigma _{g_{1}}^{\left( p,q\right) }\left(
f_{1},\varphi \right) \neq \sigma _{g_{2}}^{\left( p,q\right) }\left(
f_{1},\varphi \right) $ or $\overline{\sigma }_{g_{1}}^{\left( p,q\right)
}\left( f_{1},\varphi \right) \neq \overline{\sigma }_{g_{2}}^{\left(
p,q\right) }\left( f_{1},\varphi \right) $;\newline
$\left( iv\right) $ Either $\sigma _{g_{1}}^{\left( p,q\right) }\left(
f_{2},\varphi \right) \neq \sigma _{g_{2}}^{\left( p,q\right) }\left(
f_{2},\varphi \right) $ or $\overline{\sigma }_{g_{1}}^{\left( p,q\right)
}\left( f_{2},\varphi \right) \neq \overline{\sigma }_{g_{2}}^{\left(
p,q\right) }\left( f_{2},\varphi \right) $; then%
\begin{equation*}
\rho _{g_{1}\pm g_{2}}^{\left( p,q\right) }\left( f_{1}\pm f_{2},\varphi
\right) =\rho _{g_{1}}^{\left( p,q\right) }\left( f_{1},\varphi \right)
=\rho _{g_{1}}^{\left( p,q\right) }\left( f_{2},\varphi \right) =\rho
_{g_{2}}^{\left( p,q\right) }\left( f_{1},\varphi \right) =\rho
_{g_{2}}^{\left( p,q\right) }\left( f_{2},\varphi \right) ~.
\end{equation*}
\end{theorem}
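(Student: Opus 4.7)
The plan is to derive Theorem \ref{t9.15A} by iterating Theorem \ref{t9.15} rather than re-running the type-versus-order contradiction argument from scratch. Both parts ask us to collapse five order values to a single number: the ``mixed'' order $\rho_{g_1\pm g_2}^{(p,q)}(f_1\pm f_2,\varphi)$ together with the four component orders $\rho_{g_l}^{(p,q)}(f_k,\varphi)$ for $k,l\in\{1,2\}$. The idea is to obtain three overlapping equality chains, each produced by one of the assumptions (ii), (iii), (iv) feeding Theorem \ref{t9.15}, and then splice them at their common endpoints.

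For part (A) I would first apply Theorem \ref{t9.15}(A) with base entire function $g_1$ and meromorphic functions $f_1,f_2$; condition (iii) supplies the ``either $\sigma$ or $\overline{\sigma}$ differs'' hypothesis and Property (A) of $g_1$ is available from (i), so
\[
\rho_{g_1}^{(p,q)}(f_1\pm f_2,\varphi)=\rho_{g_1}^{(p,q)}(f_1,\varphi)=\rho_{g_1}^{(p,q)}(f_2,\varphi).
\]
Condition (iv) together with Property (A) for $g_2$ gives the analogous chain with $g_1$ replaced by $g_2$. The third chain comes from Theorem \ref{t9.15}(B) applied to the single meromorphic function $f_1\pm f_2$ against the entire pair $g_1,g_2$: condition (ii) is the type inequality it requires, the regular relative growth of $f_1\pm f_2$ in (i) is the regular-growth hypothesis it requires, and Property (A) for $g_1\pm g_2$ is at hand, yielding
\[
\rho_{g_1\pm g_2}^{(p,q)}(f_1\pm f_2,\varphi)=\rho_{g_1}^{(p,q)}(f_1\pm f_2,\varphi)=\rho_{g_2}^{(p,q)}(f_1\pm f_2,\varphi).
\]
Splicing these three chains through the common values $\rho_{g_1}^{(p,q)}(f_1\pm f_2,\varphi)$ and $\rho_{g_2}^{(p,q)}(f_1\pm f_2,\varphi)$ gives the five-fold equality claimed in (A).

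For part (B) the same strategy is executed in transposed form. Condition (iii), combined with the regular relative growth of $f_1$ from (i) and Property (A) for $g_1\pm g_2$, permits Theorem \ref{t9.15}(B) applied to $f_1$ with entire pair $g_1,g_2$, producing $\rho_{g_1\pm g_2}^{(p,q)}(f_1,\varphi)=\rho_{g_1}^{(p,q)}(f_1,\varphi)=\rho_{g_2}^{(p,q)}(f_1,\varphi)$; condition (iv) together with the regular relative growth of $f_2$ yields the analogous chain for $f_2$. The outer link is then supplied by Theorem \ref{t9.15}(A) applied with base entire function $g_1\pm g_2$ to the pair $f_1,f_2$, using condition (ii) and Property (A) for $g_1\pm g_2$, which produces
\[
\rho_{g_1\pm g_2}^{(p,q)}(f_1\pm f_2,\varphi)=\rho_{g_1\pm g_2}^{(p,q)}(f_1,\varphi)=\rho_{g_1\pm g_2}^{(p,q)}(f_2,\varphi).
\]
Splicing again gives the desired common value.

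The only point requiring care — and essentially the only non-cosmetic step — is the bookkeeping of which regular-growth and Property (A) hypotheses are consumed at which invocation. In part (A), the regular-growth assumption of (i) on $f_1\pm f_2$ is spent precisely in the third application, namely Theorem \ref{t9.15}(B) for $f_1\pm f_2$; in part (B), the corresponding assumption from (i) must furnish regular growth for $f_1$ and for $f_2$ separately, since each of the two invocations of Theorem \ref{t9.15}(B) consumes one. Once this hypothesis-tracking is verified, no fresh Nevanlinna-theoretic estimation is needed: the theorem follows by stringing together the three instances of Theorem \ref{t9.15}.
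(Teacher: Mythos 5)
Your proposal is correct and is exactly the argument the paper intends: the paper omits the proof of Theorem \ref{t9.15A}, declaring it a natural consequence of Theorem \ref{t9.15}, and your three applications of Theorem \ref{t9.15} (two instances of part (A) plus one of part (B) for statement (A), and the transposed combination for statement (B)), spliced at the common values $\rho _{g_{1}}^{\left( p,q\right) }\left( f_{1}\pm f_{2},\varphi \right) $, $\rho _{g_{2}}^{\left( p,q\right) }\left( f_{1}\pm f_{2},\varphi \right) $, $\rho _{g_{1}\pm g_{2}}^{\left( p,q\right) }\left( f_{1},\varphi \right) $ and $\rho _{g_{1}\pm g_{2}}^{\left( p,q\right) }\left( f_{2},\varphi \right) $, supply precisely that deduction with the hypotheses correctly allocated.
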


\qquad We omit the proof of Theorem \ref{t9.15A} as it is a natural
consequence of Theorem \ref{t9.15}.

\begin{theorem}
\label{t9.16} Let $f_{1},\,f_{2}\,$be ant two meromorphic functions and $%
g_{1}$,$g_{2}$ be any two entire functions.\newline
\textbf{(A)} The following conditions are assumed to be satisfied:\newline
$\left( i\right) $ At least any one of $f_{1}$ or $f_{2}$ is of regular
relative $\left( p,q\right) $-$\varphi $ growth with respect to $g_{1}$;%
\newline
$\left( ii\right) $ Either $\tau _{g_{1}}^{\left( p,q\right) }\left(
f_{1},\varphi \right) \neq \tau _{g_{1}}^{\left( p,q\right) }\left(
f_{2},\varphi \right) $ or $\overline{\tau }_{g_{1}}^{\left( p,q\right)
}\left( f_{1},\varphi \right) \neq \overline{\tau }_{g_{1}}^{\left(
p,q\right) }\left( f_{2},\varphi \right) $ holds and $g_{1}$ has the
Property (A), then%
\begin{equation*}
\lambda _{g_{1}}^{\left( p,q\right) }\left( f_{1}\pm f_{2},\varphi \right)
=\lambda _{g_{1}}^{\left( p,q\right) }\left( f_{1},\varphi \right) =\lambda
_{g_{1}}^{\left( p,q\right) }\left( f_{2},\varphi \right) ~.
\end{equation*}%
\textbf{(B)} The following conditions are assumed to be satisfied:\newline
$\left( i\right) $ $f_{1},$ $g_{1}$ and $g_{2}$ be any three entire
functions such that $\lambda _{g_{1}}^{\left( p,q\right) }\left(
f_{1},\varphi \right) $ and $\lambda _{g_{2}}^{\left( p,q\right) }\left(
f_{1},\varphi \right) $ exists;\newline
$\left( ii\right) $ Either $\tau _{g_{1}}^{\left( p,q\right) }\left(
f_{1},\varphi \right) \neq \tau _{g_{2}}^{\left( p,q\right) }\left(
f_{1},\varphi \right) $ or $\overline{\tau }_{g_{1}}^{\left( p,q\right)
}\left( f_{1},\varphi \right) \neq \overline{\tau }_{g_{2}}^{\left(
p,q\right) }\left( f_{1},\varphi \right) $ holds and $g_{1}\pm g_{2}$ has
the Property (A), then%
\begin{equation*}
\lambda _{g_{1}\pm g_{2}}^{\left( p,q\right) }\left( f_{1},\varphi \right)
=\lambda _{g_{1}}^{\left( p,q\right) }\left( f_{1},\varphi \right) =\lambda
_{g_{2}}^{\left( p,q\right) }\left( f_{1},\varphi \right) ~.
\end{equation*}
\end{theorem}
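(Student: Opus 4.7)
The plan is to adapt the proof of Theorem \ref{t9.15} line-by-line, replacing every appeal to an order-level result with its lower-order / weak-type analogue. Specifically, the translation is: Theorem \ref{t9.1x} is replaced by Theorem \ref{t9.2x}, Theorem \ref{t9.4} by Theorem \ref{t9.3}, and the relevant parts of Theorem \ref{t9.13} by the corresponding parts of Theorem \ref{t9.14}. The overall structure will be a proof by contradiction in two parallel cases, one for each of Part (A) and Part (B).

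For Part (A), I would begin by treating the case $\lambda_{g_{1}}^{(p,q)}(f_{1},\varphi )=\lambda _{g_{1}}^{(p,q)}(f_{2},\varphi )$, since any other scenario would immediately violate the stated three-way equality once Theorem \ref{t9.2x} is invoked. Theorem \ref{t9.2x} then yields $\lambda _{g_{1}}^{(p,q)}(f_{1}\pm f_{2},\varphi )\leq \lambda _{g_{1}}^{(p,q)}(f_{1},\varphi )$. Assuming toward contradiction that this inequality is strict, the identity $f_{1}=(f_{1}\pm f_{2})\mp f_{2}$ turns $f_{1}$ into the algebraic sum of two functions whose lower orders are now distinct. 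Applying Theorem \ref{t9.14}(A) to the pair $(f_{1}\pm f_{2},f_{2})$ forces $\tau _{g_{1}}^{(p,q)}(f_{1},\varphi )=\tau _{g_{1}}^{(p,q)}(f_{2},\varphi )$, and an entirely parallel application with $\overline{\tau }$ in place of $\tau $ forces $\overline{\tau }_{g_{1}}^{(p,q)}(f_{1},\varphi )=\overline{\tau }_{g_{1}}^{(p,q)}(f_{2},\varphi )$. Either conclusion contradicts condition (ii), so the strict inequality is impossible and equality holds.

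For Part (B), the symmetric argument on the base side applies: start by assuming $\lambda _{g_{1}}^{(p,q)}(f_{1},\varphi )=\lambda _{g_{2}}^{(p,q)}(f_{1},\varphi )$, use Theorem \ref{t9.3} to obtain $\lambda _{g_{1}\pm g_{2}}^{(p,q)}(f_{1},\varphi )\geq \lambda _{g_{1}}^{(p,q)}(f_{1},\varphi )$, and suppose toward contradiction that strict inequality holds. Writing $g_{1}=(g_{1}\pm g_{2})\mp g_{2}$ and applying Theorem \ref{t9.14}(B) to the pair $(g_{1}\pm g_{2},g_{2})$, one deduces $\tau _{g_{1}}^{(p,q)}(f_{1},\varphi )=\tau _{g_{2}}^{(p,q)}(f_{1},\varphi )$ (and the analogous equality for $\overline{\tau }$), contradicting condition (ii) in either alternative.

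The main obstacle will be confirming that the regular-growth hypotheses built into Theorem \ref{t9.14}(A) survive the algebraic reshuffling $f_{1}\mapsto (f_{1}\pm f_{2})\mp f_{2}$. Theorem \ref{t9.14}(A) demands that the function with the smaller lower order in the new pair be of regular relative $(p,q)$-$\varphi$ growth with respect to $g_{1}$; here that function is $f_{1}\pm f_{2}$ itself, whereas the hypothesis of Theorem \ref{t9.16} only supplies regular growth for at least one of $f_{1}$ or $f_{2}$. To bridge this, I would split on which of $f_{1}$ or $f_{2}$ is regular and apply Theorem \ref{t9.14}(A) to whichever of the pairs $(f_{1}\pm f_{2},f_{1})$ or $(f_{1}\pm f_{2},f_{2})$ allows the regular function to play the role required by the cited theorem; a completely analogous delicacy arises in Part (B) with the pair $(g_{1}\pm g_{2},g_{2})$ versus $(g_{1}\pm g_{2},g_{1})$. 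Once these case-splits are laid out, the remainder is routine book-keeping.
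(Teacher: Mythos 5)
Your proposal follows essentially the same route as the paper's own proof: assume the two lower orders coincide, get one inequality from Theorem \ref{t9.2x} (resp.\ Theorem \ref{t9.3}), suppose it is strict, and derive a contradiction with condition $(ii)$ by applying Theorem \ref{t9.14} to the decomposition $f_{1}=\left( f_{1}\pm f_{2}\right) \mp f_{2}$ (resp.\ $g_{1}=\left( g_{1}\pm g_{2}\right) \mp g_{2}$). The regularity obstacle you flag in Part (A) is genuine, but note that your proposed case-split on which of $f_{1}$ or $f_{2}$ is regular does not resolve it: in either of the pairs $\left( f_{1}\pm f_{2},f_{1}\right) $ or $\left( f_{1}\pm f_{2},f_{2}\right) $ the function with the smaller lower order under the contradiction hypothesis is $f_{1}\pm f_{2}$ itself, so it is $f_{1}\pm f_{2}$ whose regular relative $\left( p,q\right) $-$\varphi $ growth Theorem \ref{t9.14}(A) demands; the paper sidesteps this by silently adding in Case I of its proof the assumption that $f_{1}\pm f_{2}$ is of regular relative growth with respect to $g_{1}$, an assumption absent from the theorem statement, so your argument is no less rigorous than the original.
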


\begin{proof}
Let $f_{1},\,f_{2},\,g_{1}$ and $g_{2}$ be any four entire functions
satisfying the conditions of the theorem.\medskip \newline
\textbf{Case I.} Let $\lambda _{g_{1}}^{\left( p,q\right) }\left(
f_{1},\varphi \right) =\lambda _{g_{1}}^{\left( p,q\right) }\left(
f_{2},\varphi \right) $ $(0<\lambda _{g_{1}}^{\left( p,q\right) }\left(
f_{1},\varphi \right) ,\lambda _{g_{1}}^{\left( p,q\right) }\left(
f_{2},\varphi \right) <\infty )$ and at least $f_{1}$ or $f_{2}$ and $\left(
f_{1}\pm f_{2}\right) $ are of regular relative $\left( p,q\right) $-$%
\varphi $ growth with respect to $g_{1}$. Now, in view of Theorem \ref{t9.2x}%
, it is easy to see that $\lambda _{g_{1}}^{\left( p,q\right) }\left(
f_{1}\pm f_{2},\varphi \right) \leq \lambda _{g_{1}}^{\left( p,q\right)
}\left( f_{1},\varphi \right) =\lambda _{g_{1}}^{\left( p,q\right) }\left(
f_{2},\varphi \right) .$ If possible let 
\begin{equation}
\lambda _{g_{1}}^{\left( p,q\right) }\left( f_{1}\pm f_{2},\varphi \right)
<\lambda _{g_{1}}^{\left( p,q\right) }\left( f_{1},\varphi \right) =\lambda
_{g_{1}}^{\left( p,q\right) }\left( f_{2},\varphi \right) ~.  \label{96.19xy}
\end{equation}

\qquad Let\textbf{\ }$\tau _{g_{1}}^{\left( p,q\right) }\left( f_{1},\varphi
\right) \neq \tau _{g_{1}}^{\left( p,q\right) }\left( f_{2},\varphi \right)
. $ Then in view of the proof of the first part of Theorem \ref{t9.14} and $%
\left( \ref{96.19xy}\right) $ we obtain that $\tau _{g_{1}}^{\left(
p,q\right) }\left( f_{1},\varphi \right) =\tau _{g_{1}}^{\left( p,q\right)
}\left( f_{1}\pm f_{2}\mp f_{2},\varphi \right) =\tau _{g_{1}}^{\left(
p,q\right) }\left( f_{2},\varphi \right) $ which is a contradiction. Hence $%
\lambda _{g_{1}}^{\left( p,q\right) }\left( f_{1}\pm f_{2},\varphi \right) $ 
$=$ $\lambda _{g_{1}}^{\left( p,q\right) }\left( f_{1},\varphi \right) $ $=$ 
$\lambda _{g_{1}}^{\left( p,q\right) }\left( f_{2},\varphi \right) ~.$
Similarly in view of the proof of the first part of Theorem \ref{t9.14} ,
one can establish the same conclusion under the hypothesis $\overline{\tau }%
_{g_{1}}^{\left( p,q\right) }\left( f_{1},\varphi \right) \neq \overline{%
\tau }_{g_{1}}^{\left( p,q\right) }\left( f_{2},\varphi \right) .$ This
proves the first part of the theorem.\medskip \newline
\textbf{Case II.} Let us consider that $\lambda _{g_{1}}^{\left( p,q\right)
}\left( f_{1},\varphi \right) =\lambda _{g_{2}}^{\left( p,q\right) }\left(
f_{1},\varphi \right) $ $(0<\lambda _{g_{1}}^{\left( p,q\right) }\left(
f_{1},\varphi \right) ,\lambda _{g_{2}}^{\left( p,q\right) }\left(
f_{1},\varphi \right) <\infty .$ Therefore in view of Theorem \ref{t9.3}, it
follows that $\lambda _{g_{1}\pm g_{2}}^{\left( p,q\right) }\left(
f_{1},\varphi \right) \geq \lambda _{g_{1}}^{\left( p,q\right) }\left(
f_{1},\varphi \right) =\lambda _{g_{2}}^{\left( p,q\right) }\left(
f_{1},\varphi \right) $ and if possible let 
\begin{equation}
\lambda _{g_{1}\pm g_{2}}^{\left( p,q\right) }\left( f_{1},\varphi \right)
>\lambda _{g_{1}}^{\left( p,q\right) }\left( f_{1},\varphi \right) =\lambda
_{g_{2}}^{\left( p,q\right) }\left( f_{1},\varphi \right) ~.  \label{96.2t}
\end{equation}

\qquad Suppose\textbf{\ }$\tau _{g_{1}}^{\left( p,q\right) }\left(
f_{1},\varphi \right) \neq \tau _{g_{2}}^{\left( p,q\right) }\left(
f_{1},\varphi \right) .$ Then in view of the second part of Theorem \ref%
{t9.14} and $\left( \ref{96.2t}\right) $, we obtain that $\tau
_{g_{1}}^{\left( p,q\right) }\left( f_{1},\varphi \right) =\tau _{g_{1}\pm
g_{2}\mp g_{2}}^{\left( p,q\right) }\left( f_{1},\varphi \right) =\tau
_{g_{2}}^{\left( p,q\right) }\left( f_{1},\varphi \right) $ which is a
contradiction. Hence $\lambda _{g_{1}\pm g_{2}}^{\left( p,q\right) }\left(
f_{1},\varphi \right) $ $=$ $\lambda _{g_{1}}^{\left( p,q\right) }\left(
f_{1},\varphi \right) $ $=$ $\lambda _{g_{2}}^{\left( p,q\right) }\left(
f_{1},\varphi \right) ~.$ Analogously with the help of the second part of
Theorem \ref{t9.14}, the same conclusion can also be derived under the
condition $\overline{\tau }_{g_{1}}^{\left( p,q\right) }\left( f_{1},\varphi
\right) \neq \overline{\tau }_{g_{2}}^{\left( p,q\right) }\left(
f_{1},\varphi \right) $ and therefore the second part of the theorem is
established.
\end{proof}

\begin{theorem}
\label{t9.16A} Let $f_{1},\,f_{2}\,$be any two meromorphic functions and $%
g_{1}$, $g_{2}$ be any two entire functions.\newline
\textbf{(A)} The following conditions are assumed to be satisfied:\newline
$\left( i\right) $ At least any one of $f_{1}$ or $f_{2}$ is of regular
relative $\left( p,q\right) $-$\varphi $ growth with respect to $g_{1}$ and $%
g_{2}$. Also $g_{1},$ $g_{2},$ $g_{1}\pm g_{2}$ have satisfy the Property
(A);\newline
$\left( ii\right) $ Either $\tau _{g_{1}}^{\left( p,q\right) }\left(
f_{1}\pm f_{2},\varphi \right) \neq \tau _{g_{2}}^{\left( p,q\right) }\left(
f_{1}\pm f_{2},\varphi \right) $ or $\overline{\tau }_{g_{1}}^{\left(
p,q\right) }\left( f_{1}\pm f_{2},\varphi \right) \neq \overline{\tau }%
_{g_{2}}^{\left( p,q\right) }\left( f_{1}\pm f_{2},\varphi \right) $;\newline
$\left( iii\right) $ Either $\tau _{g_{1}}^{\left( p,q\right) }\left(
f_{1},\varphi \right) \neq \tau _{g_{1}}^{\left( p,q\right) }\left(
f_{2},\varphi \right) $ or $\overline{\tau }_{g_{1}}^{\left( p,q\right)
}\left( f_{1},\varphi \right) \neq \overline{\tau }_{g_{1}}^{\left(
p,q\right) }\left( f_{2},\varphi \right) $;\newline
$\left( iv\right) $ Either $\tau _{g_{2}}^{\left( p,q\right) }\left(
f_{1},\varphi \right) \neq \tau _{g_{2}}^{\left( p,q\right) }\left(
f_{2},\varphi \right) $ or $\overline{\tau }_{g_{2}}^{\left( p,q\right)
}\left( f_{1},\varphi \right) \neq \overline{\tau }_{g_{2}}^{\left(
p,q\right) }\left( f_{2},\varphi \right) $; then%
\begin{equation*}
\lambda _{g_{1}\pm g_{2}}^{\left( p,q\right) }\left( f_{1}\pm f_{2},\varphi
\right) =\lambda _{g_{1}}^{\left( p,q\right) }\left( f_{1},\varphi \right)
=\lambda _{g_{1}}^{\left( p,q\right) }\left( f_{2},\varphi \right) =\lambda
_{g_{2}}^{\left( p,q\right) }\left( f_{1},\varphi \right) =\lambda
_{g_{2}}^{\left( p,q\right) }\left( f_{2},\varphi \right) ~.
\end{equation*}%
\textbf{(B)} The following conditions are assumed to be satisfied:\newline
$\left( i\right) $ At least any one of $f_{1}$ or $f_{2}$ are of regular
relative $\left( p,q\right) $-$\varphi $ growth with respect to $g_{1}\pm
g_{2}$, and $g_{1}\pm g_{2}$ has satisfy the Property (A);\newline
$\left( ii\right) $ Either $\tau _{g_{1}\pm g_{2}}^{\left( p,q\right)
}\left( f_{1},\varphi \right) \neq \tau _{g_{1}\pm g_{2}}^{\left( p,q\right)
}\left( f_{2},\varphi \right) $ or $\overline{\tau }_{g_{1}\pm
g_{2}}^{\left( p,q\right) }\left( f_{1},\varphi \right) \neq \overline{\tau }%
_{g_{1}\pm g_{2}}^{\left( p,q\right) }\left( f_{2},\varphi \right) $ holds;%
\newline
$\left( iii\right) $ Either $\tau _{g_{1}}^{\left( p,q\right) }\left(
f_{1},\varphi \right) \neq \tau _{g_{2}}^{\left( p,q\right) }\left(
f_{1},\varphi \right) $ or $\overline{\tau }_{g_{1}}^{\left( p,q\right)
}\left( f_{1},\varphi \right) \neq \overline{\tau }_{g_{2}}^{\left(
p,q\right) }\left( f_{1},\varphi \right) $ holds;\newline
$\left( iv\right) $ Either $\tau _{g_{1}}^{\left( p,q\right) }\left(
f_{2},\varphi \right) \neq \tau _{g_{2}}^{\left( p,q\right) }\left(
f_{2},\varphi \right) $ or $\overline{\tau }_{g_{1}}^{\left( p,q\right)
}\left( f_{2},\varphi \right) \neq \overline{\tau }_{g_{2}}^{\left(
p,q\right) }\left( f_{2},\varphi \right) $ holds, then%
\begin{equation*}
\lambda _{g_{1}\pm g_{2}}^{\left( p,q\right) }\left( f_{1}\pm f_{2},\varphi
\right) =\lambda _{g_{1}}^{\left( p,q\right) }\left( f_{1},\varphi \right)
=\lambda _{g_{1}}^{\left( p,q\right) }\left( f_{2},\varphi \right) =\lambda
_{g_{2}}^{\left( p,q\right) }\left( f_{1},\varphi \right) =\lambda
_{g_{2}}^{\left( p,q\right) }\left( f_{2},\varphi \right) ~.
\end{equation*}
\end{theorem}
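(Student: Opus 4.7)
The plan is to derive both parts as straightforward corollaries of Theorem~\ref{t9.16}, in direct parallel with how Theorem~\ref{t9.15A} is deduced from Theorem~\ref{t9.15}. No new estimates are needed; the whole argument reduces to applying Theorem~\ref{t9.16}(A) and Theorem~\ref{t9.16}(B) three times in the right order and stitching the resulting chains of equalities together.

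For part~(A), I would first invoke Theorem~\ref{t9.16}(A) twice, once with the entire function $g_1$ (using hypothesis (iii) on the weak types with respect to $g_1$, together with condition~(i) which supplies the regular relative growth and Property~(A) of $g_1$) and once with $g_2$ (using hypothesis~(iv) in the same way). This yields
\[
\lambda_{g_1}^{(p,q)}(f_1\pm f_2,\varphi)=\lambda_{g_1}^{(p,q)}(f_1,\varphi)=\lambda_{g_1}^{(p,q)}(f_2,\varphi),
\]
\[
\lambda_{g_2}^{(p,q)}(f_1\pm f_2,\varphi)=\lambda_{g_2}^{(p,q)}(f_1,\varphi)=\lambda_{g_2}^{(p,q)}(f_2,\varphi).
\]
Next I would apply Theorem~\ref{t9.16}(B) to the meromorphic function $h:=f_1\pm f_2$, with the pair of entire functions $g_1,g_2$, using hypothesis~(ii) to guarantee that either $\tau_{g_1}^{(p,q)}(h,\varphi)\neq\tau_{g_2}^{(p,q)}(h,\varphi)$ or the corresponding $\overline{\tau}$ values differ, and using Property~(A) of $g_1\pm g_2$. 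This delivers
\[
\lambda_{g_1\pm g_2}^{(p,q)}(f_1\pm f_2,\varphi)=\lambda_{g_1}^{(p,q)}(f_1\pm f_2,\varphi)=\lambda_{g_2}^{(p,q)}(f_1\pm f_2,\varphi),
\]
and splicing the three chains collapses all five quantities in the conclusion to a single common value.

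For part~(B), the roles of $\{f_i\}$ and $\{g_j\}$ are essentially swapped. I would first apply Theorem~\ref{t9.16}(B) twice, once with $f_1$ in place of the meromorphic function (using hypothesis~(iii)) and once with $f_2$ (using hypothesis~(iv)), to obtain
\[
\lambda_{g_1\pm g_2}^{(p,q)}(f_k,\varphi)=\lambda_{g_1}^{(p,q)}(f_k,\varphi)=\lambda_{g_2}^{(p,q)}(f_k,\varphi),\qquad k=1,2.
\]
Then I would apply Theorem~\ref{t9.16}(A) with the single entire function $g_1\pm g_2$ and the pair $f_1,f_2$, using hypothesis~(ii) on the weak types with respect to $g_1\pm g_2$, together with condition~(i) that supplies regular relative $(p,q)$-$\varphi$ growth with respect to $g_1\pm g_2$ and Property~(A) of $g_1\pm g_2$. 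This gives
\[
\lambda_{g_1\pm g_2}^{(p,q)}(f_1\pm f_2,\varphi)=\lambda_{g_1\pm g_2}^{(p,q)}(f_1,\varphi)=\lambda_{g_1\pm g_2}^{(p,q)}(f_2,\varphi),
\]
and chaining the three identities again produces the required five-way equality.

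The only point that deserves genuine care—rather than a real obstacle—is the bookkeeping in verifying the hypotheses of Theorem~\ref{t9.16}(A) and~(B) at each reuse, because the auxiliary function to which the lemma is being applied is itself a sum. In particular, one must check that the ``regular relative growth'' assumptions transfer when we pass from $f_k$ to $f_1\pm f_2$ in part~(A) and from $g_j$ to $g_1\pm g_2$ in part~(B); this is precisely why condition~(i) of Theorem~\ref{t9.16A} is phrased to bestow regular relative growth directly on the combined functions, and why the Property~(A) assumptions are imposed on $g_1,g_2$ and $g_1\pm g_2$ (in~(A)) or on $g_1\pm g_2$ alone (in~(B)). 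Once these verifications are in place the chaining is mechanical, and the proof closes.
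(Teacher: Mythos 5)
Your proposal is correct and follows exactly the route the paper intends: the paper omits the proof of Theorem \ref{t9.16A}, stating only that it is a natural consequence of Theorem \ref{t9.16}, and your three-fold application of Theorem \ref{t9.16}(A) and \ref{t9.16}(B) (mirroring how Theorem \ref{t9.15A} follows from Theorem \ref{t9.15}) is precisely that deduction. The chaining of the three resulting equalities into the five-way identity is sound in both parts.
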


\qquad We omit the proof of Theorem \ref{t9.16A} as it is a natural
consequence of Theorem \ref{t9.16}.

\begin{theorem}
\label{t9.17} Let $f_{1},f_{2}$ be any two meromorphic functions and $g_{1}$%
, $g_{2}$ be any two entire functions. Also let $\rho _{g_{1}}^{\left(
p,q\right) }\left( f_{1},\varphi \right) $, $\rho _{g_{1}}^{\left(
p,q\right) }\left( f_{2},\varphi \right) $, $\rho _{g_{2}}^{\left(
p,q\right) }\left( f_{1},\varphi \right) $ and $\rho _{g_{2}}^{\left(
p,q\right) }\left( f_{2},\varphi \right) $ are all non zero and finite.%
\newline
\textbf{(A)} Assume the functions $f_{1},f_{2}$ and $g_{1}$ satisfy the
following conditions:\newline
$\left( i\right) $ $\rho _{g_{1}}^{\left( p,q\right) }\left( f_{i},\varphi
\right) >\rho _{g_{1}}^{\left( p,q\right) }\left( f_{j},\varphi \right) $
for $i$, $j$ $=$ $1,2$ and $i\neq j$;\newline
$\left( ii\right) $ $g_{1}$ satisfies the Property (A), then%
\begin{equation*}
\sigma _{g_{1}}^{\left( p,q\right) }\left( f_{1}\cdot f_{2},\varphi \right)
=\sigma _{g_{1}}^{\left( p,q\right) }\left( f_{i},\varphi \right) \text{ and
\ }\overline{\sigma }_{g_{1}}^{\left( p,q\right) }\left( f_{1}\cdot
f_{2},\varphi \right) =\overline{\sigma }_{g_{1}}^{\left( p,q\right) }\left(
f_{i},\varphi \right) \mid i=1,2\text{ }~.
\end{equation*}%
Similarly,%
\begin{equation*}
\sigma _{g_{1}}^{\left( p,q\right) }\left( \frac{f_{1}}{f_{2}},\varphi
\right) =\sigma _{g_{1}}^{\left( p,q\right) }\left( f_{i},\varphi \right) 
\text{ and \ }\overline{\sigma }_{g_{1}}^{\left( p,q\right) }\left( \frac{%
f_{1}}{f_{2}},\varphi \right) =\overline{\sigma }_{g_{1}}^{\left( p,q\right)
}\left( f_{i},\varphi \right) \mid i=1,2
\end{equation*}%
holds provided $\left( i\right) $ $\frac{f_{1}}{f_{2}}$ is meromorphic, $%
\left( ii\right) $ $\rho _{g_{1}}^{\left( p,q\right) }\left( f_{i},\varphi
\right) $ $>$ $\rho _{g_{1}}^{\left( p,q\right) }\left( f_{j},\varphi
\right) $ $\mid $ $i$, $1,2;$ $j$ $=$ $1,2;$ $i$ $\neq $ $j$ and $\left(
iii\right) $ $g_{1}$ satisfy the Property (A).\newline
\textbf{(B) }Assume the functions $g_{1},g_{2}$ and $f_{1}$ satisfy the
following conditions:\newline
$\left( i\right) $ $\rho _{g_{i}}^{\left( p,q\right) }\left( f_{1},\varphi
\right) <\rho _{g_{j}}^{\left( p,q\right) }\left( f_{1},\varphi \right) $
with at least $f_{1}$ is of regular relative $\left( p,q\right) $-$\varphi $
growth with respect to $g_{j}$ for $i$, $j$ $=$ $1,2$ and $i\neq j,$ and $%
g_{i}$ satisfy the Property (A);\newline
$\left( ii\right) $ $g_{1}\cdot g_{2}$ satisfy the Property (A), then%
\begin{equation*}
\sigma _{g_{1}\cdot g_{2}}^{\left( p,q\right) }\left( f_{1},\varphi \right)
=\sigma _{g_{i}}^{\left( p,q\right) }\left( f_{1},\varphi \right) \text{ and
\ }\overline{\sigma }_{g_{1}\cdot g_{2}}^{\left( p,q\right) }\left(
f_{1},\varphi \right) =\overline{\sigma }_{g_{i}}^{\left( p,q\right) }\left(
f_{1},\varphi \right) \mid i=1,2~.
\end{equation*}%
Similarly,%
\begin{equation*}
\sigma _{\frac{g_{1}}{g_{2}}}^{\left( p,q\right) }\left( f_{1},\varphi
\right) =\sigma _{g_{i}}^{\left( p,q\right) }\left( f_{1},\varphi \right) 
\text{ and \ }\overline{\sigma }_{\frac{g_{1}}{g_{2}}}^{\left( p,q\right)
}\left( f_{1},\varphi \right) =\overline{\sigma }_{gi}^{\left( p,q\right)
}\left( f_{1},\varphi \right) \mid i=1,2
\end{equation*}%
holds provided $\left( i\right) $ $\frac{g_{1}}{g_{2}}$ is entire and
satisfy the Property (A), $\left( ii\right) $ At least $f_{1}$ is of regular
relative $\left( p,q\right) $-$\varphi $ growth with respect to $g_{2}$, $%
\left( iii\right) $ $\rho _{g_{i}}^{\left( p,q\right) }\left( f_{1},\varphi
\right) <\rho _{g_{j}}^{\left( p,q\right) }\left( f_{1},\varphi \right) $ $%
\mid $ $i$ $=$ $1,2;$ $j$ $=$ $1,2;$ $i$ $\neq $ $j$ and $\left( iv\right) $ 
$g_{1}$ satisfy the Property (A).\newline
\textbf{(C)} Assume the functions $f_{1},f_{2}$, $g_{1}$ and $g_{2}$ satisfy
the following conditions:\newline
$\left( i\right) $ $g_{1}\cdot g_{2}$ satisfy the Property (A);\newline
$\left( ii\right) $ $\rho _{g_{i}}^{\left( p,q\right) }\left( f_{1},\varphi
\right) <\rho _{g_{j}}^{\left( p,q\right) }\left( f_{1},\varphi \right) $
with at least $f_{1}$ is of regular relative $\left( p,q\right) $-$\varphi $
growth with respect to $g_{j}$ for $i$ $=$ $1,$ $2$, $j$ $=$ $1,2$ and $%
i\neq j$;\newline
$\left( iii\right) $ $\rho _{g_{i}}^{\left( p,q\right) }\left( f_{2},\varphi
\right) <\rho _{g_{j}}^{\left( p,q\right) }\left( f_{2},\varphi \right) $
with at least $f_{2}$ is of regular relative $\left( p,q\right) $-$\varphi $
growth with respect to $g_{j}$ for $i$ $=$ $1,$ $2$, $j$ $=$ $1,2$ and $%
i\neq j$;\newline
$\left( iv\right) $ $\rho _{g_{1}}^{\left( p,q\right) }\left( f_{i},\varphi
\right) >\rho _{g_{1}}^{\left( p,q\right) }\left( f_{j},\varphi \right) $
and $\rho _{g_{2}}^{\left( p,q\right) }\left( f_{i},\varphi \right) >\rho
_{g_{2}}^{\left( p,q\right) }\left( f_{j},\varphi \right) $ holds
simultaneously for $i=1,2;$ $j=1,2\ $and $i\neq j$; \newline
$\left( v\right) $ $\rho _{g_{m}}^{\left( p,q\right) }\left( f_{l},\varphi
\right) =\max \left[ \min \left\{ \rho _{g_{1}}^{\left( p,q\right) }\left(
f_{1},\varphi \right) ,\rho _{g_{2}}^{\left( p,q\right) }\left(
f_{1},\varphi \right) \right\} ,\min \left\{ \rho _{g_{1}}^{\left(
p,q\right) }\left( f_{2},\varphi \right) ,\rho _{g_{2}}^{\left( p,q\right)
}\left( f_{2},\varphi \right) \right\} \right] \mid l,m=1,2$; then%
\begin{equation*}
\sigma _{g_{1}\cdot g_{2}}^{\left( p,q\right) }\left( f_{1}\cdot
f_{2},\varphi \right) =\sigma _{g_{m}}^{\left( p,q\right) }\left(
f_{l},\varphi \right) \text{ and }\overline{\sigma }_{g_{1}\cdot
g_{2}}^{\left( p,q\right) }\left( f_{1}\cdot f_{2},\varphi \right) =%
\overline{\sigma }_{g_{m}}^{\left( p,q\right) }\left( f_{l},\varphi \right)
\mid l,m=1,2~.
\end{equation*}%
Similarly,%
\begin{equation*}
\sigma _{\frac{g_{1}}{g_{2}}}^{\left( p,q\right) }\left( \frac{f_{1}}{f_{2}}%
,\varphi \right) =\sigma _{g_{m}}^{\left( p,q\right) }\left( f_{l},\varphi
\right) \text{ and }\overline{\sigma }_{\frac{g_{1}}{g_{2}}}^{\left(
p,q\right) }\left( \frac{f_{1}}{f_{2}},\varphi \right) =\overline{\sigma }%
_{g_{m}}^{\left( p,q\right) }\left( f_{l},\varphi \right) \mid l,m=1,2.
\end{equation*}%
holds provided $\frac{f_{1}}{f_{2}}$ is meromorphic function and $\frac{g_{1}%
}{g_{2}}$ is entire function which satisfy the following conditions:\newline
$\left( i\right) $ $\frac{g_{1}}{g_{2}}$ satisfy the Property (A);\newline
$\left( ii\right) $ At least $f_{1}$ is of regular relative $\left(
p,q\right) $-$\varphi $ growth with respect to $g_{2}$ and $\rho
_{g_{1}}^{\left( p,q\right) }\left( f_{1},\varphi \right) \neq \rho
_{g_{2}}^{\left( p,q\right) }\left( f_{1},\varphi \right) $;\newline
$\left( iii\right) $ At least $f_{2}$ is of regular relative $\left(
p,q\right) $-$\varphi $ growth with respect to $g_{2}$ and $\rho
_{g_{1}}^{\left( p,q\right) }\left( f_{2},\varphi \right) \neq \rho
_{g_{2}}^{\left( p,q\right) }\left( f_{2},\varphi \right) $;\newline
$\left( iv\right) $ $\rho _{g_{1}}^{\left( p,q\right) }\left( f_{i},\varphi
\right) <\rho _{g_{1}}^{\left( p,q\right) }\left( f_{j},\varphi \right) $
and $\rho _{g_{2}}^{\left( p,q\right) }\left( f_{i},\varphi \right) <\rho
_{g_{2}}^{\left( p,q\right) }\left( f_{j},\varphi \right) $ holds
simultaneously for $i=1,2;$ $j=1,2\ $and $i\neq j$;\newline
$\left( v\right) $ $\rho _{g_{m}}^{\left( p,q\right) }\left( f_{l},\varphi
\right) =\max \left[ \min \left\{ \rho _{g_{1}}^{\left( p,q\right) }\left(
f_{1},\varphi \right) ,\rho _{g_{2}}^{\left( p,q\right) }\left(
f_{1},\varphi \right) \right\} ,\min \left\{ \rho _{g_{1}}^{\left(
p,q\right) }\left( f_{2},\varphi \right) ,\rho _{g_{2}}^{\left( p,q\right)
}\left( f_{2},\varphi \right) \right\} \right] \mid l,m=1,2$.
\end{theorem}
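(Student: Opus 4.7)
The plan is to mirror the four-case structure of Theorem \ref{t9.13}, replacing the additive bound $T_{f_1 \pm f_2}(r) \leq T_{f_1}(r)+T_{f_2}(r)+O(1)$ by the multiplicative bound $T_{f_1\cdot f_2}(r)\leq T_{f_1}(r)+T_{f_2}(r)$, and using the identity $T_{1/f}(r)=T_f(r)+O(1)$ to handle quotients. The upper/lower bounds $(\ref{9.15})$--$(\ref{9.20a})$ that drove the proof of Theorem \ref{t9.13} depend only on the definition of relative $(p,q)$-$\varphi$ type and relative $(p,q)$-$\varphi$ lower type, so they can be reused verbatim here. Throughout, whenever the proof of Theorem \ref{t9.13} invoked Theorem \ref{t9.1x} or Theorem \ref{t9.4}, we instead invoke Theorem \ref{t9.7} or Theorem \ref{t9.10}/\ref{t9.10A} respectively; whenever Theorem \ref{t9.5} was invoked to combine a sum of $f$'s with a sum of $g$'s, we instead invoke Theorem \ref{t9.11} (or Theorem \ref{t9.11A} for the quotient).

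For part \textbf{(A)}, assume without loss of generality that $\rho_{g_1}^{(p,q)}(f_1,\varphi)>\rho_{g_1}^{(p,q)}(f_2,\varphi)$. Starting from $T_{f_1\cdot f_2}(r)\leq T_{f_1}(r)+T_{f_2}(r)$ and plugging in $(\ref{9.15})$, one factors out the dominant term exactly as in $(\ref{9.18})$; the ratio analogous to $A$ tends to $0$ because the exponent $\rho_{g_1}^{(p,q)}(f_1,\varphi)$ strictly dominates $\rho_{g_1}^{(p,q)}(f_2,\varphi)$. Letting $\alpha\to 1^+$ and using Theorem \ref{t9.7} to identify $\rho_{g_1}^{(p,q)}(f_1\cdot f_2,\varphi)=\rho_{g_1}^{(p,q)}(f_i,\varphi)$, I obtain $\sigma_{g_1}^{(p,q)}(f_1\cdot f_2,\varphi)\leq \sigma_{g_1}^{(p,q)}(f_i,\varphi)$. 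For the reverse, I write $f_i=(f_1\cdot f_2)\cdot(1/f_j)$ and repeat the same estimate, noting $T_{1/f_j}(r)=T_{f_j}(r)+O(1)$. Case II of Theorem \ref{t9.13} is imitated in the same way using $(\ref{9.20a})$ to handle $\overline{\sigma}$. For the quotient version, the single substitution $T_{f_1/f_2}(r)\leq T_{f_1}(r)+T_{1/f_2}(r)=T_{f_1}(r)+T_{f_2}(r)+O(1)$ reduces the proof to the product case.

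Part \textbf{(B)} is dual. From $T_{g_1\cdot g_2}(r)\leq T_{g_1}(r)+T_{g_2}(r)+O(1)$ and the lower bounds $(\ref{9.15a})$, $(\ref{9.20})$, one repeats Cases III and IV of Theorem \ref{t9.13} with the same dominance-of-exponents trick controlling the auxiliary ratios $C$ and $D$. Here the conclusion $\sigma_{g_1\cdot g_2}^{(p,q)}(f_1,\varphi)\geq \sigma_{g_i}^{(p,q)}(f_1,\varphi)$ is extracted; the reverse inequality comes from writing $g_i=(g_1\cdot g_2)\cdot(1/g_j)$ together with the hypothesis that $g_i$ has the Property (A) (which is what lets Theorem \ref{t9.10} be applied to the factorization). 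For the quotient statement, I use $g_i=(g_1/g_2)\cdot g_j^{\pm 1}$ and Theorem \ref{t9.10A}, with the extra hypothesis that $g_1$ satisfies Property (A) playing the same role as in Theorem \ref{t9.9}.

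Finally part \textbf{(C)} is obtained by combining the first two parts with Theorem \ref{t9.11} (respectively Theorem \ref{t9.11A} for the quotient version). Conditions $(ii)$--$(v)$ of (C) are precisely what is needed so that Theorem \ref{t9.11} collapses to an equality $\rho_{g_1\cdot g_2}^{(p,q)}(f_1\cdot f_2,\varphi)=\rho_{g_m}^{(p,q)}(f_l,\varphi)$; then applying part (A) twice (with respect to $g_1$ and then $g_2$) and part (B) twice (with $f_l$ as the fixed meromorphic function) identifies $\sigma_{g_1\cdot g_2}^{(p,q)}(f_1\cdot f_2,\varphi)$ with $\sigma_{g_m}^{(p,q)}(f_l,\varphi)$, and similarly for $\overline{\sigma}$. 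The main obstacle is purely bookkeeping: keeping track of which pair $(l,m)$ realizes the max-min in $(iv)$/$(v)$ and verifying that every invocation of Theorems \ref{t9.7}, \ref{t9.10}, \ref{t9.10A}, \ref{t9.11}, \ref{t9.11A} has its Property (A) and regular-growth hypotheses satisfied; the analytic content is already present in the proof of Theorem \ref{t9.13}.
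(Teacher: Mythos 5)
Your proposal follows essentially the same route as the paper's own proof: the paper likewise replaces the additive Nevanlinna bound by $T_{f_{1}\cdot f_{2}}\left( r\right) \leq T_{f_{1}}\left( r\right) +T_{f_{2}}\left( r\right) $, reruns Cases I--IV of Theorem \ref{t9.13} with Theorems \ref{t9.7}, \ref{t9.10} and \ref{t9.10A} in place of their sum counterparts, obtains the reverse inequalities from the factorizations $f_{1}=f/f_{2}$ and $g_{1}=g/g_{2}$ together with $T_{1/f}\left( r\right) =T_{f}\left( r\right) +O(1)$, and derives part (C) from Theorems \ref{t9.11} and \ref{t9.11A} combined with the first two parts. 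No substantive difference.
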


\begin{proof}
Let us suppose that $\rho _{g_{1}}^{\left( p,q\right) }\left( f_{1},\varphi
\right) $, $\rho _{g_{1}}^{\left( p,q\right) }\left( f_{2},\varphi \right) $%
, $\rho _{g_{2}}^{\left( p,q\right) }\left( f_{1},\varphi \right) $ and $%
\rho _{g_{2}}^{\left( p,q\right) }\left( f_{2},\varphi \right) $ are all non
zero and finite.\medskip \textbf{\newline
Case I.} Suppose that $\rho _{g_{1}}^{\left( p,q\right) }\left(
f_{1},\varphi \right) >\rho _{g_{1}}^{\left( p,q\right) }\left(
f_{2},\varphi \right) $. Also let $g_{1}$ satisfy the Property (A). \ Since $%
T_{f_{1}\cdot f_{2}}\left( r\right) \leq T_{f_{1}}\left( r\right)
+T_{f_{2}}\left( r\right) $ for all large $r,$ therefore applying the same
procedure as adopted in Case I of Theorem \ref{t9.13} we get that%
\begin{equation}
\sigma _{g_{1}}^{\left( p,q\right) }\left( f_{1}\cdot f_{2},\varphi \right)
\leq \sigma _{g_{1}}^{\left( p,q\right) }\left( f_{1},\varphi \right) ~.
\label{77.3}
\end{equation}

\qquad Further without loss of any generality, let $f=f_{1}\cdot f_{2}$ and $%
\rho _{g_{1}}^{\left( p,q\right) }\left( f_{2},\varphi \right) $ $<$ $\rho
_{g_{1}}^{\left( p,q\right) }\left( f_{1},\varphi \right) $ $=$ $\rho
_{g_{1}}^{\left( p,q\right) }\left( f,\varphi \right) .$ Then in view of $%
\left( \ref{77.3}\right) ,$ we obtain that $\sigma _{g_{1}}^{\left(
p,q\right) }\left( f,\varphi \right) $ $=$ $\sigma _{g_{1}}^{\left(
p,q\right) }\left( f_{1}\cdot f_{2},\varphi \right) $ $\leq $ $\sigma
_{g_{1}}^{\left( p,q\right) }\left( f_{1},\varphi \right) .$ Also $f_{1}=%
\frac{f}{f_{2}}$ and $T_{f_{2}}\left( r\right) $ $=$ $T_{\frac{1}{f_{2}}%
}\left( r\right) $ $+$ $O(1).$ Therefore $T_{f_{1}}\left( r\right) \leq
T_{f}\left( r\right) +T_{f_{2}}\left( r\right) +O(1)$ and in this case also
we obtain from $\left( \ref{77.3}\right) $ that $\sigma _{g_{1}}^{\left(
p,q\right) }\left( f_{1},\varphi \right) $ $\leq $ $\sigma _{g_{1}}^{\left(
p,q\right) }\left( f,\varphi \right) $ $=$ $\sigma _{g_{1}}^{\left(
p,q\right) }\left( f_{1}\cdot f_{2},\varphi \right) .$ Hence $\sigma
_{g_{1}}^{\left( p,q\right) }\left( f,\varphi \right) $ $=$ $\sigma
_{g_{1}}^{\left( p,q\right) }\left( f_{1},\varphi \right) $ $\Rightarrow $ $%
\sigma _{g_{1}}^{\left( p,q\right) }\left( f_{1}\cdot f_{2},\varphi \right) $
$=$ $\sigma _{g_{1}}^{\left( p,q\right) }\left( f_{1},\varphi \right) .$

\qquad Similarly, if we consider $\rho _{g_{1}}^{\left( p,q\right) }\left(
f_{1},\varphi \right) <\rho _{g_{1}}^{\left( p,q\right) }\left(
f_{2},\varphi \right) ,$ then one can verify that $\sigma _{g_{1}}^{\left(
p,q\right) }\left( f_{1}\cdot f_{2},\varphi \right) $ $=$ $\sigma
_{g_{1}}^{\left( p,q\right) }\left( f_{2},\varphi \right) .$

\qquad Next we may suppose that $f=\frac{f_{1}}{f_{2}}$ with $f_{1},$ $f_{2}$
and $f$ are all meromorphic functions.\medskip \newline
\textbf{Sub Case I}$_{\mathbf{A}}$\textbf{.} Let $\rho _{g_{1}}^{\left(
p,q\right) }\left( f_{2},\varphi \right) $ $<$ $\rho _{g_{1}}^{\left(
p,q\right) }\left( f_{1},\varphi \right) $. Therefore in view of Theorem \ref%
{t9.7}, $\rho _{g_{1}}^{\left( p,q\right) }\left( f_{2},\varphi \right) $ $<$
$\rho _{g_{1}}^{\left( p,q\right) }\left( f_{1},\varphi \right) $ $=$ $\rho
_{g_{1}}^{\left( p,q\right) }\left( f,\varphi \right) $. We have $%
f_{1}=f\cdot f_{2}$. So, $\sigma _{g_{1}}^{\left( p,q\right) }\left(
f_{1},\varphi \right) $ $=$ $\sigma _{g_{1}}^{\left( p,q\right) }\left(
f,\varphi \right) $ $=$ $\sigma _{g_{1}}^{\left( p,q\right) }\left( \frac{%
f_{1}}{f_{2}},\varphi \right) $.\medskip \newline
\textbf{Sub Case I}$_{\mathbf{B}}$\textbf{. }Let $\rho _{g_{1}}^{\left(
p,q\right) }\left( f_{2},\varphi \right) $ $>$ $\rho _{g_{1}}^{\left(
p,q\right) }\left( f_{1},\varphi \right) $. Therefore in view of Theorem \ref%
{t9.7}, $\rho _{g_{1}}^{\left( p,q\right) }\left( f_{1},\varphi \right) $ $<$
$\rho _{g_{1}}^{\left( p,q\right) }\left( f_{2},\varphi \right) $ $=$ $\rho
_{g_{1}}^{\left( p,q\right) }\left( f,\varphi \right) $. Since $T_{f}\left(
r\right) =T_{\frac{1}{f}}\left( r\right) +O(1)=T_{\frac{f_{2}}{f_{1}}}\left(
r\right) +O(1),$ So $\sigma _{g_{1}}^{\left( p,q\right) }\left( \frac{f_{1}}{%
f_{2}},\varphi \right) $ $=$ $\sigma _{g_{1}}^{\left( p,q\right) }\left(
f_{2},\varphi \right) $.\medskip \newline
\textbf{Case II. }Let $\rho _{g_{1}}^{\left( p,q\right) }\left(
f_{1},\varphi \right) >\rho _{g_{1}}^{\left( p,q\right) }\left(
f_{2},\varphi \right) $. Also let $g_{1}$ satisfy the Property (A). As $%
T_{f_{1}\cdot f_{2}}\left( r\right) \leq T_{f_{1}}\left( r\right)
+T_{f_{2}}\left( r\right) $ for all large $r,$ therefore applying the same
procedure as explored in Case II of Theorem \ref{t9.13}, one can easily
verify that $\overline{\sigma }_{g_{1}}^{\left( p,q\right) }\left(
f_{1}\cdot f_{2},\varphi \right) =\overline{\sigma }_{g_{1}}^{\left(
p,q\right) }\left( f_{1},\varphi \right) $ and $\overline{\sigma }%
_{g_{1}}^{\left( p,q\right) }\left( \frac{f_{1}}{f_{2}},\varphi \right) $ $=$
$\overline{\sigma }_{g_{1}}^{\left( p,q\right) }\left( f_{i},\varphi \right)
\mid i=1,2$ \ under the conditions specified in the theorem.

\qquad Similarly, if we consider $\rho _{g_{1}}^{\left( p,q\right) }\left(
f_{1},\varphi \right) <\rho _{g_{1}}^{\left( p,q\right) }\left(
f_{2},\varphi \right) ,$ then one can verify that $\overline{\sigma }%
_{g_{1}}^{\left( p,q\right) }\left( f_{1}\cdot f_{2},\varphi \right) $ $=$ $%
\overline{\sigma }_{g_{1}}^{\left( p,q\right) }\left( f_{2},\varphi \right) $
and $\overline{\sigma }_{g_{1}}^{\left( p,q\right) }\left( \frac{f_{1}}{f_{2}%
},\varphi \right) $ $=$ $\overline{\sigma }_{g_{1}}^{\left( p,q\right)
}\left( f_{2},\varphi \right) .$

\qquad Therefore the first part of theorem follows from Case I and Case
II.\medskip \newline
\textbf{Case III. }Let $g_{1}\cdot g_{2}$ satisfy the Property (A) and $\rho
_{g_{1}}^{\left( p,q\right) }\left( f_{1},\varphi \right) <\rho
_{g_{2}}^{\left( p,q\right) }\left( f_{1},\varphi \right) $ with at least $%
f_{1}$ is of regular relative $\left( p,q\right) $-$\varphi $ growth with
respect to $g_{2}.$ Since $T_{g_{1}\cdot g_{2}}\left( r\right) \leq
T_{g_{1}}\left( r\right) +T_{g_{2}}\left( r\right) $ for all large $r,$
therefore applying the same procedure as adopted in Case III of Theorem \ref%
{t9.13} we get that%
\begin{equation}
\sigma _{g_{1}\cdot g_{2}}^{\left( p,q\right) }\left( f_{1},\varphi \right)
\geq \sigma _{g_{1}}^{\left( p,q\right) }\left( f_{1},\varphi \right) ~.
\label{7.6j}
\end{equation}

\qquad Further without loss of any generality, let $g=g_{1}\cdot g_{2}$ and $%
\rho _{g}^{\left( p,q\right) }\left( f_{1},\varphi \right) $ $=$ $\rho
_{g_{1}}^{\left( p,q\right) }\left( f_{1},\varphi \right) $ $<$ $\rho
_{g_{2}}^{\left( p,q\right) }\left( f_{1},\varphi \right) .$ Then in view of 
$\left( \ref{7.6j}\right) ,$ we obtain that $\sigma _{g}^{\left( p,q\right)
}\left( f_{1},\varphi \right) $ $=$ $\sigma _{g_{1}\cdot g_{2}}^{\left(
p,q\right) }\left( f_{1},\varphi \right) $ $\geq $ $\sigma _{g_{1}}^{\left(
p,q\right) }\left( f_{1},\varphi \right) $. Also $g_{1}=\frac{g}{g_{2}}$ and 
$T_{g_{2}}\left( r\right) $ $=$ $T_{\frac{1}{g_{2}}}\left( r\right) $ $+$ $%
O(1).$ Therefore $T_{g_{1}}\left( r\right) \leq T_{g}\left( r\right)
+T_{g_{2}}\left( r\right) +O(1)$ and in this case we obtain from $\left( \ref%
{7.6j}\right) $ that $\sigma _{g_{1}}^{\left( p,q\right) }\left(
f_{1},\varphi \right) $ $\geq $ \ $\sigma _{g}^{\left( p,q\right) }\left(
f_{1},\varphi \right) $ $=$ $\sigma _{g_{1}\cdot g_{2}}^{\left( p,q\right)
}\left( f_{1},\varphi \right) $. Hence $\sigma _{g}^{\left( p,q\right)
}\left( f_{1},\varphi \right) $ $=$ $\sigma _{g_{1}}^{\left( p,q\right)
}\left( f_{1},\varphi \right) $ $\Rightarrow $ $\sigma _{g_{1}\cdot
g_{2}}^{\left( p,q\right) }\left( f_{1},\varphi \right) $ $=$ $\sigma
_{g_{1}}^{\left( p,q\right) }\left( f_{1},\varphi \right) $.

\qquad Similarly, if we consider $\rho _{g_{1}}^{\left( p,q\right) }\left(
f_{1},\varphi \right) >\rho _{g_{2}}^{\left( p,q\right) }\left(
f_{1},\varphi \right) $ with at least $f_{1}$ is of regular relative $\left(
p,q\right) $-$\varphi $ growth with respect to $g_{1}$, then one can verify
that $\sigma _{g_{1}\cdot g_{2}}^{\left( p,q\right) }\left( f_{1},\varphi
\right) $ $=$ $\sigma _{g_{2}}^{\left( p,q\right) }\left( f_{1},\varphi
\right) $.

\qquad Next we may suppose that $g=\frac{g_{1}}{g_{2}}$ with $g_{1},$ $%
g_{2}, $ $g$ are all entire functions satisfying the conditions specified in
the theorem.\medskip \newline
\textbf{Sub Case III}$_{\mathbf{A}}$\textbf{.} Let $\rho _{g_{1}}^{\left(
p,q\right) }\left( f_{1},\varphi \right) $ $<$ $\rho _{g_{2}}^{\left(
p,q\right) }\left( f_{1},\varphi \right) $. Therefore in view of Theorem \ref%
{t9.10A}, $\rho _{g}^{\left( p,q\right) }\left( f_{1},\varphi \right) $ $=$ $%
\rho _{g_{1}}^{\left( p,q\right) }\left( f_{1},\varphi \right) $ $<$ $\rho
_{g_{2}}^{\left( p,q\right) }\left( f_{1},\varphi \right) $. We have $%
g_{1}=g\cdot g_{2}$. So $\sigma _{g_{1}}^{\left( p,q\right) }\left(
f_{1},\varphi \right) $ $=$ $\sigma _{g}^{\left( p,q\right) }\left(
f_{1},\varphi \right) $ $=\sigma _{\frac{g_{1}}{g_{2}}}^{\left( p,q\right)
}\left( f_{1},\varphi \right) $.\medskip \newline
\textbf{Sub Case III}$_{\mathbf{B}}$\textbf{. }Let $\rho _{g_{1}}^{\left(
p,q\right) }\left( f_{1},\varphi \right) $ $>$ $\rho _{g_{2}}^{\left(
p,q\right) }\left( f_{1},\varphi \right) $. Therefore in view of Theorem \ref%
{t9.10A}, $\rho _{g}^{\left( p,q\right) }\left( f_{1},\varphi \right) $ $=$ $%
\rho _{g_{2}}^{\left( p,q\right) }\left( f_{1},\varphi \right) $ $<$ $\rho
_{g_{1}}^{\left( p,q\right) }\left( f_{1},\varphi \right) $. Since $%
T_{g}\left( r\right) =T_{\frac{1}{g}}\left( r\right) +O(1)=T_{\frac{g_{2}}{%
g_{1}}}\left( r\right) +O(1),$ So $\sigma _{\frac{g_{1}}{g_{2}}}^{\left(
p,q\right) }\left( f_{1},\varphi \right) $ $=$ $\sigma _{g_{2}}^{\left(
p,q\right) }\left( f_{1},\varphi \right) $.\medskip \newline
\textbf{Case IV. }Suppose $g_{1}\cdot g_{2}$ satisfy the Property (A). Also
let $\rho _{g_{1}}^{\left( p,q\right) }\left( f_{1},\varphi \right) <\rho
_{g_{2}}^{\left( p,q\right) }\left( f_{1},\varphi \right) $ with at least $%
f_{1}$ is of regular relative $\left( p,q\right) $-$\varphi $ growth with
respect to $g_{2}.$ As $T_{g_{1}\cdot g_{2}}\left( r\right) \leq
T_{g_{1}}\left( r\right) +T_{g_{2}}\left( r\right) $ for all large $r,$ the
same procedure as explored in Case IV of Theorem \ref{t9.13}, one can easily
verify that $\overline{\sigma }_{g_{1}\cdot g_{2}}^{\left( p,q\right)
}\left( f_{1},\varphi \right) $ $=$ $\overline{\sigma }_{g_{1}}^{\left(
p,q\right) }\left( f_{1},\varphi \right) $ and $\overline{\sigma }_{\frac{%
g_{1}}{g_{2}}}^{\left( p,q\right) }\left( f_{1},\varphi \right) $ $=$ $%
\overline{\sigma }_{g_{i}}^{\left( p,q\right) }\left( f_{1},\varphi \right)
\mid i=1,2$ under the conditions specified in the theorem.

\qquad Likewise, if we consider $\rho _{g_{1}}^{\left( p,q\right) }\left(
f_{1},\varphi \right) >\rho _{g_{2}}^{\left( p,q\right) }\left(
f_{1},\varphi \right) $ with at least $f_{1}$ is of regular relative $\left(
p,q\right) $-$\varphi $ growth with respect to $g_{1}$, then one can verify
that $\overline{\sigma }_{g_{1}\cdot g_{2}}^{\left( p,q\right) }\left(
f_{1},\varphi \right) $ $=$ $\overline{\sigma }_{g_{2}}^{\left( p,q\right)
}\left( f_{1},\varphi \right) $ and $\overline{\sigma }_{\frac{g_{1}}{g_{2}}%
}^{\left( p,q\right) }\left( f_{1},\varphi \right) $ $=$ $\overline{\sigma }%
_{g_{2}}^{\left( p,q\right) }\left( f_{1},\varphi \right) $. Therefore the
second part of theorem follows from Case III and Case IV.

\qquad Proof of the third part of the Theorem is omitted as it can be
carried out in view of Theorem \ref{t9.11} and Theorem \ref{t9.11A} and the
above cases.
\end{proof}

\begin{theorem}
\label{t9.18} Let $f_{1},f_{2}$ be any two meromorphic functions and $g_{1}$%
, $g_{2}$ be any two entire functions. Also let $\lambda _{g_{1}}^{\left(
p,q\right) }\left( f_{1},\varphi \right) $, $\lambda _{g_{1}}^{\left(
p,q\right) }\left( f_{2},\varphi \right) $, $\lambda _{g_{2}}^{\left(
p,q\right) }\left( f_{1},\varphi \right) $ and $\lambda _{g_{2}}^{\left(
p,q\right) }\left( f_{2},\varphi \right) $ are all non zero and finite.%
\newline
\textbf{(A)} Assume the functions $f_{1},f_{2}$ and $g_{1}$ satisfy the
following conditions:\newline
$\left( i\right) $ $\lambda _{g_{1}}^{\left( p,q\right) }\left(
f_{i},\varphi \right) >\lambda _{g_{1}}^{\left( p,q\right) }\left(
f_{j},\varphi \right) $ with at least $f_{j}$ is of regular relative $\left(
p,q\right) $-$\varphi $ growth with respect to $g_{1}$ for $i$, $j$ $=$ $1,2$
and $i\neq j$;\newline
$\left( ii\right) $ $g_{1}$ satisfy the Property (A)$,$ then%
\begin{equation*}
\tau _{g_{1}}^{\left( p,q\right) }\left( f_{1}\cdot f_{2},\varphi \right)
=\tau _{g_{1}}^{\left( p,q\right) }\left( f_{i},\varphi \right) \text{ and \ 
}\overline{\tau }_{g_{1}}^{\left( p,q\right) }\left( f_{1}\cdot
f_{2},\varphi \right) =\overline{\tau }_{g_{1}}^{\left( p,q\right) }\left(
f_{i},\varphi \right) \mid i=1,2~.
\end{equation*}%
Similarly,%
\begin{equation*}
\tau _{g_{1}}^{\left( p,q\right) }\left( \frac{f_{1}}{f_{2}},\varphi \right)
=\tau _{g_{1}}^{\left( p,q\right) }\left( f_{i},\varphi \right) \text{ and \ 
}\overline{\tau }_{g_{1}}^{\left( p,q\right) }\left( \frac{f_{1}}{f_{2}}%
,\varphi \right) =\overline{\tau }_{g_{1}}^{\left( p,q\right) }\left(
f_{i},\varphi \right) \mid i=1,2
\end{equation*}%
holds provided $\frac{f_{1}}{f_{2}}$ is meromorphic, at least $f_{2}$ is of
regular relative $\left( p,q\right) $-$\varphi $ growth with respect to $%
g_{1}$ where $g_{1}$ satisfy the Property (A) and $\lambda _{g_{1}}^{\left(
p,q\right) }\left( f_{i},\varphi \right) $ $>$ $\lambda _{g_{1}}^{\left(
p,q\right) }\left( f_{j},\varphi \right) $ $\mid $ $i$ $=$ $1,2;$ $j$ $=$ $%
1,2;$ $i$ $\neq $ $j$.\newline
\textbf{(B) }Assume the functions $g_{1},g_{2}$ and $f_{1}$ satisfy the
following conditions:\newline
$\left( i\right) $ $\lambda _{g_{i}}^{\left( p,q\right) }\left(
f_{1},\varphi \right) <\lambda _{g_{j}}^{\left( p,q\right) }\left(
f_{1},\varphi \right) $ for $i$, $j$ $=$ $1,2$, $i\neq j$; and $g_{i}$
satisfy the Property (A)\newline
$\left( ii\right) $ $g_{1}\cdot g_{2}$ satisfy the Property (A), then%
\begin{equation*}
\tau _{g_{1}\cdot g_{2}}^{\left( p,q\right) }\left( f_{1},\varphi \right)
=\tau _{g_{i}}^{\left( p,q\right) }\left( f_{1},\varphi \right) \text{ and \ 
}\overline{\tau }_{g_{1}\cdot g_{2}}^{\left( p,q\right) }\left(
f_{1},\varphi \right) =\overline{\tau }_{g_{i}}^{\left( p,q\right) }\left(
f_{1},\varphi \right) \mid i=1,2~.
\end{equation*}%
Similarly,%
\begin{equation*}
\tau _{\frac{g_{1}}{g_{2}}}^{\left( p,q\right) }\left( f_{1},\varphi \right)
=\tau _{g_{i}}^{\left( p,q\right) }\left( f_{1},\varphi \right) \text{ and \ 
}\overline{\tau }_{\frac{g_{1}}{g_{2}}}^{\left( p,q\right) }\left(
f_{1},\varphi \right) =\overline{\tau }_{gi}^{\left( p,q\right) }\left(
f_{1},\varphi \right) \mid i=1,2
\end{equation*}%
holds provided $\frac{g_{1}}{g_{2}}$ is entire and satisfy the Property (A), 
$g_{1}$ satisfy the Property (A) and $\lambda _{g_{i}}^{\left( p,q\right)
}\left( f_{1},\varphi \right) <\lambda _{g_{j}}^{\left( p,q\right) }\left(
f_{1},\varphi \right) $ $\mid $ $i$ $=$ $1,2;$ $j$ $=$ $1,2;$ $i$ $\neq $ $j$%
.\newline
\textbf{(C)} Assume the functions $f_{1},f_{2}$, $g_{1}$ and $g_{2}$ satisfy
the following conditions:\newline
$\left( i\right) $ $g_{1}\cdot g_{2}$, $g_{1}$ and $g_{2}$ are satisfy the
Property (A)$;$\newline
$\left( ii\right) $ $\lambda _{g_{1}}^{\left( p,q\right) }\left(
f_{i},\varphi \right) >\lambda _{g_{1}}^{\left( p,q\right) }\left(
f_{j},\varphi \right) $ with at least $f_{j}$ is of regular relative $\left(
p,q\right) $-$\varphi $ growth with respect to $g_{1}$ for $i$ $=$ $1,$ $2$, 
$j$ $=$ $1,2$ and $i\neq j$;\newline
$\left( iii\right) $ $\lambda _{g_{2}}^{\left( p,q\right) }\left(
f_{i},\varphi \right) >\lambda _{g_{2}}^{\left( p,q\right) }\left(
f_{j},\varphi \right) $ with at least $f_{j}$ is of regular relative $\left(
p,q\right) $-$\varphi $ growth with respect to $g_{2}$ for $i$ $=$ $1,$ $2$, 
$j$ $=$ $1,2$ and $i\neq j$;\newline
$\left( iv\right) $ $\lambda _{g_{i}}^{\left( p,q\right) }\left(
f_{1},\varphi \right) <\lambda _{g_{j}}^{\left( p,q\right) }\left(
f_{1},\varphi \right) $ and $\lambda _{g_{i}}^{\left( p,q\right) }\left(
f_{2},\varphi \right) <\lambda _{g_{j}}^{\left( p,q\right) }\left(
f_{2},\varphi \right) $ holds simultaneously for $i=1,2;$ $j=1,2\ $and $%
i\neq j$;\newline
$\left( v\right) $ $\lambda _{g_{m}}^{\left( p,q\right) }\left(
f_{l},\varphi \right) =\min \left[ \max \left\{ \lambda _{g_{1}}^{\left(
p,q\right) }\left( f_{1},\varphi \right) ,\lambda _{g_{1}}^{\left(
p,q\right) }\left( f_{2},\varphi \right) \right\} ,\max \left\{ \lambda
_{g_{2}}^{\left( p,q\right) }\left( f_{1},\varphi \right) ,\lambda
_{g_{2}}^{\left( p,q\right) }\left( f_{2},\varphi \right) \right\} \right]
\mid l,m=1,2$; then%
\begin{equation*}
\tau _{g_{1}\cdot g_{2}}^{\left( p,q\right) }\left( f_{1}\cdot f_{2},\varphi
\right) =\tau _{g_{m}}^{\left( p,q\right) }\left( f_{l},\varphi \right) 
\text{ and }\overline{\tau }_{g_{1}\cdot g_{2}}^{\left( p,q\right) }\left(
f_{1}\cdot f_{2},\varphi \right) =\overline{\tau }_{g_{m}}^{\left(
p,q\right) }\left( f_{l},\varphi \right) \mid l,m=1,2~.
\end{equation*}%
Similarly,%
\begin{equation*}
\tau _{\frac{g_{1}}{g_{2}}}^{\left( p,q\right) }\left( \frac{f_{1}}{f_{2}}%
,\varphi \right) =\tau _{g_{m}}^{\left( p,q\right) }\left( f_{l},\varphi
\right) \text{ and }\overline{\tau }_{\frac{g_{1}}{g_{2}}}^{\left(
p,q\right) }\left( \frac{f_{1}}{f_{2}},\varphi \right) =\overline{\tau }%
_{g_{m}}^{\left( p,q\right) }\left( f_{l},\varphi \right) \mid l,m=1,2~.
\end{equation*}%
holds provided $\frac{f_{1}}{f_{2}}$ is meromorphic and $\frac{g_{1}}{g_{2}}$
is entire functions which satisfy the following conditions:\newline
$\left( i\right) $ $\frac{g_{1}}{g_{2}}$, $g_{1}$ and $g_{2}$ satisfy the
Property (A);\newline
$\left( ii\right) $ At least $f_{2}$ is of regular relative $\left(
p,q\right) $-$\varphi $ growth with respect to $g_{1}$ and $\lambda
_{g_{1}}^{\left( p,q\right) }\left( f_{1},\varphi \right) \neq \lambda
_{g_{1}}^{\left( p,q\right) }\left( f_{2},\varphi \right) $;\newline
$\left( iii\right) $ At least $f_{2}$ is of regular relative $\left(
p,q\right) $-$\varphi $ growth with respect to $g_{2}$ and $\lambda
_{g_{2}}^{\left( p,q\right) }\left( f_{1},\varphi \right) \neq \lambda
_{g_{2}}^{\left( p,q\right) }\left( f_{2},\varphi \right) $;\newline
$\left( iv\right) $ $\lambda _{g_{i}}^{\left( p,q\right) }\left(
f_{1},\varphi \right) <\lambda _{g_{j}}^{\left( p,q\right) }\left(
f_{1},\varphi \right) $ and $\lambda _{g_{i}}^{\left( p,q\right) }\left(
f_{2},\varphi \right) <\lambda _{g_{j}}^{\left( p,q\right) }\left(
f_{2},\varphi \right) $ holds simultaneously for $i=1,2;$ $j=1,2\ $and $%
i\neq j$;\newline
$\left( v\right) $ $\lambda _{g_{m}}^{\left( p,q\right) }\left(
f_{l},\varphi \right) =\min \left[ \max \left\{ \lambda _{g_{1}}^{\left(
p,q\right) }\left( f_{1},\varphi \right) ,\lambda _{g_{1}}^{\left(
p,q\right) }\left( f_{2},\varphi \right) \right\} ,\max \left\{ \lambda
_{g_{2}}^{\left( p,q\right) }\left( f_{1},\varphi \right) ,\lambda
_{g_{2}}^{\left( p,q\right) }\left( f_{2},\varphi \right) \right\} \right]
\mid l,m=1,2$.
\end{theorem}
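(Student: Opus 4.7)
The plan is to mirror the structure of the proof of Theorem \ref{t9.17}, but replacing every appeal to Theorem \ref{t9.13} by the analogous weak type estimates used in Theorem \ref{t9.14}, and appealing to Theorem \ref{t9.8}, Theorem \ref{t9.8 A}, Theorem \ref{t9.9}, Theorem \ref{t9.12} and Theorem \ref{t9.12A} in place of their additive counterparts. Concretely, I start from the four basic estimates \eqref{9.15x}--\eqref{9.20ax} that encode the definitions of $\tau$ and $\overline{\tau}$. The crucial inequality throughout is $T_{f_{1}\cdot f_{2}}(r)\leq T_{f_{1}}(r)+T_{f_{2}}(r)$, which replaces $T_{f_{1}\pm f_{2}}(r)\leq T_{f_{1}}(r)+T_{f_{2}}(r)+O(1)$ of Theorem \ref{t9.14}; since the $O(1)$ term was asymptotically harmless in that proof, the same arguments transfer verbatim.

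For part \textbf{(A)}, I would split into the two sub-cases $\lambda_{g_{1}}^{(p,q)}(f_{1},\varphi)>\lambda_{g_{1}}^{(p,q)}(f_{2},\varphi)$ (with at least $f_{2}$ of regular relative growth w.r.t.\ $g_{1}$) and the symmetric one, exactly as in Case I and Case II of Theorem \ref{t9.14}. In the first, writing the analogue of the ratio $E$ (respectively $F$) and using $\lambda_{g_{1}}^{(p,q)}(f_{1},\varphi)>\lambda_{g_{1}}^{(p,q)}(f_{2},\varphi)$ one forces that ratio to be negligible, so that the product estimate collapses to the $f_{1}$ term alone. Combined with Theorem \ref{t9.8} this yields $\tau_{g_{1}}^{(p,q)}(f_{1}\cdot f_{2},\varphi)\leq \tau_{g_{1}}^{(p,q)}(f_{1},\varphi)$; the reverse inequality is obtained by setting $f=f_{1}\cdot f_{2}$, noting $f_{1}=f/f_{2}$ and $T_{f_{2}}(r)=T_{1/f_{2}}(r)+O(1)$, and reapplying the previous step, as done in the product part of Theorem \ref{t9.17}. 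The same derivation works with $\overline{\tau}$. The quotient claim $\tau_{g_{1}}^{(p,q)}(f_{1}/f_{2},\varphi)=\tau_{g_{1}}^{(p,q)}(f_{i},\varphi)$ then follows by the two sub-case split of Theorem \ref{t9.17} (Sub Cases I$_{A}$ and I$_{B}$): when $\lambda_{g_{1}}^{(p,q)}(f_{2},\varphi)<\lambda_{g_{1}}^{(p,q)}(f_{1},\varphi)$, use $f_{1}=(f_{1}/f_{2})\cdot f_{2}$; when the inequality is reversed, use $T_{f_{1}/f_{2}}(r)=T_{f_{2}/f_{1}}(r)+O(1)$.

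For part \textbf{(B)}, the roles of $f$ and $g$ are swapped and I exploit $T_{g_{1}\cdot g_{2}}(r)\leq T_{g_{1}}(r)+T_{g_{2}}(r)$ together with the lower bounds \eqref{9.15ax}. Fixing $\lambda_{g_{1}}^{(p,q)}(f_{1},\varphi)<\lambda_{g_{2}}^{(p,q)}(f_{1},\varphi)$ and defining the analogue of the ratio $G$ (respectively $H$), one reduces the product $g_{1}\cdot g_{2}$ to the dominant factor $g_{1}$, obtaining $\tau_{g_{1}\cdot g_{2}}^{(p,q)}(f_{1},\varphi)\geq \tau_{g_{1}}^{(p,q)}(f_{1},\varphi)$. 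The reverse inequality comes from setting $g=g_{1}\cdot g_{2}$, writing $g_{1}=g/g_{2}$, and invoking Theorem \ref{t9.9} to control $\lambda_{g}^{(p,q)}(f_{1},\varphi)$. The quotient version is handled analogously via the sub-case split (Sub Cases III$_{A}$ and III$_{B}$ of Theorem \ref{t9.17}) using $T_{g_{1}/g_{2}}(r)=T_{g_{2}/g_{1}}(r)+O(1)$ and Theorem \ref{t9.10A}.

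Finally, part \textbf{(C)} is simply the simultaneous application of parts \textbf{(A)} and \textbf{(B)}: condition $(v)$ identifies $\lambda_{g_{m}}^{(p,q)}(f_{l},\varphi)$ as the value that governs the growth of $f_{1}\cdot f_{2}$ with respect to $g_{1}\cdot g_{2}$ via Theorem \ref{t9.12}, while conditions $(ii)$--$(iv)$ guarantee that the strict inequalities needed in parts \textbf{(A)} and \textbf{(B)} are in force for each of the two factors separately. Therefore the equality of the weak types and of their upper variants propagates from the factor-level equalities established above, and an analogous argument using Theorem \ref{t9.12A} settles the quotient case. The main obstacle is purely bookkeeping: carefully matching the many regularity and Property (A) hypotheses to the step that actually uses them; the analytic content, once part \textbf{(A)} is in place, is identical to that of Theorem \ref{t9.14} and Theorem \ref{t9.17}, so no new estimates are required.
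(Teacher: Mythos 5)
Your proposal is correct and takes essentially the same route as the paper's own proof: split into the cases/sub-cases of Theorem \ref{t9.14} and Theorem \ref{t9.17}, use $T_{f_{1}\cdot f_{2}}(r)\leq T_{f_{1}}(r)+T_{f_{2}}(r)$ and $T_{g_{1}\cdot g_{2}}(r)\leq T_{g_{1}}(r)+T_{g_{2}}(r)$ to get one inequality, recover the reverse one via $f_{1}=f/f_{2}$ (resp. $g_{1}=g/g_{2}$) and $T_{1/f_{2}}(r)=T_{f_{2}}(r)+O(1)$, and deduce part \textbf{(C)} from Theorems \ref{t9.12} and \ref{t9.12A} together with parts \textbf{(A)} and \textbf{(B)}. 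The only slip is a citation: in the quotient sub-cases of part \textbf{(B)} the relevant auxiliary result is Theorem \ref{t9.9} (the $\lambda$-version covering $\frac{g_{1}}{g_{2}}$), not Theorem \ref{t9.10A}, which concerns $\rho$.
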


\begin{proof}
Let us consider that $\lambda _{g_{1}}^{\left( p,q\right) }\left(
f_{1},\varphi \right) $, $\lambda _{g_{1}}^{\left( p,q\right) }\left(
f_{2},\varphi \right) $, $\lambda _{g_{2}}^{\left( p,q\right) }\left(
f_{1},\varphi \right) $ and $\lambda _{g_{2}}^{\left( p,q\right) }\left(
f_{2},\varphi \right) $ are all non zero and finite.\textbf{\newline
Case I.} Suppose $\lambda _{g_{1}}^{\left( p,q\right) }\left( f_{1},\varphi
\right) >\lambda _{g_{1}}^{\left( p,q\right) }\left( f_{2},\varphi \right) $
with at least $f_{2}$ is of regular relative $\left( p,q\right) $-$\varphi $
growth with respect to $g_{1}$ and $g_{1}$ satisfy the Property (A). Since $%
T_{f_{1}\cdot f_{2}}\left( r\right) \leq T_{f_{1}}\left( r\right)
+T_{f_{2}}\left( r\right) $ for all large $r,$ therefore applying the same
procedure as adopted in Case I of Theorem \ref{t9.14} we get that%
\begin{equation}
\tau _{g_{1}}^{\left( p,q\right) }\left( f_{1}\cdot f_{2},\varphi \right)
\leq \tau _{g_{1}}^{\left( p,q\right) }\left( f_{1},\varphi \right) ~.
\label{77.90}
\end{equation}

\qquad Further without loss of any generality, let $f=f_{1}\cdot f_{2}$ and $%
\lambda _{g_{1}}^{\left( p,q\right) }\left( f_{2},\varphi \right) $ $<$ $%
\lambda _{g_{1}}^{\left( p,q\right) }\left( f_{1},\varphi \right) $ $=$ $%
\lambda _{g_{1}}^{\left( p,q\right) }\left( f,\varphi \right) .$ Then in
view of $\left( \ref{77.90}\right) ,$ we obtain that $\tau _{g_{1}}^{\left(
p,q\right) }\left( f,\varphi \right) $ $=$ $\tau _{g_{1}}^{\left( p,q\right)
}\left( f_{1}\cdot f_{2},\varphi \right) $ $\leq $ $\tau _{g_{1}}^{\left(
p,q\right) }\left( f_{1},\varphi \right) .$ Also $f_{1}=\frac{f}{f_{2}}$ and 
$T_{f_{2}}\left( r\right) $ $=$ $T_{\frac{1}{f_{2}}}\left( r\right) $ $+$ $%
O(1).$ Therefore $T_{f_{1}}\left( r\right) \leq T_{f}\left( r\right)
+T_{f_{2}}\left( r\right) +O(1)$ and in this case we obtain from the above
arguments that $\tau _{g_{1}}^{\left( p,q\right) }\left( f_{1},\varphi
\right) $ $\leq $ $\tau _{g_{1}}^{\left( p,q\right) }\left( f,\varphi
\right) $ $=$ $\tau _{g_{1}}^{\left( p,q\right) }\left( f_{1}\cdot
f_{2},\varphi \right) .$ Hence $\tau _{g_{1}}^{\left( p,q\right) }\left(
f,\varphi \right) $ $=$ $\tau _{g_{1}}^{\left( p,q\right) }\left(
f_{1},\varphi \right) $ $\Rightarrow $ $\tau _{g_{1}}^{\left( p,q\right)
}\left( f_{1}\cdot f_{2},\varphi \right) $ $=$ $\tau _{g_{1}}^{\left(
p,q\right) }\left( f_{1},\varphi \right) .$

\qquad Similarly, if we consider $\lambda _{g_{1}}^{\left( p,q\right)
}\left( f_{1},\varphi \right) <\lambda _{g_{1}}^{\left( p,q\right) }\left(
f_{2},\varphi \right) $ with at least $f_{1}$ is of regular relative $\left(
p,q\right) $-$\varphi $ growth with respect to $g_{1}$, then one can easily
verify that $\tau _{g_{1}}^{\left( p,q\right) }\left( f_{1}\cdot
f_{2},\varphi \right) $ $=$ $\tau _{g_{1}}^{\left( p,q\right) }\left(
f_{2},\varphi \right) .$

\qquad Next we may suppose that $f=\frac{f_{1}}{f_{2}}$ with $f_{1},$ $f_{2}$
and $f$ are all meromorphic functions satisfying the conditions specified in
the theorem.\medskip \newline
\textbf{Sub Case I}$_{\mathbf{A}}$\textbf{.} Let $\lambda _{g_{1}}^{\left(
p,q\right) }\left( f_{2},\varphi \right) $ $<\lambda $ $_{g_{1}}^{\left(
p,q\right) }\left( f_{1},\varphi \right) $. Therefore in view of Theorem \ref%
{t9.8 A}, $\lambda _{g_{1}}^{\left( p,q\right) }\left( f_{2},\varphi \right) 
$ $<$ $\lambda _{g_{1}}^{\left( p,q\right) }\left( f_{1},\varphi \right) $ $%
= $ $\lambda _{g_{1}}^{\left( p,q\right) }\left( f,\varphi \right) $. We
have $f_{1}=f\cdot f_{2}$. So $\tau _{g_{1}}^{\left( p,q\right) }\left(
f_{1},\varphi \right) $ $=$ $\tau _{g_{1}}^{\left( p,q\right) }\left(
f,\varphi \right) $ $=$ $\tau _{g_{1}}^{\left( p,q\right) }\left( \frac{f_{1}%
}{f_{2}},\varphi \right) $.\medskip \newline
\textbf{Sub Case I}$_{\mathbf{B}}$\textbf{. }Let $\lambda _{g_{1}}^{\left(
p,q\right) }\left( f_{2},\varphi \right) $ $>$ $\lambda _{g_{1}}^{\left(
p,q\right) }\left( f_{1},\varphi \right) $. Therefore in view of Theorem \ref%
{t9.8 A}, $\lambda _{g_{1}}^{\left( p,q\right) }\left( f_{1},\varphi \right) 
$ $<$ $\lambda _{g_{1}}^{\left( p,q\right) }\left( f_{2},\varphi \right) $ $%
= $ $\lambda _{g_{1}}^{\left( p,q\right) }\left( f,\varphi \right) $. Since $%
T_{f}\left( r\right) =T_{\frac{1}{f}}\left( r\right) +O(1)=T_{\frac{f_{2}}{%
f_{1}}}\left( r\right) +O(1),$ So $\tau _{g_{1}}^{\left( p,q\right) }\left( 
\frac{f_{1}}{f_{2}},\varphi \right) $ $=$ $\tau _{g_{1}}^{\left( p,q\right)
}\left( f_{2},\varphi \right) $.\medskip \newline
\textbf{Case II. }Let $\lambda _{g_{1}}^{\left( p,q\right) }\left(
f_{1},\varphi \right) >\lambda _{g_{1}}^{\left( p,q\right) }\left(
f_{2},\varphi \right) $ with at least $f_{2}$ is of regular relative $\left(
p,q\right) $-$\varphi $ growth with respect to $g_{1}$ where $g_{1}$ satisfy
the Property (A). As $T_{f_{1}\cdot f_{2}}\left( r\right) \leq
T_{f_{1}}\left( r\right) +T_{f_{2}}\left( r\right) $ for all large $r,$ so
applying the same procedure as adopted in Case II of Theorem \ref{t9.14} we
can easily verify that $\overline{\tau }_{g_{1}}^{\left( p,q\right) }\left(
f_{1}\cdot f_{2},\varphi \right) =\overline{\tau }_{g_{1}}^{\left(
p,q\right) }\left( f_{1},\varphi \right) $ and\ $\overline{\tau }_{\frac{%
g_{1}}{g_{2}}}^{\left( p,q\right) }\left( f_{1},\varphi \right) =\overline{%
\tau }_{gi}^{\left( p,q\right) }\left( f_{1},\varphi \right) \mid i=1,2$
under the conditions specified in the theorem.

\qquad Similarly, if we consider $\lambda _{g_{1}}^{\left( p,q\right)
}\left( f_{1},\varphi \right) <\lambda _{g_{1}}^{\left( p,q\right) }\left(
f_{2},\varphi \right) $ with at least $f_{1}$ is of regular relative $\left(
p,q\right) $-$\varphi $ growth with respect to $g_{1}$, then one can easily
verify that $\overline{\tau }_{g_{1}}^{\left( p,q\right) }\left( f_{1}\cdot
f_{2},\varphi \right) $ $=$ $\overline{\tau }_{g_{1}}^{\left( p,q\right)
}\left( f_{2},\varphi \right) .$

\qquad Therefore the first part of theorem follows Case I and Case II.%
\newline
\textbf{Case III. }Let $\lambda _{g_{1}}^{\left( p,q\right) }\left(
f_{1},\varphi \right) <\lambda _{g_{2}}^{\left( p,q\right) }\left(
f_{1},\varphi \right) $ and $g_{1}\cdot g_{2}$ satisfy the Property
(A).Since $T_{g_{1}\cdot g_{2}}\left( r\right) \leq T_{g_{1}}\left( r\right)
+T_{g_{2}}\left( r\right) $ for all large $r,$ therefore applying the same
procedure as adopted in Case III of Theorem \ref{t9.14} we get that 
\begin{equation}
\tau _{g_{1}\cdot g_{2}}^{\left( p,q\right) }\left( f_{1},\varphi \right)
\leq \tau _{g_{1}}^{\left( p,q\right) }\left( f_{1},\varphi \right) ~.
\label{79.0}
\end{equation}

\qquad Further without loss of any generality, let $g=g_{1}\cdot g_{2}$ and $%
\lambda _{g}^{\left( p,q\right) }\left( f_{1},\varphi \right) $ $=$ $\lambda
_{g_{1}}^{\left( p,q\right) }\left( f_{1},\varphi \right) $ $<$ $\lambda
_{g_{2}}^{\left( p,q\right) }\left( f_{1},\varphi \right) .$ Then in view of 
$\left( \ref{79.0}\right) ,$ we obtain that $\tau _{g}^{\left( p,q\right)
}\left( f_{1},\varphi \right) $ $=$ $\tau _{g_{1}\cdot g_{2}}^{\left(
p,q\right) }\left( f_{1},\varphi \right) $ $\geq $ $\tau _{g_{1}}^{\left(
p,q\right) }\left( f_{1},\varphi \right) $. Also $g_{1}=\frac{g}{g_{2}}$ and 
$T_{g_{2}}\left( r\right) $ $=$ $T_{\frac{1}{g_{2}}}\left( r\right) $ $+$ $%
O(1).$ Therefore $T_{g_{1}}\left( r\right) \leq T_{g}\left( r\right)
+T_{g_{2}}\left( r\right) +O(1)$ and in this case we obtain from above
arguments that $\tau _{g_{1}}^{\left( p,q\right) }\left( f_{1},\varphi
\right) $ $\geq $ \ $\tau _{g}^{\left( p,q\right) }\left( f_{1},\varphi
\right) $ $=$ $\tau _{g_{1}\cdot g_{2}}^{\left( p,q\right) }\left(
f_{1},\varphi \right) $. Hence $\tau _{g}^{\left( p,q\right) }\left(
f_{1},\varphi \right) $ $=$ $\tau _{g_{1}}^{\left( p,q\right) }\left(
f_{1},\varphi \right) $ $\Rightarrow $ $\tau _{g_{1}\cdot g_{2}}^{\left(
p,q\right) }\left( f_{1},\varphi \right) $ $=$ $\tau _{g_{1}}^{\left(
p,q\right) }\left( f_{1},\varphi \right) $.

\qquad If $\lambda _{g_{1}}^{\left( p,q\right) }\left( f_{1},\varphi \right)
>\lambda _{g_{2}}^{\left( p,q\right) }\left( f_{1},\varphi \right) ,$ then
one can easily verify that $\tau _{g_{1}\cdot g_{2}}^{\left( p,q\right)
}\left( f_{1},\varphi \right) $ $=$ $\tau _{g_{2}}^{\left( p,q\right)
}\left( f_{1},\varphi \right) $.

\qquad Next we may suppose that $g=\frac{g_{1}}{g_{2}}$ with $g_{1},$ $%
g_{2}, $ $g$ are all entire functions satisfying the conditions specified in
the theorem.\medskip \newline
\textbf{Sub Case III}$_{\mathbf{A}}$\textbf{.} Let $\lambda _{g_{1}}^{\left(
p,q\right) }\left( f_{1},\varphi \right) $ $<$ $\lambda _{g_{2}}^{\left(
p,q\right) }\left( f_{1},\varphi \right) $. Therefore in view of Theorem \ref%
{t9.9}, $\lambda _{g}^{\left( p,q\right) }\left( f_{1},\varphi \right) $ $=$ 
$\lambda _{g_{1}}^{\left( p,q\right) }\left( f_{1},\varphi \right) $ $<$ $%
\lambda _{g_{2}}^{\left( p,q\right) }\left( f_{1},\varphi \right) $. We have 
$g_{1}=g\cdot g_{2}$. So $\tau _{g_{1}}^{\left( p,q\right) }\left(
f_{1},\varphi \right) $ $=$ $\tau _{g}^{\left( p,q\right) }\left(
f_{1},\varphi \right) $ $=\tau _{\frac{g_{1}}{g_{2}}}^{\left( p,q\right)
}\left( f_{1},\varphi \right) $.\medskip \newline
\textbf{Sub Case III}$_{\mathbf{B}}$\textbf{. }Let $\lambda _{g_{1}}^{\left(
p,q\right) }\left( f_{1},\varphi \right) $ $>$ $\lambda _{g_{2}}^{\left(
p,q\right) }\left( f_{1},\varphi \right) $. Therefore in view of Theorem \ref%
{t9.9}, $\lambda _{g}^{\left( p,q\right) }\left( f_{1},\varphi \right) $ $=$ 
$\lambda _{g_{2}}^{\left( p,q\right) }\left( f_{1},\varphi \right) $ $<$ $%
\lambda _{g_{1}}^{\left( p,q\right) }\left( f_{1},\varphi \right) $. Since $%
T_{g}\left( r\right) =T_{\frac{1}{g}}\left( r\right) +O(1)=T_{\frac{g_{2}}{%
g_{1}}}\left( r\right) +O(1),$ So $\tau _{\frac{g_{1}}{g_{2}}}^{\left(
p,q\right) }\left( f_{1},\varphi \right) $ $=$ $\tau _{g_{2}}^{\left(
p,q\right) }\left( f_{1},\varphi \right) $.\medskip \newline
\textbf{Case IV. }Suppose $\lambda _{g_{1}}^{\left( p,q\right) }\left(
f_{1},\varphi \right) <\lambda _{g_{2}}^{\left( p,q\right) }\left(
f_{1},\varphi \right) $ and $g_{1}\cdot g_{2}$ satisfy the Property (A).
Since $T_{g_{1}\cdot g_{2}}\left( r\right) \leq T_{g_{1}}\left( r\right)
+T_{g_{2}}\left( r\right) $ for all large $r,$ then adopting the same
procedure as of Case IV of Theorem \ref{t9.14}, we obtain that $\overline{%
\tau }_{g_{1}\cdot g_{2}}^{\left( p,q\right) }\left( f_{1},\varphi \right) =%
\overline{\tau }_{g_{1}}^{\left( p,q\right) }\left( f_{1},\varphi \right) $
and $\overline{\tau }_{\frac{g_{1}}{g_{2}}}^{\left( p,q\right) }\left(
f_{1},\varphi \right) =\overline{\tau }_{gi}^{\left( p,q\right) }\left(
f_{1},\varphi \right) \mid i=1,2$.

\qquad Similarly if we consider that $\lambda _{g_{1}}^{\left( p,q\right)
}\left( f_{1},\varphi \right) >\lambda _{g_{2}}^{\left( p,q\right) }\left(
f_{1},\varphi \right) ,$ then one can easily verify that $\overline{\tau }%
_{g_{1}\cdot g_{2}}^{\left( p,q\right) }\left( f_{1},\varphi \right) $ $=$ $%
\overline{\tau }_{g_{2}}^{\left( p,q\right) }\left( f_{1},\varphi \right) $.

\qquad Therefore the second part of the theorem follows from Case III and
Case IV.

\qquad Proof of the third part of the Theorem is omitted as it can be
carried out in view of Theorem \ref{t9.12} , Theorem \ref{t9.12A} and the
above cases.
\end{proof}

\begin{theorem}
\label{t9.19} Let $f_{1},\,f_{2}$ be any two meromorphic functions and $%
g_{1} $, $g_{2}$ be any two entire functions.\newline
\textbf{(A)} The following condition is assumed to be satisfied:\newline
$\left( i\right) $ Either $\sigma _{g_{1}}^{\left( p,q\right) }\left(
f_{1},\varphi \right) \neq \sigma _{g_{1}}^{\left( p,q\right) }\left(
f_{2},\varphi \right) $ or $\overline{\sigma }_{g_{1}}^{\left( p,q\right)
}\left( f_{1},\varphi \right) \neq \overline{\sigma }_{g_{1}}^{\left(
p,q\right) }\left( f_{2},\varphi \right) $ holds;\newline
$\left( ii\right) $ $g_{1}$ satisfies the Property (A), then%
\begin{equation*}
\rho _{g_{1}}^{\left( p,q\right) }\left( f_{1}\cdot f_{2},\varphi \right)
=\rho _{g_{1}}^{\left( p,q\right) }\left( f_{1},\varphi \right) =\rho
_{g_{1}}^{\left( p,q\right) }\left( f_{2},\varphi \right) ~.
\end{equation*}%
\textbf{(B)} The following conditions are assumed to be satisfied:\newline
$\left( i\right) $ Either $\sigma _{g_{1}}^{\left( p,q\right) }\left(
f_{1},\varphi \right) \neq \sigma _{g_{2}}^{\left( p,q\right) }\left(
f_{1},\varphi \right) $ or $\overline{\sigma }_{g_{1}}^{\left( p,q\right)
}\left( f_{1},\varphi \right) \neq \overline{\sigma }_{g_{2}}^{\left(
p,q\right) }\left( f_{1},\varphi \right) $ holds;\newline
$\left( ii\right) $ $f_{1}$ is of regular relative $\left( p,q\right) $-$%
\varphi $ growth with respect to at least any one of $g_{1}$ or $g_{2}$.
Also $g_{1}\cdot g_{2}$ satisfy the Property (A). Then we have 
\begin{equation*}
\rho _{g_{1}\cdot g_{2}}^{\left( p,q\right) }\left( f_{1},\varphi \right)
=\rho _{g_{1}}^{\left( p,q\right) }\left( f_{1},\varphi \right) =\rho
_{g_{2}}^{\left( p,q\right) }\left( f_{1},\varphi \right) ~.
\end{equation*}
\end{theorem}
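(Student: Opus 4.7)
The plan is to replicate the argument in Theorem \ref{t9.15}, with the operations of addition and subtraction replaced by multiplication and reciprocation. The essential algebraic facts I need are: $T_{1/h}(r) = T_{h}(r) + O(1)$ for any meromorphic $h$, so that the reciprocal preserves all the relative $(p,q)$-$\varphi$ growth indicators; and Theorem \ref{t9.17}, which is the multiplicative analogue of Theorem \ref{t9.13}.

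For Part (A), I would first observe that if $\rho_{g_1}^{(p,q)}(f_1,\varphi) \neq \rho_{g_1}^{(p,q)}(f_2,\varphi)$, the equality clause of Theorem \ref{t9.7} already pins down $\rho_{g_1}^{(p,q)}(f_1 \cdot f_2, \varphi)$. So I work under the assumption $\rho_{g_1}^{(p,q)}(f_1,\varphi) = \rho_{g_1}^{(p,q)}(f_2,\varphi)$, nonzero and finite. Theorem \ref{t9.7} then yields $\rho_{g_1}^{(p,q)}(f_1 \cdot f_2,\varphi) \leq \rho_{g_1}^{(p,q)}(f_1,\varphi)$. Suppose for contradiction this is strict. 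Writing $f_1 = (f_1 \cdot f_2) \cdot (1/f_2)$, and noting that $1/f_2$ is meromorphic with $\rho_{g_1}^{(p,q)}(1/f_2,\varphi) = \rho_{g_1}^{(p,q)}(f_2,\varphi)$ strictly dominating $\rho_{g_1}^{(p,q)}(f_1 \cdot f_2,\varphi)$, Theorem \ref{t9.17}(A) in its product-and-quotient form applies and forces $\sigma_{g_1}^{(p,q)}(f_1,\varphi) = \sigma_{g_1}^{(p,q)}(1/f_2,\varphi) = \sigma_{g_1}^{(p,q)}(f_2,\varphi)$ (and likewise for $\overline{\sigma}$), contradicting hypothesis (i).

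For Part (B), I would proceed symmetrically. Again I may restrict to the case $\rho_{g_1}^{(p,q)}(f_1,\varphi) = \rho_{g_2}^{(p,q)}(f_1,\varphi)$. The regularity hypothesis on $f_1$ with respect to at least one of $g_1, g_2$ together with Property (A) of $g_1 \cdot g_2$ allows Theorem \ref{t9.10} to deliver $\rho_{g_1 \cdot g_2}^{(p,q)}(f_1,\varphi) \geq \rho_{g_1}^{(p,q)}(f_1,\varphi)$. Suppose the inequality is strict. Set $\widetilde{g} = g_1 \cdot g_2$. Then $\widetilde{g}/g_2 = g_1$ is entire, and the quotient clause of Theorem \ref{t9.17}(B) applied with $\widetilde{g}$ and $g_2$ playing the roles of $g_1$ and $g_2$ would yield $\sigma_{g_1}^{(p,q)}(f_1,\varphi) = \sigma_{\widetilde{g}/g_2}^{(p,q)}(f_1,\varphi) = \sigma_{g_2}^{(p,q)}(f_1,\varphi)$, and likewise for $\overline{\sigma}$, contradicting hypothesis (i).

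The main obstacle is matching the auxiliary hypotheses of Theorem \ref{t9.17} under the reconstructed factorizations. In Part (A) this is largely automatic because only Property (A) of $g_1$ is required and no regularity assumption on the $f_k$'s appears. In Part (B), however, the quotient branch of Theorem \ref{t9.17}(B) demands that at least $f_1$ be of regular relative $(p,q)$-$\varphi$ growth with respect to the function playing the role of $g_2$; since the hypothesis only supplies regularity with respect to at least one of the original $g_1, g_2$, a short case-split on which of the two provides regularity may be needed so that the chosen factorization $g_1 = \widetilde{g}/g_2$ (or the symmetric one $g_2 = \widetilde{g}/g_1$) is the one to which \ref{t9.17}(B) legitimately applies.
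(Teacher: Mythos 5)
Your proposal matches the paper's own proof in all essentials: both parts proceed by assuming the relevant orders are equal, invoking Theorem \ref{t9.7} (resp.\ Theorem \ref{t9.10}) for the one-sided inequality, and then deriving a contradiction with hypothesis $(i)$ by applying Theorem \ref{t9.17} to the factorization $f_{1}=\frac{f_{1}\cdot f_{2}}{f_{2}}$ (resp.\ $g_{1}=\frac{g_{1}\cdot g_{2}}{g_{2}}$), your use of the reciprocal $1/f_{2}$ being equivalent to the paper's quotient form via $T_{1/f_{2}}(r)=T_{f_{2}}(r)+O(1)$. Your closing remark about verifying the regularity side-conditions of Theorem \ref{t9.17}(B) is a legitimate point of care that the paper itself passes over silently, but it does not alter the argument.
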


\begin{proof}
Let $f_{1},\,f_{2}$ be any two meromorphic functions and $g_{1}$, $g_{2}$ be
any two entire functions satisfying the conditions of the theorem.\medskip 
\newline
\textbf{Case I.} Suppose that $\rho _{g_{1}}^{\left( p,q\right) }\left(
f_{1},\varphi \right) =\rho _{g_{1}}^{\left( p,q\right) }\left(
f_{2},\varphi \right) $ $(0<\rho _{g_{1}}^{\left( p,q\right) }\left(
f_{1},\varphi \right) ,\rho _{g_{1}}^{\left( p,q\right) }\left(
f_{2},\varphi \right) <\infty )$ and $g_{1}$ satisfy the Property (A). Now
in view of Theorem \ref{t9.7}, it is easy to see that $\rho _{g_{1}}^{\left(
p,q\right) }\left( f_{1}\cdot f_{2},\varphi \right) \leq \rho
_{g_{1}}^{\left( p,q\right) }\left( f_{1},\varphi \right) =\rho
_{g_{1}}^{\left( p,q\right) }\left( f_{2},\varphi \right) .$ If possible let 
\begin{equation}
\rho _{g_{1}}^{\left( p,q\right) }\left( f_{1}\cdot f_{2},\varphi \right)
<\rho _{g_{1}}^{\left( p,q\right) }\left( f_{1},\varphi \right) =\rho
_{g_{1}}^{\left( p,q\right) }\left( f_{2},\varphi \right) ~.  \label{20.1}
\end{equation}

\qquad Let\textbf{\ }$\sigma _{g_{1}}^{\left( p,q\right) }\left(
f_{1},\varphi \right) \neq \sigma _{g_{1}}^{\left( p,q\right) }\left(
f_{2},\varphi \right) .$ Now in view of the first part of Theorem \ref{t9.17}
and $\left( \ref{20.1}\right) $ we obtain that $\sigma _{g_{1}}^{\left(
p,q\right) }\left( f_{1},\varphi \right) =\sigma _{g_{1}}^{\left( p,q\right)
}\left( \frac{f_{1}\cdot f_{2}}{f_{2}},\varphi \right) =\sigma
_{g_{1}}^{\left( p,q\right) }\left( f_{2},\varphi \right) $ which is a
contradiction. Hence $\rho _{g_{1}}^{\left( p,q\right) }\left( f_{1}\cdot
f_{2},\varphi \right) $ $=$ $\rho _{g_{1}}^{\left( p,q\right) }\left(
f_{1},\varphi \right) $ $=$ $\rho _{g_{1}}^{\left( p,q\right) }\left(
f_{2},\varphi \right) .$ Similarly with the help of the first part of
Theorem \ref{t9.17}, one can obtain the same conclusion under the hypothesis 
$\overline{\sigma }_{g_{1}}^{\left( p,q\right) }\left( f_{1},\varphi \right)
\neq \overline{\sigma }_{g_{1}}^{\left( p,q\right) }\left( f_{2},\varphi
\right) .$ This prove the first part of the theorem.\newline
\textbf{Case II.} Let us consider that $\rho _{g_{1}}^{\left( p,q\right)
}\left( f_{1},\varphi \right) =\rho _{g_{2}}^{\left( p,q\right) }\left(
f_{1},\varphi \right) $ $(0<\rho _{g_{1}}^{\left( p,q\right) }\left(
f_{1},\varphi \right) ,\rho _{g_{2}}^{\left( p,q\right) }\left(
f_{1},\varphi \right) <\infty )$, $f_{1}$ is of regular relative $\left(
p,q\right) $-$\varphi $ growth with respect to at least any one of $g_{1}$
or $g_{2}$. Also $g_{1}\cdot g_{2}$ satisfy the Property (A). Therefore in
view of Theorem \ref{t9.10}, it follows that $\rho _{g_{1}\cdot
g_{2}}^{\left( p,q\right) }\left( f_{1},\varphi \right) \geq \rho
_{g_{1}}^{\left( p,q\right) }\left( f_{1},\varphi \right) =\rho
_{g_{2}}^{\left( p,q\right) }\left( f_{1},\varphi \right) $ and if possible
let 
\begin{equation}
\rho _{g_{1}\cdot g_{2}}^{\left( p,q\right) }\left( f_{1},\varphi \right)
>\rho _{g_{1}}^{\left( p,q\right) }\left( f_{1},\varphi \right) =\rho
_{g_{2}}^{\left( p,q\right) }\left( f_{1},\varphi \right) ~.  \label{20.2}
\end{equation}

\qquad Further suppose that\textbf{\ }$\sigma _{g_{1}}^{\left( p,q\right)
}\left( f_{1},\varphi \right) \neq \sigma _{g_{2}}^{\left( p,q\right)
}\left( f_{1},\varphi \right) .$ Therefore in view of the proof of the
second part of Theorem \ref{t9.17} and $\left( \ref{20.2}\right) $, we
obtain that $\sigma _{g_{1}}^{\left( p,q\right) }\left( f_{1},\varphi
\right) $ $=$ $\sigma _{\frac{g_{1}\cdot g_{2}}{g_{2}}}^{\left( p,q\right)
}\left( f_{1},\varphi \right) $ $=$ $\sigma _{g_{2}}^{\left( p,q\right)
}\left( f_{1},\varphi \right) $ which is a contradiction. Hence $\rho
_{g_{1}\cdot g_{2}}^{\left( p,q\right) }\left( f_{1},\varphi \right) $ $=$ $%
\rho _{g_{1}}^{\left( p,q\right) }\left( f_{1},\varphi \right) $ $=$ $\rho
_{g_{2}}^{\left( p,q\right) }\left( f_{1},\varphi \right) ~.$ Likewise in
view of the proof of second part of Theorem \ref{t9.17}, one can obtain the
same conclusion under the hypothesis $\overline{\sigma }_{g_{1}}^{\left(
p,q\right) }\left( f_{1},\varphi \right) \neq \overline{\sigma }%
_{g_{2}}^{\left( p,q\right) }\left( f_{1},\varphi \right) .$ This proves the
second part of the theorem.
\end{proof}

\begin{theorem}
\label{t9.19A} Let $f_{1},\,f_{2}$ be any two meromorphic functions and $%
g_{1}$, $g_{2}$ be any two entire functions.\newline
\textbf{(A) }The following conditions are assumed to be satisfied:\newline
$\left( i\right) $ $\left( f_{1}\cdot f_{2}\right) $ is of regular relative $%
\left( p,q\right) $-$\varphi $ growth with respect to at least any one $%
g_{1} $ or $g_{2}$;\newline
$\left( ii\right) $ $\left( g_{1}\cdot g_{2}\right) $, $g_{1}$ and $g_{2}$
all satisfy the Property (A);\newline
$\left( iii\right) $ Either $\sigma _{g_{1}}^{\left( p,q\right) }\left(
f_{1}\cdot f_{2},\varphi \right) \neq \sigma _{g_{2}}^{\left( p,q\right)
}\left( f_{1}\cdot f_{2},\varphi \right) $ or $\overline{\sigma }%
_{g_{1}}^{\left( p,q\right) }\left( f_{1}\cdot f_{2},\varphi \right) \neq 
\overline{\sigma }_{g_{2}}^{\left( p,q\right) }\left( f_{1}\cdot
f_{2},\varphi \right) $;\newline
$\left( iv\right) $ Either $\sigma _{g_{1}}^{\left( p,q\right) }\left(
f_{1},\varphi \right) \neq \sigma _{g_{1}}^{\left( p,q\right) }\left(
f_{2},\varphi \right) $ or $\overline{\sigma }_{g_{1}}^{\left( p,q\right)
}\left( f_{1},\varphi \right) \neq \overline{\sigma }_{g_{1}}^{\left(
p,q\right) }\left( f_{2},\varphi \right) $;\newline
$\left( v\right) $ Either $\sigma _{g_{2}}^{\left( p,q\right) }\left(
f_{1},\varphi \right) \neq \sigma _{g_{2}}^{\left( p,q\right) }\left(
f_{2},\varphi \right) $ or $\overline{\sigma }_{g_{2}}^{\left( p,q\right)
}\left( f_{1},\varphi \right) \neq \overline{\sigma }_{g_{2}}^{\left(
p,q\right) }\left( f_{2},\varphi \right) $; then%
\begin{equation*}
\rho _{g_{1}\cdot g_{2}}^{\left( p,q\right) }\left( f_{1}\cdot f_{2},\varphi
\right) =\rho _{g_{1}}^{\left( p,q\right) }\left( f_{1},\varphi \right)
=\rho _{g_{1}}^{\left( p,q\right) }\left( f_{2},\varphi \right) =\rho
_{g_{2}}^{\left( p,q\right) }\left( f_{1},\varphi \right) =\rho
_{g_{2}}^{\left( p,q\right) }\left( f_{2},\varphi \right) ~.
\end{equation*}%
\textbf{(B) }The following conditions are assumed to be satisfied:\newline
$\left( i\right) $ $\left( g_{1}\cdot g_{2}\right) $ satisfy the Property
(A);\newline
$\left( ii\right) $ $f_{1}$ and $f_{2}$ are of regular relative $\left(
p,q\right) $-$\varphi $ growth with respect to at least any one $g_{1}$ or $%
g_{2}$;\newline
$\left( iii\right) $ Either $\sigma _{g_{1}\cdot g_{2}}^{\left( p,q\right)
}\left( f_{1},\varphi \right) \neq \sigma _{g_{1}\cdot g_{2}}^{\left(
p,q\right) }\left( f_{2},\varphi \right) $ or $\overline{\sigma }%
_{g_{1}\cdot g_{2}}^{\left( p,q\right) }\left( f_{1},\varphi \right) \neq 
\overline{\sigma }_{g_{1}\cdot g_{2}}^{\left( p,q\right) }\left(
f_{2},\varphi \right) $;\newline
$\left( iv\right) $ Either $\sigma _{g_{1}}^{\left( p,q\right) }\left(
f_{1},\varphi \right) \neq \sigma _{g_{2}}^{\left( p,q\right) }\left(
f_{1},\varphi \right) $ or $\overline{\sigma }_{g_{1}}^{\left( p,q\right)
}\left( f_{1},\varphi \right) \neq \overline{\sigma }_{g_{2}}^{\left(
p,q\right) }\left( f_{1},\varphi \right) $;\newline
$\left( v\right) $ Either $\sigma _{g_{1}}^{\left( p,q\right) }\left(
f_{2},\varphi \right) \neq \sigma _{g_{2}}^{\left( p,q\right) }\left(
f_{2},\varphi \right) $ or $\overline{\sigma }_{g_{1}}^{\left( p,q\right)
}\left( f_{2},\varphi \right) \neq \overline{\sigma }_{g_{2}}^{\left(
p,q\right) }\left( f_{2},\varphi \right) $; then%
\begin{equation*}
\rho _{g_{1}\cdot g_{2}}^{\left( p,q\right) }\left( f_{1}\cdot f_{2},\varphi
\right) =\rho _{g_{1}}^{\left( p,q\right) }\left( f_{1},\varphi \right)
=\rho _{g_{1}}^{\left( p,q\right) }\left( f_{2},\varphi \right) =\rho
_{g_{2}}^{\left( p,q\right) }\left( f_{1},\varphi \right) =\rho
_{g_{2}}^{\left( p,q\right) }\left( f_{2},\varphi \right) ~.
\end{equation*}
\end{theorem}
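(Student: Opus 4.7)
The plan is to derive Theorem \ref{t9.19A} as a two-step consequence of Theorem \ref{t9.19}, exactly mirroring the way Theorem \ref{t9.15A} was obtained from Theorem \ref{t9.15}. The strategy exploits the fact that both hypotheses of Theorem \ref{t9.19A} (Parts (A) and (B)) are designed so that Theorem \ref{t9.19} can be invoked twice: once to collapse the relative orders over the two entire ``denominators'' $g_1,g_2$ for a common meromorphic function, and once to collapse the relative orders of the two meromorphic ``numerators'' $f_1,f_2$ with respect to a common entire function. Chaining the resulting equalities yields the full five-fold equality.

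For Part (A), first I would apply Part (A) of Theorem \ref{t9.19} twice, once with the entire function $g_1$ and once with $g_2$. Condition (iv) of Theorem \ref{t9.19A} provides the non-equality of types (or lower types) needed to invoke Part (A) of Theorem \ref{t9.19} with $g_1$, yielding
\begin{equation*}
\rho_{g_1}^{(p,q)}(f_1\cdot f_2,\varphi)=\rho_{g_1}^{(p,q)}(f_1,\varphi)=\rho_{g_1}^{(p,q)}(f_2,\varphi),
\end{equation*}
and condition (v) supplies the analogous input for $g_2$, giving
\begin{equation*}
\rho_{g_2}^{(p,q)}(f_1\cdot f_2,\varphi)=\rho_{g_2}^{(p,q)}(f_1,\varphi)=\rho_{g_2}^{(p,q)}(f_2,\varphi).
\end{equation*}
Next, I would apply Part (B) of Theorem \ref{t9.19} to the meromorphic function $f_1\cdot f_2$ with entire functions $g_1,g_2$. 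Condition (i) (regular $(p,q)$-$\varphi$ growth of $f_1\cdot f_2$ with respect to at least one of $g_1,g_2$), together with the Property (A) hypothesis on $g_1\cdot g_2$, ensures the structural hypotheses of Theorem \ref{t9.19}(B) are met; condition (iii) is precisely the non-equality of types (or lower types) it requires. This yields
\begin{equation*}
\rho_{g_1\cdot g_2}^{(p,q)}(f_1\cdot f_2,\varphi)=\rho_{g_1}^{(p,q)}(f_1\cdot f_2,\varphi)=\rho_{g_2}^{(p,q)}(f_1\cdot f_2,\varphi).
\end{equation*}
Stringing these three displayed chains together delivers the desired five-fold equality.

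For Part (B), I would reverse the order of application. First, use Part (B) of Theorem \ref{t9.19} twice, once with the meromorphic function $f_1$ and once with $f_2$: conditions (iv) and (v) of Theorem \ref{t9.19A}(B) supply the two type-inequality hypotheses needed, while condition (ii) guarantees the required regular relative growth assumptions with respect to $g_1$ or $g_2$. This collapses $\rho_{g_1\cdot g_2}^{(p,q)}(f_k,\varphi)=\rho_{g_1}^{(p,q)}(f_k,\varphi)=\rho_{g_2}^{(p,q)}(f_k,\varphi)$ for $k=1,2$. Second, apply Part (A) of Theorem \ref{t9.19} with the entire function $g_1\cdot g_2$, invoking condition (iii) of Theorem \ref{t9.19A}(B) along with the Property (A) hypothesis on $g_1\cdot g_2$, to obtain $\rho_{g_1\cdot g_2}^{(p,q)}(f_1\cdot f_2,\varphi)=\rho_{g_1\cdot g_2}^{(p,q)}(f_1,\varphi)=\rho_{g_1\cdot g_2}^{(p,q)}(f_2,\varphi)$. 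Concatenating gives the claim.

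The only subtle point, and hence the main obstacle, is bookkeeping: one must verify that in each of the four invocations of Theorem \ref{t9.19} the hypotheses are actually present in Theorem \ref{t9.19A}. In particular, Part (A) of Theorem \ref{t9.19} demands only a type (or lower type) non-equality plus Property (A) of the relevant entire function, while Part (B) additionally requires a regular relative growth assumption on the meromorphic argument; hypotheses (i), (ii) and the Property (A) clauses in Theorem \ref{t9.19A} have been tailored precisely so that these regularity requirements are met at each stage. Beyond this verification, the argument is a pure chaining of equalities and no further analytic estimates are needed; accordingly, as the paper notes, the proof may be omitted once it is recognised as a natural consequence of Theorem \ref{t9.19}.
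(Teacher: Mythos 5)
Your proposal is correct and is precisely the derivation the paper intends when it says Theorem \ref{t9.19A} is ``a natural consequence of Theorem \ref{t9.19}'': two applications of one part of Theorem \ref{t9.19} to collapse the orders over $f_{1},f_{2}$ (resp.\ over $g_{1},g_{2}$), followed by one application of the other part to link in $\rho _{g_{1}\cdot g_{2}}^{\left( p,q\right) }\left( f_{1}\cdot f_{2},\varphi \right) $, with the hypotheses (i)--(v) supplying exactly the Property (A), regular-growth, and type-inequality inputs needed at each of the three invocations. No gaps; the bookkeeping you flag as the only subtle point is indeed all there is to check.
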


\qquad We omit the proof of Theorem \ref{t9.19A} as it is a natural
consequence of Theorem \ref{t9.19}.

\begin{theorem}
\label{t9.20.} Let $f_{1},\,f_{2}$ be any two meromorphic functions and $%
g_{1}$, $g_{2}$ be any two entire functions.\newline
\textbf{(A)} The following conditions are assumed to be satisfied:\newline
$\left( i\right) $ At least any one of $f_{1}$ or $f_{2}$ are of regular
relative $\left( p,q\right) $-$\varphi $ growth with respect to $g_{1}$;%
\newline
$\left( ii\right) $ Either $\tau _{g_{1}}^{\left( p,q\right) }\left(
f_{1},\varphi \right) \neq \tau _{g_{1}}^{\left( p,q\right) }\left(
f_{2},\varphi \right) $ or $\overline{\tau }_{g_{1}}^{\left( p,q\right)
}\left( f_{1},\varphi \right) \neq \overline{\tau }_{g_{1}}^{\left(
p,q\right) }\left( f_{2},\varphi \right) $ holds.\newline
$\left( iii\right) $ $g_{1}$ satisfy the Property (A), then%
\begin{equation*}
\lambda _{g_{1}}^{\left( p,q\right) }\left( f_{1}\cdot f_{2},\varphi \right)
=\lambda _{g_{1}}^{\left( p,q\right) }\left( f_{1},\varphi \right) =\lambda
_{g_{1}}^{\left( p,q\right) }\left( f_{2},\varphi \right) ~.
\end{equation*}%
\textbf{(B)} The following conditions are assumed to be satisfied:\newline
$\left( i\right) $ $f_{1}$ be any meromorphic function and $g_{1}$, $g_{2}$
be any two entire functions such that $\lambda _{g_{1}}^{\left( p,q\right)
}\left( f_{1},\varphi \right) $ and $\lambda _{g_{2}}^{\left( p,q\right)
}\left( f_{1},\varphi \right) $ exist and $g_{1}\cdot g_{2}$ satisfy the
Property (A);\newline
$\left( ii\right) $ Either $\tau _{g_{1}}^{\left( p,q\right) }\left(
f_{1},\varphi \right) \neq \tau _{g_{2}}^{\left( p,q\right) }\left(
f_{1},\varphi \right) $ or $\overline{\tau }_{g_{1}}^{\left( p,q\right)
}\left( f_{1},\varphi \right) \neq \overline{\tau }_{g_{2}}^{\left(
p,q\right) }\left( f_{1},\varphi \right) $ holds, then%
\begin{equation*}
\lambda _{g_{1}\cdot g_{2}}^{\left( p,q\right) }\left( f_{1},\varphi \right)
=\lambda _{g_{1}}^{\left( p,q\right) }\left( f_{1},\varphi \right) =\lambda
_{g_{2}}^{\left( p,q\right) }\left( f_{1},\varphi \right) ~.
\end{equation*}
\end{theorem}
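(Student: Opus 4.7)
The plan is to mirror the argument of Theorem \ref{t9.16} almost verbatim, but with products in place of sums, replacing appeals to Theorem \ref{t9.14} by appeals to the product-version Theorem \ref{t9.18}, and appeals to Theorem \ref{t9.2x} / Theorem \ref{t9.3} by appeals to their product analogues Theorem \ref{t9.8} / Theorem \ref{t9.9}. The skeleton in both parts is the same: one first applies the relevant max/min inequality to get a one-sided bound, then assumes the strict inequality for contradiction, and finally uses the corresponding weak-type sum-decomposition identity to force equality of the two weak-type indicators, contradicting hypothesis $(ii)$.

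For part \textbf{(A)}, I would start by supposing $\lambda_{g_1}^{(p,q)}(f_1,\varphi)=\lambda_{g_1}^{(p,q)}(f_2,\varphi)$ (both positive and finite), together with hypothesis $(i)$ that at least one of $f_1,f_2$ is of regular relative $(p,q)$-$\varphi$ growth with respect to $g_1$, and with $g_1$ having Property (A). Theorem \ref{t9.8} immediately gives
\begin{equation*}
\lambda_{g_1}^{(p,q)}(f_1\cdot f_2,\varphi)\le \lambda_{g_1}^{(p,q)}(f_1,\varphi)=\lambda_{g_1}^{(p,q)}(f_2,\varphi).
\end{equation*}
If strict inequality held, then writing $f_1=(f_1\cdot f_2)\cdot(1/f_2)$ and using $T_{1/f_2}(r)=T_{f_2}(r)+O(1)$, one falls exactly into the hypotheses of the first part of Theorem \ref{t9.18} applied to the pair $(f_1\cdot f_2,\,f_2)$ (whichever of the two has regular growth supplies the required regularity). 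That theorem then forces $\tau_{g_1}^{(p,q)}(f_1,\varphi)=\tau_{g_1}^{(p,q)}(f_2,\varphi)$ (and similarly for $\overline{\tau}$), contradicting hypothesis $(ii)$. Hence equality must hold in the displayed inequality, giving the desired conclusion. The $\overline{\tau}$-branch of hypothesis $(ii)$ is handled identically using the $\overline{\tau}$-statement of the first part of Theorem \ref{t9.18}.

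For part \textbf{(B)}, the dual argument works. Assume $\lambda_{g_1}^{(p,q)}(f_1,\varphi)=\lambda_{g_2}^{(p,q)}(f_1,\varphi)$ finite and positive, with $g_1\cdot g_2$ satisfying Property (A). Theorem \ref{t9.9} yields
\begin{equation*}
\lambda_{g_1\cdot g_2}^{(p,q)}(f_1,\varphi)\ge \lambda_{g_1}^{(p,q)}(f_1,\varphi)=\lambda_{g_2}^{(p,q)}(f_1,\varphi).
\end{equation*}
If the inequality were strict, I would write $g_1=(g_1\cdot g_2)/g_2$, again invoking $T_{1/g_2}(r)=T_{g_2}(r)+O(1)$, so that the second part of Theorem \ref{t9.18} applied to the pair of entire functions $(g_1\cdot g_2,\,g_2)$ (in the role of $g_1,g_2$ there) is available; it concludes $\tau_{g_1}^{(p,q)}(f_1,\varphi)=\tau_{g_2}^{(p,q)}(f_1,\varphi)$, contradicting hypothesis $(ii)$. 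The $\overline{\tau}$-case is analogous. Therefore the strict case is impossible and the three quantities coincide.

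The only mildly delicate point, and the one I expect to be the main obstacle, is checking that the auxiliary applications of Theorem \ref{t9.18} are legitimate: one must verify that the regular-growth condition that appears as a hypothesis in the first part of Theorem \ref{t9.18} is inherited by the pair $(f_1\cdot f_2,\,f_2)$ from hypothesis $(i)$ of the present theorem (and similarly, in part (B), that $\lambda_{g_1\cdot g_2}^{(p,q)}(f_1,\varphi)$ and $\lambda_{g_2}^{(p,q)}(f_1,\varphi)$ exist so Theorem \ref{t9.18}(B) applies). This is exactly the role played by hypothesis $(i)$ in each part of Theorem \ref{t9.20.} and by the assumption that $\lambda_{g_k}^{(p,q)}(f_1,\varphi)$ exists in part (B); modulo this bookkeeping, the proof reduces to the contradiction scheme above, and no fresh computation is needed.
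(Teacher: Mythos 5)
Your proposal follows essentially the same route as the paper's own proof: both parts obtain the one-sided bound from Theorem \ref{t9.8} (resp.\ Theorem \ref{t9.9}), assume strict inequality, and then apply the quotient case of Theorem \ref{t9.18} to $\frac{f_{1}\cdot f_{2}}{f_{2}}$ (resp.\ $\frac{g_{1}\cdot g_{2}}{g_{2}}$) to force $\tau_{g_{1}}^{(p,q)}(f_{1},\varphi)=\tau_{g_{1}}^{(p,q)}(f_{2},\varphi)$ (resp.\ $\tau_{g_{1}}^{(p,q)}(f_{1},\varphi)=\tau_{g_{2}}^{(p,q)}(f_{1},\varphi)$), contradicting hypothesis $(ii)$. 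The bookkeeping point you flag about transferring the regular-growth hypothesis to the auxiliary pair is handled the same way (and equally tersely) in the paper, so your argument is correct and matches the original.
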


\begin{proof}
Let $f_{1},\,f_{2}$ be any two meromorphic functions and $g_{1}$, $g_{2}$ be
any two entire functions satisfy the conditions of the theorem.\medskip 
\newline
\textbf{Case I.} Let $\lambda _{g_{1}}^{\left( p,q\right) }\left(
f_{1},\varphi \right) =\lambda _{g_{1}}^{\left( p,q\right) }\left(
f_{2},\varphi \right) $ $(0<\lambda _{g_{1}}^{\left( p,q\right) }\left(
f_{1},\varphi \right) ,\lambda _{g_{1}}^{\left( p,q\right) }\left(
f_{2},\varphi \right) <\infty ),$ $g_{1}$ satisfy the Property (A) and at
least $f_{1}$ or $f_{2}$ is of regular relative $\left( p,q\right) $-$%
\varphi $ growth with respect to $g_{1}$. Now in view of Theorem \ref{t9.8}
it is easy to see that $\lambda _{g_{1}}^{\left( p,q\right) }\left(
f_{1}\cdot f_{2},\varphi \right) $ $\leq $ $\lambda _{g_{1}}^{\left(
p,q\right) }\left( f_{1},\varphi \right) $ $=$ $\lambda _{g_{1}}^{\left(
p,q\right) }\left( f_{2},\varphi \right) .$ If possible let%
\begin{equation}
\lambda _{g_{1}}^{\left( p,q\right) }\left( f_{1}\cdot f_{2},\varphi \right)
<\lambda _{g_{1}}^{\left( p,q\right) }\left( f_{1},\varphi \right) =\lambda
_{g_{1}}^{\left( p,q\right) }\left( f_{2},\varphi \right) ~.  \label{20.3}
\end{equation}

\qquad Also let\textbf{\ }$\tau _{g_{1}}^{\left( p,q\right) }\left(
f_{1},\varphi \right) \neq \tau _{g_{1}}^{\left( p,q\right) }\left(
f_{2},\varphi \right) .$ Then in view of the proof of first part of Theorem %
\ref{t9.18} and $\left( \ref{20.3}\right) ,$ we obtain that $\tau
_{g_{1}}^{\left( p,q\right) }\left( f_{1},\varphi \right) =\tau
_{g_{1}}^{\left( p,q\right) }\left( \frac{f_{1}\cdot f_{2}}{f_{2}},\varphi
\right) =\tau _{g_{1}}^{\left( p,q\right) }\left( f_{2},\varphi \right) $
which is a contradiction. Hence $\lambda _{g_{1}}^{\left( p,q\right) }\left(
f_{1}\cdot f_{2},\varphi \right) $ $=$ $\lambda _{g_{1}}^{\left( p,q\right)
}\left( f_{1},\varphi \right) $ $=$ $\lambda _{g_{1}}^{\left( p,q\right)
}\left( f_{2},\varphi \right) .$ Analogously, in view of the proof of first
part of Theorem \ref{t9.18}$,$ one can derived the same conclusion under the
hypothesis $\overline{\tau }_{g_{1}}^{\left( p,q\right) }\left(
f_{1},\varphi \right) \neq \overline{\tau }_{g_{1}}^{\left( p,q\right)
}\left( f_{2},\varphi \right) $. Hence the first part of the theorem is
established.\newline
\textbf{Case II.} Let us consider that $\lambda _{g_{1}}^{\left( p,q\right)
}\left( f_{1},\varphi \right) =\lambda _{g_{2}}^{\left( p,q\right) }\left(
f_{1},\varphi \right) $ $(0<\lambda _{g_{1}}^{\left( p,q\right) }\left(
f_{1},\varphi \right) ,\lambda _{g_{2}}^{\left( p,q\right) }\left(
f_{1},\varphi \right) <\infty $ and $g_{1}\cdot g_{2}$ satisfy the Property
(A). Therefore in view of Theorem \ref{t9.9}, it follows that $\lambda
_{g_{1}\cdot g_{2}}^{\left( p,q\right) }\left( f_{1},\varphi \right) $ $\geq 
$ $\lambda _{g_{1}}^{\left( p,q\right) }\left( f_{1},\varphi \right) $ $=$ $%
\lambda _{g_{2}}^{\left( p,q\right) }\left( f_{1},\varphi \right) $ and if
possible let%
\begin{equation}
\lambda _{g_{1}\cdot g_{2}}^{\left( p,q\right) }\left( f_{1},\varphi \right)
>\lambda _{g_{1}}^{\left( p,q\right) }\left( f_{1},\varphi \right) =\lambda
_{g_{2}}^{\left( p,q\right) }\left( f_{1},\varphi \right) ~.  \label{20.4}
\end{equation}

\qquad Further let\textbf{\ }$\tau _{g_{1}}^{\left( p,q\right) }\left(
f_{1},\varphi \right) \neq \tau _{g_{2}}^{\left( p,q\right) }\left(
f_{1},\varphi \right) .$ Then in view of second part of Theorem \ref{t9.18}
and $\left( \ref{20.4}\right) $, we obtain that $\tau _{g_{1}}^{\left(
p,q\right) }\left( f_{1},\varphi \right) =\tau _{\frac{g_{1}\cdot g_{2}}{%
g_{2}}}^{\left( p,q\right) }\left( f_{1},\varphi \right) =\tau
_{g_{2}}^{\left( p,q\right) }\left( f_{1},\varphi \right) $ which is a
contradiction. Hence $\lambda _{g_{1}\cdot g_{2}}^{\left( p,q\right) }\left(
f_{1},\varphi \right) $ $=$ $\lambda _{g_{1}}^{\left( p,q\right) }\left(
f_{1},\varphi \right) $ $=$ $\lambda _{g_{2}}^{\left( p,q\right) }\left(
f_{1},\varphi \right) .$ Similarly by second part of Theorem \ref{t9.18}, we
get the same conclusion when $\overline{\tau }_{g_{1}}^{\left( p,q\right)
}\left( f_{1},\varphi \right) \neq \overline{\tau }_{g_{2}}^{\left(
p,q\right) }\left( f_{1},\varphi \right) $ and therefore the second part of
the theorem follows.
\end{proof}

\begin{theorem}
\label{t9.20A} Let $f_{1},\,f_{2}$ be any two meromorphic functions and $%
g_{1}$, $g_{2}$ be any two entire functions.\newline
\textbf{(A)} The following conditions are assumed to be satisfied:\newline
$\left( i\right) $ $g_{1}\cdot g_{2}$, $g_{1}$ and $g_{2}$ satisfy the
Property (A);\newline
$\left( ii\right) $ At least any one of $f_{1}$ or $f_{2}$ are of regular
relative $\left( p,q\right) $-$\varphi $ growth with respect to $g_{1}$ and $%
g_{2}$;\newline
$\left( iii\right) $ Either $\tau _{g_{1}}^{\left( p,q\right) }\left(
f_{1}\cdot f_{2},\varphi \right) \neq \tau _{g_{2}}^{\left( p,q\right)
}\left( f_{1}\cdot f_{2},\varphi \right) $ or $\overline{\tau }%
_{g_{1}}^{\left( p,q\right) }\left( f_{1}\cdot f_{2},\varphi \right) \neq 
\overline{\tau }_{g_{2}}^{\left( p,q\right) }\left( f_{1}\cdot f_{2},\varphi
\right) $;\newline
$\left( iv\right) $ Either $\tau _{g_{1}}^{\left( p,q\right) }\left(
f_{1},\varphi \right) \neq \tau _{g_{1}}^{\left( p,q\right) }\left(
f_{2},\varphi \right) $ or $\overline{\tau }_{g_{1}}^{\left( p,q\right)
}\left( f_{1},\varphi \right) \neq \overline{\tau }_{g_{1}}^{\left(
p,q\right) }\left( f_{2},\varphi \right) $;\newline
$\left( v\right) $ Either $\tau _{g_{2}}^{\left( p,q\right) }\left(
f_{1},\varphi \right) \neq \tau _{g_{2}}^{\left( p,q\right) }\left(
f_{2},\varphi \right) $ or $\overline{\tau }_{g_{2}}^{\left( p,q\right)
}\left( f_{1},\varphi \right) \neq \overline{\tau }_{g_{2}}^{\left(
p,q\right) }\left( f_{2},\varphi \right) $; then%
\begin{equation*}
\lambda _{g_{1}\cdot g_{2}}^{\left( p,q\right) }\left( f_{1}\cdot
f_{2},\varphi \right) =\lambda _{g_{1}}^{\left( p,q\right) }\left(
f_{1},\varphi \right) =\lambda _{g_{1}}^{\left( p,q\right) }\left(
f_{2},\varphi \right) =\lambda _{g_{2}}^{\left( p,q\right) }\left(
f_{1},\varphi \right) =\lambda _{g_{2}}^{\left( p,q\right) }\left(
f_{2},\varphi \right) ~.
\end{equation*}%
\textbf{(B)} The following conditions are assumed to be satisfied:\newline
$\left( i\right) $ $g_{1}\cdot g_{2}$ satisfy the Property (A);\newline
$\left( ii\right) $ At least any one of $f_{1}$ or $f_{2}$ are of regular
relative $\left( p,q\right) $-$\varphi $ growth with respect to $g_{1}\cdot
g_{2}$;\newline
$\left( iii\right) $ Either $\tau _{g_{1}\cdot g_{2}}^{\left( p,q\right)
}\left( f_{1},\varphi \right) \neq \tau _{g_{1}\cdot g_{2}}^{\left(
p,q\right) }\left( f_{2},\varphi \right) $ or $\overline{\tau }_{g_{1}\cdot
g_{2}}^{\left( p,q\right) }\left( f_{1},\varphi \right) \neq \overline{\tau }%
_{g_{1}\cdot g_{2}}^{\left( p,q\right) }\left( f_{2},\varphi \right) $ holds;%
\newline
$\left( iv\right) $ Either $\tau _{g_{1}}^{\left( p,q\right) }\left(
f_{1},\varphi \right) \neq \tau _{g_{2}}^{\left( p,q\right) }\left(
f_{1},\varphi \right) $ or $\overline{\tau }_{g_{1}}^{\left( p,q\right)
}\left( f_{1},\varphi \right) \neq \overline{\tau }_{g_{2}}^{\left(
p,q\right) }\left( f_{1},\varphi \right) $ holds;\newline
$\left( v\right) $ Either $\tau _{g_{1}}^{\left( p,q\right) }\left(
f_{2},\varphi \right) \neq \tau _{g_{2}}^{\left( p,q\right) }\left(
f_{2},\varphi \right) $ or $\overline{\tau }_{g_{1}}^{\left( p,q\right)
}\left( f_{2},\varphi \right) \neq \overline{\tau }_{g_{2}}^{\left(
p,q\right) }\left( f_{2},\varphi \right) $ holds, then%
\begin{equation*}
\lambda _{g_{1}\cdot g_{2}}^{\left( p,q\right) }\left( f_{1}\cdot
f_{2},\varphi \right) =\lambda _{g_{1}}^{\left( p,q\right) }\left(
f_{1},\varphi \right) =\lambda _{g_{1}}^{\left( p,q\right) }\left(
f_{2},\varphi \right) =\lambda _{g_{2}}^{\left( p,q\right) }\left(
f_{1},\varphi \right) =\lambda _{g_{2}}^{\left( p,q\right) }\left(
f_{2},\varphi \right) ~.
\end{equation*}
\end{theorem}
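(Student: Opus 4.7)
The plan is to derive Theorem \ref{t9.20A} as a natural consequence of Theorem \ref{t9.20}, exactly parallel to how Theorem \ref{t9.16A} was deduced from Theorem \ref{t9.16} and Theorem \ref{t9.19A} from Theorem \ref{t9.19}. I would not attempt to rewrite the four-case structure that appears inside Theorem \ref{t9.20}; instead I would apply that theorem three times, once for each of the three pairs whose relative weak types are assumed to disagree, and then chain the resulting equalities.

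For part (A), I would first invoke Theorem \ref{t9.20}(A) with the entire function $g_{1}$ (which satisfies Property (A) by hypothesis (i)), the meromorphic functions $f_{1}, f_{2}$, and hypothesis (iv) to conclude
\begin{equation*}
\lambda_{g_{1}}^{(p,q)}(f_{1}\cdot f_{2},\varphi)=\lambda_{g_{1}}^{(p,q)}(f_{1},\varphi)=\lambda_{g_{1}}^{(p,q)}(f_{2},\varphi);
\end{equation*}
the regular-growth condition required for Theorem \ref{t9.20}(A) is supplied by hypothesis (ii). The same application with $g_{2}$ in place of $g_{1}$ and hypothesis (v) in place of (iv) yields the analogous equality among $\lambda_{g_{2}}^{(p,q)}(f_{1}\cdot f_{2},\varphi)$, $\lambda_{g_{2}}^{(p,q)}(f_{1},\varphi)$, $\lambda_{g_{2}}^{(p,q)}(f_{2},\varphi)$. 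Finally, since $g_{1}\cdot g_{2}$ satisfies Property (A) and hypothesis (iii) guarantees the $\tau$ or $\overline{\tau}$ values of $f_{1}\cdot f_{2}$ with respect to $g_{1}$ and $g_{2}$ differ, Theorem \ref{t9.20}(B) applied to the function $f_{1}\cdot f_{2}$ gives
\begin{equation*}
\lambda_{g_{1}\cdot g_{2}}^{(p,q)}(f_{1}\cdot f_{2},\varphi)=\lambda_{g_{1}}^{(p,q)}(f_{1}\cdot f_{2},\varphi)=\lambda_{g_{2}}^{(p,q)}(f_{1}\cdot f_{2},\varphi).
\end{equation*}
Concatenating these three chains of equalities delivers the five-term equality claimed in part (A).

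For part (B), the same strategy applies but the roles of the two assumptions (i) and (ii) change. I would apply Theorem \ref{t9.20}(B) three times. First, with hypothesis (iv) and the entire functions $g_{1},g_{2}$, Theorem \ref{t9.20}(B) gives $\lambda_{g_{1}\cdot g_{2}}^{(p,q)}(f_{1},\varphi)=\lambda_{g_{1}}^{(p,q)}(f_{1},\varphi)=\lambda_{g_{2}}^{(p,q)}(f_{1},\varphi)$, and the same argument with hypothesis (v) gives the corresponding equality for $f_{2}$. To tie $\lambda_{g_{1}\cdot g_{2}}^{(p,q)}(f_{1}\cdot f_{2},\varphi)$ into this chain, I would apply Theorem \ref{t9.20}(A) to the pair $f_{1},f_{2}$ with the entire function $g_{1}\cdot g_{2}$: hypothesis (i) ensures $g_{1}\cdot g_{2}$ has Property (A), hypothesis (ii) provides the required regularity of $f_{1}$ or $f_{2}$ with respect to $g_{1}\cdot g_{2}$, and hypothesis (iii) supplies the needed inequality of weak types. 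Combining these three equalities yields the asserted five-term identity.

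The only subtle point will be checking that the regular-growth hypotheses threading through Theorem \ref{t9.20}(A) are met at each invocation, because Theorem \ref{t9.20}(A) requires at least one of $f_{1}$ or $f_{2}$ to be of regular relative $(p,q)$-$\varphi$ growth with respect to the reference entire function. In part (A) this is handed to us directly by hypothesis (ii), which specifies regularity with respect to both $g_{1}$ and $g_{2}$; in part (B) we need it with respect to $g_{1}\cdot g_{2}$, which is again exactly hypothesis (ii) of that part. Thus no additional work is needed, and the proof reduces to assembling Theorem \ref{t9.20} three times and quoting it; for this reason the proof may indeed be omitted, as the author states.
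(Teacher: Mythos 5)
Your proposal is correct and is precisely the argument the paper intends: the author omits the proof, stating that Theorem \ref{t9.20A} is a natural consequence of Theorem \ref{t9.20.}, and your three invocations of that theorem (parts (A), (A), (B) for the first claim and (B), (B), (A) for the second, with the hypotheses (i)--(v) matched exactly as you indicate) are the standard way to chain the equalities. Nothing further is needed.
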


\qquad We omit the proof of Theorem \ref{t9.20A} as it is a natural
consequence of Theorem \ref{t9.20.}.

\begin{remark}
If we take $\frac{f_{1}}{f_{2}}$ instead of $f_{1}\cdot f_{2}$ and $\frac{%
g_{1}}{g_{2}}$ instead of $g_{1}\cdot g_{2}$ where $\frac{f_{1}}{f_{2}}$ is
meromorphic and $\frac{g_{1}}{g_{2}}$ is entire function, and the other
conditions of Theorem \ref{t9.19}, Theorem \ref{t9.19A}, Theorem \ref{t9.20.}
and Theorem \ref{t9.20A} remain the same, then conclusion of Theorem \ref%
{t9.19}, Theorem \ref{t9.19A}, Theorem \ref{t9.20.} and Theorem \ref{t9.20A}
remains valid.
\end{remark}


\begin{thebibliography}{99}
\bibitem{1} Bernal L. \emph{Orden relative de crecimiento de funciones
enteras}. Collect. Math., 1988, \textbf{39}, 209-229.

\bibitem{xxx} Biswas T. \emph{On some inequalities concerning relative }$%
\left( p,q\right) $\emph{-}$\varphi $\emph{\ type and relative }$\left(
p,q\right) $\emph{-}$\varphi $\emph{\ weak type of entire or meromorphic
functions with respect to an entire function}. J. Class. Anal., 2018, 
\textbf{13}(2), 107-122.

\bibitem{yyy} Biswas T. \emph{On the integral representations of relative }$%
\left( p,q\right) $\emph{-th type and relative }$\left( p,q\right) $\emph{%
-th weak type of entire\ and meromorphic functions}. J. Fract. Calc. Appl.,
2019, \textbf{10} (1), 68-84.

\bibitem{3} Debnath L. Debnath, Datta S. K., Biswas T., Kar A. \emph{Growth
of meromorphic functions depending on (p,q)-th relative order}. Facta Univ.
Ser. Math. Inform. 2016, \textbf{31} (3) 691-705.

\bibitem{6} Hayman W. K. Meromorphic Functions. The Clarendon Press, Oxford,
1964.

\bibitem{7} Juneja O. P., Kapoor G. P., Bajpai S. K. \emph{On the
(p,q)-order and lower (p,q)-order of an entire function}, J. Reine Angew.
Math. 1976, \textbf{282}, 53-67.

\bibitem{8} Juneja O. P., Kapoor G. P., Bajpai S. K. \emph{On the (p,q)-type
and lower (p,q)-type of an entire function}, J. Reine Angew. Math. 1977, 
\textbf{290}, 180-190.

\bibitem{9} Lahiri B. K., Banerjee D. \emph{Relative order of entire and
meromorphic functions}. Proc. Nat. Acad. Sci. India Ser. A. 1999, \textbf{69}
(A) (3), 339-354.

\bibitem{10} Laine I. Nevanlinna Theory and Complex Differential Equations.
De Gruyter, Berlin, 1993.

\bibitem{12} Yang C. C., Yi H. X. Uniqueness theory of meromorphic
functions, Mathematics and its Applications. 557. Kluwer Academic Publishers
Group, Dordrecht, 2003.

\bibitem{13} Yang L. Value distribution theory. Springer-Verlag, Berlin,
1993.

\bibitem{14} Valiron G. Lectures on the general theory of integral
functions. Chelsea Publishing Company, 1949.
\end{thebibliography}
\end{document}